\documentclass[12pt,a4paper,reqno]{amsart}
\usepackage{amssymb}
\usepackage{amscd}
\usepackage{enumerate}
\usepackage{graphicx}
\usepackage{siunitx}
\usepackage{tikz-cd}
\usepackage{color}
\usetikzlibrary{arrows}
\numberwithin{equation}{section}

\usepackage{mathabx}

\usepackage{mathtools}
\usepackage[tableposition=top]{caption}
\usepackage{booktabs,dcolumn}




\DeclareFontFamily{OT1}{rsfs}{}
\DeclareFontShape{OT1}{rsfs}{n}{it}{<-> rsfs10}{}
\DeclareMathAlphabet{\mathscr}{OT1}{rsfs}{n}{it}

\addtolength{\textwidth}{3 truecm}
\addtolength{\textheight}{1 truecm}
\setlength{\voffset}{-.6 truecm}
\setlength{\hoffset}{-1.3 truecm}
     
\theoremstyle{plain}

\newtheorem{theorem}{Theorem}[section]
\newtheorem{proposition}[theorem]{Proposition}
\newtheorem{lemma}[theorem]{Lemma}
\newtheorem{corollary}[theorem]{Corollary}
\newtheorem{conjecture}[theorem]{Conjecture}

\theoremstyle{definition}

\newtheorem{definition}[theorem]{Definition}
\newtheorem{remark}[theorem]{Remark}

\newcommand\E{\mathbb{E}}
\newcommand\R{\mathbb{R}}
\newcommand\Z{\mathbb{Z}}

\newcommand\C{\mathbb{C}}

\newcommand\eps{\varepsilon}

\newcommand\SO{{\operatorname{SO}}}
\newcommand\Rot{{\operatorname{Rot}}}
\newcommand\Dil{{\operatorname{Dil}}}
\newcommand\df{{\operatorname{df}}}

\parindent 0mm
\parskip   5mm 


\usepackage{algorithm, algorithmic}

\begin{document}

\title[Blowup for averaged Navier-Stokes]{Finite time blowup for an averaged three-dimensional Navier-Stokes equation}

\author{Terence Tao}
\address{UCLA Department of Mathematics, Los Angeles, CA 90095-1555.}
\email{tao@math.ucla.edu}


\subjclass[2010]{35Q30}

\begin{abstract}  The Navier-Stokes equation on the Euclidean space $\R^3$ can be expressed in the form $\partial_t u = \Delta u + B(u,u)$, where $B$ is a certain bilinear operator on divergence-free vector fields $u$ obeying the cancellation property $\langle B(u,u), u\rangle=0$ (which is equivalent to the energy identity for the Navier-Stokes equation).  In this paper, we consider a modification $\partial_t u = \Delta u + \tilde B(u,u)$ of this equation, where $\tilde B$ is an averaged version of the bilinear operator $B$ (where the average involves rotations, dilations and Fourier multipliers of order zero), and which also obeys the cancellation condition $\langle \tilde B(u,u), u \rangle = 0$ (so that it obeys the usual energy identity).  By analysing a system of ODE related to (but more complicated than) a dyadic Navier-Stokes model of Katz and Pavlovic, we construct an example of a smooth solution to such an averaged Navier-Stokes equation  which blows up in finite time.  This demonstrates that any attempt to positively resolve the Navier-Stokes global regularity problem in three dimensions has to use finer structure on the nonlinear portion $B(u,u)$ of the equation than is provided by harmonic analysis estimates and the energy identity.  We also propose a program for adapting these blowup results to the true Navier-Stokes equations.
\end{abstract}

\maketitle


\section{Introduction}

\subsection{Statement of main result}

The purpose of this paper is to formalise the ``supercriticality'' barrier for the (infamous) global regularity problem for the Navier-Stokes equation, using a blowup solution to a certain averaged version of Navier-Stokes equation to demonstrate that any proposed positive solution to the regularity problem which does not use the finer structure of the nonlinearity cannot possibly be successful.  This barrier also suggests a possible route to provide a negative answer to this problem, that is to say it suggests a program for constructing a blowup solution to the true Navier-Stokes equations.

The barrier is not particularly sensitive to the precise formulation\footnote{See \cite{tao-local} for an analysis of the relationship between different formulations of the Navier-Stokes regularity problem in three dimensions.  It is likely that our main results also extend to higher dimensions than three, although we will not pursue this matter here.} of the regularity problem, but to state the results in the cleanest fashion we will take the homogeneous global regularity problem in the Euclidean setting in three spatial dimensions as our formulation:

\begin{conjecture}[Navier-Stokes global regularity]\label{nsconj} \ \cite[(A)]{fefferman} Let $\nu > 0$, and let $u_0: \R^3 \to \R^3$ be a divergence-free vector field in the Schwartz class.  Then there exist a smooth vector field $u: [0,+\infty) \times \R^3 \to \R^3$ (the velocity field) and smooth function $p: \R^3 \to \R$ (the pressure field) obeying the equations
\begin{equation}\label{ns}
\begin{split}
\partial_t u + (u \cdot \nabla) u &= \nu \Delta u - \nabla p \\
\nabla \cdot u &= 0 \\
u(0,\cdot) &= u_0
\end{split}
\end{equation}
as well as the finite energy condition $u \in L^\infty_t L^2_x([0,T] \times \R^3)$ for every $0 < T < \infty$.
\end{conjecture}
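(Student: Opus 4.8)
The plan is the standard local-to-global scheme for a parabolic equation: construct a smooth solution on a short time interval, characterize the only way it can fail to extend globally, and then try to rule out that failure using the quantities the equation conserves or dissipates. \textbf{Step 1: local well-posedness and smoothing.} Applying the Leray projection $\mathbb{P}$ onto divergence-free vector fields eliminates the pressure and rewrites \eqref{ns} as $\partial_t u = \nu\Delta u + \mathbb{P}\nabla\cdot(u\otimes u)$, equivalently in Duhamel form
\[
u(t)=e^{\nu t\Delta}u_0+\int_0^t e^{\nu(t-s)\Delta}\,\mathbb{P}\nabla\cdot(u(s)\otimes u(s))\,ds.
\]
A contraction-mapping argument in $C([0,T];H^s(\R^3))$ for any fixed $s>1/2$ --- using one derivative of heat smoothing to absorb the $\nabla$ in the nonlinearity, together with the product estimates for $H^s$ --- produces a unique mild solution on a maximal interval $[0,T_*)$ with $T_*$ bounded below in terms of $\|u_0\|_{H^s}$ and $\nu$; bootstrapping in $s$ and using the instantaneous smoothing of $e^{\nu t\Delta}$ upgrades $u$ to be smooth and Schwartz in $x$ on $(0,T_*)\times\R^3$, while elliptic regularity recovers a smooth pressure $p$. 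Uniqueness and persistence of Schwartz regularity are classical.

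\textbf{Step 2: reduction to a critical a priori bound.} It remains to show $T_*=+\infty$. By the continuation theory the solution extends past $T_*$ provided a suitable scale-invariant norm of $u(t)$ stays bounded, so it suffices to obtain an a priori estimate, uniform on $[0,T_*)$, for any one of: a Prodi--Serrin quantity $\|u\|_{L^p_tL^q_x}$ with $\tfrac2p+\tfrac3q=1$ and $3<q\le\infty$; the Beale--Kato--Majda integral $\int_0^{T_*}\|\nabla\times u(t)\|_{L^\infty_x}\,dt$; or the endpoint quantity $\|u\|_{L^\infty_tL^3_x}$. Thus the entire problem collapses to a single a priori bound at critical regularity.

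\textbf{Step 3: the energy method.} The only globally coercive quantity is the energy, whose conservation is precisely the cancellation $\langle B(u,u),u\rangle=0$ emphasized in the abstract: pairing the equation with $u$ gives
\[
\tfrac12\frac{d}{dt}\|u(t)\|_{L^2_x}^2+\nu\|\nabla u(t)\|_{L^2_x}^2=0,
\]
so that $u\in L^\infty_tL^2_x\cap L^2_t\dot H^1_x$ with norm controlled by $\|u_0\|_{L^2}$ and $\nu$. One would then feed this into the harmonic-analysis machinery --- Littlewood--Paley decomposition, bilinear estimates for $\mathbb{P}\nabla\cdot(u\otimes u)$, smoothing and maximal-function estimates for the heat semigroup, interpolation against the coercive bound, and the Caffarelli--Kohn--Nirenberg partial-regularity theorem to handle a possibly nonempty but measure-zero singular set --- aiming to propagate control of a critical norm through a Gronwall-type inequality and so reach the criterion of Step 2.

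\textbf{The main obstacle.} Step 3 is exactly where the argument must, at present, halt: the energy is \emph{supercritical}. It lives at regularity $\dot H^0=L^2$, strictly below the scale-invariant regularity $\dot H^{1/2}$ for \eqref{ns}; equivalently, under the rescaling $u_\lambda(t,x):=\lambda u(\lambda^2t,\lambda x)$, which preserves \eqref{ns}, one has $\|u_\lambda(t)\|_{L^2_x}=\lambda^{-1/2}\|u(\lambda^2t)\|_{L^2_x}$, so a bound on the energy gives no control whatsoever at small spatial scales. Consequently any estimate of a critical quantity in terms of the energy retains an uncontrolled positive power of a high-frequency norm, and the Gronwall inequality of Step 3 fails to close: the energy identity together with generic harmonic-analysis bounds on the nonlinearity are simply not enough. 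The blowup solution constructed in this paper for the averaged equation $\partial_t u=\Delta u+\tilde B(u,u)$ --- which retains the cancellation $\langle\tilde B(u,u),u\rangle=0$ and all the harmonic-analysis estimates available for $B$, yet develops a singularity in finite time --- shows that this obstruction is real rather than an artifact of a clumsy argument: any proof of Conjecture~\ref{nsconj} must exploit finer structure of the genuine bilinear operator $B$ (the precise form of the Euler nonlinearity, the divergence-free constraint, and the pressure coupling) that the averaging procedure destroys.
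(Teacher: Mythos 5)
You have not been asked to prove a theorem here: Conjecture~\ref{nsconj} is the open Clay Millennium problem, and the paper offers no proof of it --- it is stated only as the target whose inaccessibility (by a certain class of methods) the paper's actual results formalise. Your write-up correctly recognises this: Steps 1 and 2 (local well-posedness, smoothing, and reduction to a critical a priori bound via Prodi--Serrin or Beale--Kato--Majda) are standard and sound, and your Step 3 honestly halts at exactly the right place, namely the supercriticality of the energy relative to the $\dot H^{1/2}$ scaling. That obstruction is precisely what Theorem~\ref{main} turns into a rigorous barrier: the averaged operator $\tilde B$ satisfies the same cancellation $\langle \tilde B(u,u),u\rangle=0$ and the same (or better) harmonic-analysis estimates as $B$, yet admits finite-time blowup, so no argument of the shape of your Step 3 can close. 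In short, there is no gap to report because you have not claimed a proof; your account is an accurate description of why the standard scheme fails, consistent with the paper's thesis that any successful attack must exploit structure of $B$ not preserved under averaging.
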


By applying the rescaling $\tilde u(t,x) := \nu u( \nu t, x )$, $\tilde p(t,x) := \nu p( \nu t, \nu x)$ we may normalise $\nu=1$ (note that there is no smallness requirement on the initial data $u_0$), and we shall do so henceforth.  

To study this conjecture, we perform some standard computations to eliminate the role of the pressure $p$, and to pass from the category of smooth (classical) solutions to the closely related category of \emph{mild} solutions in a high regularity class.  It will not matter too much what regularity class we take here, as long as it is subcritical, but for sake of concreteness (and to avoid some very minor technicalities) we will take a quite high regularity space, namely the Sobolev space $H^{10}_\df(\R^3)$ of (distributional) vector fields $u: \R^3 \to \R^3$ with $H^{10}$ regularity (thus the weak derivatives $\nabla^j u$ are square-integrable for $j=0,\dots,10$) and which are divergence free in the distributional sense: $\nabla \cdot u = 0$.  By using the $L^2$ inner product\footnote{We will not use the $H^{10}_\df$ inner product in this paper, thus all appearances of the $\langle,\rangle$ notation should be interpreted in the $L^2$ sense.}
$$ \langle u, v \rangle := \int_{\R^3} u \cdot v\ dx$$
on vector fields $u, v: \R^3\to \R^3$, the dual $H^{10}_\df(\R^3)^*$ may be identified with the negative-order Sobolev space $H^{-10}_\df(\R^3)$ of divergence-free distributions $u: \R^3 \to \R^3$ of $H^{-10}$ regularity.  We introduce the \emph{Euler bilinear operator} $B: H^{10}_\df(\R^3) \times H^{10}_\df(\R^3) \to H^{10}_\df(\R^3)^*$ via duality as
$$ \langle B(u,v), w \rangle := 
-\frac{1}{2} \int_{\R^3} \left(\left(\left(u \cdot \nabla\right) v\right) \cdot w\right) + \left(\left(\left(v \cdot \nabla\right) u\right) \cdot w\right)\ dx$$
for $u,v,w \in H^{10}_\df(\R^3)$; it is easy to see from Sobolev embedding that this operator is well defined.  More directly, we can write
$$ B(u,v) = -\frac{1}{2} P \left[ (u \cdot \nabla) v + (v \cdot \nabla) u \right]$$
where $P$ is the \emph{Leray projection} onto divergence-free vector fields, defined on square-integrable $u: \R^3 \to \R^3$ by the formula
$$ Pu_i := u_i - \Delta^{-1} \partial_i \partial_j u_j$$
with the usual summation conventions, where $\Delta^{-1} \partial_i \partial_j$ is defined as the Fourier multiplier with symbol $\frac{\xi_i \xi_j}{|\xi|^2}$.  Note that $B(u,v)$ takes values in $L^2(\R^3)$ (and not just in $H^{10}_\df(\R^3)^*$) when $u,v \in H^{10}_\df(\R^3)$.  We refer to the form $(u,v,w) \mapsto\langle B(u,v),w \rangle$ as the \emph{Euler trilinear form}.  As is well known, we have the important cancellation law
\begin{equation}\label{cancellation}
\langle B(u,u), u \rangle = 0
\end{equation}
for all $u \in H^{10}_\df(\R^3)$, as can be seen by a routine integration by parts exploiting the divergence-free nature of $u$, with all manipulations being easily justified due to the high regularity of $u$.  It will also be convenient to express the Euler trilinear form in terms of the Fourier transform $\hat u(\xi) := \int_{\R^3} u(x) e^{-2\pi i x \cdot \xi}\ dx$ as
\begin{equation}\label{bwing}
\langle B(u,v), w \rangle = -\pi i \int_{\xi_1+\xi_2+\xi_3=0} \Lambda_{\xi_1,\xi_2,\xi_3}( \hat u(\xi_1), \hat v(\xi_2), \hat w(\xi_3) )
\end{equation}
for all $u,v,w \in H^{10}_\df(\R^3)$, 
where we adopt the shorthand
$$\int_{\xi_1+\xi_2+\xi_3=0} F(\xi_1,\xi_2,\xi_3) := \int_{\R^3} \int_{\R^3} F(\xi_1,\xi_2,-\xi_1-\xi_2)\ d\xi_1 d\xi_2$$
and $\Lambda_{\xi_1,\xi_2,\xi_3}: \xi_1^\perp \times \xi_2^\perp \times \xi_3^\perp \to \R$ is the trilinear form
\begin{equation}\label{lambda-def}
 \Lambda_{\xi_1,\xi_2,\xi_3}( X_1, X_2, X_3 ) := (X_1 \cdot \xi_2)(X_2 \cdot X_3) + (X_2 \cdot \xi_1) (X_1 \cdot X_3),
\end{equation}
defined for vectors $X_i$ in the orthogonal complement $\xi_i^\perp := \{ X_i \in \R^3: X_i \cdot \xi_i = 0 \}$ of $\xi_i$ for $i=1,2,3$; note the divergence-free condition ensures that $\hat u(\xi_1) \in \xi_1^\perp$ for (almost) all $\xi_1 \in \R^3$, and similarly for $v$ and $w$.  This also provides an alternate way to establish \eqref{cancellation}.  

Given a Schwartz divergence-free vector field $u_0: \R^3 \to \R^3$ and a time interval $I \subset [0,+\infty)$ containing $0$, we define a \emph{mild $H^{10}$ solution to the Navier-Stokes equations} (or \emph{mild solution} for short) with initial data $u_0$ to be a continuous map $u: I \to H^{10}_\df(\R^3)$ obeying the integral equation
\begin{equation}\label{integral}
 u(t) = e^{t\Delta} u_0 + \int_0^t e^{(t-t')\Delta} B(u(t'),u(t'))\ dt'
 \end{equation}
for all $t \in I$, where $e^{t\Delta}$ are the usual heat propagators (defined on $L^2(\R^3)$, for instance); formally,\eqref{integral} implies the projected Navier-Stokes equation
\begin{equation}\label{ns-smooth}
\begin{split}
 \partial_t u &= \Delta u + B(u,u)\\
 u(0,\cdot) &= u_0
 \end{split}
 \end{equation}
in a distributional sense at least (actually, at the $H^{10}_\df$ level of regularity it is not difficult to justify \eqref{ns-smooth} in the classical sense for mild solutions).  

The distinction between smooth finite energy solutions and $H^{10}_\df$ mild solutions is essentially non-existent (at least\footnote{For data which is only in $H^{10}_\df$, there is a technical distinction between the two solution concepts, due to a lack of unlimited time regularity at the initial time $t=0$ that is ultimately caused by the non-local effects of the divergence-free condition $\nabla \cdot u = 0$, requiring one to replace the notion of a smooth solution with that of an \emph{almost smooth solution}; see \cite{tao-local} for details.  However, in this paper we will only concern ourselves with Schwartz initial data, so that this issue does not arise.} for Schwartz initial data), and the reader may wish to conflate the two notions on a first reading.  More rigorously, we can reformulate Conjecture \ref{nsconj} as the following logically equivalent conjecture:

\begin{conjecture}[Navier-Stokes global regularity, again]\label{reg-again}  Let $u_0: \R^3 \to \R^3$ be a divergence-free vector field in the Schwartz class.  Then there exists a mild solution $u: [0,+\infty) \to H^{10}_\df(\R^3)$ to the Navier-Stokes equations with initial data $u_0$.
\end{conjecture}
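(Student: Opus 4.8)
The statement to be justified is the assertion, made just before Conjecture~\ref{reg-again}, that it is logically equivalent to Conjecture~\ref{nsconj}; the plan is to establish this by combining a pressure-recovery computation with the local theory of mild $H^{10}$ solutions. First I would record the structural identity underlying the passage from \eqref{ns} to \eqref{ns-smooth}: applying the Leray projection $P$ to the first line of \eqref{ns} (with $\nu=1$) and using $Pu = u$ for divergence-free $u$, $P\nabla p = 0$, and $P[(u\cdot\nabla)u] = -B(u,u)$ (immediate from the definition of $B$), one sees that a divergence-free $u$ solves \eqref{ns-smooth} if and only if it solves \eqref{ns} with $p$ determined by the Poisson equation $\Delta p = -\partial_i\partial_j(u_i u_j)$; conversely, this formula recovers a pressure from any solution of \eqref{ns-smooth}, and the resulting pair solves \eqref{ns}. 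Thus the whole content of the equivalence is the matching of the regularity classes and of the differential and integral (Duhamel) formulations.

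For the direction ``Conjecture~\ref{reg-again} $\Rightarrow$ Conjecture~\ref{nsconj}'', given a global mild solution $u \in C^0([0,\infty);H^{10}_\df(\R^3))$ I would bootstrap regularity: since $B$ maps $H^{10}_\df(\R^3) \times H^{10}_\df(\R^3)$ boundedly into $L^2$, and the heat propagators $e^{t\Delta}$ gain arbitrarily many derivatives for $t>0$, iterating the Duhamel formula \eqref{integral} shows $u(t) \in H^s$ for every $s$ and every $t>0$; using that $u_0$ is Schwartz (so that $e^{t\Delta}u_0$ is smooth and rapidly decreasing up to $t=0$), a standard parabolic bootstrap promotes this to smoothness of $u$ on all of $[0,\infty)\times\R^3$, including in the time variable. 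Differentiating \eqref{integral} then gives \eqref{ns-smooth} classically, the pressure recovery above upgrades this to \eqref{ns}, and the cancellation law \eqref{cancellation} yields the energy identity $\frac{1}{2}\partial_t \|u(t)\|_{L^2}^2 = -\|\nabla u(t)\|_{L^2}^2 \le 0$, hence $u \in L^\infty_t L^2_x([0,T]\times\R^3)$ for every $T$.

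For the reverse direction, given a smooth finite-energy solution $(u,p)$ of \eqref{ns} with Schwartz data, I would first establish persistence of $H^{10}$ regularity by the usual energy method: testing \eqref{ns} against $\nabla^{2k}u$, integrating by parts, using $\nabla\cdot u = 0$ to control the trilinear term, and running a Gr\"onwall argument on each $[0,T]$ (the smoothness and finite-energy hypotheses making all the manipulations legitimate) gives $u \in C^0([0,\infty);H^{10}_\df(\R^3))$. Applying $P$ to \eqref{ns} produces \eqref{ns-smooth}, and then the variation-of-constants formula for the strongly continuous heat semigroup on $L^2$---legitimate because $t' \mapsto B(u(t'),u(t'))$ is continuous with values in $L^2$---converts \eqref{ns-smooth} into \eqref{integral}, exhibiting $u$ as a mild solution.

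I expect the genuine difficulty to lie not in any single hard estimate but in the bookkeeping at the endpoints: one should invoke uniqueness of mild solutions (so that the smooth solution produced above is the continuation of the maximal mild solution with data $u_0$, making ``a global mild solution exists'' and ``a global smooth finite-energy solution exists'' literally the same assertion) and be attentive to time regularity at $t=0$, where the nonlocal Leray projection can obstruct smoothness for non-Schwartz data---an issue that, as noted in the footnote and treated in \cite{tao-local}, does not arise here since we restrict to Schwartz initial data.
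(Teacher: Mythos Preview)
Your approach is essentially the same as the paper's: both directions rest on the standard equivalence between classical and mild formulations via the Leray projection and Duhamel, together with parabolic smoothing/bootstrap to match the regularity classes. The paper simply outsources the details to specific results in \cite{tao-local} (Corollary~11.1, Corollary~4.3, Theorem~5.4) rather than sketching the arguments as you do.

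One point where your sketch is looser than the paper's proof and deserves care: in the direction Conjecture~\ref{nsconj} $\Rightarrow$ Conjecture~\ref{reg-again}, you propose to ``establish persistence of $H^{10}$ regularity by the usual energy method'' on the smooth finite-energy solution. But a function that is merely smooth with $u\in L^\infty_t L^2_x$ need not have $\nabla^k u\in L^2_x$ \emph{a priori}, so the pairings $\langle \partial_t u,\nabla^{2k}u\rangle$ and the associated integrations by parts are not immediately legitimate; ``smoothness and finite energy'' alone do not justify them. The paper avoids this circularity by first invoking \cite[Corollary~11.1]{tao-local} to place the smooth finite-energy solution in $L^\infty_t H^1_x$, then \cite[Corollary~4.3]{tao-local} to recover the integral equation, and only then bootstrapping to all $H^k$. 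Your route can be made to work, but the cleanest fix is exactly what you gesture at in your final paragraph: run the local $H^{10}$ mild theory from the Schwartz data, use uniqueness to identify that mild solution with the given smooth one on its interval of existence, and then your Gr\"onwall bound (now justified, since the mild solution is in $C^0_t H^{10}$) prevents blowup of the $H^{10}$ norm, extending the mild solution globally.
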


\begin{lemma} Conjecture \ref{nsconj} and Conjecture \ref{reg-again} are equivalent.
\end{lemma}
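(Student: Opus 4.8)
The plan is to prove the two implications separately, in each case reducing to the local well-posedness and regularity theory for mild $H^{10}$ solutions developed in \cite{tao-local}, which supplies the (somewhat delicate) regularity and decay statements that make the two solution concepts interchangeable for Schwartz data. For the direction Conjecture~\ref{reg-again} $\Rightarrow$ Conjecture~\ref{nsconj}, start from a mild solution $u\colon[0,+\infty)\to H^{10}_\df(\R^3)$ with Schwartz data $u_0$ and bootstrap the integral equation \eqref{integral}: since $e^{t'\Delta}$ maps $L^2$ into $H^k$ for every $k$ and $t'>0$, and $B(u,u)\in L^2$ whenever $u\in H^{10}_\df$, one iterates to see that $u$ is smooth on $(0,+\infty)\times\R^3$ and lies in $C^0_t H^k_\df$ on compact subintervals for all $k$; feeding the Schwartz decay of $u_0$ through the same iteration upgrades this to $u(t)$ being Schwartz in space for every $t\ge0$, with seminorms locally bounded in $t$ and smoothness up to $t=0$ (this last point, where the non-locality of the divergence-free constraint requires care, is what \cite{tao-local} handles). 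One then recovers the pressure from the Leray--Hodge decomposition of $(u\cdot\nabla)u$, explicitly $p:=-\Delta^{-1}\partial_i\partial_j(u_iu_j)$ (a smooth function, with $\nabla p=-(1-P)[(u\cdot\nabla)u]$), and verifies by direct substitution, using $B(u,u)=-P[(u\cdot\nabla)u]$ and $P\nabla p=0$, that $(u,p)$ solves the unprojected system \eqref{ns} classically; the finite-energy bound is immediate from the continuous embedding $H^{10}_\df(\R^3)\hookrightarrow L^2(\R^3)$ and continuity of $t\mapsto u(t)$, giving $u\in C^0_tL^2_x\subset L^\infty_tL^2_x$ on every $[0,T]$.

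For the converse Conjecture~\ref{nsconj} $\Rightarrow$ Conjecture~\ref{reg-again}, let $(u,p)$ be the smooth finite-energy solution supplied by Conjecture~\ref{nsconj} for a given Schwartz divergence-free $u_0$. Applying the Leray projection $P$ to the first equation of \eqref{ns} annihilates $\nabla p$ and commutes with $\Delta$ and $\partial_t$, so $u$ satisfies the projected equation \eqref{ns-smooth}; variation of parameters against the heat semigroup then shows $u$ satisfies the integral equation \eqref{integral} on any interval on which it is continuous into $H^{10}_\df$ with locally bounded norm. Everything therefore comes down to \emph{persistence of regularity and decay}: a smooth finite-energy solution with Schwartz data automatically stays Schwartz in space, with $H^{10}_\df$ norm locally bounded and continuous in time. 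This is the substantive point; I would cite it from \cite{tao-local}, the mechanism being an energy-type a~priori estimate bounding $\|u(t)\|_{H^{10}}$ by its initial value times a time-integral of $\|\nabla u\|_{L^\infty}$ (finite once spatial decay is in hand), closed by a Gr\"onwall argument and bootstrapped to recover full Schwartz decay. Granting this, $u$ is a mild $H^{10}$ solution on $[0,+\infty)$ with data $u_0$, which is exactly Conjecture~\ref{reg-again}.

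The main obstacle is precisely this regularity/decay theory linking the two solution classes, appearing in two places: the forward bootstrap showing a mild $H^{10}$ solution is genuinely smooth up to and including the initial time despite the non-local Leray projection, and the persistence statement showing a smooth finite-energy solution with Schwartz data does not lose $H^{10}_\df$ control. Both are standard but technical; for Schwartz initial data they hold cleanly, and the complete arguments are carried out in \cite{tao-local}, to which I defer for the details.
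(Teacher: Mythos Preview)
Your proposal is correct and follows essentially the same approach as the paper: both directions are reduced to the regularity theory in \cite{tao-local}, with the pressure recovered via $p=-\Delta^{-1}\partial_i\partial_j(u_iu_j)$ in the mild-to-smooth direction, and the projected integral equation plus persistence of $H^k$ regularity in the smooth-to-mild direction. The paper is simply more specific about which results from \cite{tao-local} are invoked (Corollary~11.1, Corollary~4.3, and Theorem~5.4(ii),(iv)), whereas you sketch the underlying mechanisms; also note that Conjecture~\ref{nsconj} only requires $u$ to be smooth with finite energy, not Schwartz in space, so your bootstrap to Schwartz regularity is a slight overclaim that is in any case not needed.
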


\begin{proof}  We use the results from \cite{tao-local}, although this equivalence is essentially classical and was previously well known to experts.

Let us first show that Conjecture \ref{nsconj} implies Conjecture \ref{reg-again}.  Let $u_0: \R^3 \to\R^3$ be a Schwartz divergence-free vector field, then by Conjecture \ref{nsconj} we may find a smooth vector field $u: [0,+\infty) \times \R^3 \to \R^3$ and smooth function $p: \R^3 \to \R$ obeying the equations \eqref{ns} and the finite energy condition.  By \cite[Corollary 11.1]{tao-local}, $u$ is an $H^1$ solution, that is to say $u \in L^\infty_t H^1_x([0,T] \times \R^3)$ for all finite $T$.  By \cite[Corollary 4.3]{tao-local}, we then have the integral equation \eqref{integral}, and by \cite[Theorem 5.4(ii)]{tao-local}, $u \in L^\infty_t H^k_x([0,T] \times \R^3)$ for every $k$, which easily implies (from \eqref{integral}) that $u$ is a continuous map from $[0,+\infty)$ to $H^{10}_\df(\R^3)$.  This gives Conjecture \ref{reg-again}.

Conversely, if Conjecture \ref{reg-again} holds, and $u_0: \R^3 \to \R^3$ is a Schwartz class solution, we may find a mild solution $u: [0,+\infty) \to H^{10}_\df(\R^3)$ with this initial data.  By \cite[Theorem 5.4(ii)]{tao-local}, $u\in L^\infty_t H^k_x([0,T] \times \R^3)$ for every $k$.  If we define the normalised pressure
$$ p := -\Delta^{-1} \partial_i \partial_j(u_i u_j)$$
then by \cite[Theorem 5.4(iv)]{tao-local}, $u$ and $p$ are smooth on $[0,+\infty) \times \R^3$, and for each $j, k \geq 0$, the functions $\partial_t^j u, \partial_t^j p$ lie in $L^\infty_t H^k_x([0,T] \times \R^3)$ for all finite $T$.  By differentiating \eqref{integral}, we have
\begin{align*}
\partial_t u &= \Delta u + B(u,u) \\
&= \Delta u - (u \cdot \nabla) u - \nabla p,
\end{align*}
and Conjecture \ref{nsconj} follows.
\end{proof}

If we take the inner product of \eqref{ns-smooth} with $u$ and integrate in time using \eqref{cancellation}, we arrive at\footnote{One has to justify the integration by parts of course, but this is routine under the hypothesis of a mild solution; we omit the (standard) details. } the fundamental \emph{energy identity}
\begin{equation}\label{energy-ident}
\frac{1}{2} \int_{\R^3} |u(T,x)|^2\ dx + \int_0^T \int_{\R^3} |\nabla u(t,x)|^2\ dx dt = \frac{1}{2} \int_{\R^3} |u_0(x)|^2\ dx
\end{equation}
for any mild solution to the Navier-Stokes equation.

If one was unaware of the supercritical nature of the Navier-Stokes equation, one might attempt to obtain a positive solution to Conjecture \ref{nsconj} or Conjecture \ref{reg-again} by combining \eqref{energy-ident} (or equivalently, \eqref{cancellation}) with various harmonic analysis estimates for the inhomogeneous heat equation
\begin{align*}
\partial_t u &= \Delta u + F \\
u(0,\cdot) &= u_0
\end{align*}
(or, in integral form, $u(t) = e^{t\Delta} u_0 + \int_0^t e^{(t-t')\Delta} F(t')\ dt'$), together with harmonic analysis estimates for the Euler bilinear operator $B$, a simple example of which is the estimate
\begin{equation}\label{bouv}
 \| B(u,v) \|_{L^2(\R^3)} \leq C \left( \|\nabla u \|_{L^4(\R^3)} \| v \|_{L^4(\R^3)} + \|\nabla v \|_{L^4(\R^3)} \| u \|_{L^4(\R^3)} \right)
\end{equation}
for some absolute constant $C$.  Such an approach succeeds for instance if the initial data $u_0$ is sufficiently small\footnote{One can of course also consider other perturbative regimes, in which the solution $u$ is expected to be close to some other special solution than the zero solution.  There is a vast literature in these directions, see e.g. \cite{chemin} and the references therein.} in a suitable critical norm (see \cite{koch-tataru} for an essentially optimal result in this direction), or if the dissipative operator $-\Delta$ is replaced by a hyperdissipative operator $(-\Delta)^\alpha$ for some $\alpha \geq 5/4$ (see \cite{katz-hyper}) or with very slightly less hyperdissipative operators (see \cite{tao-hyper}).  Unfortunately, standard scaling heuristics (see e.g. \cite[\S 2.4]{tao-struct}) have long indicated to the experts that the energy estimate \eqref{energy-ident} (or \eqref{cancellation}), together with the harmonic analysis estimates available for the heat equation and for the Euler bilinear operator $B$, are not sufficient by themselves to affirmatively answer Conjecture \ref{nsconj}.  However, these scaling heuristics are not formalised as a rigorous barrier to solvability, and the above mentioned strategy to solve the Navier-Stokes global regularity problem continues to be attempted on occasion.

The most conclusive way to rule out such a strategy would of course be to demonstrate\footnote{It is a classical fact that mild solutions to a given initial data are unique, see e.g. \cite[Theorem 5.4(iii)]{tao-local}.} a mild solution to the Navier-Stokes equation that develops a singularity in finite time, in the sense that the $H^{10}_\df$ norm of $u(t)$ goes to infinity as $t$ approaches a finite time $T_*$.  Needless to say, we are unable to produce such a solution.  However, we will in this paper obtain a finite time blowup (mild) solution to an \emph{averaged} equation
\begin{equation}\label{ns-modified}
\begin{split}
\partial_t u &= \Delta u + \tilde B(u,u)\\
u(0,\cdot) &= u_0,
\end{split}
\end{equation}
where $\tilde B: H^{10}_\df(\R^3) \times H^{10}_\df(\R^3) \to H^{10}_\df(\R^3)^*$ will be a (carefully selected) averaged version of $B$ that has equal or lesser ``strength'' from a harmonic analysis point of view (indeed, $\tilde B$ obeys slightly \emph{more} estimates than $B$ does), and which still obeys the fundamental cancellation property \eqref{cancellation}.  Thus, any successful method to affirmatively answer Conjecture \ref{nsconj} (or Conjecture \ref{reg-again}) must either use finer structure of the Navier-Stokes equation beyond the general form \eqref{ns-smooth}, or else must rely crucially on some estimate or other property of the Euler bilinear operator $B$ that is not shared by the averaged operator $\tilde B$.

We pause to mention some previous blowup results in this direction.  If one drops the cancellation requirement \eqref{cancellation}, so that one no longer has the energy identity \eqref{energy-ident}, then blowup solutions for various Navier-Stokes type equations have been constructed in the literature.  For instance, in \cite{mont} finite time blowup for a ``cheap Navier-Stokes equation'' $\partial_t u = \Delta u + \sqrt{-\Delta}(u^2)$ (with $u$ now a scalar field) was constructed in the one-dimensional setting, with the results extended to higher dimensions in \cite{gallagher}.  As remarked in that latter paper, it is essential to the methods of proof that no energy identity is available.  In a slightly different direction, finite time blowup was established in \cite{sinai} for a complexified version of the Navier-Stokes equations, in which the energy identity was again unavailable (or more precisely, it is available but non-coercive).  These models are not exactly of the type \eqref{ns-modified} considered in this paper, but are certainly very similar in spirit.  

Further models of Navier-Stokes type, which obey an energy identity, were introduced by Plech\'a\c{c} and \c{S}ver\'ak \cite{plechac}, \cite{plechac-2}, by Katz and Pavlovic \cite{katz-dyadic}, and by Hou and Lei \cite{hou}; of these three, the model in \cite{katz-dyadic} is the most relevant for our work and will be discussed in detail in Section \ref{overview-sec} below.  These models differ from each other in several respects, but interestingly, in all three cases there is substantial evidence of blowup in five and higher dimensions, but not in three or four dimensions; indeed, for all three of the models mentioned above there are global regularity results in three dimensions, even in the presence of blowup results for the corresponding inviscid model.  Numerical evidence for blowup for the Navier-Stokes equations is currently rather scant (except in the infinite energy setting, see \cite{grundy}, \cite{naga}); the blowup evidence is much stronger in the case of the Euler equations (see \cite{hlwz} for a recent result in this direction, and \cite{hou-survey} for a survey), but it is as yet unclear\footnote{However, in \cite{hsw}, finite time blowup for a three-dimensional ``partially viscous'' Navier-Stokes type model, in which some but not all of the fields are subject to a viscosity term, was established.} whether these blowup results have direct implications for Navier-Stokes in the three-dimensional setting, due to the relatively significant strength of the dissipation.  

Finally, we mention work \cite{kiselev}, \cite{alibaud}, \cite{dong} establishing finite time blowup for supercritical fractal Burgers equations; such equations are not exactly of Navier-Stokes type, being scalar one-dimensional equations rather than incompressible vector-valued three-dimensional ones, but from a scaling perspective the results are of the same type, namely a demonstration of blowup whenever the norms controlled by the conservation and monotonicity laws are all supercritical.  

We now describe more precisely the type of averaged operator $\tilde B: H^{10}_\df(\R^3) \times H^{10}_\df(\R^3) \to H^{10}_\df(\R^3)^*$ we will consider.  We consider three types of symmetries on $H^{10}_\df(\R^3)$ that we will average over.  Firstly, we have rotation symmetry: if $R \in \SO(3)$ is a rotation matrix on $\R^3$ and $u \in H^{10}_\df(\R^3)$, then the rotated vector field
$$ \Rot_R(u)( x ) := R u( R^{-1} x )$$
is also in $H^{10}_\df(\R^3)$; note that the Fourier transform also rotates by the same law,
$$ \widehat{\Rot_R(u)}(\xi) = R \hat u(R^{-1}\xi).$$
Clearly, these rotation operators are uniformly bounded on $H^{10}_\df(\R^3)$, and also on every Sobolev space $W^{s,p}(\R^3)$ with $s \in \R$ and $1 < p < \infty$.

Next, define a (complex) \emph{Fourier multiplier of order $0$} to be an operator $m(D)$ defined on (the complexification $H^{10}_\df(\R^3) \otimes \C$ of) $H^{10}_\df(\R^3)$ by the formula
$$ \widehat{m(D) u}(\xi) := m(\xi) \hat u(\xi)$$
where $m: \R^3 \to \C$ is a function that is smooth away from the origin, with the seminorms
\begin{equation}\label{seminorm}
\| m \|_k := \sup_{\xi \neq 0} |\xi|^k |\nabla^k m(\xi)|
\end{equation}
being finite for every natural number $k$.  We say that $m(D)$ is \emph{real} if the symbol $m$ obeys the symmetry $m(-\xi) = \overline{m(\xi)}$ for all $\xi \in \R^3 \backslash \{0\}$, then $m(D)$ maps $H^{10}_\df(\R^3)$ to itself.  From the H\"ormander-Mikhlin multiplier theorem (see e.g. \cite{stein-singular}), complex Fourier multipliers of order $0$ are also bounded on (the complexifications of) every Sobolev space $W^{s,p}(\R^3)$ for all $s \in \R$ and $1 < p < \infty$, with an operator norm that depends linearly on finitely many of the $\|m\|_k$.  We let $\mathcal{M}_0$ denote the space of all real Fourier multipliers of order $0$, so that $\mathcal{M}_0 \otimes \C$ is the space of complex Fourier multipliers (note that every complex Fourier multiplier $m(D)$ of order $0$ can be uniquely decomposed as $m(D) = m_1(D) + i m_2(D)$ with $m_1(D),m_2(D)$ real Fourier multipliers of order $0$).  Fourier multipliers of order $0$ do not necessarily commute with the rotation operators $\Rot_R$, but the group of rotation operators normalises the algebra $\mathcal{M}_0$, and hence also the complexification $\mathcal{M}_{0} \otimes \C$.

Finally, we will average\footnote{In an earlier version of this manuscript, no averaging over dilations was assumed, but it was pointed out to us by the referee that the non-degeneracy condition \eqref{c-nondeg} failed if one did not introduce dilation averaging.} over the dilation operators
\begin{equation}\label{dila}
 \Dil_\lambda(u)(x) := \lambda^{3/2} u(\lambda x)
\end{equation}
for $\lambda > 0$.  These operators do not quite preserve the $H^{10}_\df(\R^3)$ norm, but if $\lambda$ is restricted to a compact subset of $(0,+\infty)$ then these operators (and their inverses) will be uniformly bounded on $H^{10}_\df(\R^3)$.

We now define an \emph{averaged Euler bilinear operator} to be an operator $\tilde B: H^{10}_\df(\R^3) \times H^{10}_\df(\R^3) \to H^{10}_\df(\R^3)^*$, defined via duality by the formula
\begin{equation}\label{bogo}
 \langle \tilde B(u,v), w \rangle := \E \left\langle B\left( m_1(D) \Rot_{R_1} \Dil_{\lambda_1} u, m_2(D) \Rot_{R_2} \Dil_{\lambda_2} v\right), m_3(D) \Rot_{R_3} \Dil_{\lambda_3} w\right\rangle
\end{equation}
for all $u,v,w \in H^{10}_\df(\R^3)$, where $m_1(D),m_2(D),m_3(D)$ are random real Fourier multipliers of order $0$, $R_1,R_2,R_3$ are random rotations, and $\lambda_1,\lambda_2,\lambda_3$ are random dilations, obeying the moment bounds
$$ \E \| m_1 \|_{k_1} \|m_2 \|_{k_2} \|m_3\|_{k_3} < \infty$$
and
$$ C^{-1} \leq \lambda_1,\lambda_2,\lambda_3 \leq C$$
almost surely for any natural numbers $k_1,k_2,k_3$ and some finite $C$.  To phrase this definition without probabilistic notation, we have
\begin{equation}\label{bogo-alt}
\langle \tilde B(u,v), w \rangle = \int_\Omega \left\langle B\left( m_{1,\omega}(D) \Rot_{R_{1,\omega}} \Dil_{\lambda_1} u, m_{2,\omega}(D) \Rot_{R_{2,\omega}} \Dil_{\lambda_2} v\right), m_{3,\omega}(D) \Rot_{R_{3,\omega}} \Dil_{\lambda_3} w\right\rangle\ d\mu(\omega)
\end{equation}
for some probability space $(\Omega,\mu)$ and some measurable maps $R_{i,\cdot}: \Omega \to \SO(3)$, $\lambda_{i,\cdot}: \Omega \to (0,+\infty)$ and $m_{i,\cdot}(D): \Omega \to \mathcal{M}_{0}$, where $\mathcal{M}_{0}$ is given the Borel $\sigma$-algebra coming from the seminorms $\| \|_k$, and one has
$$ \int_\Omega \| m_{1,\omega} \|_{k_1} \| m_{2,\omega} \|_{k_2} \| m_{3,\omega} \|_{k_3}\ d\mu(\omega) < \infty$$
and
$$ C^{-1} \leq\lambda_1(\omega), \lambda_2(\omega), \lambda_3(\omega) < C$$
for all natural numbers $k_1,k_2,k_3$.  One can also express $\tilde B(u,v)$ without duality by the formula
$$ \tilde B(u,v) = \int_\Omega \Dil_{\lambda_{3,\omega}^{-1}} \Rot_{R_{3,\omega}^{-1}} \overline{m_{3,\omega}}(D) B\left ( m_{1,\omega}(D) \Rot_{R_{1,\omega}} \Dil_{\lambda_{1,\omega}} u, m_{2,\omega}(D) \Rot_{R_{2,\omega}} \Dil_{\lambda_{2,\omega}} v\right)\ d\mu(\omega)$$
where the integral is interpreted in the weak sense (i.e. the Gelfand-Pettis integral).  However, we will not use this formulation of $\tilde B$ here.

\begin{remark} By the rotation symmetry $\langle B( \Rot_R u, \Rot_R v), \Rot_R w \rangle = \langle B(u,v), w \rangle$, we may eliminate one of the three rotation operators $\Rot_{R_{i,\omega}}$ in \eqref{bogo-alt} if desired, and similarly for the dilation operator.  By some Fourier analysis (related to the fractional Leibniz rule) it should also be possible to eliminate one of the Fourier multipliers $m_{i,\omega}(D)$.  However, we will not attempt to do so here.
\end{remark}

From duality, the triangle inequality (or more precisely, Minkowski's inequality for integrals), and the H\"ormander-Mikhlin multiplier theorem, we see that every estimate on the Euler bilinear operator $B$ in Sobolev spaces $W^{s,p}(\R^3)$ with $1 < p < \infty$ implies a corresponding estimate for averaged Euler bilinear operators $\tilde B$ (but possibly with a larger constant).  For instance, from \eqref{bouv} we have
\begin{equation}\label{bouv-avg}
 \| \tilde B(u,v) \|_{L^2(\R^3)} \leq C_{\tilde B} ( \|\nabla u \|_{L^4(\R^3)} \| v \|_{L^4(\R^3)} + \|\nabla v \|_{L^4(\R^3)} \| u \|_{L^4(\R^3)} ).
\end{equation}
for $u,v \in H^{10}_\df(\R^3)$, where the constant $C_{\tilde B}$ depends\footnote{Note that by applying the transformation $(u,\tilde B) \to (\lambda u, \lambda^{-1} \tilde B)$ to \eqref{ns-modified}, we have the freedom to multiply $\tilde B$ by an arbitrary constant, and so the constants $C_{\tilde B}$ appearing in any given estimate such as \eqref{bouv-avg} can be normalised to any absolute constant (e.g. $1$) if desired.} only on $\tilde B$.  A similar argument shows that the expectation in \eqref{bogo} (or the integral in \eqref{bogo-alt}) is absolutely convergent for any $u,v,w \in H^{10}_\df(\R^3)$.

Similar considerations hold for most other basic bilinear estimates\footnote{There is a possible exception to this principle if the estimate involves endpoint spaces such as $L^1$ and $L^\infty$ for which the H\"ormander-Mikhlin multiplier theorem is not available, or non-convex spaces such as $L^{1,\infty}$ for which the triangle inequality is not available.  However, as the Leray projection $P$ is also badly behaved on these spaces, such endpoint spaces rarely appear in these sorts of analyses of the Navier-Stokes equation.} on $B$ in popular function spaces such as H\"older spaces, Besov spaces, or Morrey spaces.  Because of this, the local theory (and related theory, such as the concentration-compactness theory) for \eqref{ns-modified} is essentially identical to that of \eqref{ns-smooth} (up to changes in the explicit constants), although we will not attempt to formalise this assertion here.  In particular, we may introduce the notion of a \emph{mild solution} to the averaged Navier-Stokes equation \eqref{ns-smooth} with initial data $u_0 \in H^{10}_\df(\R^3)$ on a time interval $I\subset [0,+\infty)$ containing $0$, defined to be a continuous map $u: I \to H^{10}_\df(\R^3)$ obeying the integral equation
\begin{equation}\label{integral-avg}
 u(t) = e^{t\Delta} u_0 + \int_0^t e^{(t-t')\Delta} \tilde B(u(t'),u(t'))\ dt'
 \end{equation}
for all $0 \leq t \leq T$.  It is then a routine matter to extend the $H^{10}$ local existence and uniqueness theory (see e.g. \cite[\S 5]{tao-local}) for mild solutions of the Navier-Stokes equations, to mild solutions of the averaged Navier-Stokes equations, basically because of the previous observation that all the estimates on $B$ used in that local theory continue to hold for $\tilde B$.

Because we have not imposed any symmetry or anti-symmetry hypotheses on the averaging measure $\mu$, rotations $R_j$, and Fourier multipliers $m_j(D)$, the analogue
\begin{equation}\label{cancellation-2}
\langle \tilde B( u, u ), u \rangle = 0
\end{equation}
of the cancellation condition \eqref{cancellation} is not automatically satisfied.  If however we have \eqref{cancellation-2} for all $u \in H^{10}_\df(\R^3)$, then mild solutions to \eqref{ns-modified} enjoy the same energy identity \eqref{energy-ident} as mild solutions to the true Navier-Stokes equation. 

We are now ready to state the main result of the paper.

\begin{theorem}[Finite time blowup for an averaged Navier-Stokes equation]\label{main}  There exists a symmetric averaged Euler bilinear operator $\tilde B: H^{10}_\df(\R^3) \times H^{10}_\df(\R^3) \to H^{10}_\df(\R^3)^*$ obeying the cancellation property \eqref{cancellation-2} for all $u \in H^{10}_\df(\R^3)$, and a Schwartz divergence-free vector field $u_0$, such that there is no global-in-time mild solution $u: [0,+\infty) \to H^{10}_\df(\R^3)$ to the averaged Navier-Stokes equation \eqref{ns-modified} with initial data $u_0$.
\end{theorem}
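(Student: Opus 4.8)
The plan is to reduce the whole construction to an ordinary differential equation and then to realise that ODE inside an averaged Euler operator. First I would fix a countable family of Schwartz divergence-free ``generalised wavelets'' $\psi_n \in H^{10}_\df(\R^3)$, indexed by $n$ in some finite-degree graph, each $\psi_n$ localised at a dyadic frequency scale $\lambda^{2n}$ (for a large fixed $\lambda>1$) and at a spatial ball, normalised so that $\langle \psi_n,\psi_m\rangle=\delta_{nm}$ and so that the heat flow acts essentially diagonally, $\langle \Delta\psi_n,\psi_m\rangle\approx -\lambda^{2n}\delta_{nm}$, the $\psi_n$ occupying disjoint frequency annuli. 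Under the ansatz $u(t)=\sum_n X_n(t)\psi_n$, equation \eqref{ns-modified} becomes the dyadic ODE system
\begin{equation*}
\dot X_n = -\lambda^{2n} X_n + \sum_{m,k} c_{n,m,k}\, X_m X_k ,
\end{equation*}
where $c_{n,m,k} := \langle \tilde B(\psi_m,\psi_k),\psi_n\rangle$, and the cancellation \eqref{cancellation-2} becomes the energy identity $\tfrac{d}{dt}\tfrac12\sum_n X_n^2 = -\sum_n \lambda^{2n} X_n^2$ for this system. The problem thus splits into two tasks: (i) design such a system, obeying the prescribed dissipation rates and this energy identity, whose solution blows up in finite time; and (ii) show that the interaction array $(c_{n,m,k})$ one needs is realised by a symmetric averaged Euler bilinear operator $\tilde B$ of the form \eqref{bogo}.

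Task (i) is where I expect the main difficulty to lie. The naive three-dimensional dyadic (Katz--Pavlovic) cascade $\dot X_n = -\lambda^{2n}X_n + \lambda^n X_{n-1}^2 - \lambda^{n+1}X_n X_{n+1}$ is known \emph{not} to blow up: once one tracks the energy carried at each scale, the dissipation outpaces the transfer, so a subtler nonlinearity is needed. I would build the quadratic interaction from a chain of self-similar ``gadgets'', each gadget comprising $O(1)$ modes at comparable scales whose internal dynamics first amplifies an incoming excitation through a self-interaction of the unstable type $\dot x \sim x^2$, and then -- after a controlled delay -- discharges the amplified excitation into the next gadget at a much finer scale, quenching itself in the process. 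Choosing the scale jumps, amplitude gains, and delays to be geometric in the stage number, one arranges the successive activation times $t_m$ to converge to a finite $T_*$ while the solution becomes, in effect, a discretely self-similar cascade: the energy carried by the active stage decays (consistent with finite total energy, and with dissipation, which now acts on a timescale long compared to each stage), but the frequency support races to infinity fast enough that $\|u(t)\|_{H^{10}_\df}\to\infty$ as $t\to T_*^-$. The delicate part is to verify that the gadget dynamics performs exactly as advertised, uniformly over the infinitely many stages, that the quenching always intervenes before any single mode actually blows up, and that dissipation never intercepts the cascade prematurely; this is where the precise choice of exponents is forced upon us.

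For task (ii) the averaging in \eqref{bogo} supplies a large amount of freedom. Using the dilations $\Dil_\lambda$ to place the three slots at matching scales, the rotations $\Rot_R\in\SO(3)$ to move the frequency supports of the inputs into generic relative position, and the Fourier multipliers $m_i(D)$ both to localise to the relevant frequency annuli and to tune amplitude and sign, one reduces the realisation of a prescribed interaction coefficient to the statement that the Euler trilinear form $\Lambda_{\xi_1,\xi_2,\xi_3}$ of \eqref{lambda-def} is non-degenerate on the frequency configurations that arise -- a property of $\Lambda$ that fails without dilation averaging but holds once dilations are allowed (cf.\ the footnote preceding \eqref{dila}). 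Assembling countably many such elementary averages into a single probability space, with weights decaying fast enough in the stage number to guarantee the moment bounds $\E\|m_1\|_{k_1}\|m_2\|_{k_2}\|m_3\|_{k_3}<\infty$ and the dilation bounds $C^{-1}\le\lambda_i\le C$, produces an averaged Euler operator realising the array demanded by task (i).

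Finally, to obtain both the symmetry of $\tilde B$ and the exact cancellation \eqref{cancellation-2}, I would symmetrise the averaging measure over the $3!$ permutations of the three slots of $B$ (which, because $B(a,b)=B(b,a)$, renders $\tilde B$ symmetric), the gadget construction in task (i) being arranged to survive this symmetrisation. Since the Euler trilinear form satisfies $\langle B(v,v),v\rangle=0$ for all $v$ by \eqref{cancellation}, its total symmetrisation is a symmetric trilinear form that vanishes on the diagonal, hence vanishes identically by polarisation; consequently the symmetrised average obeys $\langle\tilde B(u,u),u\rangle=0$ for \emph{every} $u\in H^{10}_\df(\R^3)$, not only for the special fields $\psi_n$. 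The initial datum $u_0 := \sum_n X_n(0)\psi_n$ is a finite sum of Schwartz divergence-free fields, hence itself Schwartz and divergence-free; and since mild solutions are unique and must coincide with the field reassembled from the ODE solution on their common interval of existence, the finite-time blowup of $\sum_n \lambda^{20n}X_n(t)^2 \sim \|u(t)\|_{\dot H^{10}}^2$ as $t\to T_*^-$ rules out a global-in-time mild solution with this initial data, completing the proof.
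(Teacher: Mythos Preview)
Your overall strategy—reduce to a wavelet-indexed ODE, design a blowup cascade via ``gadgets'' that amplify and relay energy across scales, and then realise the interaction tensor by averaging the Euler form over rotations, dilations, and multipliers—is exactly the paper's. The place where your outline and the paper diverge is in the mechanism for cancellation, and this is where your proposal has a genuine gap.

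Your symmetrisation argument is valid as far as it goes: if the averaging measure is $S_3$-invariant in the three operator slots, then $\langle \tilde B(u,u),u\rangle=0$ follows from the vanishing of the fully symmetrised Euler trilinear form. But symmetrising an already-constructed measure will in general \emph{change} the coefficients $\langle \tilde B(\psi_m,\psi_k),\psi_n\rangle$, so the operator after symmetrisation need not realise the ODE you designed in task~(i). Saying the gadget is ``arranged to survive this symmetrisation'' is a promise, not an argument. The paper sidesteps this entirely by working through an intermediate class it calls \emph{local cascade operators}: bilinear maps of the explicit rank-one-tensor form
\[
\langle C(u,v),w\rangle=\sum_n\sum_{i_1,i_2,i_3,\mu}\alpha_{i_1,i_2,i_3,\mu}\,(1+\epsilon_0)^{5n/2}\langle u,\psi_{i_1,n+\mu_1}\rangle\langle v,\psi_{i_2,n+\mu_2}\rangle\langle w,\psi_{i_3,n+\mu_3}\rangle.
\]
Because this trilinear form factors through the wavelet coefficients of $u,v,w$, the identity $\langle C(u,u),u\rangle=0$ holds for \emph{every} $u\in H^{10}_\df$ as soon as the structure constants satisfy the finite cyclic condition $\sum_{\{a,b,c\}=\{1,2,3\}}\alpha_{i_a,i_b,i_c,\mu_a,\mu_b,\mu_c}=0$, and this condition is directly compatible with the gate-based ODE design (the paper's pump/amplifier/rotor circuit, with $m=4$ modes per scale). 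The paper then proves separately (its Theorem~\ref{avg}) that every local cascade operator is an averaged Euler operator, with no symmetry hypothesis on the averaging measure; cancellation comes from the explicit form of $C$, not from the measure. This intermediate object is the device your proposal is missing.

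There is a second, smaller gap. Your ansatz $u=\sum_n X_n\psi_n$ is not preserved by the heat flow: $\Delta\psi_n$ shares the Fourier support of $\psi_n$ but is not a scalar multiple of it, so $e^{t\Delta}\psi_n$ drifts inside its frequency annulus and the PDE does not collapse to an exact ODE in the $X_n$. The paper handles this by tracking, alongside $X_{i,n}=\langle u,\psi_{i,n}\rangle$, the full local energies $E_{i,n}=\tfrac12\|u_{i,n}\|_{L^2}^2$ of the Fourier projections, deriving an equation of motion for $X_{i,n}$ with an $O((1+\epsilon_0)^{2n}E_{i,n}^{1/2})$ dissipative error and a local energy \emph{inequality} for $E_{i,n}$, and controlling the defect $E_{i,n}-\tfrac12 X_{i,n}^2$ by a differential inequality with an $O((1+\epsilon_0)^{2n})$ coefficient—small relative to the $(1+\epsilon_0)^{5n/2}$ nonlinear transfer rate, hence perturbative in the supercritical regime once $n_0$ is large.
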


In fact, the arguments used to prove the above theorem can be pushed a little further to construct a smooth mild solution $u: [0,T_*) \to H^{10}_\df(\R^3)$ for some $0 < T_* < \infty$ that blows up (at the spatial origin) as $t$ approaches $T_*$ (and with subcritical norms such as $\|u(t)\|_{H^{10}_\df(\R^3)}$ diverging to infinity as $t \to T_*$).

\begin{remark} One can also rewrite the averaged Navier-Stokes equation \eqref{ns-modified} in a form more closely resembling \eqref{ns}, namely
\begin{align*}
\partial_t u + T(u,u) &= \Delta u - \nabla p \\
\nabla \cdot u &= 0 \\
u(0,\cdot) &= u_0
\end{align*}
where $T$ is an averaged version of the convection operator $(u \cdot \nabla)u$, defined by $T = \frac{1}{2} (T_{12} + T_{21})$ where
$$ T_{ij}(u,u) := 
\int_\Omega \Rot_{R_{3,\omega}^{-1}} \overline{m_{3,\omega}}(D) \left ( (m_{i,\omega}(D) \Rot_{R_{i,\omega}} u \cdot \nabla) m_{j,\omega}(D) \Rot_{R_{j,\omega}} u\right)\ d\mu(\omega)$$
for $ij=12,21$.  We can also ensure that the inviscid form of the averaged Navier-Stokes equation conserves helicity, as well as total momentum, angular momentum, and vorticity; see Remark \ref{helicity} below.
\end{remark}

Our construction of this averaged bilinear operator $\tilde B: H^{10}_\df(\R^3) \times H^{10}_\df(\R^3) \to H^{10}_\df(\R^3)^*$ and blowup solution $u$ will admittedly be rather artificial, as the averaged operator $\tilde B$ will only retain a carefully chosen (and carefully weighted) subset of the nonlinear interactions present in the original operator $B$, with the weights designed to facilitate a specific blowup mechanism while suppressing other nonlinear interactions that could potentially disrupt this mechanism.  There is however a possibility that the proof strategy in Theorem \ref{main} could be adapted to the true Navier-Stokes equations; see Section \ref{program} below.   Even without this possibility, however, we view this result as a significant (but not completely inpenetrable) \emph{barrier} to a certain class of strategies for excluding such blowup based on treating the bilinear Euler operator $B$ abstractly, as it shows that any strategy that fails to distinguish between the Euler bilinear operator $B$ and its averaged counterparts $\tilde B$ (assuming that the averages obey the cancellation \eqref{cancellation-2}) is doomed to failure.   We emphasise however that this barrier does not rule out arguments that crucially exploit specific properties of the Navier-Stokes equation that are not shared by the averaged versions.  For instance, the arguments in \cite{ess} (see also the subsequent paper \cite{kenig-koch} for an alternate treatment), which establish global regularity for Navier-Stokes subject to a hypothesis of bounded critical norm, rely on a unique continuation property for backwards heat equations which in turn relies on being able to control the nonlinearity pointwise in terms of the solution and its first derivatives.  This is a particular feature of the Navier-Stokes equation \eqref{ns} (in vorticity formulation) which is difficult to discern from the projected formulation \eqref{ns-smooth}, and does not hold in general in \eqref{ns-modified}; in particular, it is not obvious to the author whether the main results in \cite{ess} extend\footnote{This would not be in contradiction to Theorem \ref{main}, as the blowup solution constructed in the proof of that theorem is of ``Type II'' in the sense that critical norms of the solution $u(t)$ diverge in the limit $t \to T_*$.  In contrast, the results in \cite{ess} rules out ``Type I'' blowup, in which a certain critical norm stays bounded.} to averaged Navier-Stokes equations.   As such, arguments based on such unique continuation properties are (currently, at least) examples of approaches to the regularity problem that are not manifestly subject to this barrier (unless progress is made on the program outlined in Section \ref{program} below).  Another example of a positive recent result on the Navier-Stokes problem that uses the finer structure of the nonlinearity (and is thus not obviously subject to this barrier) is the work in \cite{chemin} constructing large data smooth solutions to the Navier-Stokes equations in which the initial data varies slowly in one direction, and which relies on certain delicate algebraic properties of the symbol of $B$.

\subsection{Overview of proof}\label{overview-sec}

The philosophy of proof of Theorem \ref{main} is to treat the dissipative term $\Delta u$ of \eqref{ns-modified} as a perturbative error (which is possible thanks to the supercritical nature of the energy, due to the fact that we are in more than two spatial dimensions), and to construct a stable blowup solution to the ``averaged Euler equation'' $\partial_t u = \tilde B(u,u)$ that blows up so rapidly that the effect of adding a dissipation\footnote{Indeed, our arguments permit one to add any supercritical hyperdissipation $(-\Delta)^\alpha$, $\alpha < 5/4$, to the equation \eqref{ns-modified} while still obtaining blowup for certain choices of initial data, although for sake of exposition we will only discuss the classical $\alpha=1$ case here.} term is negligible.  This blowup solution will have a significant portion of its energy concentrating on smaller and smaller balls around the spatial origin $x=0$; more precisely, there will be an increasing sequence of times $t_n$ converging exponentially fast to a finite limit $T_*$, such that a large fraction of the energy (at least $(1+\epsilon_0)^{-\eps n}$ for some small $\eps, \epsilon_0>0$) is concentrated in the ball $B(0,(1+\epsilon_0)^{-n})$ centred at the origin.  We will be able to make the difference $t_{n+1}-t_n$ of the order $(1+\epsilon_0)^{(-\frac{5}{2}+O(\eps))n}$ for some small $\eps>0$; this is about as short as one can hope from scaling heuristics (see e.g. \cite{tao-hyper} for a discussion), and indicates a blowup which is almost as rapid and efficient as possible, given the form of the nonlinearity.  In particular, for large $n$, the time difference $t_{n+1}-t_n$ will be significantly shorter than the dissipation time $(1+\epsilon_0)^{-2n}$ at that spatial scale, which helps explain why the effect of the dissipative term $\Delta u$ will be negligible.  

To construct the stable blowup solution, we were motivated by the work on regularity and blowup of the system of ODE
\begin{equation}\label{xn}
\partial_t X_n = - \lambda^{2n\alpha} X_n + \lambda^{n-1} X_{n-1}^2 - \lambda^n X_n X_{n+1}
\end{equation}
for a system $(X_n)_{n\in \Z}$ of scalar unknown functions $X_n: [0,T_*) \to \R$, where $\lambda>1$ and $\alpha>0$ are parameters.  This system was introduced by Katz-Pavlovic \cite{katz-dyadic} (with $\lambda=2$ and $\alpha=2/5$) as a dyadic model\footnote{Strictly speaking, the equation studied in \cite{katz-dyadic} is slightly different, in that there is a nonlinear interaction between each wavelet in the model and all of the children of that wavelet, whereas the model here corresponds to the case where each wavelet interacts with only one of its children at most.  The equation in \cite{katz-dyadic} turns out to be a bit more dispersive than the model \eqref{xn}, and in particular enjoys global regularity (by an unpublished argument of Nazarov), and is thus not directly suitable as a model for proving Theorem \ref{main}.}  for the Navier-Stokes equations \eqref{ns}, and are related to hierarchical shell models for these equations (see also \cite{des} for an earlier derivation of these equations from Fourier-analytic considerations).  Roughly speaking, a solution $(X_n)_{n \in \Z}$ to this system (with $\alpha=2/5$) corresponds (at a heuristic level) to a solution $u$ to an equation similar to \eqref{ns-smooth} or \eqref{ns-modified} with $u$ of the shape
\begin{equation}\label{utx}
u(t,x) \approx \sum_n X_n(t) \lambda^{3n/5} \psi( \lambda^{2n/5} x )
\end{equation}
for some Schwartz function $\psi$ with Fourier transform vanishing near the origin.  We remark that the analogue of the energy identity \eqref{energy-ident} in this setting is the identity
\begin{equation}\label{energy-ident-dyadic}
\frac{1}{2} \sum_n X_n(T)^2 + \int_0^T \sum_n \lambda^{2n\alpha} X_n(t)^2 dt = \frac{1}{2} \sum_n X_n(0)^2,
\end{equation}
valid whenever $X_n$ exhibits sufficient decay as $n \to \pm \infty$ (we do not formalise this statement here).

We will defer for now the technical issue (which we regard as being of secondary importance) of transferring blowup results from dyadic Navier-Stokes models to averaged Navier-Stokes models, and focus on the question of whether blowup solutions may be constructed for ODE systems such as \eqref{xn}.

Blowup solutions for the equation \eqref{xn} are known to exist for sufficiently small $\alpha$; specifically, for $\alpha < 1/4$ this was (essentially) established in \cite{katz-dyadic}, while for $\alpha < 1/3$ this was established
in \cite{ches}, with global regularity established in the critical and subcritical regimes $\alpha \geq 1/2$.  If a blowup solution could be constructed\footnote{The results in \cite{katz-dyadic} can be however adapted to establish a version of Theorem \ref{main} in six and higher dimensions, while the results in \cite{ches} give a version in five and higher dimensions (and just barely miss the four-dimensional case); this can be done by adapting the arguments in this paper (and using the above-cited blowup results as a substitute for the lengthier ODE analysis in this paper), and we leave the details to the interested reader.  Interestingly, the results in \cite{plechac}, \cite{plechac-2} on a somewhat different Navier-Stokes type model also indicate blowup in five and higher dimensions, while giving global regularity instead in lower dimensions; similarly for a third Navier-Stokes model introduced in \cite{hou}.} with the value $\alpha=2/5$, then this would be a dyadic analogue of Theorem \ref{main}.  Unfortunately for our purposes, for the values $\lambda=2^{1/\alpha}, \alpha=2/5$, global regularity was established in \cite{bmr} (for non-negative initial data $X_n(0)$), by carefully identifying a region of phase space that is invariant under forward evolution of \eqref{xn}, and which in particular prevents the energy $X_n$ from concentrating too strongly at a single value of $n$.  However, the argument in \cite{bmr} is sensitive to the specific numerical value of $\lambda$ (and also relies heavily on the assumption of initial non-negativity), and does not rule out the possibility of blowup at $\alpha=2/5$ for some variant of the system \eqref{xn}. 

From multiplying \eqref{xn} by $X_n$, we arrive at the energy transfer equations
\begin{equation}\label{e-transfer}
 \partial_t \left(\frac{1}{2} X_n^2\right) = - \lambda^{2n\alpha} X_n^2 + \lambda^{n-1} X_n X_{n-1}^2 - \lambda^n X_{n+1} X_n^2
\end{equation}
for $n \in \Z$,
which are a local version of \eqref{energy-ident-dyadic}, and reveal in particular (in the non-negative case $X_n \geq 0$) that there is a flow of energy at rate $\lambda^n X_{n+1} X_n^2$ from the $n^{\operatorname{th}}$ mode $X_n$ to the $(n+1)^{\operatorname{st}}$ mode $X_{n+1}$.  In principle, whenever one is in the supercritical regime $\alpha < 1/2$, one should be able to start with a delta function initial data $X_n(0) = 1_{n=n_0}$ for some sufficiently large $n_0$, and then this transfer of energy should allow for a ``low-to-high frequency cascade'' solution in which the energy moves rapidly from the $n_0^{\operatorname{th}}$ mode to the $(n_0+1)^{\operatorname{st}}$ mode, with the cascade fast enough to ``outrun'' the dissipative effect of the term $-\lambda^{2n\alpha} X_n^2$ in the energy transfer equation \eqref{e-transfer}, which is lower order when $\alpha<1/2$.  However, as observed in \cite{bmr}, this cascade scenario does not actually occur as strongly as the above heuristic reasoning suggests, because the energy in $X_{n+1}$ is partially transferred to $X_{n+2}$ before the transfer of energy from $X_n$ to $X_{n+1}$ is fully complete, leading instead to a solution in which the bulk of the energy remains in low values of $n$ and is eventually dissipated away by the $-\lambda^{2n\alpha} X_n^2$ term before forming a singularity.  Thus we see that there is an interference effect between the energy transfer between $X_n$ and $X_{n+1}$, and the energy transfer between $X_{n+1}$ and $X_{n+2}$, that disrupts the naive blowup scenario.

One can fix this problem by suitably modifying the model equation \eqref{xn}.  One rather drastic (and not particularly satisfactory) way to do this is to forcibly (i.e., \emph{exogenously}) shut off most of the nonlinear interactions, so that only one pair $X_n,X_{n+1}$ of adjacent modes experiences a nonlinear (but energy-conserving) interaction at any given time.  Specifically, one can consider a truncated-nonlinearity ODE
\begin{equation}\label{trunc-non}
\partial_t X_n = - \lambda^{2n\alpha} X_n + 1_{n-1 = n(t)} \lambda^{n-1} X_{n-1}^2 - 1_{n = n(t)} \lambda^n X_n X_{n+1}
\end{equation}
where $n: [0,T_*) \to \Z$ is a piecewise constant function that one specifies in advance, and which describes which pair of modes $X_{n(t)}, X_{n(t)+1}$ is ``allowed'' to interact at a given time $t$.  It is not difficult to construct a blowup solution for this truncated ODE; we do so in Section \ref{beyond}.  Such a result corresponds to a weak version of Theorem \ref{main} in which the averaged nonlinearity $\tilde B$ is now allowed to be time dependent, $\tilde B = \tilde B(t)$, with the dependence of $\tilde B(t)$ on $t$ being piecewise constant (and experiencing an unbounded number of discontinuities as $t$ approaches $T_*$). In particular, the nonlinearity $\tilde B(t)$ is experiencing an exogenous oscillatory singularity in time as $t$ approaches $T_*$, making the spatial singularity of the solution $u$ become significantly less surprising\footnote{It is worth noting, however, that a surprisingly large portion of the local theory for Navier-Stokes would survive with a time-dependent nonlinearity, even if it were discontinuous in time, so even this weakened version of Theorem \ref{main} provides a somewhat non-trivial barrier that can still exclude certain solution strategies to the Navier-Stokes regularity problem.}.  

Our strategy, then, is to design a system of ODE similar to \eqref{xn} that can \emph{endogenously} simulate the exogenous truncations $1_{n-1=n(t)}$, $1_{n=n(t)}$ of \eqref{trunc-non}.  As shown in \cite{bmr}, this cannot be done for the scalar equation \eqref{xn}, at least when $\lambda$ is equal to $2$.  However, by replacing \eqref{xn} with a vector-valued generalisation, in which one has four scalar functions $X_{1,n}(t),X_{2,n}(t),X_{3,n}(t),X_{4,n}(t)$ associated to each scale $n$, rather than a single scalar function $X_n(t)$, it turns out to be possible to use quadratic interactions of the same strength as the terms $\lambda^{n-1} X_{n-1}^2, \lambda^n X_n X_{n+1}$ appearing in \eqref{xn} to induce such a simulation, while still respecting the energy identity.  The precise system of ODE used is somewhat complicated (see Section \ref{blowup-sec}), but it can be described as a sequence of ``quadratic circuits'' connected in series, with each circuit built out of a small number of ``quadratic logic gates'', each corresponding to a certain type of basic quadratic nonlinear interaction.  Specifically, we will combine together some ``pump'' gates that transfer energy from one mode to another (and which are the only gate present in \eqref{xn}) with ``amplifier'' gates (that use one mode to ignite exponential growth in another mode) and ``rotor'' gates (that use one mode to rotate the energy between two other modes).  By combining together these gates with carefully chosen coupling constants (a sort of ``quadratic engineering'' task somewhat analogous to the more linear circuit design tasks in electrical engineering), we can set up a transfer of energy from scale $n$ to scale $n+1$ which can be made arbitrarily abrupt, in that the duration of the time interval separating the regime in which most of the energy is at scale $n$, and most of the energy is at scale $n+1$, can be made as small as desired.  Furthermore, this transfer is delayed somewhat from the time at which the scale $n$ first experiences a large influx of energy.  The combination of the delay in energy transfer and the abruptness of that transfer means that the process of transferring energy from scale $n$ to scale $n+1$ is not itself interrupted (up to negligible errors) by the process of transferring energy from scale $n+1$ to $n+2$, and this permits us (after a lengthy bootstrap argument) to construct a blowup solution to this equation, which resembles the blowup solution for the truncated ODE \eqref{trunc-non}.

We now briefly discuss how to pass from the dyadic model problem of establishing blowup for a variant of \eqref{xn} to a problem of the form \eqref{ns-modified}, though as noted before we view the dyadic analysis as containing the core results of the paper, with the conversion to the non-dyadic setting being primarily for aesthetic reasons (and to eliminate any lingering suspicion that the blowup here is arising from some purely dyadic phenomenon that is somehow not replicable in the non-dyadic setup).  By using an ansatz of the form \eqref{utx} and rewriting everything in Fourier space, one can map the dyadic model problem to a problem similar to \eqref{ns-modified}, but with the Laplacian replaced by a ``dyadic Laplacian'' (similar to the one appearing in \cite{katz-dyadic}, \cite{fp}), and with a bilinear operator $\tilde B$ which has a Fourier representation
$$ \langle \tilde B(u,v), w \rangle = \int\int\int \tilde m(\xi_1,\xi_2,\xi_3)( \hat u(\xi_1), \hat v(\xi_2), \hat w(\xi_3) )\ d\xi_1 d\xi_2 d\xi_3 $$
for a certain tensor-valued symbol $\tilde m(\xi_1,\xi_2,\xi_3)$ that is supported on the region of frequency space where $\xi_1,\xi_2,\xi_3$ are comparable in magnitude, and having magnitude $\sim |\xi_1|$ in that region (together with the usual estimates on derivatives of the symbol).  Meanwhile, thanks to \eqref{bwing}, $B$ has a similar representation
$$ \langle B(u,v), w \rangle = \int\int\int m(\xi_1,\xi_2,\xi_3)( \hat u(\xi_1), \hat v(\xi_2), \hat w(\xi_3) )\ d\xi_1 d\xi_2 d\xi_3 $$
with $m$ being a singular (tensor-valued) distribution on the hyperplane $\xi_1+\xi_2+\xi_3=0$.  After averaging over some rotations, one can\footnote{For minor technical and notational reasons, the formal version of this argument performed in Section \ref{euler-avg} does not quite perform these steps in the order indicated here, however all the ingredients mentioned here are still used at some point in the rigorous argument.} ``smear out'' the distribution $m$ to be absolutely continuous with respect to $d\xi_1 d\xi_2 d\xi_3$, and then by suitably modulating by Fourier multipliers of order $0$ (and in particular, differentiation operators of imaginary order) one can localise the symbol to the region of frequency space where $\xi_1,\xi_2,\xi_3$ are comparable in magnitude.  By performing some suitable Fourier-type decompositions of the latter symbol, we are then able to express $\tilde m$ as an average of various transformations of $m$, giving rise to a description of $\tilde B$ as an averaged Navier-Stokes operator.  Ultimately, the problem boils down to the task of establishing a certain non-degeneracy property of the tensor symbol $\Lambda$ defined in \eqref{lambda-def}, which one establishes by a short geometric calculation.  The averaging over dilations in Theorem \ref{main} is needed in order to ensure this non-degeneracy property, but it is likely that this averaging can be dropped by a more careful analysis.

This almost finishes the proof of Theorem \ref{main}, except that the dyadic model equation involves the dyadic Laplacian instead of the Euclidean Laplacian.  However, it turns out that the analysis of the dyadic system of ODE can be adapted to the case of non-dyadic dissipation, by using local energy inequalities as a substitute for the exact ODE that appear in the dyadic model.  While this complicates the analysis slightly, the effect is ultimately negligible due to the perturbative nature of the dissipation.

\subsection{A program for establishing blowup for the true Navier-Stokes equations?}\label{program}

To summarise the strategy of proof of Theorem \ref{main}, a solution to a carefully chosen averaged version
$$ \partial_t u = \tilde B(u,u)$$
of the Euler equations is constructed which behaves like a ``von Neumann machine'' (that is, a self-replicating machine) in the following sense: at a given time $t_n$, it evolves as a sort of ``quadratic computer'', made out of ``quadratic logic gates'', which is ``programmed'' so that after a reasonable period of time $t_{n+1}-t_n$, it abruptly ``replicates'' into a rescaled version of itself (being $1+\epsilon_0$ times smaller, and about $(1+\epsilon_0)^{5/2}$ times faster), while also erasing almost completely the previous iteration of this machine.  This replication process is stable with respect to perturbations, and in particular can survive the presence of a supercritical dissipation if the initial scale of the machine is sufficiently small.

This suggests an ambitious (but not obviously impossible) program (in both senses of the word) to achieve the same effect for the true Navier-Stokes equations, thus obtaining a negative answer to Conjecture \ref{nsconj}.  Define an \emph{ideal (incompressible, inviscid) fluid} to be a divergence-free vector field $u$ that evolves according to the true Euler equations
$$ \partial_t u = B(u,u).$$
Somewhat analogously to how a quantum computer can be constructed from the laws of quantum mechanics (see e.g. \cite{benioff}), or a Turing machine can be constructed from cellular automata such as Conway's ``Game of Life'' (see e.g. \cite{adam}), one could hope to design logic gates entirely out of ideal fluid (perhaps by using suitably shaped vortex sheets to simulate the various types of physical materials one would use in a mechanical computer).  If these gates were sufficiently ``Turing complete'', and also ``noise-tolerant'', one could then hope to combine enough of these gates together to ``program'' a von Neumann machine consisting of ideal fluid that, when it runs, behaves qualitatively like the blowup solution used to establish Theorem \ref{main}.  Note that such replicators, as well as the related concept of a \emph{universal constructor}, have been built within cellular automata such as the ``Game of Life''; see e.g. \cite{adam-life}.

Once enough logic gates of ideal fluid are constructed, it seems that the main difficulties in executing the above program are of a ``software engineering'' nature, and would be in principle achievable, even if the details could be extremely complicated in practice.  The main mathematical difficulty in executing this ``fluid computing'' program would thus be to arrive at (and rigorously certify) a design for logical gates of inviscid fluid that has some good noise tolerance properties.  In this regard, ideas from quantum computing (which faces a unitarity constraint somewhat analogous to the energy conservation constraint for ideal fluids, albeit with the key difference of having a linear evolution rather than a nonlinear one) may prove to be useful.

A significant (but perhaps not insuperable) obstacle to this program is that in addition to the conservation of energy, the Euler equations obey a number of additional conservation laws, such as conservation of helicity, with vortex lines also being transported by the flow; see e.g. \cite{bert}.  This places additional limitations on the type of fluid gates one could hope to construct; however, as these conservation laws are indefinite in sign, it may still be possible to design computational gates that respect all of these laws.

It is worth pointing out, however, that even if this program is successful, it would only demonstrate blowup for a very specific type of initial data (and tiny perturbations thereof), and is not necessarily in contradiction with the belief that one has global regularity for \emph{most} choices of initial data (for some carefully chosen definition of ``most'', e.g. with overwhelming (but not almost sure) probability with respect to various probability distributions of initial data).  However, we do not have any new ideas to contribute on how to address this latter question, other than to state the obvious fact that deterministic methods alone are unlikely to be sufficient to resolve the problem, and that stochastic methods (e.g. those based on invariant measures) are probably needed.

\subsection{Acknowledgments}  

I thank Nets Katz for helpful discussions, Zhen Lei and Gregory Seregin for help with the references, and the anonymous referee for a careful reading and pointing out an error in a previous version of this manuscript.  The author is supported by a Simons Investigator grant, the James and Carol Collins Chair, the Mathematical Analysis \& Application Research Fund Endowment, and by NSF grant DMS-1266164.  

\section{Notation}

We use $X=O(Y)$ or $X \lesssim Y$ to denote the estimate $|X| \leq CY$, for some quantity $C$ (which we call the \emph{implied constant}).  If we need the implied constant to depend on a parameter (e.g. $k$), we will either indicate this convention explicitly in the text, or use subscripts, e.g. $X = O_k(Y)$ or $X \lesssim_k Y$.

If $\xi$ is an element of $\R^3$, we use $|\xi|$ to denote its Euclidean magnitude.  For $\xi_0 \in \R^3$ and $r>0$, we use $B(\xi_0,r) := \{ \xi \in \R^3: |\xi-\xi_0| < r \}$ to denote the open ball of radius $r$ centred at $\xi_0$. Given a subset $B$ of $\R^3$ and a real number $\lambda$, we use $\lambda \cdot B := \{ \lambda \xi: \xi \in B \}$ to denote the dilate of $B$ by $\lambda$.

If $P$ is a mathematical statement, we use $1_P$ to denote the quantity $1$ when $P$ is true and $0$ when $P$ is false.

Given two real vector spaces $V,W$, we define the tensor product $V \otimes W$ to be the real vector space spanned by formal tensor products $v \otimes w$ with $v \in V$ and $w \in W$, subject to the requirement that the map $(v,w) \mapsto v \otimes w$ is bilinear.  Thus for instance $V \otimes \C$ is the complexification of $V$, that is to say the space of formal linear combinations $v_1+iv_2$ with $v_1,v_2 \in V$.

\section{Averaging the Euler bilinear operator}\label{euler-avg}

In this section we show that certain bilinear operators, which are spatially localised variants of the ``cascade operators'' introduced in \cite{katz-dyadic}, can be viewed as averaged Euler bilinear operators.  

We now formalise the class of local cascade operators we will be working with.
For technical reasons, we will use the integer powers $(1+\epsilon_0)^n$ of $1+\epsilon_0$ for some sufficiently small $\epsilon_0>0$ as our dyadic range of scales, rather than the more traditional powers of two, $2^n$.  Roughly speaking, the reason for this is to ensure that any triangle of side lengths that are of comparable size, in the sense that they all between $(1+\epsilon_0)^{n-O(1)}$ and $(1+\epsilon_0)^{n+O(1)}$ for some $n$, are almost equilateral; this lets us avoid some degeneracies in the tensor symbol implicit in \eqref{bwing} that would otherwise complicate the task of expressing certain bilinear operators as averages of the Euler bilinear operator $B$ (specifically, the smallness of $\epsilon_0$ is needed to establish the non-degeneracy condition \eqref{c-nondeg} below).

\begin{definition}[Local cascade operators]\label{cascdef}  Let $\epsilon_0>0$.  A \emph{basic local cascade operator} (with dyadic scale parameter $\epsilon_0>0$) is a bilinear operator $C: H^{10}_\df(\R^3) \times H^{10}_\df(\R^3) \to H^{10}_\df(\R^3)^*$ defined via duality by the formula
\begin{equation}\label{cuw}
 \langle C(u,v), w \rangle = \sum_{n \in \Z} (1+\epsilon_0)^{5n/2} \langle u, \psi_{1,n} \rangle \langle v, \psi_{2,n} \rangle \langle w, \psi_{3,n} \rangle
\end{equation}
for all $u,v,w \in H^{10}_\df(\R^3)$, where for $i=1,2,3$ and $n \in \Z$, $\psi_{i,n}: \R^3 \to \R^3$ is the $L^2$-rescaled function
$$ \psi_{i,n}(x) := (1+\epsilon_0)^{3n/2} \psi_i\left( (1+\epsilon_0)^n x \right)$$
and $\psi_i: \R^3 \to \R^3$ is a Schwartz function whose Fourier transform is supported on the annulus $\{ \xi: 1-2\epsilon_0 \leq |\xi| \leq 1+2\epsilon_0 \}$.  A \emph{local cascade operator} is defined to be a finite linear combination of basic local cascade operators.
\end{definition}

Note from the Plancherel theorem that one has
$$ \sum_n (1 + (1+\epsilon_0)^{2n})^{10} |\langle u, \psi_{1,n} \rangle|^2 < \infty$$
whenever $u \in H^{10}_\df(\R^3)$ and $\psi_{1,n}$ is as in Definition \ref{cascdef}.  Similarly for $\psi_{2,n}$ and $\psi_{3,n}$.  From this and the H\"older inequality it is an easy matter to ensure that the sum in \eqref{cuw} is absolutely convergent for any $u,v,w \in H^{10}_\df(\R^3)$, so the definition of a cascade operator is well-defined, and that such operators are bounded from $H^{10}_\df(\R^3) \times H^{10}_\df(\R^3)$ to $H^{10}_\df(\R^3)^*$; indeed, the same argument shows that such operators map $H^{10}_\df(\R^3) \times H^{10}_\df(\R^3)$ to $L^2(\R^3)$.  (One could in fact extend such operators to significantly rougher spaces than $H^{10}_\df(\R^3)$, but we will not need to do so here.) 

We did not impose that the $\psi_i$ were divergence free, but one could easily do so via Leray projections if desired, in which case the operators $C(u,v)$ defined via duality in \eqref{cuw} can be expressed more directly as
$$
 C(u,v) = \sum_{n \in \Z} (1+\epsilon_0)^{5n/2} \langle u, \psi_{1,n} \rangle \langle v, \psi_{2,n} \rangle \psi_{3,n}.
$$
We remark that the exponent $5/2$ appearing in \eqref{cuw} ensures that local cascade operators enjoy a dyadic version of the scale invariance that the Euler bilinear form enjoys.  Indeed, recalling the dilation operators \eqref{dila}, one can compute that for any $u,v,w \in H^{10}_\df(\R^3)$, one has
$$ \langle B( \Dil_\lambda u, \Dil_\lambda v ), \Dil_\lambda w \rangle = \lambda^{5/2} \langle B(u,v), w \rangle,$$
and similarly for any local cascade operator $C$ one has
$$ \langle C( \Dil_\lambda u, \Dil_\lambda v ), \Dil_\lambda w \rangle = \lambda^{5/2} \langle C(u,v), w \rangle,$$
under the additional restriction that $\lambda$ is an integer power of $1+\epsilon_0$.

Theorem \ref{main} is then an immediate consequence of the following two results.

\begin{theorem}[Local cascade operators are averaged Euler operators]\label{avg}  Let $\epsilon_0>0$ be a sufficiently small absolute constant.  Then every local cascade operator (with dyadic scale parameter $\epsilon_0$) is an averaged Euler bilinear operator.
\end{theorem}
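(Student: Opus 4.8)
The plan is to realise a basic local cascade operator $C$ as an average of the Euler bilinear operator $B$ by running the informal recipe sketched in the overview in reverse: starting from the singular trilinear symbol $\Lambda$ of $B$ supported on the hyperplane $\xi_1+\xi_2+\xi_3=0$, we smear it out via a rotation (and dilation) average into an absolutely continuous symbol, use Fourier multipliers of order zero to sculpt it into the shape appearing in \eqref{cuw}, and then decompose the resulting kernel as a (convergent, moment-bounded) superposition of transformations $m_j(D)\Rot_{R_j}\Dil_{\lambda_j}$ applied to $B$. Since local cascade operators are finite linear combinations of basic ones, and since the class of averaged Euler operators is visibly closed under finite linear combinations (concatenate the probability spaces with appropriate weights, rescaling the multipliers to absorb the coefficients as in the footnote following \eqref{bouv-avg}), it suffices to treat a single basic local cascade operator $C$ with fixed Schwartz profiles $\psi_1,\psi_2,\psi_3$ whose Fourier transforms live on the annulus $\{1-2\epsilon_0\le|\xi|\le 1+2\epsilon_0\}$.

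First I would rewrite $\langle C(u,v),w\rangle = \sum_n (1+\epsilon_0)^{5n/2}\langle u,\psi_{1,n}\rangle\langle v,\psi_{2,n}\rangle\langle w,\psi_{3,n}\rangle$ in Fourier variables; using the dyadic scale-invariance noted before the statement, it is enough (after conjugating everything by suitable $\Dil_{(1+\epsilon_0)^{-n}}$ and summing a geometric-type series in $n$, which converges thanks to the Plancherel bound quoted after Definition \ref{cascdef}) to produce the single-scale operator $\langle u,\psi_1\rangle\langle v,\psi_2\rangle\langle w,\psi_3\rangle$, whose symbol is the rank-one tensor $\overline{\hat\psi_1(\xi_1)}\otimes\overline{\hat\psi_2(\xi_2)}\otimes\overline{\hat\psi_3(\xi_3)}$ supported where all three $|\xi_i|\sim 1$. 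The task is then purely symbol-level: express this smooth, compactly supported, frequency-localised tensor symbol as an average $\E\big[ \Lambda_{R_1\xi_1,R_2\xi_2,R_3\xi_3}$ precomposed with $m_j(\xi_j)\big]$ against the measure $\delta(\xi_1+\xi_2+\xi_3)$. The rotation average turns the delta on the hyperplane into a smooth density on the region where $\xi_1,\xi_2,\xi_3$ have comparable magnitude and can form a triangle — this is exactly where the smallness of $\epsilon_0$ enters, guaranteeing that any such triangle is nearly equilateral so that the relevant configuration space is an open set and no degeneration of $\Lambda$ occurs on it. Then the order-zero multipliers $m_j(D)$ (including imaginary-order differentiations $|\xi|^{it}$, whose symbols are bounded of order zero) are used to carry out a Fourier/Mellin decomposition in the radial and angular variables, writing the desired smooth tensor density as an integral of pointwise products $m_1(\xi_1)m_2(\xi_2)m_3(\xi_3)\,\Lambda_{R_1\xi_1,R_2\xi_2,R_3\xi_3}(\,\cdot\,)$ with coefficients that are absolutely integrable together with all their $\|\cdot\|_k$ seminorms, so that the moment bounds $\E\|m_1\|_{k_1}\|m_2\|_{k_2}\|m_3\|_{k_3}<\infty$ and $C^{-1}\le\lambda_j\le C$ are met.

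The main obstacle, and the technical heart of the argument, is the \emph{non-degeneracy} of the tensor symbol $\Lambda$ defined in \eqref{lambda-def}: one must show that on the relevant open set of nearly-equilateral triples $(\xi_1,\xi_2,\xi_3)$ with $\xi_1+\xi_2+\xi_3=0$, the map $(R_1,R_2,R_3)\mapsto \Lambda_{R_1\xi_1,R_2\xi_2,R_3\xi_3}$, together with its modulations by order-zero multipliers, spans (in an appropriate averaged/integral sense) a space of tensor symbols large enough to contain arbitrary rank-one products of annular profiles — this is the condition \eqref{c-nondeg} alluded to in the text. Concretely I expect this to reduce to checking that $\Lambda_{\xi_1,\xi_2,\xi_3}$, as a trilinear form on $\xi_1^\perp\times\xi_2^\perp\times\xi_3^\perp$, is nonzero (and in fact ``generic'' under rotation) for generic such triples, which one verifies by the short geometric calculation promised in the overview — choosing, say, $\xi_1,\xi_2,\xi_3$ the vertices of an equilateral triangle and exhibiting explicit unit vectors $X_i\perp\xi_i$ for which $(X_1\cdot\xi_2)(X_2\cdot X_3)+(X_2\cdot\xi_1)(X_1\cdot X_3)\neq 0$. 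Once \eqref{c-nondeg} is in hand, the passage from ``$\Lambda$ is non-degenerate'' to ``every frequency-localised rank-one tensor symbol is an average of $\Lambda$'s'' is a (somewhat lengthy but routine) exercise in harmonic analysis: partition-of-unity localisation in frequency, a change of variables straightening out the constraint, and a Fourier inversion in the finitely many remaining parameters, all of which produce the required $(\Omega,\mu)$ and measurable selections $R_{j,\omega},\lambda_{j,\omega},m_{j,\omega}(D)$ with the stated moment and boundedness properties. Reassembling the scales then gives the full local cascade operator $C$ as an averaged Euler bilinear operator, completing the proof.
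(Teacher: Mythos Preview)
Your outline matches the paper's approach closely: reduce to basic operators, localise to a single scale via imaginary-order multipliers, and then recover the rank-one tensor symbol $\overline{\hat\psi_1}\otimes\overline{\hat\psi_2}\otimes\overline{\hat\psi_3}$ as a rotation/multiplier average of the singular symbol $\Lambda$. However, there is a genuine gap in how you identify and propose to verify the non-degeneracy condition.

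You write that the non-degeneracy should reduce to showing $\Lambda_{\xi_1,\xi_2,\xi_3}$ is nonzero as a trilinear form, concretely by ``exhibiting explicit unit vectors $X_i\perp\xi_i$ for which $(X_1\cdot\xi_2)(X_2\cdot X_3)+(X_2\cdot\xi_1)(X_1\cdot X_3)\neq 0$.'' That is far too weak. What the argument actually requires is this: once one parametrises each $\xi_j^\perp$ by an angle $\gamma_j$ via $R_{\eta_j}^{\gamma_j} n$ (rotation around the axis $\eta_j$), the function $\Theta_{\eta_1,\eta_2,\eta_3}(\gamma_1,\gamma_2,\gamma_3):=\Lambda_{\eta_1,\eta_2,\eta_3}(R_{\eta_1}^{\gamma_1}n,R_{\eta_2}^{\gamma_2}n,R_{\eta_3}^{\gamma_3}n)$ on the $3$-torus has only the eight Fourier modes $e^{i(\sigma_1\gamma_1+\sigma_2\gamma_2+\sigma_3\gamma_3)}$, $\sigma_j\in\{\pm1\}$, and the condition \eqref{c-nondeg} is that \emph{all eight} Fourier coefficients $c_{\sigma_1,\sigma_2,\sigma_3}$ are nonzero. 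Only then does Fourier inversion on $(\R/2\pi\Z)^3$ let you write each plane wave $e^{i(\sigma_1\alpha_1+\sigma_2\alpha_2+\sigma_3\alpha_3)}$---and hence any rank-one tensor $Y_1\otimes Y_2\otimes Y_3$---as an average of rotated copies of $\Lambda$. Merely finding one point where $\Lambda\neq 0$ says nothing about whether any particular $c_{\sigma_1,\sigma_2,\sigma_3}$ vanishes.

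Relatedly, your suggestion to place $\xi_1,\xi_2,\xi_3$ at the vertices of an equilateral triangle is exactly the configuration one should \emph{avoid}: the paper uses the dilation averaging precisely to move the reference frequencies to an asymmetric configuration (specifically $\xi_1^0=(0,1,0)$, $\xi_2^0=(-1,-1,0)$, $\xi_3^0=(1,0,0)$) so that the explicit computation of the eight coefficients yields $c_{\sigma_1,\sigma_2,\sigma_3}=-\tfrac{1}{8i}(-\sigma_1+\tfrac{1}{\sqrt 2}\sigma_2+\tfrac{1}{\sqrt 2}\sigma_1\sigma_2\sigma_3)+O(\epsilon_0)$, which is bounded away from zero for all sign patterns. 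A symmetric (equilateral) choice risks forcing some of these coefficients to vanish by symmetry. So the ``short geometric calculation'' is not the one you propose; it is an explicit evaluation of all eight $c_{\sigma_1,\sigma_2,\sigma_3}$ at a carefully chosen asymmetric reference triple.
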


\begin{theorem}[Blowup for a local cascade equation]\label{blowup}  Let $0 < \epsilon_0 < 1$.  Then there exists a symmetric local cascade operator $C: H^{10}_\df(\R^3) \times H^{10}_\df(\R^3) \to H^{10}_\df(\R^3)^*$ (with dyadic scale parameter $\epsilon_0$) obeying the cancellation property 
\begin{equation}\label{cancelled}
 \langle C(u,u), u \rangle = 0
\end{equation}
for all $u \in H^{10}_\df(\R^3)$, and Schwartz divergence-free vector field $u_0$, such that there does not exist any global mild solution $u: [0,+\infty) \to H^{10}_\df(\R^3)$ to the initial value problem 
\begin{equation}\label{system}
\begin{split}
\partial_t u &= \Delta u + C(u,u) \\
u(0,\cdot) &= u_0,
\end{split}
\end{equation}
that is to say there does not exist any continuous $u: [0,+\infty) \to H^{10}_\df(\R^3)$ with
$$ u(t) = e^{t\Delta} u_0 + \int_0^t e^{(t-t')\Delta} C\left( u(t'), u(t') \right)\ dt'$$
for all $t \in [0,+\infty)$.
\end{theorem}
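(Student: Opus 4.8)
The plan is to reduce the PDE \eqref{system} to a system of ordinary differential equations by exploiting the special structure of the local cascade operator. By choosing the Schwartz profiles $\psi_{i,n}$ built from a single profile $\psi$ (with $\psi_3$ an $L^2$-normalized bump adapted to the unit annulus) and by taking $u_0$ and the evolution to lie in the closed span of the $\psi_{3,n}$, one hopes that a solution $u$ of the form $u(t,x) = \sum_n X_n(t)\,\psi_{3,n}(x)$ (or a vector-valued refinement $u(t,x) = \sum_{i,n} X_{i,n}(t)\,\psi^{(i)}_{i,n}(x)$, with four ``components'' per scale as announced in the overview) is forced by the dynamics, provided the inner products $\langle \psi_{i,n},\psi_{j,m}\rangle$ have the right near-orthogonality. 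Then \eqref{cuw} shows that $\langle C(u,u),\psi_{3,n}\rangle$ is a finite sum of quadratic expressions in the $X_m$'s with coefficients of size $(1+\epsilon_0)^{5m/2}$, i.e. of exactly the ``strength'' of the pump terms in \eqref{xn}. The cancellation property \eqref{cancelled} will be arranged by imposing the appropriate antisymmetry/telescoping relations on the coupling constants defining $C$ (as with the $\lambda^{n-1}X_{n-1}^2 - \lambda^n X_n X_{n+1}$ pairing in \eqref{xn}), which guarantees the dyadic energy identity \eqref{energy-ident-dyadic} and in particular keeps all the machinery under control.

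The core of the argument is then purely the ODE blowup construction. First I would design the ``quadratic circuit'' ODE system: a chain of circuits, one per scale $n$, each assembled from pump gates, amplifier gates, and rotor gates, with coupling constants chosen (``quadratic engineering'') so that the circuit at scale $n$ ingests energy from scale $n-1$, holds it for a controlled delay, then abruptly dumps essentially all of it into scale $n+1$ in a time window $t_{n+1}-t_n \sim (1+\epsilon_0)^{(-5/2+O(\epsilon))n}$. The abruptness-plus-delay is the key design feature: it ensures the scale-$n \to n{+}1$ transfer completes before the scale-$(n{+}1)\to n{+}2$ transfer begins, so the interference effect identified in \cite{bmr} that wrecks the naive cascade for \eqref{xn} is avoided, and the solution genuinely mimics the truncated model \eqref{trunc-non}. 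Starting from a delta-type initial datum concentrated at a large scale $n_0$, a (lengthy) bootstrap/continuity argument would then propagate this self-similar cascade forward, showing the times $t_n$ converge to a finite $T_*$ while, say, $\|u(t)\|_{H^{10}_\df}$ — which weights scale $n$ by $(1+\epsilon_0)^{20n}$ — diverges as $t\to T_*$, so no continuous $u:[0,+\infty)\to H^{10}_\df$ solving the integral equation can exist.

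Finally I would handle the discrepancy between the clean dyadic ODE (which implicitly uses a ``dyadic Laplacian'') and the true Euclidean dissipation $\Delta$ appearing in \eqref{system}: rather than an exact ODE for $X_n(t)$, one only gets local energy \emph{inequalities} for the quantities $\langle u(t),\psi_{i,n}\rangle$, with error terms coming from (i) the mismatch between $e^{t\Delta}$ acting on $\psi_{i,n}$ and pure dyadic dissipation, and (ii) the non-orthogonal tails $\langle\psi_{i,n},\psi_{j,m}\rangle$ for $|n-m|$ small. Both are perturbative: the dissipation time $(1+\epsilon_0)^{-2n}$ at scale $n$ is far longer than $t_{n+1}-t_n$, so the dissipative error is negligible against the nonlinear dynamics, and the near-orthogonality errors are controlled by taking $\psi$'s Fourier support a thin annulus. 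One then re-runs the bootstrap with these error terms included in the inequalities.

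\medskip

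\textbf{Main obstacle.} The hard part is unquestionably the ODE design and the bootstrap: exhibiting explicit coupling constants for the pump/amplifier/rotor circuit that simultaneously (a) respect the energy-conservation (cancellation) constraint, (b) produce a transfer that is arbitrarily abrupt \emph{and} suitably delayed, and (c) is stable enough that the induction from scale $n$ to scale $n+1$ closes with uniform constants despite the accumulated errors. Getting the quantitative bookkeeping of the bootstrap to close — tracking the fraction of energy at each scale, the timing windows $t_{n+1}-t_n$, and the smallness of all cross-scale and dissipative corrections simultaneously — is where essentially all the real work lies; the reduction to ODE and the perturbative treatment of dissipation are comparatively routine.
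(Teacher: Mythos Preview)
Your proposal is correct and follows essentially the same approach as the paper: reduce to a coupled ODE/energy-inequality system via testing against the $\psi_{i,n}$, design the four-modes-per-scale pump/amplifier/rotor circuit with cancellation built in, and close a scale-by-scale bootstrap showing the transfer times sum to a finite $T_*$ while the $H^{10}$ norm diverges. One small correction: the $\psi_{i,n}$ can (and in the paper are) chosen with pairwise \emph{disjoint} Fourier supports, so they are exactly orthogonal and there are no ``near-orthogonality tails''; the genuine PDE/ODE discrepancy is instead that $\psi_{i,n}$ is not an eigenfunction of $\Delta$, so $e^{t\Delta}$ pushes the solution off the span of the $\psi_{i,n}$ --- this is handled, exactly as you anticipate, by tracking both the coefficients $X_{i,n}=\langle u,\psi_{i,n}\rangle$ and the full local energies $E_{i,n}=\tfrac12\|u_{i,n}\|_2^2$ and showing the defect $E_{i,n}-\tfrac12 X_{i,n}^2$ stays perturbatively small.
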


Theorem \ref{blowup} is the main technical result of this paper, and its proof will occupy the subsequent sections of this paper.  In this section we establish Theorem \ref{avg}.  This will be done by a somewhat lengthy series of averaging arguments and Fourier decompositions, together with some elementary three-dimensional geometry, with the result ultimately following from a certain non-degeneracy property of the trilinear form $\Lambda$ defined in \eqref{lambda-def}; the arguments are unrelated to those in the rest of the paper, and readers may wish to initially skip this section and move on to the rest of the argument.

Henceforth $\epsilon_0>0$ will be assumed to be sufficiently small (e.g. $\epsilon_0 = 10^{-10}$ will suffice).  In this section, the implied constants in the $O()$ notation are not permitted to depend on $\epsilon_0$. 

\subsection{First step: complexification}

It will be convenient to complexify the problem in order to freely use Fourier-analytic tools at later stages of the argument.  To this end, we introduce the following notation.

\begin{definition}[Complex averaging]\label{avg-def}  Let $C, C': H^{10}_\df(\R^3) \otimes \C \times H^{10}_\df(\R^3) \otimes \C \to H^{10}_\df(\R^3)^* \otimes \C$ be bounded (complex-)bilinear operators.  We say that $C$ is a \emph{complex average} of $C'$ if there exists a finite measure space $(\Omega,\mu)$ and measurable functions $m_{i,\cdot}(D): \Omega \to {\mathcal M}_0 \otimes \C$, $R_{i,\cdot}: \Omega \to \SO(3)$, $\lambda_{i,\cdot}: \Omega \to (0,+\infty)$ for $i=1,2,3$ such that
\begin{equation}\label{cuvw}
\begin{split}
&\langle C(u,v), w \rangle = \int_\Omega \\
&\left\langle C'\left( m_{1,\omega}(D) \Rot_{R_{1,\omega}} \Dil_{\lambda_{1,\omega}} u, m_{2,\omega}(D) \Rot_{R_{2,\omega}} \Dil_{\lambda_{2,\omega}} v\right), m_{3,\omega}(D) \Rot_{R_{3,\omega}} \Dil_{\lambda_{3,\omega}} w\right\rangle\ d\mu(\omega),
\end{split}\end{equation}
and that one has the integrability conditions
\begin{equation}\label{integrab}
 \int_\Omega \| m_{1,\omega}(D) \|_{k_1} \| m_{2,\omega}(D) \|_{k_2} \| m_{3,\omega}(D) \|_{k_3}\ d\mu(\omega) < \infty
 \end{equation}
and
$$ C_0^{-1} \leq \lambda_1(\omega), \lambda_2(\omega), \lambda_3(\omega) \leq C_0$$
for any natural numbers $k_1,k_2,k_3$ (recall that the seminorms $\| \|_k$ on ${\mathcal M}_0 \otimes \C$ were defined in \eqref{seminorm}) and some finite $C_0$.  Here, we complexify the inner product $\langle,\rangle$ by defining 
$$ \langle u, v \rangle := \int_{\R^3} u(x) \cdot v(x)\ dx$$
for complex vector fields $u,v \in H^{10}_\df(\R^3) \otimes \C$; note that we do \emph{not} place a complex conjugate on the $v$ factor, so the inner product is complex bilinear rather than sesquilinear.
\end{definition}

Suppose we can show that every local cascade operator $C$ is a complex average of the Euler bilinear operator $B$ in the sense of the above definition.  The multipliers $m_{j,\omega}(D)$ for $j=1,2,3$ appearing in the expansion \eqref{cuvw} are not required to be real, but we can decompose them as $m_{j,\omega,1}(D) + i m_{j,\omega,2}(D)$ where $m_{j,\omega,1}(D), m_{j,\omega,2}(D)$ are real (and with the seminorms of $m_{j,\omega,1}(D), m_{j,\omega,2}(D)$ bounded by a multiple of the corresponding seminorm of $m_{j,\omega}(D)$).  Thus we can decompose the right-hand side of \eqref{cuvw} as the sum of $2^3=8$ pieces, each of which is of the same form as the original right-hand side up to a power of $i$, and with all the $m_{j,\omega}(D)$ appearing in each piece being a real Fourier multiplier.  As the left-hand side of \eqref{cuvw} is real (as are the inner products on the right-hand side), we may eliminate all the terms on the right-hand side involving odd powers of $i$ by taking real parts.  The power of $i$ in each of the four remaining terms is now just a sign $\pm 1$ and can be absorbed into the $m_{1,\omega}(D)$ factor; by concatenating together four copies of $(\Omega,\mu)$ we may now obtain an expansion of the form \eqref{cuvw} in which all the $m_{j,\omega}(D)$ are real.  Finally, by multiplying $m_{1,\omega}(D)$ by a normalising constant we may take $(\Omega,\mu)$ to be a probability space rather than a finite measure space.  Combining all these manipulations, we conclude Theorem \ref{avg}.  Thus, it will suffice to show that every local cascade operator is a complex average of the Euler bilinear operator $B$.

\subsection{Second step: frequency localisation}

By again using $m_{1,\omega}(D)$ to absorb scalar factors, we see that if $C$ is a complex average of $C'$, then any complex scalar multiple of $C$ is a complex average of $C'$; also, by concatenating finite measure spaces together we see from Definition \ref{avg-def} that if $C_1, C_2$ are both complex averages of $C'$, then $C_1+C_2$ is an complex average of $C'$.  Thus the space of averages of the Euler bilinear operator is closed under finite linear combinations, and so it will suffice to show that every \emph{basic} local cascade operator is a complex average of the Euler bilinear operator.  

By decomposing the $\psi_j$, $j=1,2,3$ in \eqref{cuvw} into finitely many (complex-valued) pieces, we may replace the basic local cascade operator with the complexified basic local cascade operator $C$ defined by 
\begin{equation}\label{cuw-soft}
 \langle C(u,v), w \rangle = \sum_{n \in \Z} (1+\epsilon_0)^{5n/2} \langle u, \overline{\psi_{1,n}} \rangle \langle v, \overline{\psi_{2,n}} \rangle \langle w, \overline{\psi_{3,n}} \rangle,
\end{equation}
where each $\psi_j: \R^3 \to \C^3$ is now a Schwartz complex vector field with Fourier transform $\hat \psi_j$ supported on the ball $B(\xi^0_j, \epsilon_0^3)$ for some non-zero $\xi^0_j \in \R^3$ with magnitude comparable to $1$.  Henceforth we fix $C$ to be such a complexified basic local cascade operator.  Note that due to the presence of rotations and dilations in the definition of a complex average, we have the freedom to rotate each and dilate each of the $\xi^0_j$ as we please.  We shall select the normalisation
\begin{equation}\label{xi-split}
\begin{split}
\xi^0_1 &= (0,1,0) \\
\xi^0_2 &= (-1,-1,0) \\
\xi^0_3 &= (1,0,0)
\end{split}
\end{equation}
so that in particular
\begin{equation}\label{xisum}
\xi^0_1 + \xi^0_2 + \xi^0_3 = 0;
\end{equation}
see Figure \ref{fig:freq}.  The exact normalisation in \eqref{xi-split} is somewhat arbitrary, but the vanishing \eqref{xisum} is convenient for technical reasons; also, it is necessary to ensure that $\xi^0_1,\xi^0_2,\xi^0_3$ have distinct magnitudes in order to avoid a certain degeneracy later in the argument (namely, the failure of \eqref{c-nondeg} below).

\begin{figure} [t]
\centering
\includegraphics{./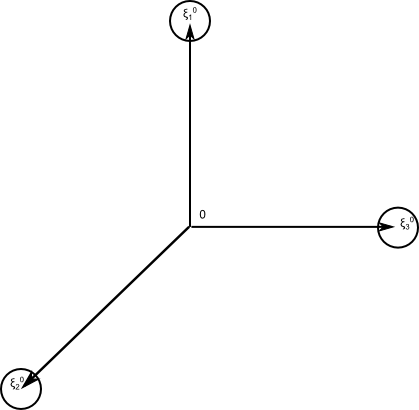}
\caption[Frequencies]{The frequencies $\xi^0_1,\xi^0_2,\xi^0_3$.    The frequency variables $\xi_j$ (and later on, the normalised frequencies $\tilde \xi_j$) will be localised to within $O(\epsilon_0^3)$ of $\xi_j^0$; this localisation is represented schematically in this figure by the circles around the reference frequencies $\xi_j^0$.}
\label{fig:freq}
\end{figure}

Once we perform this normalisation, we will have no further need of averaging over dilations, and will rely purely on Fourier and rotation averaging to obtain the required representation of the cascade operator $C$.

Note that ${\mathcal M}_0 \otimes \C$ is closed under composition, and from \eqref{seminorm} and the Leibniz rule we have the inequalities
$$ \| m(D) m'(D) \|_k \leq C_k \sum_{k_1=0}^k \sum_{k_2=0}^k \| m(D) \|_{k_1} \|m'(D)\|_{k_2}$$
for all natural numbers $k$ and all $m(D), m'(D) \in {\mathcal M}_0 \otimes \C$ (where $C_k$ depends only on $k$).  From this, Fubini's theorem, and H\"older's inequality, together with the observation that rotation and dilation operators normalise ${\mathcal M}_0 \otimes \C$, we have the following transitivity property: if $C_1$ is a complex average of $C_2$, and $C_2$ is a complex average of $C_3$, then $C_1$ is a complex average of $C_3$.  Our proof strategy will exploit this transitivity by passing from the Euler bilinear operator $B$ to the local cascade operator $C$ in stages, performing a sequence of averaging operations on $B$ to gradually make it resemble the local cascade operator.

\subsection{Third step: forcing frequency comparability}

We now use some differential operators of imaginary order to localise the frequencies $\xi_1,\xi_2,\xi_3$ to be comparable to each other in magnitude.  Let $\varphi: \R \to \R$ be a smooth function supported on $[-2,2]$ that equals one on $[-1,1]$.  We then define the function $\eta: (0,+\infty)^3 \to \R$ by
$$ \eta(N_1,N_2,N_3) := \prod_{j=2}^3 \varphi\left( \frac{1}{10\epsilon_0^2} \left( \frac{N_j}{N_1} - \frac{|\xi^0_j|}{|\xi^0_1|} \right) \right);$$
thus $\eta(|\xi_1|, |\xi_2|, |\xi_3|)$ is only non-vanishing when $\xi_1,\xi_2,\xi_3$ have comparable magnitude.

Note that $\eta(N_1,N_2,N_3) = \eta(1,e^{\log(N_2/N_1)},e^{\log(N_3/N_1)})$, and that $(x,y) \mapsto \eta(1,e^x,e^y)$ is a smooth compactly supported function.  By Fourier\footnote{One could also use Mellin inversion here if desired.} inversion, we thus have a representation of the form
\begin{align*}
 \eta(N_1,N_2,N_3) &= \eta( 1, e^{\log(N_2/N_1)}, e^{\log(N_3/N_1)}) \\
&= \int_\R \int_\R e^{it_2 \log(N_2/N_1)} e^{it_3 \log(N_3/N_1)} \phi(t_2,t_3)\ dt_2 dt_3\\
&= \int_\R \int_\R N_1^{-it_2-it_3} N_2^{it_2} N_3^{it_3} \phi(t_2,t_3)\ dt_2 dt_3
\end{align*}
for any $N_1,N_2,N_3 > 0$, where $\phi: \R^2 \to \C$ is a rapidly decreasing function, thus
$$ \int_\R \int_\R |\phi(t_2,t_3)| (1+|t_2|+|t_3|)^k\ dt_2 dt_3 <\infty$$
for all $k \geq 0$.  If we then define the bilinear operator $B_\eta: H^{10}_\df(\R^3) \otimes \C \times H^{10}_\df(\R^3) \otimes \C \to H^{10}_\df(\R^3)^* \otimes \C$ via duality by the formula
$$ \langle B_\eta(u,v), w \rangle :=
\int_\R \int_\R \left\langle B\left( D^{-it_2-it_3} u, D^{it_2} v \right), D^{it_3} w \right\rangle \phi(t_2,t_3)\ dt_2 dt_3$$
where $D^{it}$ is the Fourier multiplier
$$ \widehat{D^{it} u}(\xi) := |\xi|^{it} \hat u(\xi)$$
then $B_\eta$ is a complex average of $B$ (note that $\| D^{it} \|_k$ grows polynomially in $t$ for each $k$).  From \eqref{bwing} and Fubini's theorem (working first with Schwartz $u,v,w$ to justify all the exchange of integrals, and then taking limits) we see that\footnote{Note that we do not define $\eta(|\xi_1|,|\xi_2|,|\xi_3|)$ when one of $\xi_1,\xi_2,\xi_3$ vanishes, but this is only occurs on a set of measure zero and so there is no difficulty defining the integral.}
$$
\langle B_\eta(u,v), w \rangle = -\pi i \int_{\xi_1+\xi_2+\xi_3=0} \eta(|\xi_1|,|\xi_2|,|\xi_3|) \Lambda_{\xi_1,\xi_2,\xi_3}( \hat u(\xi_1), \hat v(\xi_2), \hat w(\xi_3) ).
$$
It thus suffices to show that $C$ is a complex average of $B_\eta$.

Next, we localise the frequency $\xi_1$ to the correct sequence of balls.  Let $\rho: \R^3 \to \C$ be the function
$$ \rho(\xi_1) := \sum_{n \in \Z} \varphi\left( \frac{1}{\epsilon_0^2} \left( (1+\epsilon_0)^{-n} \xi_1 - \xi_1^0 \right) \right)$$
with $\varphi$ defined as before; thus $\rho$ is supported on the union of the balls $(1+\epsilon_0)^n \cdot B( \xi_1^0, 2\epsilon_0^2)$ for $n \in \Z$.  Let $\rho(D)$ be the associated Fourier multiplier; this is easily checked to be a Fourier multiplier of order $0$.  By Definition \ref{avg-def}, the bilinear operator $B_{\eta,\rho}$ defined by
$$ B_{\eta,\rho}(u,v) := B_\eta( \rho(D) u, v )$$
is clearly a complex average of $B_\eta$, and so it suffices to show that $C$ is a complex average of $B_{\eta,\rho}$.

\subsection{Fourth step: localising to a single frequency scale}

Now we localise to a single scale.  Observe that we can decompose $B_{\eta,\rho} = -\pi i \sum_{n \in \Z} (1+\epsilon_0)^{5n/2} B_{\eta,\rho,n}$, where for each $n \in \Z$ we may define the operator $B_{\eta,\rho,n}: H^{10}_\df(\R^3) \otimes \C \times H^{10}_\df(\R^3) \otimes \C \to H^{10}_\df(\R^3)^* \otimes \C$ by the formula
\begin{align*}
\langle B_{\eta,\rho,n}(u,v), w \rangle &= (1+\epsilon_0)^{-5n/2} \int_{\xi_1+\xi_2+\xi_3=0} \varphi\left(\frac{1}{\epsilon_0^2} \left((1+\epsilon_0)^{-n} \xi_1 - \xi_1^0\right)\right) \\
&\quad \eta(|\xi_1|,|\xi_2|,|\xi_3|) \Lambda_{\xi_1,\xi_2,\xi_3}( \hat u(\xi_1), \hat v(\xi_2), \hat w(\xi_3))
\end{align*}
for $u,v,w \in H^{10}_\df(\R^3)$.
In a similar vein, we may use \eqref{cuw-soft} to decompose $C = \sum_{n \in \Z} (1+\epsilon_0)^{5n/2} C_n$, where
\begin{equation}\label{cnuv}
 \langle C_n(u,v), w \rangle = \langle u, \overline{\psi_{1,n}} \rangle \langle v, \overline{\psi_{2,n}} \rangle \langle w, \overline{\psi_{3,n}} \rangle.
\end{equation}
Observe (by using the change of variables $\tilde \xi := \xi / (1+\epsilon_0)^n$) that we have the scaling laws
$$
 \langle B_{\eta,\rho,n}(u,v), w \rangle  =  \left\langle B_{\eta,\rho,0}\left(\Dil_{(1+\epsilon_0)^{-n}} u, \Dil_{(1+\epsilon_0)^{-n}} v\right), \Dil_{(1+\epsilon_0)^{-n}} w \right\rangle 
$$
and similarly
$$
 \langle C_n(u,v), w \rangle  =  \left\langle C_0\left(\Dil_{(1+\epsilon_0)^{-n}} u, \Dil_{(1+\epsilon_0)^{-n}} v\right), \Dil_{(1+\epsilon_0)^{-n}} w \right\rangle 
$$
for any $n \in \Z$ and $u,v,w \in H^{10}_\df(\R^3) \otimes \C$.

Suppose for now that we can show that $C_0$ is a complex average of $B_{\eta,\rho,0}$ (without the use of dilation operators), thus
\begin{equation}\label{couv}
\langle C_0(u,v), w \rangle = \int_\Omega \left\langle B_{\eta,\rho,0}\left( m_{1,\omega}(D) \Rot_{R_{1,\omega}} u, m_{2,\omega}(D) \Rot_{R_{2,\omega}} v\right), m_{3,\omega}(D) \Rot_{R_3} w\right\rangle\ d\mu(\omega)
\end{equation}
for some $m_{j,\omega}$, $R_j$ ($j=1,2,3$), and $(\Omega,\mu)$ as in Definition \ref{avg-def}. From the definition of $C_0$ (and the support hypotheses on $\psi_1,\psi_2,\psi_3$), we see that we may smoothly localise each $m_{j,\omega}$ to the ball $B( \xi_j^0, O(\epsilon_0^3) )$ without loss of generality (and without destroying the fact that the $m_{i,\omega}(D)$ are Fourier multipliers of order $0$ that obey \eqref{integrab}).  If we then define
$$ m_{i,\omega,n}(\xi) := m_{i,\omega}( (1+\epsilon_0)^{-n} \xi )$$
and $\tilde m_{i,\omega} := \sum_{n \in \Z} m_{i,\omega,n}$, then the $m_{i,\omega}(D)$ are also Fourier multipliers of order $0$ obeying \eqref{integrab}, and the quantity
$$ \int_\Omega \left\langle B_{\eta,\rho}\left( m_{1,\omega,n_1}(D) \Rot_{R_{1,\omega}} u, m_{2,\omega,n_2}(D) \Rot_{R_{2,\omega}} v\right), m_{3,\omega,n_3}(D) \Rot_{R_3} w\right\rangle\ d\mu(\omega)$$
is equal to $-\pi i \langle C_{n_1}(u,v),w\rangle$ when $n_1=n_2=n_3$, and vanishing otherwise if $\epsilon_0$ is small enough (thanks to the support properties of $m_{i,\omega,n}$, $\eta$ and $\rho$).  Summing, we see that
$$ \langle C(u,v), w \rangle = \frac{1}{-\pi i} \int_\Omega \left\langle B_{\eta,\rho}\left( \tilde m_{1,\omega}(D) \Rot_{R_{1,\omega}} u, \tilde m_{2,\omega}(D) \Rot_{R_{2,\omega}} v\right), \tilde m_{3,\omega}(D) \Rot_{R_3} w\right\rangle\ d\mu(\omega)$$
(as before, one can work first with Schwartz $u,v,w$, and then take limits), thus demonstrating that $C$ is a complex average of $B_{\eta,\rho}$ as desired (absorbing the $\frac{1}{-\pi i}$ factor into $m_{1,\omega}$).  Thus, to finish the proof of Theorem \ref{avg}, it suffices to show that $C_0$ is a complex average of $B_{\eta,\rho,0}$.

\subsection{Fifth step: extracting the symbol}

We have reduced matters to the task of obtaining a representation \eqref{couv} for $\langle C_0(u,v), w \rangle$.  By \eqref{cnuv} and Plancherel's theorem, we may expand $\langle C_0(u,v),w\rangle$ as
$$
\int_{\R^3} \int_{\R^3} \int_{\R^3} \left(u(\xi_1) \cdot \overline{\hat \psi_1(\xi_1)}\right) \left(v(\xi_2) \cdot \overline{\hat \psi_2(\xi_2)}\right) \left(w(\xi_3) \cdot \overline{\hat \psi_3(\xi_3)}\right)\ d\xi_1 d\xi_2 d\xi_3$$
which we rewrite as
\begin{equation}\label{expansion}
\int_{\R^3} \int_{\R^3} \int_{\R^3} \left(u(\xi_1) \otimes v(\xi_2) \otimes w(\xi_3)\right) \cdot \left(\overline{\hat \psi_1(\xi_1)} \otimes \overline{\hat \psi_2(\xi_2)} \otimes \overline{\hat \psi_3(\xi_3)}\right)\ d\xi_1 d\xi_2 d\xi_3
\end{equation}
where $\cdot$ here denotes the standard complex-bilinear inner product on the $3^3=27$-dimensional complex vector space $\C^3 \otimes \C^3 \otimes \C^3$.
Meanwhile, the right-hand side of \eqref{couv} can be expanded as
\begin{align*}
&\int_\Omega \int_{\xi_1+\xi_2+\xi_3=0} m_{1,\omega}(\xi_1) m_{2,\omega}(\xi_2) m_{3,\omega}(\xi_3) \varphi\left(\frac{1}{\epsilon_0^2} \left(\xi_1 - \xi_1^0\right)\right) \eta(|\xi_1|,|\xi_2|,|\xi_3|) \times \\
&\quad 
\Lambda_{\xi_1,\xi_2,\xi_3}\left( R_{1,\omega} \hat u(R_{1,\omega}^{-1} \xi_1), R_{2,\omega} \hat v(R_{2,\omega}^{-1} \xi_2), R_{3,\omega} \hat w(R_{3,\omega}^{-1} \xi_3)\right)\ d\mu(\omega).
\end{align*}
Rewriting the integral $\int_{\xi_1+\xi_2+\xi_3=0}$ (by a slight abuse\footnote{If one wanted to be more formally rigorous here, one could replace the Dirac delta function $\delta(\xi)$ here with an approximation to the identity $\frac{1}{\eps^3} \phi(\frac{\xi}{\eps})$ for some smooth compactly supported function $\phi: \R^3 \to \R$ of total mass one, and then add a limit symbol $\lim_{\eps \to 0}$ outside of the integration.}  of notation) as $\int_{\R^3} \int_{\R^3} \int_{\R^3} \delta(\xi_1+\xi_2+\xi_3)\ d\xi_1 d\xi_2 d\xi_3$, where $\delta$ is the Dirac delta function on $\R^3$, and then applying the change of variables $\xi_j \mapsto R_{j,\omega}^{-1} \xi_j$, we may rewrite the above expression as
\begin{align*}
&\int_{\R^3} \int_{\R^3} \int_{\R^3} \int_\Omega \delta( R_{1,\omega} \xi_1 + R_{2,\omega} \xi_2 + R_{3,\omega} \xi_3 )
m_{1,\omega}(R_{1,\omega} \xi_1) m_{2,\omega}(R_{2,\omega} \xi_2) m_{3,\omega}(R_{3,\omega} \xi_3)\\
&\quad \varphi\left(\frac{1}{\epsilon_0^2} \left(R_{1,\omega} \xi_1 - \xi_1^0\right)\right) \eta(|\xi_1|, |\xi_2|,|\xi_3|) \times \\
&\quad 
\Lambda_{R_{1,\omega} \xi_1, R_{2,\omega} \xi_2, R_{3,\omega} \xi_3}( R_{1,\omega} \hat u(\xi_1), R_{2,\omega} \hat v(\xi_2), R_{3,\omega} \hat w(\xi_3)) \\
&\quad d\mu(\omega) d\xi_1 d\xi_2 d\xi_3.
\end{align*}
Comparing this with the expansion \eqref{expansion} of the left-hand side of \eqref{couv}, we claim that our task is now reduced to that of constructing a finite measure space $(\Omega,\mu)$ and measurable functions $R_{i,\cdot}: \Omega \to \SO(3)$, $m_{i,\cdot}(D): \Omega \to {\mathcal M}_0 \otimes \C$, and $F: \Omega \to \C^3\otimes \C^3 \otimes \C^3$ obeying \eqref{integrab} with $F$ bounded, such that we have the identity
\begin{equation}\label{bigmess}
\begin{split}
X_1 \otimes X_2 \otimes X_3 &= \int_\Omega \delta( R_{1,\omega} \xi_1 + R_{2,\omega} \xi_2 + R_{3,\omega} \xi_3 ) F(\omega)
m_{1,\omega}(R_{1,\omega} \xi_1) m_{2,\omega}(R_{2,\omega} \xi_2) m_{3,\omega}(R_{3,\omega} \xi_3)\\
&\quad \varphi\left(\frac{1}{\epsilon_0^2} \left(R_{1,\omega} \xi_1 - \xi_1^0\right)\right) \eta(|\xi_1|,|\xi_2|,|\xi_3|) \times \\
&\quad 
\Lambda_{R_{1,\omega} \xi_1, R_{2,\omega} \xi_2, R_{3,\omega} \xi_3}( R_{1,\omega} X_1, R_{2,\omega} X_2, R_{3,\omega} X_3)\ d\mu(\omega) 
\end{split}
\end{equation}
for all $\xi_j \in B(\xi_j^0,\epsilon_0^3)$ and $X_j \in \xi_j^\perp$, $j=1,2,3$.  Indeed, if one applies \eqref{bigmess} with $(X_1,X_2,X_3) = (\hat u(\xi_1), \hat v(\xi_2),\hat w(\xi_3))$, contracts the resulting tensor against 
$\overline{\hat \psi_1(\xi_1)} \otimes \overline{\hat \psi_2(\xi_2)} \otimes \overline{\hat \psi_3(\xi_3)}$ and then integrates in $\xi_1,\xi_2,\xi_3$ (absorbing the $\overline{\hat \psi_i}$ and $F$ factors into the $m_{j,\omega}$ terms, after first breaking $F$ into $27$ components), we obtain the desired decomposition \eqref{couv} (after replacing $\Omega$ with the disjoint union of $27$ copies of $\Omega$ to accommodate the contributions from the various components of $F$).  As before, one may wish to first work with Schwartz $u,v,w$ to justify the interchanges of integrals, and then take limits at the end of the argument.

\subsection{Sixth step: simplifying the weights}

It remains to obtain the decomposition \eqref{bigmess}.  We will restrict attention to those rotations $R_{j,\omega}$ which almost fix $\xi_j^0$ in the sense that
\begin{equation}\label{roxi}
 |R_{j,\omega} \xi_j^0 - \xi_j^0| < \epsilon_0^2/2
\end{equation}
for $j=1,2,3$.  With this restriction, the weight $\varphi(\frac{1}{\epsilon_0^2} (R_{1,\omega} \xi_1 - \xi_1^0)) \eta(|\xi_1|,|\xi_2|,|\xi_3|)$ is equal to one (for $\epsilon_0$ small enough), and so \eqref{bigmess} simplifies to
\begin{equation}\label{bigmess-2}
\begin{split}
X_1 \otimes X_2 \otimes X_3 &= \int_\Omega \delta( R_{1,\omega} \xi_1 + R_{2,\omega} \xi_2 + R_{3,\omega} \xi_3 ) F(\omega)
m_{1,\omega}(R_{1,\omega} \xi_1) m_{2,\omega}(R_{2,\omega} \xi_2) m_{3,\omega}(R_{3,\omega} \xi_3) \times \\
&\quad 
\Lambda_{R_{1,\omega} \xi_1, R_{2,\omega} \xi_2, R_{3,\omega} \xi_3}( R_{1,\omega} X_1, R_{2,\omega} X_2, R_{3,\omega} X_3)\ d\mu(\omega). 
\end{split}
\end{equation}
Let 
$$ \Sigma \subset \SO(3) \times \SO(3) \times \SO(3) \times \R^3 \times \R^3 \times \R^3$$ 
denote the set of sextuples $(R_1,R_2,R_3,\xi_1,\xi_2,\xi_3)$ where $R_j \in \SO(3)$ with $|R_j \xi_j^0 -\xi_j^0| < \epsilon_0^2/4$ for $j=1,2,3$, and $\xi_i \in B( \xi_i^0, 2\epsilon_0^3 )$ for $i=1,2,3$ with
$$ R_1 \xi_1 + R_2 \xi_2 + R_3 \xi_3 = 0.$$
For $\epsilon_0$ small enough, we see from the implicit function theorem that this is a smooth manifold (of dimension $15$), and that for any choice of $\xi_j \in B( \xi_j^0, 2\epsilon_0^3 )$ for $j=1,2,3$, the slice
$$ \Sigma_{\xi_1,\xi_2,\xi_3} := \{ (R_1,R_2,R_3): (R_1,R_2,R_3,\xi_1,\xi_2,\xi_3) \in \Sigma \}$$
is a smooth manifold (of dimension $6$).

Suppose that we can find a smooth function
$$ F': \Sigma \to \C^3 \otimes \C^3 \otimes \C^3$$
such that we have the identity
\begin{equation}\label{bigmess-3}
\begin{split}
X_1 \otimes X_2 \otimes X_3 &= \int_{\Sigma_{\xi_1,\xi_2,\xi_3}} F'( R_1,R_2,R_3, \xi_1,\xi_2,\xi_3) \times \\
&\quad 
\Lambda_{R_1 \xi_1, R_2 \xi_2, R_3 \xi_3}( R_1 X_1, R_2 X_2, R_3 X_3 )\ d\sigma(R_1,R_2,R_3)
\end{split}
\end{equation}
whenever $\xi_j \in B(\xi_j^0,\epsilon_0^3)$ and $X_j \in \xi_j^\perp$, $j=1,2,3$, where $d\sigma(R_1,R_2,R_3)$ is surface measure on $\Sigma_{\xi_1,\xi_2,\xi_3}$.  By a change of variables, this can be rewritten as
\begin{align*}
X_1 \otimes X_2 \otimes X_3 &= \int_{U} \delta( R_1 \xi_1 + R_2 \xi_2 + R_3 \xi_3 )
\tilde F'( R_1,R_2,R_3, \xi_1,\xi_2,\xi_3) \times \\
&\quad 
\Lambda_{R_1 \xi_1, R_2 \xi_2, R_3 \xi_3}( R_1 X_1, R_2 X_2, R_3 X_3 )\ dR_1 dR_2 dR_3
\end{align*}
where $\tilde F': \Sigma \to \C^3 \otimes \C^3 \otimes \C^3$ is another smooth function ($F'$ multiplied by some Jacobian factors) and $dR_1,dR_2,dR_3$ denote Haar measure on $\SO(3)$, with
$$ U := \{ (R_1,R_2,R_3) \in \SO(3)^3: |R_j \xi_j^0 -\xi_j^0| < \epsilon_0^2/4 \hbox{ for } j=1,2,3 \}.$$
We may smoothly extend $\tilde F'$ to become a smooth compactly supported function on the larger domain
$$ U \times B(\xi_1^0,2\epsilon_0^3) \times B(\xi_2^0,2\epsilon_0^3) \times B(\xi_3^0,2\epsilon_0^3).$$
By a Fourier expansion and another smooth truncation, we may thus write
$$ \tilde F'( R_1,R_2,R_3, \xi_1,\xi_2,\xi_3) = \int_{\R^3 \times \R^3 \times \R^3} f(R_1,R_2,R_3,x_1,x_2,x_3) \prod_{j=1}^3 (e^{2\pi i x_j \cdot \xi_j} m_j(\xi_j))\ dx_1 dx_2 dx_3$$
whenever $(R_1,R_2,R_3) \in U$ and $\xi_j \in B(\xi_j^0,\epsilon^3)$, where $m_j$ is a smooth function supported on $B(\xi_j^0, 3\epsilon_0^3)$, and $f: U \times \R^3 \times \R^3 \times \R^3 \to \C^3 \times \C^3 \times \C^3$ is rapidly decreasing in $x_1,x_2,x_3$, uniformly in $R_1,R_2,R_3$.  Inserting this expansion into \eqref{bigmess-3}, we obtain the desired expansion \eqref{bigmess-2} (taking $\Omega$ to be $U \times \R^3 \times \R^3 \times \R^3$, with $\mu$ being Haar measure weighted by $|f|$, choosing the $m_{j,\omega}$ to be an appropriately rotated version of $m_j$, twisted by a plane wave, and with $F := f/|f|$).

\subsection{Seventh step: restricting to rotations around fixed axes}

It remains to find a smooth function $F'$ for which one has the required representation \eqref{bigmess-3}.  Observe from \eqref{xisum} and the implicit function theorem (for $\epsilon_0$ small enough) that if $\xi_j \in B(\xi_j^0,\epsilon_0^3)$ for $j=1,2,3$, one can find rotations $R_{j,\xi_1,\xi_2,\xi_3} \in \SO(3)$ for $j=1,2,3$ with
$$
 R_{j,\xi_1,\xi_2,\xi_3} = I +  O(\epsilon_0^3) $$
(where $I$ is the identity matrix) and the tuple $(\tilde \xi_1,\tilde \xi_2, \tilde \xi_3)$
defined by
\begin{equation}\label{txdef}
 \tilde \xi_j := R_{j,\xi_1,\xi_2,\xi_3} \xi_j
\end{equation}
lives in the space
\begin{equation}\label{gamma-def}
\Gamma := \{ (\eta_1,\eta_2,\eta_3) \in B(\xi_1^0, C\epsilon_0^3) \times B(\xi_2^0, C\epsilon_0^3) \times B(\xi_3^0, C\epsilon_0^3): \eta_1 + \eta_2 + \eta_3 = 0 \}
\end{equation}
for some absolute constant $C$ independent of $\epsilon_0$.  Furthermore, from the implicit function theorem we may make $R_{j,\xi_1,\xi_2,\xi_3}$ and hence $\tilde \xi_1, \tilde \xi_2, \tilde \xi_3$ depend smoothly on $\xi_1,\xi_2,\xi_3$ in the indicated domain if $\epsilon_0$ is small enough.  If we let $R_\xi^\theta \in \SO(3)$ denote the rotation by $\theta$ around the axis $\xi$ using the right-hand rule\footnote{More precisely, if $u$ is the unit vector $u = \xi/|\xi|$, we define $R_\xi^\theta X := (X \cdot u) u + \cos(\theta) (X - (X \cdot u) u) + \sin(\theta) u \times X$.} for any $\xi \in \R^3 \backslash \{0\}$ and $\theta \in \R/2\pi \Z$, we then see that the six-dimensional manifold
\begin{equation}\label{slor}
\{ ( S R_{\tilde \xi_1}^{\theta_1} R_{1,\xi_1,\xi_2,\xi_3}, S R_{\tilde \xi_2}^{\theta_2} R_{2,\xi_1,\xi_2,\xi_3}, S R_{\tilde \xi_3}^{\theta_3} R_{3,\xi_1,\xi_2,\xi_3}): S \in \SO(3); \theta_1,\theta_2,\theta_3 \in \R/2\pi \Z; \| S - I \| \leq \epsilon^2/8 \}
\end{equation}
(where $\| \|$ denotes the operator norm) is an open submanifold of $\Sigma_{\xi_1,\xi_2,\xi_3}$.  Also, if we use the ansatz
$$ (R_1,R_2,R_3) = ( S R_{\tilde \xi_1}^{\theta_1} R_{1,\xi_1,\xi_2,\xi_3}, S R_{\tilde \xi_2}^{\theta_2} R_{2,\xi_1,\xi_2,\xi_3}, S R_{\tilde \xi_3}^{\theta_3} R_{3,\xi_1,\xi_2,\xi_3})$$
then from \eqref{lambda-def} we see that
$$ \Lambda_{R_1 \xi_1, R_2 \xi_2, R_3\xi_3}(R_1 X_1, R_2 X_2, R_3 X_3) = \Lambda_{\tilde \xi_1, \tilde \xi_2, \tilde \xi_3}\left( 
R_{\tilde \xi_1}^{\theta_1} R_{1,\xi_1,\xi_2,\xi_3} X_1, 
R_{\tilde \xi_2}^{\theta_2} R_{2,\xi_1,\xi_2,\xi_3} X_2,
R_{\tilde \xi_3}^{\theta_2} R_{2,\xi_1,\xi_2,\xi_3} X_3 \right)$$
for $X_j \in \xi_j^\perp$, $j=1,2,3$.
Thus, if we can find a smooth function 
$$F'': \R/2\pi\Z \times \R/2\pi\Z \times \R/2\pi\Z \times \Gamma \to \C^3 \otimes \C^3 \otimes \C^3$$
with the property that
\begin{equation}\label{bigmess-4}
\begin{split}
Y_1 \otimes Y_2 \otimes Y_3 &= \int_{ \R/2\pi\Z \times \R/2\pi\Z \times \R/2\pi\Z} F''( \theta_1,\theta_2,\theta_3, \eta_1, \eta_2, \eta_3 ) \times \\
&\quad 
\Lambda_{\eta_1,\eta_2,\eta_3}( R_{\eta_1}^{\theta_1} Y_1, R_{\eta_2}^{\theta_2} Y_2, R_{\eta_3}^{\theta_3} Y_3 )\ d\theta_1 d\theta_2 d\theta_3
\end{split}
\end{equation}
for all $(\eta_1,\eta_2,\eta_3) \in \Gamma$ and $Y_j \in \eta_j^\perp$ for $j=1,2,3$, then by substituting $Y_j = R_{j,\xi_1,\xi_2,\xi_3} X_j$ and $\eta_j = \tilde \xi_j$, we have
\begin{align*}
R_{1,\xi_1,\xi_2,\xi_3} X_1 \otimes R_{2,\xi_1,\xi_2,\xi_3} X_2 \otimes R_{3,\xi_1,\xi_2,\xi_3} X_3 &= \int_{\R/2\pi\Z \times \R/2\pi\Z \times \R/2\pi\Z} F''( \theta_1,\theta_2,\theta_3, \tilde \xi_1, \tilde \xi_2, \tilde \xi_3 ) \times \\
&\quad 
\Lambda_{R_1 \xi_1, R_2 \xi_2, R_3 \xi_3}( R_1 X_1, R_2 X_2, R_3 X_3 )\ d\theta_1 d\theta_2 d\theta_3
\end{align*}
for any $\xi_j \in B(\xi_j^0,\epsilon^3)$ and $X_j \in \xi_j^\perp$.  Averaging this over all $S \in \SO(3)$ with $\| S - I \| \leq \epsilon^2/8$, and inverting the tensored rotation operator $R_{1,\xi_1,\xi_2,\xi_3} \otimes R_{2,\xi_1,\xi_2,\xi_3} \otimes R_{3,\xi_1,\xi_2,\xi_3}$, we obtain a representation of the desired form \eqref{bigmess-3}.  Thus it suffices to find a smooth function $F''$ with the representation \eqref{bigmess-4}.

\subsection{Eighth step: parameterising in terms of rotation angles}

Note that if $(\eta_1,\eta_2,\eta_3) \in \Gamma$, then the vectors $\eta_1,\eta_2,\eta_3$ are coplanar, and so we may find a unit vector $n = n(\eta_1,\eta_2,\eta_3)$ orthogonal to all of the $\eta_i$; by the implicit function theorem we may ensure that $n$ depends smoothly on $\eta_1,\eta_2,\eta_3$.  From \eqref{xi-split} we may normalise $n$ to be close to $(0,0,1)$ (as opposed to close to $(0,0,-1)$). To prove \eqref{bigmess-4}, it suffices by homogeneity to consider the case when $Y_1,Y_2,Y_3$ are unit vectors; as $Y_j \in \eta_j^\perp$, this means that we may write $Y_j = R_{\eta_j}^{\alpha_j} n$ for some $\alpha_j \in \R/2\pi\Z$ for all $j=1,2,3$.  We may thus rewrite \eqref{bigmess-4} as the claim that
\begin{equation}\label{bigmess-5}
\begin{split}
R_{\eta_1}^{\alpha_1} n \otimes R_{\eta_2}^{\alpha_2} n \otimes R_{\eta_3}^{\alpha_3} n &= \int_{ \R/2\pi\Z \times \R/2\pi\Z \times \R/2\pi\Z} F''( \theta_1,\theta_2,\theta_3, \eta_1, \eta_2, \eta_3 ) \times \\
&\quad \Theta_{\eta_1,\eta_2,\eta_3}( \theta_1 + \alpha_1, \theta_2 + \alpha_2, \theta_3 + \alpha_3 )\ d\theta_1 d\theta_2 d\theta_3
\end{split}
\end{equation}
for all $(\eta_1,\eta_2,\eta_3) \in \Gamma$ and $\alpha_1,\alpha_2,\alpha_3 \in \R/2\pi\Z$, where $\Theta_{\eta_1,\eta_2,\eta_3}: (\R/2\pi\Z)^3 \to \R$ is the function
\begin{equation}\label{theta-def} \Theta_{\eta_1,\eta_2,\eta_3}(\gamma_1,\gamma_2,\gamma_3) :=
\Lambda_{\eta_1,\eta_2,\eta_3}( R_{\eta_1}^{\gamma_1} n, R_{\eta_2}^{\gamma_2} n, R_{\eta_3}^{\gamma_3} n ).
\end{equation}
Note that for fixed $\eta_1,\eta_2,\eta_3$ and each $j=1,2,3$, each of the three coefficients of $R_{\eta_j}^{\alpha_j} n \in \R^3$ is a complex linear combination of $e^{-i \alpha_j}$ and $e^{ i \alpha_j}$, with coefficients depending smoothly on $\eta_1,\eta_2,\eta_3$.  Thus to show \eqref{bigmess-5}, it suffices to obtain a representation
\begin{equation}\label{bigmess-6}
\begin{split}
e^{i (\sigma_1 \alpha_1 + \sigma_2 \alpha_2 + \sigma_3 \alpha_3)} &= \int_{ \R/2\pi\Z \times \R/2\pi\Z \times \R/2\pi\Z} F_{\sigma_1,\sigma_2,\sigma_3}( \theta_1,\theta_2,\theta_3, \eta_1, \eta_2, \eta_3 ) \times \\
&\quad \Theta_{\eta_1,\eta_2,\eta_3}( \theta_1 + \alpha_1, \theta_2 + \alpha_2, \theta_3 + \alpha_3 )\ d\theta_1 d\theta_2 d\theta_3
\end{split}
\end{equation}
for all eight choices of sign patterns $(\sigma_1,\sigma_2,\sigma_3) \in \{-1,+1\}^3$, and some smooth functions
$$ F_{\sigma_1,\sigma_2,\sigma_3}: \R/2\pi\Z \times \R/2\pi\Z \times \R/2\pi\Z \times \Gamma \to \C.$$

\subsection{Ninth step: Fourier inversion and checking a non-degeneracy condition}

By \eqref{theta-def}, \eqref{lambda-def} and decomposing $R_{\eta_j}^{\gamma_j} n$ into a complex linear combination of $e^{-i \gamma_j}$ and $e^{i \gamma_j}$, we see that for fixed $\eta_1,\eta_2,\eta_3$, we may expand 
\begin{equation}\label{cdoc}
 \Theta_{\eta_1,\eta_2,\eta_3}( \gamma_1,\gamma_2,\gamma_3 ) = \sum_{(\sigma_1,\sigma_2,\sigma_3) \in \{-1,+1\}^3} c_{\sigma_1,\sigma_2,\sigma_3}(\eta_1,\eta_2,\eta_3) e^{i (\sigma_1 \gamma_1 + \sigma_2 \gamma_2 + \sigma_3 \gamma_3)} 
 \end{equation}
for some smooth coefficients $c_{\sigma_1,\sigma_2,\sigma_3}: \Gamma \to \C$.  From the Fourier inversion formula on $(\R/2\pi\Z)^3$, we thus obtain \eqref{bigmess-6} as long as we have the non-degeneracy condition
\begin{equation}\label{c-nondeg}
c_{\sigma_1,\sigma_2,\sigma_3}(\eta_1,\eta_2,\eta_3) \neq 0
\end{equation}
for all $(\eta_1,\eta_2,\eta_3) \in \Gamma$ and all choices of signs $(\sigma_1,\sigma_2,\sigma_3) \in \{-1,+1\}^3$.

For this, we finally need to use the precise form of $\Lambda$.  From \eqref{theta-def}, \eqref{lambda-def} we can write $\Theta_{\eta_1,\eta_2,\eta_3}( \gamma_1,\gamma_2,\gamma_3 )$ as
$$
(R_{\eta_1}^{\gamma_1} n \cdot \eta_2) (R_{\eta_2}^{\gamma_2} n \cdot R_{\eta_3}^{\gamma_3} n) + (R_{\eta_2}^{\gamma_2} n \cdot \eta_1) (R_{\eta_1}^{\gamma_1} n \cdot R_{\eta_3}^{\gamma_3} n)$$
which we expand further as
\begin{align*}
& \sin(\gamma_1) \left(\left(u_1 \times n\right) \cdot \eta_2\right) \left(\cos(\gamma_2) \cos(\gamma_3) + \left(u_2 \cdot u_3\right) \sin(\gamma_2) \sin(\gamma_3)\right)\\
& \quad +
\sin(\gamma_2) \left(\left(u_2 \times n\right) \cdot \eta_1\right) \left(\cos(\gamma_1) \cos(\gamma_3) + \left(u_1 \cdot u_3\right) \sin(\gamma_1) \sin(\gamma_3)\right)
\end{align*}
where $u_i := \eta_i/|\eta_i|$.  Expanding 
\begin{equation}\label{sin}
\sin(\gamma) = \frac{1}{2i} (e^{i\gamma} - e^{-i\gamma}) = \frac{1}{2i} \sum_{\sigma = \pm 1} \sigma e^{i \sigma \gamma}
\end{equation}
and 
$$\cos(\gamma) = \frac{1}{2} (e^{i\gamma} + e^{-i\gamma}) = \frac{1}{2} \sum_{\sigma = \pm 1} e^{i \sigma \gamma}$$ 
we have
$$ \cos(\gamma_2) \cos(\gamma_3) = \frac{1}{4} \sum_{\sigma_2,\sigma_3 = \pm 1} e^{i (\sigma_2 \gamma_2 + \sigma_3 \gamma_3)}$$
and
$$ \sin(\gamma_2) \sin(\gamma_3) = - \frac{1}{4} \sum_{\sigma_2,\sigma_3 = \pm 1} \sigma_2 \sigma_3 e^{i (\sigma_2 \gamma_2 + \sigma_3 \gamma_3)}$$
(the minus sign arising here from the $i$ in the denominator in \eqref{sin}).  Similarly with $\sigma_2, \gamma_2$ replaced by $\sigma_1, \gamma_1$ respectively.  Inserting these expansions and comparing with \eqref{cdoc}, we conclude that
$$ c_{\sigma_1,\sigma_2,\sigma_3}(\eta_1,\eta_2,\eta_3) =\frac{1}{8i}
\left( \left(\left(u_1 \times n\right) \cdot \eta_2\right) \sigma_1 \left(1 - \left(u_2 \cdot u_3\right) \sigma_2 \sigma_3 \right) + 
\left(\left(u_2 \times n\right) \cdot \eta_1\right) \sigma_2 \left(1 - \left(u_1 \cdot u_3\right) \sigma_1 \sigma_3 \right) \right).$$
But by \eqref{gamma-def}, $\eta_j = \xi_j^0 + O(\epsilon_0^3)$, which from \eqref{xi-split} implies that
\begin{align*}
(u_1 \cdot n) \cdot \eta_2 &= -1 + O(\epsilon_0) \\
(u_2 \cdot n) \cdot \eta_1 &= \frac{1}{\sqrt{2}} + O(\epsilon_0) \\
u_2 \cdot u_3 &= -\frac{1}{\sqrt{2}} + O(\epsilon_0) \\
u_1 \cdot u_3 &= O(\epsilon_0)
\end{align*}
and thus
$$ c_{\sigma_1,\sigma_2,\sigma_3}(\eta_1,\eta_2,\eta_3) = -\frac{1}{8i} ( - \sigma_1 + \frac{1}{\sqrt{2}} \sigma_2 + \frac{1}{\sqrt{2}} \sigma_1 \sigma_2 \sigma_3 ) + O(\epsilon_0).$$
As $- \sigma_1 + \frac{1}{\sqrt{2}} \sigma_2 + \frac{1}{\sqrt{2}} \sigma_1 \sigma_2 \sigma_3$ is bounded away from zero for $\sigma_1,\sigma_2,\sigma_3 \in \{-1,+1\}$, the non-degeneracy claim \eqref{c-nondeg} follows for $\epsilon_0$ small enough.  This concludes the proof of Theorem \ref{avg}.

\begin{remark}  The averaging over dilation operators was only needed to place the base frequencies $\xi^0_1,\xi^0_2,\xi^0_3$ in a location where the non-degeneracy condition \eqref{c-nondeg} held.  This condition in fact holds for generic $\xi^0_1,\xi^0_2,\xi^0_3$, and so even without the use of averaging over dilations it should be the case that \emph{most} local cascade operators are expressible as averaged Euler operators.  As there is some freedom to select the local cascade operators in Theorem \ref{blowup}, this should still be enough to establish a slightly stronger version of Theorem \ref{main} in which one does not use any averaging over dilations.  We will however not pursue this matter here.
\end{remark}
	 
\section{Reduction to an infinite-dimensional ODE}

We now begin the proof of Theorem \ref{blowup}.  We fix $0 < \epsilon_0 < 1$; henceforth we allow all implied constants in the $O()$ notation to depend on $\epsilon_0$.  We suppose that Theorem \ref{blowup} failed, so that one can always construct\footnote{This hypothesis of global existence is technically convenient so that we may assume some \emph{a priori} regularity on our solution, namely $H^{10}$.  Alternatively, one could develop an $H^{10}$ local well-posedness theory for \eqref{system}, and unconditionally construct a mild $H^{10}$ solution that blows up in a finite time by a minor modification of the arguments in this paper; we leave the details of this variant of the argument to the interested reader.}
 global mild solutions to any initial value problem of the form \eqref{system} with $C$ a local cascade operator and $u_0$ a Schwartz divergence-free vector field.

To apply this hypothesis, we need to construct a local cascade operator $C$ and an initial velocity field $u_0$.  We need a dimension parameter $m$, which will be a positive integer (eventually we will set $m=4$).  Let $B_1,\dots,B_m$ be balls in the annulus $\{ \xi \in \R^3: 1 < |\xi| \leq 1+\epsilon_0/2 \}$, chosen so that the $2m$ balls $B_1,\dots,B_m,-B_1,\dots,-B_m$ are all disjoint. For each $i=1,\dots,m$, let $\psi_i \in H^{10}_\df(\R^3)$ be Schwartz with Fourier transform real-valued and supported\footnote{We need to have $\hat \psi_i$ supported on $B_i \cup -B_i$ rather than just $B_i$, otherwise we could not require $\psi_i$ to be real.} on $B_i \cup -B_i$, normalised so that $\|\psi_i\|_{L^2(\R^3)} = 1$.  


As in Definition \ref{cascdef}, we define the rescaled functions
$$ \psi_{i,n}(x) := (1+\epsilon_0)^{3n/2} \psi_i\left( (1+\epsilon_0)^n x \right)$$
for $i=1,\dots,m$ and $n \in \Z$, and then define the local cascade operator $C$ by the formula
\begin{equation}\label{cudef}
\begin{split}
C(u,v) &:= \sum_{n \in \Z} \sum_{(i_1,i_2,i_3,\mu_1,\mu_2,\mu_3) \in \{1,\dots,m\}^3 \times S}\\
&\quad
\alpha_{i_1,i_2,i_3,\mu_1,\mu_2,\mu_3} (1+\epsilon_0)^{5n/2} \langle u, \psi_{i_1,n+\mu_1} \rangle \langle v, \psi_{i_2,n+\mu_2} \rangle \psi_{i_3,n+\mu_3} 
\end{split}
\end{equation}
for $u,v \in H^{10}_\df(\R^3)$,
where $S \subset \Z^3$ is the four-element set 
$$ S := \{ (0,0,0), (1,0,0), (0,1,0), (0,0,1) \},$$
the $\alpha_{i_1,i_2,i_3,\mu_1,\mu_2,\mu_3} \in \R$ are structure constants to be chosen later, and which obey the symmetry condition
\begin{equation}\label{symmetry}
 \alpha_{i_1,i_2,i_3,\mu_1,\mu_2,\mu_3} = \alpha_{i_2,i_1,i_3,\mu_2,\mu_1,\mu_3}
\end{equation}
for $(i_1,i_2,i_3,\mu_1,\mu_2,\mu_3) \in S$.  From Definition \ref{cascdef} we see that $C$ is indeed a local cascade operator (it is a sum of $|S| = 7m^3$ basic local cascade operators), and \eqref{symmetry} ensures that $C$ is symmetric.  Clearly
\begin{align*}
 \langle C(u,u),u \rangle &= \sum_{n \in \Z} \sum_{(i_1,i_2,i_3,\mu_1,\mu_2,\mu_3) \in \{1,\dots,m\}^3 \times S}\\
&\quad
\alpha_{i_1,i_2,i_3,\mu_1,\mu_2,\mu_3} (1+\epsilon_0)^{5n/2} \langle u, \psi_{i_1,n+\mu_1} \rangle \langle u, \psi_{i_2,n+\mu_2} \rangle \langle u, \psi_{i_3,n+\mu_3} \rangle
\end{align*}
for $u \in H^{10}_\df(\R^3)$.  From this, we see that the cancellation condition \eqref{cancelled} will follow from the cancellation conditions 
\begin{equation}\label{cyclic}
\sum_{\{a,b,c\} = \{1,2,3\}} \alpha_{i_a,i_b,i_c,\mu_a,\mu_b,\mu_c} = 0
\end{equation}
for all $i_1,i_2,i_3 \in \{1,\dots,m\}$ and $(\mu_1,\mu_2,\mu_3) \in S$.

We will select initial data $u_0$ of the form\footnote{Our analysis is in fact somewhat stable, and will also apply if $u_0$ is a sufficiently small perturbation of $\psi_{1,n_0}$ in the $H^{10}_\df$ norm, thus creating blowup  for a non-empty open set of initial data in smooth topologies, although this open set is rather small and is also quite far from the origin (due to the large nature of $n_0$).  We leave the details of this modification to the interested reader.}
\begin{equation}\label{upsi}
 u_0 := \psi_{1,n_0}
\end{equation}
for some sufficiently large\footnote{Alternatively (and equivalently), one could hold $n_0$ fixed (e.g. $n_0=0$), and rescale the viscosity $\nu$ to be small, thus one is now studying the equation $\partial_t u = \nu \Delta u + C(u,u)$ with some small $\nu>0$.  One can then repeat all the arguments below, basically with $\nu$ playing the role of the quantity $(1+\epsilon_0)^{-n_0/2}$ that will make a prominent appearance in later sections.  We leave the details of this variant of the argument to the interested reader.}  integer $n_0$ to be chosen later.  This is clearly a Schwarz divergence-free vector field.  By hypothesis, we thus have a global mild solution $u: [0,+\infty) \to H^{10}_\df(\R^3)$ to the system \eqref{system}.  We record some basic properties of this solution here:

\begin{lemma}[Equations of motion]\label{eqmot}  Let $C$ be a cascade operator of the form \eqref{cudef}, with coefficients $\alpha_{i_1,i_2,i_3,\mu_1,\mu_2,\mu_3}$ obeying the symmetry \eqref{symmetry} and cancellation property \eqref{cyclic}. Let $u: [0,+\infty) \to H^{10}_\df(\R^3)$ be a global mild solution to the equation \eqref{system} with initial data given by \eqref{upsi} for some $n_0$.  For each $n \in \Z$, $t \geq 0$ and $i=1,\dots,m$, let $u_{i,n}(t)$ be the Fourier projection of $u(t)$ to the region $(1+\epsilon_0)^n \cdot (B_i \cup -B_i)$, thus
$$ \widehat{u_{i,n}(t)}(\xi) = \widehat{u(t)}(\xi) 1_{\xi \in (1+\epsilon_0)^n \cdot (B_i \cup -B_i)}$$
and then define the coefficients
$$ X_{i,n}(t) := \langle u(t), \psi_{i,n} \rangle = \langle u_{i,n}(t),\psi_{i,n} \rangle$$
and the local energies
\begin{equation}\label{ein-def}
 E_{i,n}(t) := \frac{1}{2} \|u_{i,n}(t)\|_{L^2(\R^3)}^2.
\end{equation}
\begin{itemize}
\item[(i)]  (A priori regularity) We have
\begin{equation}\label{slog}
 \sup_{0 \leq t\leq T} \sup_{n \in\Z} \sup_{i=1,\dots,m} (1 + (1+\epsilon_0)^{10n}) |X_{i,n}(t)| < \infty
\end{equation}
and 
\begin{equation}\label{slog-2}
 \sup_{0 \leq t\leq T} \sup_{n \in\Z} \sup_{i=1,\dots,m} (1 + (1+\epsilon_0)^{10n}) E_{i,n}(t)^{1/2} < \infty
\end{equation}
for all $0 < T < \infty$.
\item[(ii)] (Initial conditions) For any $n \in \Z$ and $i=1,\dots,m$, we have
\begin{equation}\label{ein-init}
 E_{i,n}(0) = \frac{1}{2} X_{i,n}(0)^2
\end{equation}
and
\begin{equation}\label{xin-init}
 X_{i,n}(0) = 1_{(i,n) = (1,n_0)}.
\end{equation}
\item[(iii)]  (Equations of motion) For any $n \in\Z$ and $i=1,\dots,m$, we have the equation of motion
\begin{equation}\label{xin}
 \partial_t X_{i,n} = \sum_{i_1,i_2 \in \{1,\dots,m\}} \sum_{(\mu_1,\mu_2,\mu_3) \in S}
\alpha_{i_1,i_2,i,\mu_1,\mu_2,\mu_3} (1+\epsilon_0)^{5(n-\mu_3)/2} X_{i_1,n-\mu_3+\mu_1} X_{i_2,n-\mu_3+\mu_2} + O\left( (1+\epsilon_0)^{2n} E_{i,n}^{1/2} \right).
\end{equation}
and the energy inequality
\begin{equation}\label{ein}
 \partial_t E_{i,n} \leq \sum_{i_1,i_2 \in \{1,\dots,m\}} \sum_{(\mu_1,\mu_2,\mu_3) \in S} \alpha_{i_1,i_2,i,\mu_1,\mu_2,\mu_3} (1+\epsilon_0)^{5(n-\mu_3)/2} X_{i_1,n-\mu_3+\mu_1} X_{i_2,n-\mu_3+\mu_2} X_{i,n}
\end{equation}
for all $t \geq 0$.
\item[(iv)] (Energy defect) For any $n \in \Z$ and $i=1,\dots,m$, we have
\begin{equation}\label{exin}
\frac{1}{2} X_{i,n}^2(t)  \leq E_{i,n}(t) \leq \frac{1}{2} X_{i,n}^2(t) + O\left( (1+\epsilon_0)^{2n} \int_0^t E_{i,n}(t')\ dt' \right)
\end{equation}
for all $t \geq 0$.
\item[(v)] (No very low frequencies)  One has
\begin{equation}\label{nolo}
X_{i,n}(t) = E_{i,n}(t) = 0
\end{equation}
for all $n < n_0$, $i=1,\dots,m$, and $t \geq 0$.
\end{itemize}
\end{lemma}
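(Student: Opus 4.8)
The plan is to read off all five parts from the Duhamel form $u(t)=e^{t\Delta}u_0+\int_0^t e^{(t-t')\Delta}C(u(t'),u(t'))\,dt'$ of \eqref{system}, together with the Fourier orthogonality hard-wired into the construction of $C$. Since the $2m$ balls $B_1,\dots,B_m,-B_1,\dots,-B_m$ are disjoint subsets of $\{\xi:1<|\xi|\le 1+\epsilon_0/2\}$ and $1+\epsilon_0/2<1+\epsilon_0$, the dilated bands $(1+\epsilon_0)^n\cdot(B_i\cup -B_i)$ are pairwise disjoint over all $(i,n)\in\{1,\dots,m\}\times\Z$; consequently the Fourier projections $P_{i,n}$ onto these bands (so that $u_{i,n}=P_{i,n}u$) are mutually orthogonal, $P_{i,n}\psi_{i',n'}=1_{(i,n)=(i',n')}\psi_{i,n}$, and (using $\|\psi_i\|_{L^2(\R^3)}=1$ and that $\hat\psi_i$ is real) $\langle\psi_{i,n},\psi_{i',n'}\rangle=1_{(i,n)=(i',n')}$. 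I will use freely that at the $H^{10}$ regularity level a mild solution of \eqref{system} solves $\partial_t u=\Delta u+C(u,u)$ in the classical-in-time sense (part of the local theory referred to in the Introduction; note $C$ maps $H^{10}_\df(\R^3)\times H^{10}_\df(\R^3)$ continuously into every Sobolev space, since the $\psi_i$ are Schwartz). Granting this, part (i) is immediate: on the Fourier support of $u_{i,n}(t)$ one has $|\xi|\sim(1+\epsilon_0)^n$, so $(1+(1+\epsilon_0)^{10n})^2\lesssim(1+|\xi|^2)^{10}$ there, giving $(1+(1+\epsilon_0)^{10n})\|u_{i,n}(t)\|_{L^2(\R^3)}\lesssim\|u(t)\|_{H^{10}}$, which is bounded on $[0,T]$ since $u$ is continuous into $H^{10}_\df(\R^3)$; \eqref{slog} and \eqref{slog-2} then follow from $|X_{i,n}|\le\|u_{i,n}\|_{L^2(\R^3)}$ and $E_{i,n}^{1/2}=2^{-1/2}\|u_{i,n}\|_{L^2(\R^3)}$. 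Part (ii) is equally immediate from $u_0=\psi_{1,n_0}$: orthogonality forces $u_{i,n}(0)=1_{(i,n)=(1,n_0)}\psi_{1,n_0}$, which is \eqref{xin-init}, and then $E_{i,n}(0)=\tfrac12 1_{(i,n)=(1,n_0)}=\tfrac12 X_{i,n}(0)^2$, which is \eqref{ein-init}.

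For parts (iii) and (iv) the decisive structural point is that the band projection of the nonlinearity is an \emph{exact scalar multiple} of $\psi_{i,n}$. Applying $P_{i,n}$ to $\partial_t u=\Delta u+C(u,u)$ (it commutes with $\Delta$) yields $\partial_t u_{i,n}=\Delta u_{i,n}+P_{i,n}C(u,u)$; expanding $C(u,u)$ from \eqref{cudef} as a linear combination of the functions $\psi_{i_3,n'+\mu_3}$ and projecting, only the summands with $i_3=i$ and $n'=n-\mu_3$ survive, so $P_{i,n}C(u,u)=M_{i,n}(t)\psi_{i,n}$ where $M_{i,n}(t)$ is precisely the first double sum on the right-hand side of \eqref{xin}. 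Pairing $\partial_t u_{i,n}=\Delta u_{i,n}+M_{i,n}\psi_{i,n}$ against $\psi_{i,n}$ gives $\partial_t X_{i,n}=M_{i,n}+\langle u_{i,n},\Delta\psi_{i,n}\rangle$, and $\|\Delta\psi_{i,n}\|_{L^2(\R^3)}=O((1+\epsilon_0)^{2n})$ turns the last term into the error term of \eqref{xin}; pairing instead against $u_{i,n}$ gives $\partial_t E_{i,n}=-\|\nabla u_{i,n}\|_{L^2(\R^3)}^2+M_{i,n}X_{i,n}\le M_{i,n}X_{i,n}$, which is \eqref{ein}. For (iv), write $u_{i,n}=X_{i,n}\psi_{i,n}+w_{i,n}$ with $w_{i,n}\perp\psi_{i,n}$, so $E_{i,n}=\tfrac12 X_{i,n}^2+\tfrac12\|w_{i,n}\|_{L^2(\R^3)}^2$ (making the lower bound in \eqref{exin} trivial) and $w_{i,n}(0)=0$ by part (ii). From $\partial_t w_{i,n}=\Delta u_{i,n}-(\partial_t X_{i,n}-M_{i,n})\psi_{i,n}$, pairing against $w_{i,n}$ annihilates the $\psi_{i,n}$ term and leaves $\tfrac12\partial_t\|w_{i,n}\|_{L^2(\R^3)}^2=-\|\nabla w_{i,n}\|_{L^2(\R^3)}^2+X_{i,n}\langle\Delta\psi_{i,n},w_{i,n}\rangle\le O((1+\epsilon_0)^{2n})|X_{i,n}|\,\|w_{i,n}\|_{L^2(\R^3)}\le O((1+\epsilon_0)^{2n})E_{i,n}$; integrating from $0$ gives $\|w_{i,n}(t)\|_{L^2(\R^3)}^2=O\big((1+\epsilon_0)^{2n}\int_0^t E_{i,n}(t')\,dt'\big)$, the upper bound in \eqref{exin}.

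For part (v), set $K:=\bigcup_{i=1}^m\bigcup_{n\ge n_0}(1+\epsilon_0)^n\cdot(B_i\cup -B_i)$ and let $Q$ be the Fourier projection onto $\R^3\setminus K$; the claim is that $Qu(t)=0$ for all $t\ge 0$, which gives \eqref{nolo} since then $u_{i,n}(t)=0$ and $X_{i,n}(t)=\langle u(t),\psi_{i,n}\rangle=0$ whenever $n<n_0$. We have $Qu_0=0$ because $\hat u_0$ is supported in $(1+\epsilon_0)^{n_0}\cdot(B_1\cup -B_1)\subset K$, and $Q$ commutes with $e^{t\Delta}$, so from Duhamel $\|Qu(t)\|_{L^2(\R^3)}\le\int_0^t\|QC(u(t'),u(t'))\|_{L^2(\R^3)}\,dt'$; by Gronwall it then suffices to show $\|QC(u(t'),u(t'))\|_{L^2(\R^3)}\lesssim_{T,n_0}\|Qu(t')\|_{L^2(\R^3)}$ on $[0,T]$. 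A summand of \eqref{cudef} indexed by $(n',i_1,i_2,i_3,\mu_1,\mu_2,\mu_3)$ is killed by $Q$ unless $n'+\mu_3<n_0$, and since $S$ contains only tuples in which at most one $\mu_j$ equals $1$, a short case check over the four elements of $S$ shows that then at least one of $n'+\mu_1,n'+\mu_2$ is $<n_0$, so at least one of the factors $\langle u(t'),\psi_{i_1,n'+\mu_1}\rangle,\langle u(t'),\psi_{i_2,n'+\mu_2}\rangle$ equals $\langle Qu(t'),\psi_{\cdot,\cdot}\rangle=O(\|Qu(t')\|_{L^2(\R^3)})$ while, by part (i) (and $\|Qu(t')\|_{L^2(\R^3)}\le\|u(t')\|_{L^2(\R^3)}$), the product of the two factors is $O_T(\|Qu(t')\|_{L^2(\R^3)})$; since $\|\psi_{i_3,n'+\mu_3}\|_{L^2(\R^3)}=1$ and $\sum_{n'<n_0}(1+\epsilon_0)^{5n'/2}<\infty$, summing yields the required bound and Gronwall forces $Qu\equiv 0$.

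The step I expect to be the main obstacle is the upper bound in part (iv): it genuinely exploits the structural fact that $P_{i,n}C(u,u)$ is an honest scalar multiple of $\psi_{i,n}$, so that the energy-defect field $w_{i,n}$ is driven only by its own dissipation and by the weak $O((1+\epsilon_0)^{2n})$ coupling through $\Delta\psi_{i,n}$; a crude estimate on the nonlinearity would be far too lossy to close \eqref{exin}. The only other place needing care is the absolute convergence of the scale sum in part (v), which is exactly what the dyadic weight $(1+\epsilon_0)^{5n'/2}$ in \eqref{cudef} secures; everything else is bookkeeping with Plancherel and the pairwise disjointness of the frequency bands.
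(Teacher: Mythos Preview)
Your proof is correct and for parts (i)--(iii) essentially identical to the paper's. Two places differ in approach, both harmlessly.

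For (iv), the paper simply subtracts: from \eqref{xin} and \eqref{ein} one gets $\partial_t(E_{i,n}-\tfrac12 X_{i,n}^2)\le O((1+\epsilon_0)^{2n}E_{i,n})$ directly, since the $M_{i,n}X_{i,n}$ terms cancel and what remains is $-X_{i,n}\langle\Delta u_{i,n},\psi_{i,n}\rangle\le O((1+\epsilon_0)^{2n}E_{i,n})$. Your explicit decomposition $u_{i,n}=X_{i,n}\psi_{i,n}+w_{i,n}$ and tracking of $\|w_{i,n}\|_{L^2}^2$ amounts to the same computation spelled out; neither route is preferable.

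For (v) the paper takes a different line: it sums the local energy inequalities \eqref{ein} over all $i$ and all $n<n_0$, invokes the cancellation identity \eqref{cyclic} to telescope the interior terms, and is left with a boundary contribution at $n=n_0-1$ from $(\mu_1,\mu_2,\mu_3)\in\{(1,0,0),(0,1,0)\}$ that is linear in $\sum_{n<n_0}\sum_i E_{i,n}$, whence Gronwall. Your argument instead exploits only the combinatorics of $S$ (at most one coordinate equals $1$) to show $\|QC(u,u)\|_{L^2}\lesssim_{T,n_0}\|Qu\|_{L^2}$ directly, and then Gronwall on $\|Qu\|_{L^2}$. This is arguably cleaner, and notably it does \emph{not} require the cancellation hypothesis \eqref{cyclic}; the paper's route uses \eqref{cyclic} essentially, so your proof of (v) is valid under a strictly weaker set of assumptions.
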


\begin{proof}  As $u$ is a mild solution to \eqref{system}, we have
\begin{equation}\label{uko}
 u(t) = e^{t\Delta} u_0 + \int_0^t e^{(t-t')\Delta} C( u(t'), u(t') )\ dt'
\end{equation}
for all $t \geq 0$.  Taking Fourier transforms, we see in particular that $\hat u(t)$ is supported on the union $\bigcup_{n \in \Z} \bigcup_{i=1}^m \bigcup_{\mu = \pm 1} \mu (1+\epsilon_0)^n \cdot B_i$ of dilations of the balls $\pm B_1,\ldots,\pm B_m$.  As these dilated balls are disjoint, we thus have a decomposition
$$ u(t) = \sum_{n \in \Z} \sum_{i=1}^m u_{i,n}(t)$$
(which is unconditionally convergent in $H^{10}$).  If we define the scalar functions $X_{i,n}: [0,+\infty) \to \C$ by the formula
$$ X_{i,n}(t) := \langle u(t), \psi_{i,n} \rangle = \langle u_{i,n}(t),\psi_{i,n} \rangle,$$
then from the \emph{a priori} regularity $u \in C^0_t H^{10}_x$ we obtain \eqref{slog} from the Plancherel identity.  Taking inner products of \eqref{uko} with $\psi_{i,n}$, we have
\begin{align*}
 u_{i,n}(t) &= e^{t\Delta} X_{i,n}(0) \psi_{i,n} + 
 \sum_{i_1,i_2 \in \{1,\dots,m\}} \sum_{(\mu_1,\mu_2,\mu_3) \in S} \\
&\quad\quad
\alpha_{i_1,i_2,i,\mu_1,\mu_2,\mu_3} (1+\epsilon_0)^{5(n-\mu_3)/2} \int_0^t X_{i_1,n-\mu_3+\mu_1}(t') X_{i_2,n-\mu_3+\mu_2}(t') e^{(t-t')\Delta} \psi_{i,n}\ dt'
\end{align*}
or in differentiated form (using \eqref{slog} to justify the calculations)
\begin{equation}\label{upp}
\partial_t u_{i,n} = \Delta u_{i,n} +  \sum_{i_1,i_2 \in \{1,\dots,m\}} \sum_{(\mu_1,\mu_2,\mu_3) \in S}
\alpha_{i_1,i_2,i,\mu_1,\mu_2,\mu_3} (1+\epsilon_0)^{5(n-\mu_3)/2} X_{i_1,n-\mu_3+\mu_1} X_{i_2,n-\mu_3+\mu_2} \psi_{i,n}.
\end{equation}
In particular this shows that $u_{i,n}$ is continuously differentiable in time (in the $L^2_x$ topology, say), which implies that the $X_{i,n}$ are continously differentiable.

It is unfortunate that the $\psi_{i,n}$ are not eigenfunctions of the Laplacian $\Delta$, otherwise $u_{i,n}$ would be always be a scalar multiple of $\psi_{i,n}$ (that is, $u_{i,n} = X_{i,n} \psi_{i,n}$), and the equation \eqref{system} would collapse to a system of ODE in the $X_{i,n}$ variables.  However, it is still possible to get good control on the dynamics even without the eigenfunction property.  To do this, we use the local energies $E_{i,n}$ from \eqref{ein-def}.  From Cauchy-Schwarz we have
\begin{equation}\label{loco}
\frac{1}{2} X_{i,n}(t)^2 \leq E_{i,n}(t),
\end{equation}
and from Plancherel and the $C^0_t H^{10}_x$ bound on $u$ we have \eqref{slog-2}
for all $0 < T < \infty$.

By taking inner products of \eqref{upp} with $u_{i,n}$, and noting that
$$ \langle \Delta u_{i,n}, u_{i,n} \rangle \leq 0$$
we obtain the \emph{local energy inequality} \eqref{ein}.  Indeed, one could use Fourier analysis to place an additional dissipation term of $8 \pi^2 (1+\epsilon_0)^{2n} E_{i,n}$ on the right-hand side of \eqref{ein}, but we will not need to use this term here (it is too small to be of much use, since we are in the regime where dissipation can be treated as a negligible perturbation).

If instead, if we take inner products of \eqref{upp} with $\psi_{i,n}$, and note that
\begin{align*}
\langle \Delta u_{i,n}, \psi_{i,n} \rangle &= \langle u_{i,n}, \Delta \psi_{i,n} \rangle \\
&= O\left( E_{i,n}^{1/2} \|\Delta \psi_{i,n} \|_{L^2(\R^3)}^2 \right) \\
&= O\left( (1+\epsilon_0)^{2n} E_{i,n}^{1/2} \right)
\end{align*}
we conclude \eqref{xin}.

From \eqref{xin}, \eqref{ein} we see that
$$
 \partial_t \left(E_{i,n} - \frac{1}{2} X_{i,n}^2\right) \leq O\left( (1+\epsilon_0)^{2n} E_{i,n} \right)
$$
while from \eqref{ein-init} we see that $E_{i,n} - \frac{1}{2} X_{i,n}^2$ vanishes at time zero.  The claim \eqref{exin} then follows from \eqref{loco} and the fundamental theorem of calculus.

Finally, we prove \eqref{nolo}.  For $i_3=1,\dots,m$ and $n < n_0$, we see from \eqref{ein}, \eqref{ein-init}, \eqref{xin-init} and the fundamental theorem of calculus that
$$
E_{i_3,n}(t) \leq \sum_{i_1,i_2 \in \{1,\dots,m\}} \sum_{(\mu_1,\mu_2,\mu_3) \in S} \alpha_{i_1,i_2,i_3,\mu_1,\mu_2,\mu_3} (1+\epsilon_0)^{5(n-\mu_3)/2} \int_0^t X_{i_1,n-\mu_3+\mu_1} X_{i_2,n-\mu_3+\mu_2} X_{i_3,n}(t')\ dt'$$
for any $t \geq 0$.  Summing this for $i=1,\dots,m$ and $n < n_0$, and using \eqref{slog}, \eqref{slog-2} to ensure all summations and integrals are absolutely convergent, we conclude that
\begin{align*}
\sum_{n<n_0} \sum_{i=1}^m E_{i,n}(t) &\leq \sum_{i_1,i_2,i_3 \in \{1,\dots,m\}} \sum_{(\mu_1,\mu_2,\mu_3) \in S} \sum_{n < n_0}\\
&\quad \alpha_{i_1,i_2,i_3,\mu_1,\mu_2,\mu_3}  (1+\epsilon_0)^{5(n-\mu_3)/2} \int_0^t X_{i_1,n-\mu_3+\mu_1} X_{i_2,n-\mu_3+\mu_2} X_{i_3,n}(t')\ dt'.
\end{align*}
By \eqref{cyclic}, all the terms here can be grouped into terms that sum to zero, except for those terms with $n=n_0-1$, $(\mu_1,\mu_2,\mu_3) \in \{ (1,0,0), (0,1,0) \}$; thus
\begin{align*}
\sum_{n<n_0} \sum_{i=1}^m E_{i,n}(t) &\leq \sum_{i_1,i_2,i_3 \in \{1,\dots,m\}} \sum_{(\mu_1,\mu_2,\mu_3) \in \{ (1,0,0), (0,1,0) \}} 
\alpha_{i_1,i_2,i_3,\mu_1,\mu_2,\mu_3}  (1+\epsilon_0)^{5(n_0-1-\mu_3)/2} \\
&\quad \int_0^t X_{i_1,n_0-1-\mu_3+\mu_1} X_{i_2,n_0-1-\mu_3+\mu_2} X_{i_3,n_0-1}(t')\ dt'.
\end{align*}
By the constraint on $(\mu_1,\mu_2,\mu_3)$, two of the terms $X_{i_1,n_0-1-\mu_3+\mu_1}$, $X_{i_2,n_0-1-\mu_3+\mu_2}$, $X_{i_3,n_0-1}$ may be bounded by $\sum_{n<n_0} \sum_{i=1}^m E_{i,n}$, and the remaining term may be controlled by \eqref{slog}, leading to the bound
$$
\sum_{n<n_0} \sum_{i=1}^m E_{i,n}(t) \leq C_{T,n_0} \int_0^{t'}\sum_{n<n_0} \sum_{i=1}^m E_{i,n}(t')\ dt'
$$
for all $0 \leq t \leq T$ and some finite quantity $C_{T,n_0}$ depending on $T, n_0$ (and on the quantity in \eqref{slog}).  By Gronwall's inequality, we conclude that $\sum_{n<n_0} \sum_{i=1}^m E_{i,n}(t) =0$ for all $t \geq 0$, giving \eqref{nolo}.
\end{proof}

The above lemma shows that \eqref{system} almost collapses into an ODE system for the $X_{i,n}$.  As a first approximation, the reader may wish to ignore the role of the energies $E_{i,n}$ (or identify them with $\frac{1}{2} X_{i,n}^2$), and pretend that \eqref{xin} is replaced by either the inviscid equation
\begin{equation}\label{inviscid-model}
 \partial_t X_{i,n} = \sum_{i_1,i_2 \in \{1,\dots,m\}} \sum_{(\mu_1,\mu_2,\mu_3) \in S}
\alpha_{i_1,i_2,i,\mu_1,\mu_2,\mu_3} (1+\epsilon_0)^{5(n-\mu_3)/2} X_{i_1,n-\mu_3+\mu_1} X_{i_2,n-\mu_3+\mu_2}
\end{equation}
or the viscous equation
\begin{equation}\label{viscous-model}
 \partial_t X_{i,n} = - (1+\epsilon_0)^{2n} X_{i,n} + \sum_{i_1,i_2 \in \{1,\dots,m\}} \sum_{(\mu_1,\mu_2,\mu_3) \in S}
\alpha_{i_1,i_2,i,\mu_1,\mu_2,\mu_3} (1+\epsilon_0)^{5(n-\mu_3)/2} X_{i_1,n-\mu_3+\mu_1} X_{i_2,n-\mu_3+\mu_2}
\end{equation}
in the analysis that follows.  Note that the viscous equation generalises the dyadic Katz-Pavlovic equation \eqref{xn} (with $\lambda=(1+\epsilon_0)^{5/2}$ and $\alpha=2/5$), which corresponds to a simple case in which $m=1$.

Theorem \ref{blowup} now follows from the following ODE result:

\begin{theorem}[ODE blowup]\label{ood}  Let $0 < \epsilon_0 < 1$.  Then there exist a natural number $m \geq 0$, structure constants $\alpha_{i_1,i_2,i_3,\mu_1,\mu_2,\mu_3} \in \R$ for $i_1,i_2,i_3 \in \{1,\dots,m\}$ and $(\mu_1,\mu_2,\mu_3) \in S$ obeying the symmetry condition \eqref{symmetry} and the cancellation condition \eqref{cyclic}, with the property that for sufficiently large $n_0$ (sufficiently large depending on implied constants in \eqref{ein}, \eqref{xin}), there does not exist continuously differentiable functions $X_{i,n}: [0,+\infty) \to \R$ and $E_{i,n}: [0,+\infty) \to [0,+\infty)$ obeying the conclusions \eqref{slog}-\eqref{nolo} of Lemma \ref{eqmot}.
\end{theorem}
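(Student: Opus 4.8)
The plan is to prove Theorem \ref{ood} by contradiction: assume continuously differentiable $X_{i,n}\colon[0,\infty)\to\R$ and $E_{i,n}\colon[0,\infty)\to[0,\infty)$ obeying \eqref{slog}--\eqref{nolo} exist for the structure constants we are about to construct, and derive an energy cascade to ever higher frequency scales $n$ that is so rapid it finishes in finite time, which will violate \eqref{slog}. First I would fix the dimension parameter to $m=4$ and build the structure constants $\alpha_{i_1,i_2,i_3,\mu_1,\mu_2,\mu_3}$ by wiring together a handful of elementary ``quadratic gates'', each of which individually respects the symmetry \eqref{symmetry} and the cyclic cancellation \eqref{cyclic}, so that the assembled cascade operator conserves energy. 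The basic gates are: a \emph{pump} (an $X_{a,n}^2\to X_{b,n+1}$ interaction of exactly the shape appearing in \eqref{xn}), which transports energy from one mode to another; an \emph{amplifier}, in which a large mode $X_{b,n}$ drives the exponential runaway $\partial_t X_{a,n}\approx c(1+\epsilon_0)^{5n/2}X_{b,n}X_{a,n}$ of a second mode starting from an exponentially small seed; and a \emph{rotor}, in which a mode $X_{c,n}$ rotates a pair $(X_{a,n},X_{b,n})$ with angular velocity proportional to $(1+\epsilon_0)^{5n/2}X_{c,n}$. These are connected in series so that a unit spike arriving in $X_{1,n}$ stays essentially dormant while an amplifier slowly ignites a companion mode --- a \emph{delay} of the order of the scale-$n$ timescale $(1+\epsilon_0)^{-5n/2}$, up to a factor polynomial in $n$ --- after which, the companion mode having grown comparable to the incoming spike, an abrupt short-duration pump empties scale $n$ into $X_{1,n+1}$.

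Next I would analyse the \emph{idealized} inviscid system, namely \eqref{inviscid-model} with $E_{i,n}$ identified with $\tfrac12 X_{i,n}^2$ and all error terms deleted. Here one constructs, by induction on $n$, an exact solution that begins (near some time $t_{n_0}$) as the unit spike $X_{1,n_0}$ and, for each $n\ge n_0$, carries almost all of the active energy at scale $n$ (concentrated in $X_{1,n}$ at time $t_n$) to almost all of the active energy at scale $n+1$ (concentrated in $X_{1,n+1}$ at time $t_{n+1}$), with $t_{n+1}-t_n=(1+\epsilon_0)^{(-5/2+O(\eps))n}$ and with the active energy diminished by only a factor $1+O((1+\epsilon_0)^{-\eps n})$ at each step, so that it still exceeds $(1+\epsilon_0)^{-\eps n}$ upon reaching scale $n$. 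Since $\sum_{n\ge n_0}(t_{n+1}-t_n)<\infty$, this idealized solution blows up at the finite time $T_*:=\lim_n t_n$, with $|X_{1,n}(t_n)|\gtrsim(1+\epsilon_0)^{-\eps n/2}$. The structural property that makes the induction go through is that each per-scale profile is \emph{stable}: because the amplifier stage is an exponential runaway, it forgets the precise size of its seed, so the cascade is robust under perturbations of the data at each scale --- which is exactly what the next step will need.

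The main work is then a bootstrap argument transferring this idealized blowup onto the true system \eqref{xin}--\eqref{exin}. One posits, on a time interval, that $(X_{i,n},E_{i,n})$ stays in a narrow multiplicative neighbourhood of the idealized solution and that the energy defect $E_{i,n}-\tfrac12 X_{i,n}^2$ obeys the bound in \eqref{exin}, and then shows that the neighbourhood actually contracts, so the bootstrap closes all the way up to $T_*$. The decisive point is that in the regime where the active energy at scale $n$ is $\sim(1+\epsilon_0)^{-O(\eps)n}$, the error terms $O((1+\epsilon_0)^{2n}E_{i,n}^{1/2})$ in \eqref{xin} and $O((1+\epsilon_0)^{2n}\int_0^t E_{i,n})$ in \eqref{exin} are smaller than the nonlinear rates $\sim(1+\epsilon_0)^{5n/2}E_{i,n}$ by a factor $\lesssim(1+\epsilon_0)^{-n/3}$ once $\eps$ is small; integrated over the short interval $t_{n+1}-t_n$ (over which the nonlinear terms effect only an $O(1)$ change) these errors are therefore summably small in $n$, so long as the starting scale $n_0$ is taken large enough, which is exactly the freedom granted in the statement of Theorem \ref{ood}. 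One uses \eqref{nolo} to discard the spurious low-frequency modes and the a priori bounds \eqref{slog}, \eqref{slog-2} to keep the whole machine inside its subcritical window throughout.

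Finally, with the approximation validated up to $T_*<\infty$, choose any $T$ with $T_*<T<\infty$; then by \eqref{slog} the quantity $\sup_{0\le t\le T}\sup_n(1+\epsilon_0)^{10n}|X_{i,n}(t)|$ is finite, yet at the times $t_n\uparrow T_*<T$ one has $(1+\epsilon_0)^{10n}|X_{1,n}(t_n)|\gtrsim(1+\epsilon_0)^{(10-\eps/2)n}\to\infty$, a contradiction. Hence no global $(X_{i,n},E_{i,n})$ obeying \eqref{slog}--\eqref{nolo} can exist, proving Theorem \ref{ood}. The step I expect to be the main obstacle is the bootstrap argument transferring the idealized blowup onto the true system: one must simultaneously control the genuinely nonlinear within-scale circuit dynamics (balancing amplification against rotation and pumping), the handoff between consecutive scales, and the accumulation of the dissipation and energy-defect errors over infinitely many scales, all while holding every intermediate quantity inside the very tight windows dictated by the slowly decaying active-energy budget --- and it is this propagation of extremely precise estimates through an unbounded hierarchy of scales that makes the rigorous argument lengthy.
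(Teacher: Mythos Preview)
Your plan is essentially the paper's approach: $m=4$, a per-scale circuit assembled from pump, amplifier, and rotor gates, and an induction on scales showing the energy cascades to infinity in finite time, contradicting \eqref{slog}. Two points where your sketch diverges from what actually works are worth flagging.

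First, your description of the mechanism is slightly off. The amplified companion mode (the paper's $c_0$) never grows comparable to the incoming spike; it stays $O(\eps)$ throughout. What matters is that it crosses a small threshold that activates the \emph{rotor}, which then rapidly mixes the primary mode $a_0$ with a transfer mode $d_0$; an equipartition-of-energy argument gives $d_0^2\approx\tfrac12$ on average, and it is the pump from $d_0$ to $a_1$ that drains the energy. The ``abrupt short-duration pump'' you describe is really this rotor-plus-pump combination, and the equipartition step is where the argument needs care.

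Second, the paper does not first build an idealized inviscid solution and then perturb; it runs the induction (Proposition \ref{blowdyn}) directly on the full system, rescaling at each step so the dissipation error carries a factor $(1+\epsilon_0)^{-n_0/2}$ and is manifestly negligible. More importantly, a substantial part of the work (Proposition \ref{noexit}) goes into ruling out \emph{backward} energy flow from scale $n$ to scale $n-1$ during the transition: one must track $b_{-1},c_{-1}$ across the handoff and exploit that the scale-$(n-1)$ rotor is already spinning much faster than the scale-$n$ one, so no constructive interference occurs. This is the concrete content of the ``handoff between consecutive scales'' you correctly identify as the main obstacle, and it forces one to carry the auxiliary bounds \eqref{spin-1}--\eqref{spin-2a} through the induction.
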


We will prove Theorem \ref{ood} in Section \ref{blowup-sec}, but we first warm up with some finite dimensional ODE toy problems in the next section.

\begin{remark}[Helicity conservation]\label{helicity} As is well known (see e.g. \cite{bert}), the inviscid Euler equations $\partial_t u = B(u,u)$ conserve helicity $\int_{\R^3} u \cdot (\operatorname{curl} u)$.  This is equivalent to the additional cancellation law
\begin{equation}\label{cancel-hel}
\langle B(u,u), \operatorname{curl} u \rangle = 0
\end{equation}
for all $u \in H^{10}_\df(\R^3)$.  One can ask whether we can similarly enforce the cancellation law
\begin{equation}\label{cancel-hel-2}
\langle \tilde B(u,u), \operatorname{curl} u \rangle = 0
\end{equation}
for the averaged operators $\tilde B$.  In general, the operator $C$ defined in \eqref{cudef} will not obey \eqref{cancel-hel-2}.  However, we may still ensure \eqref{cancel-hel-2} (while preserving the other desired properties of $\tilde B$) as follows.  Firstly, observe that we may choose the functions $\psi_1,\psi_2,\psi_3$ in the construction of $C$ to be odd, thus $\psi_i(-x) = -\psi_i(x)$ for all $i=1,2,3$ and $x \in \R^3$.  Next, from \eqref{cudef}, \eqref{cyclic}, and \eqref{symmetry}, we see that the operator $C(u,v)$ in \eqref{cudef} is a finite linear combination of operators of the form 
$$\frac{1}{2} \sum_{n \in \Z} (1+\epsilon_0)^{5n/2} (A_n(u,v) + A_n(v,u)),$$
where
$$ A_n(u,v) := \langle u, \psi'_n \rangle \langle v, \psi''_n \rangle \psi'''_n - \langle u, \psi'_n \rangle \langle v, \psi'''_n \rangle \psi''_n $$
and $\psi', \psi'', \psi''' \in H^{10}_\df(\R^3)$ are odd functions with Fourier transform supported on an annulus.  These operators obey the energy cancellation law $\langle A_n(u,u), u \rangle = 0$, but do not necessarily obey the helicity cancellation law $\langle A_n(u,u), \operatorname{curl} u \rangle = 0$.  However, if we introduce the modified operator 
\begin{align*}
\tilde A_n(u,v) &:= \langle u, \psi'_n \rangle \langle v, \psi''_n \rangle \psi'''_n - \langle u, \psi'_n \rangle \langle v, \psi'''_n \rangle \psi''_n \\
&\quad - \langle u, \psi'_n \rangle \langle v, \operatorname{curl} \psi'''_n \rangle \operatorname{curl}^{-1} \psi''_n + \langle u, \operatorname{curl}^{-1} \psi''_n \rangle \langle v, \operatorname{curl} \psi'''_n \rangle \psi'_n \\
&\quad - \langle u, \operatorname{curl}^{-1} \psi''_n \rangle \langle v, \operatorname{curl} \psi'_n \rangle \psi'''_n + \langle u, \psi'''_n \rangle \langle v, \operatorname{curl} \psi'_n \rangle \operatorname{curl}^{-1} \psi''_n
\end{align*}
where $\operatorname{curl}^{-1} := \Delta^{-1} \operatorname{curl}$ inverts the curl operator on divergence free functions with Fourier support on an annulus, one can check that $\tilde A_n$ obeys both the energy cancellation $\langle \tilde A_n(u,u), u \rangle$ and the helicity cancellation $\langle \tilde A_n(u,u), \operatorname{curl} u \rangle = 0$.  Thus if we define $\tilde C$ by replacing all occurrences of $A_n$ with their counterparts $\tilde A_n$, then $\tilde C$ also obeys energy and helicity cancellation.  Furthermore, observe that $\tilde A_n(u,u) = A_n(u,u)$ is an odd function whenever $u$ is an odd function (basically because the curl or inverse curl of an odd function is even, and thus orthogonal to all odd functions), and so $\tilde C(u,u) = C(u,u)$ when $u$ is an odd function.  It is then easy to see that any mild solution to $\partial_t u = \tilde C(u,u)$ with odd initial data is then odd for all time, and thus also solves $\partial_t u = C(u,u)$.  From this, we see that Theorem \ref{blowup} for $C$ implies Theorem \ref{blowup} for $\tilde C$.  As a consequence, we can enforce helicity conservation in Theorem \ref{main} if desired.  Of course, it was unlikely in any event that global helicity conservation would have been useful for the global regularity problem, given that the helicity of an odd vector field is automatically zero, and that odd vector fields are preserved by the Euler and Navier-Stokes flows, and are not expected to be any more difficult\footnote{Indeed, any non-odd initial data for such flows may be made odd by first translating by a large displacement and then anti-symmetrising, which will asymptotically have no impact on the dynamics after renormalising.} to handle than general vector fields.

Finally, we remark that the Euler equations $\partial_t u = B(u,u)$ formally conserve total momentum $\int_{\R^3} u\ dx$, total angular momentum $\int_{\R^3} x \times u\ dx$, and total vorticity $\int_{\R^3} \nabla \times u\ dx$.  These quantities are also formally conserved by the equation $\partial_t u = C(u,u)$ for any local cascade operator $C$, basically because the wavelets $\psi_1,\psi_2,\psi_3$ used in building these cascade operators have Fourier transform vanishing near the origin; we omit the details.
\end{remark}

\section{Quadratic circuits}

Our objective is to solve an infinite-dimensional system of ODE, roughly of the form \eqref{inviscid-model}.  In order to build up some intuition for doing so, we will first study a finite-dimensional ``toy'' model, namely ODEs of the form
\begin{equation}\label{ode}
\partial_t X = G(X,X)
\end{equation}
where $X: [t_1,t_2] \to \R^m$ is a vector-valued trajectory for some finite $m$, and $G: \R^m \times \R^m \to \R^m$ is a bilinear operator obeying the cancellation condition
\begin{equation}\label{g-cancel}
 G(X,X) \cdot X  = 0
\end{equation}
for all $X \in \R^m$ (so in particular, the flow \eqref{ode} preserves the norm of $X$, and so the ODE is globally well posed).  It will be important for us that there is no size restriction on the coefficients on the bilinear operator $G$, although the coefficients must of course be real.  The terminology ``circuit'' is meant to invoke an analogy with electrical engineering (and also with computational complexity theory).  Clearly, \eqref{ode} is a toy model for the system \eqref{inviscid-model}, and can also be viewed as a toy model for the Euler equations $\partial_t u = B(u,u)$. We will build a quadratic circuit to accomplish a specific task (namely, to abruptly transfer energy from one mode to another, after a delay) out of ``quadratic logic gates'', by which we mean quadratic circuits \eqref{ode} of a very small size (with $m=2$ or $m=3$) and a simple structure to $G$, which each accomplish a single simple task of transforming a certain type of input into a certain type of output.  

We first discuss in turn the three quadratic logic gates we will be using, which we call the ``pump'', the ``amplifier'', and the ``rotor'', and then show how these gates can be combined to build a circuit with the desired properties.  It looks likely that the set of quadratic gates is sufficiently ``Turing complete'' in that they can perform extremely general computational tasks\footnote{Of course, this is bearing in mind that, being globally well-posed ODE, circuits of the form \eqref{ode} are necessarily limited to perform continuous (i.e. analog) operations rather than perfectly digital operations.  Also, as the equation \eqref{ode} is time reversible, only reversible computing tasks may be performed by quadratic circuits, at least in the absence of dissipation.}, but we will not pursue\footnote{See \cite{pour} for a treatment of continuous computation in PDE, and \cite{turing} for continuous computation in ODE.} this matter further here.

Strictly speaking, the discussion here is not actually needed for the proof of our main results, but we believe that the model problems studied here will assist the reader in understanding what may otherwise be a highly unmotivated construction and set of arguments in the next section.

\subsection{The pump gate}

We first describe the \emph{pump gate}.  This is a binary gate (so $m=2$), with unknown $X(t) = (x(t),y(t)) \in \R^2$ obeying the quadratic ODE
\begin{equation}\label{pump}
\begin{split}
\partial_t x &= - \alpha xy \\
\partial_t y &= \alpha x^2
\end{split}
\end{equation}
where $\alpha >0$ is a fixed coupling constant (representing the strength of the pump).  We will be applying this pump in the regime where $x$ is initially positive and $y \geq 0$; by Gronwall's inequality (or by integrating factors), we see that $x$ remains positive for all subsequent time, while $y$ is increasing.  As the total energy $x^2+y^2$ is conserved, we thus see that energy is being pumped from $x$ to $y$.  For instance, we have the explicit solution
\begin{equation}\label{pomp}
x(t) = A \operatorname{sech}(\alpha A t); \quad y(t) = A \operatorname{tanh}( \alpha A t )
\end{equation}
for any amplitude $A>0$, which at time $t=0$ is at the initial state $(x(0),y(0)) = (A,0)$.  For times $0 \leq t \leq \frac{1}{\alpha A}$, the $y$ component increases more or less linearly at rate comparable to $\alpha A^2$, with a corresponding drain of energy from $x$; after this time, $x$ decays exponentially fast (at rate $\alpha A$), with the energy in $x$ being transferred more or less completely to $y$ after time $t \geq \frac{C}{\alpha A}$ for a large constant $C$.  Thus, the pump can be used to execute a delayed, but \emph{gradual}, transition of energy from one mode (the $x$ mode) to another (the $y$ mode).  We will schematically depict the pump by a thick arrow: see Figure \ref{fig:pump}.

\begin{figure} [t]
\centering
\includegraphics{./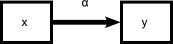}
\caption[Pump gate]{The pump gate from the $x$ mode to the $y$ mode with coupling constant $\alpha$.}
\label{fig:pump}
\end{figure}

If one ignores the dissipation term, the dyadic model equation \eqref{xn} can be viewed as a sequence of pumps chained together, with the coupling constant $\lambda^n$ of the pump from one mode $X_n$ to the next $X_{n+1}$ increasing exponentially with $n$.

One useful feature of the pump which we will exploit is that it can ``integrate'' an alternating input $x$ into a monotone output $y$, somewhat analogously to how a rectifier in electrical engineering converts AC current to DC current.  Indeed, if one couples the $x$ input of the pump to an external forcing term, thus
\begin{align*}
\partial_t x &= - \alpha xy + F\\
\partial_t y &= \alpha x^2
\end{align*}
with $F$ highly oscillatory, then $x$ may oscillate in sign also (if the $F$ term dominates the energy drain term $-\alpha xy$), but the $y$ output continues to increase at a more or less steady rate.  If for instance $F(t) = A\omega \cos(\omega t)$ with some quantities $A, \omega$ which are large compared to the coupling constant $\alpha$, and we set initial conditions $x(0)=y(0)=0$ for simplicity, then we expect $x$ to behave like $A \sin(\omega t)$, and $y$ to increase at rate about $\frac{1}{2} \alpha A^2$ on average.

If instead we couple the pump to an oscillatory forcing term on the output, thus
\begin{align*}
\partial_t x &= - \alpha xy \\
\partial_t y &= \alpha x^2 + G
\end{align*}
then it is possible that $y$ can turn negative, which causes the pump to reverse in energy flow to become an amplifier (see below).  This behaviour will be undesirable for us, so we will take some care to design our circuit so that the output of a pump does not experience significant negative forcing at key epochs in the dynamics, unless this forcing is counterbalanced by an almost equivalent amount of positive forcing.

\subsection{Application: finite time blowup for an exogenously truncated dyadic model}\label{beyond}

As a quick application of the pump gate, we establish blowup for the truncated version \eqref{trunc-non} of the dyadic model system \eqref{xn}, whenever one has supercritical dissipation:

\begin{proposition}[Blowup for a truncated dyadic model]\label{bkp}  Let $\lambda > 1$ and $0 < \alpha < 1/2$, and let $0 < \delta < 1-2\alpha$.  Then there exists a natural number $n_0$, a sequence of times
$$ 0 = t_{n_0} < t_{n_0+1} < t_{n_0+2} < \dots$$
increasing to a finite limit $T_*$, and continuous, piecewise smooth functions $X_n: [0,T_*) \to \R$ for $n \geq n_0$ such that $X_{n_0+k}(t)=0$ whenever $k \geq 1$ and $0 \leq t \leq t_{n_0+k-1}$, and such that
\begin{equation}\label{trunc-non-again}
\partial_t X_n = - \lambda^{2n\alpha} X_n + 1_{(t_{n-1},t_n)}(t) \lambda^{n-1} X_{n-1}^2 - 1_{(t_n,t_{n+1})}(t) \lambda^n X_n X_{n+1}
\end{equation}
for all $t \in [0,T_*)$ other than the times $t_{n_0}, t_{n_1},\ldots$, and all $n \geq n_0$, with the convention that $t_{n_0-1}=0$ and $X_{n_0-1}=0$.  Furthermore, we have
$$ X_{n_0+k}(t_{n_0+k}) = \lambda^{- \delta k}$$
for every $k \geq 0$.  In particular, for any $\delta'>\delta$, we have the blowup
$$\limsup_{t \to T_*} \sup_n \lambda^{\delta'n} |X_n(t)| = +\infty.$$
\end{proposition}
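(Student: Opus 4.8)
The plan is to build the times $t_{n_0+k}$ and the trajectories $X_n$ by induction on $k$, exploiting the fact that the exogenous truncations make \eqref{trunc-non-again} decouple on each slab $(t_{n-1},t_n)$. On this slab the only active nonlinear terms couple $X_{n-1}$ to $X_n$, producing exactly the pump gate \eqref{pump} with coupling constant $\lambda^{n-1}$, perturbed by the two linear dissipation terms $-\lambda^{2(n-1)\alpha}X_{n-1}$ and $-\lambda^{2n\alpha}X_n$; every other mode $X_{n'}$ obeys the decoupled dissipative equation $\partial_t X_{n'}=-\lambda^{2n'\alpha}X_{n'}$, so in particular modes below $n-1$ never feed back and the construction can genuinely be carried out one slab at a time.

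First I would initialise with $t_{n_0}:=0$, $X_{n_0}(0):=1$ and $X_n(0):=0$ for $n>n_0$. Assume inductively that \eqref{trunc-non-again} has been solved on $[0,t_{n_0+k-1}]$ with $X_{n_0+k-1}(t_{n_0+k-1})=\lambda^{-\delta(k-1)}$ and with $X_n\equiv 0$ there for $n\ge n_0+k$. On $(t_{n_0+k-1},\infty)$ we solve the pump for the pair $(X_{n_0+k-1},X_{n_0+k})$ (coupling $g_k:=\lambda^{n_0+k-1}$, dissipation rates $\lambda^{2(n_0+k-1)\alpha}$ and $\lambda^{2(n_0+k)\alpha}$), continue the other modes along their dissipative flows, and declare $t_{n_0+k}$ to be the first time at which $X_{n_0+k}$ hits the target value $\lambda^{-\delta k}$. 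The sign structure of \eqref{pump} (if $x>0$ and $y\ge 0$ then $x$ stays positive and $y$ stays non-negative, a property preserved by the added dissipation) guarantees $X_n\ge 0$ throughout, and one checks directly that this trajectory satisfies \eqref{trunc-non-again} on the slab (with the convention $t_{n_0-1}=0$, $X_{n_0-1}=0$ covering the edge case $n=n_0$).

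The heart of the matter is to estimate the slab lengths $\tau_k:=t_{n_0+k}-t_{n_0+k-1}$ and to verify that $t_{n_0+k}$ is actually well defined. Discarding dissipation, the pump with initial amplitude $A_k:=\lambda^{-\delta(k-1)}$ is the explicit solution \eqref{pomp}; since $\lambda^{-\delta k}=\lambda^{-\delta}A_k<A_k$, the output reaches the target after time $\operatorname{arctanh}(\lambda^{-\delta})/(g_kA_k)=\operatorname{arctanh}(\lambda^{-\delta})\,\lambda^{-n_0}\lambda^{-(1-\delta)(k-1)}$. Treating the dissipation perturbatively (e.g.\ by a Gronwall comparison of $X_{n_0+k}$ with its dissipation-free counterpart) one gets $\tau_k\lesssim \lambda^{-n_0}\lambda^{-(1-\delta)(k-1)}$ together with the fact that, for $n_0$ large, $X_{n_0+k}$ really does rise to $\lambda^{-\delta k}$ rather than peaking below it. The reason the perturbation is harmless is the bound
\[
\lambda^{2(n_0+k)\alpha}\,\tau_k \;\lesssim\; \lambda^{\,2(n_0+k)\alpha-n_0-(1-\delta)(k-1)} \;\le\; \lambda^{\,n_0(2\alpha-1)+2\alpha},
\]
where the last inequality uses that the exponent is non-increasing in $k$ because $2\alpha<1-\delta$ (this is precisely the hypothesis $\delta<1-2\alpha$), and the right-hand side tends to $0$ as $n_0\to\infty$ because $\alpha<1/2$; choosing $n_0$ large makes the dissipative loss on each slab uniformly below $1-\lambda^{-\delta}$, which is exactly the room needed for the hitting time to exist.

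Finally, since $1-\delta>0$ the bounds $\tau_k\lesssim\lambda^{-n_0}\lambda^{-(1-\delta)(k-1)}$ form a convergent geometric series, so $T_*:=\lim_{k\to\infty}t_{n_0+k}<\infty$, and we obtain continuous, piecewise-smooth $X_n$ on $[0,T_*)$ solving \eqref{trunc-non-again}, with $X_{n_0+k}\equiv0$ on $[0,t_{n_0+k-1}]$ and $X_{n_0+k}(t_{n_0+k})=\lambda^{-\delta k}$ by construction. The blowup is then immediate: for any $\delta'>\delta$,
\[
\lambda^{\delta'(n_0+k)}\,X_{n_0+k}(t_{n_0+k}) \;=\; \lambda^{\delta' n_0}\,\lambda^{(\delta'-\delta)k},
\]
which tends to $+\infty$ as $k\to\infty$, so $\limsup_{t\to T_*}\sup_n \lambda^{\delta' n}|X_n(t)|=+\infty$. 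The one genuinely non-routine point is the uniform-in-$k$ perturbative control of the dissipation just described --- that at every stage the pump transfers at least a $\lambda^{-\delta}$-fraction of the available amplitude before the dissipation drains it --- which is where both hypotheses $\alpha<1/2$ and $\delta<1-2\alpha$ are essential; everything else is bookkeeping with \eqref{pomp} and a geometric sum.
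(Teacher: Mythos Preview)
Your proposal is correct and follows essentially the same approach as the paper: an inductive construction slab by slab, solving the dissipation-perturbed pump \eqref{pump} for the active pair while letting all other modes dissipate linearly, using the explicit solution \eqref{pomp} to locate the hitting time in the undamped case, and then invoking the supercritical hypothesis $\delta<1-2\alpha$ to show the dissipative correction is uniformly negligible once $n_0$ is large. The only cosmetic difference is that the paper passes through an explicit rescaling $x(t)=\lambda^{\delta k}X_{n_0+k}(t_{n_0+k}+\lambda^{-n_0-k+\delta k}t)$ to reduce each slab to a fixed ODE with a single small parameter $\eps=\lambda^{-(1-2\alpha)n_0-(1-2\alpha-\delta)k}$, whereas you track the scales directly via the product $\lambda^{2(n_0+k)\alpha}\tau_k$; these are the same estimate.
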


This proposition is not needed for the blowup results in the rest of the paper, but is easier to prove than those results, and already illustrates the basic features of the blowup solutions being constructed.  Note that the blowup here is available for all values of the dissipation parameter up to the critical value of $1/2$, in contrast to the results in \cite{katz-dyadic} and \cite{ches} for the untruncated equation \eqref{xn} which cover the ranges $\alpha < 1/4$ and $\alpha < 1/3$ respectively, as well as the results in \cite{bmr} establishing global solutions when $\lambda=2$ and $2/5 \le \alpha \le 1/2$.

\begin{proof}  We let $n_0$ be a sufficiently large natural number (depending on $\lambda,\alpha,\delta$) to be chosen later.  
We then construct $t_{n_0}, t_{n_0+1},\ldots$ and $X_n(t)$ iteratively as follows:
\begin{itemize}
\item[Step 1.]  Initialise $k=0$ and $t_{n_0}=0$.  We also initialise
$$ X_{n_0}(0)=1; \quad X_{n_0+k}(0) = 0 \hbox{ for all } k \geq 1.$$
\item[Step 2.]  Now suppose that $t_{n_0+k}$ has been constructed, and the solution $X_n(t)$ constructed for all times $0 \leq t \leq t_{n_0+k}$ and $n \geq n_0$.  We then solve the pump system with dissipation
\begin{align}
\partial_t X_{n_0+k} &= -\lambda^{2(n_0+k)\alpha} X_{n_0+k} - \lambda^{n_0+k} X_{n_0+k} X_{n_0+k+1} \label{eot}\\
\partial_t X_{n_0+k+1} &= -\lambda^{2(n_0+k+1)\alpha} X_{n_0+k+1} + \lambda^{n_0+k} X_{n_0+k}^2\label{eot-2}
\end{align}
within the time interval $t \in [t_{n_0+k}, t_{n_0+k+1}]$, where $t_{n_0+k+1}$ is the first time for which $X_{n_0+k+1}(t_{n_0+k+1}) = \lambda^{-\delta(k+1)}$; we justify the existence of such a time below.
\item[Step 3.]  For each $n \neq n_0+k,n_0+k+1$, we evolve $X_n$ on $[t_{n_0+k}, t_{n_0+k+1}]$ by the linear ODE
$$ \partial_t X_n = -\lambda^{2n\alpha} X_n.$$
\item[Step 4.]  Increment $k$ to $k+1$ and return to Step 2.
\end{itemize}

Let us now establish that the time $t_{n_0+k+1}$ introduced in Step 2 is well defined for any given $k \geq 0$.  If we make the change of variables
$$ x(t) := \lambda^{\delta k} X_{n_0+k}( t_{n_0+k} + \lambda^{-n_0-k + \delta k} t )$$
$$ y(t) := \lambda^{\delta k} X_{n_0+k+1}( t_{n_0+k} + \lambda^{-n_0-k + \delta k} t )$$
then we see from construction that we have the initial conditions
$$ x(0)=1, y(0)=0$$
and the evolution equations
\begin{align}
 \partial_t x = - \eps x - xy \label{able}\\
 \partial_t y = - \lambda^{-2\alpha} \eps y + x^2 \label{bble}
\end{align}
where
$$ \eps := \lambda^{-(1-2\alpha)n_0 - (1-2\alpha-\delta)k},$$
and our task is to show that $y(t) = \lambda^{-\delta}$ for some finite $t>0$.  However, from the explicit solution \eqref{pomp} to the pump gate \eqref{pump}, we see that in the case $\eps=0$, this occurs at time $t = \operatorname{tanh}^{-1}( \lambda^{-\delta} )$; standard perturbation arguments then show that if $n_0$ is sufficiently large (which forces $\eps$ to be sufficiently small), the claim occurs at some time $t \leq 2 \operatorname{tanh}^{-1}( \lambda^{-\delta} )$ (say).  Undoing the scaling, we see that
$$ t_{n_0+k+1} - t_{n_0+k} \leq 2 \operatorname{tanh}^{-1}( \lambda^{-\delta} ) \lambda^{-n_0-k+\delta k}$$
so $t_n$ converges to a finite limit $T_*$ as $n \to \infty$, and the claim follows.
\end{proof}

As mentioned in the introduction, one can use (a slight modification of) this proposition to obtain a weaker ``exogenous'' version of Theorem \ref{main} in which the averaged operator $\tilde B = \tilde B(t)$ is now allowed to depend on the time coordinate $t$ (in a piecewise constant fashion, with an unbounded number of discontinuities as $t$ approaches the blowup time).  We leave the details (which are an adaptation of those in Section \ref{euler-avg}) to the interested reader.

\begin{remark} One cannot take $\delta=0$ in the above argument, because the pump gate never quite transfers all of its energy from the $x$ mode to the $y$ mode.  If however we worked with the modified equation
$$
\partial_t X_n = - \frac{\lambda^n}{g(\lambda^n)^2} X_n + 1_{(t_{n-1},t_n)}(t) \lambda^{n-1} (X_{n-1}^2 + X_{n-1} X_n) - 1_{(t_n,t_{n+1})}(t) \lambda^n (X_n X_{n+1} + X_{n+1}^2)$$
for some function $g: [0,+\infty) \to [0,+\infty)$ increasing to infinity, and defines $t_{n_0+k+1}$ to be the first time for which $X_{n_0+k}(t_{n_0+k+1})=0$ (so that $X_{n_0+k+1}$ is the only non-zero mode at this time), then a modification of the above argument establishes finite time blowup whenever $n_0$ is sufficiently large and
$$ \int_1^\infty \frac{ds}{sg(s)^2} < \infty,$$
basically because one can show inductively that $X_{n_0+k}(t_{n_0+k})$ is comparable to $1$, $t_{n_0+k+1}-t_{n_0+k}$ is comparable to $\lambda^{-n}$, and the energy dissipation on each time interval $[t_{n_0+k},t_{n_0+k+1}]$ is comparable to $\frac{1}{g(\lambda^{n_0+k})^2}$; we omit the details.  This is compatible with the heuristic calculation in \cite[Remark 1.2]{tao-hyper}.  In the converse direction, the arguments in \cite{tapay} or \cite{wu} should ensure global regularity for the above equation (or for the analogous hyperdissipative version of \eqref{xn}) under the condition
$$ \int_1^\infty \frac{ds}{sg(s)^4} = +\infty.$$
This leaves an intermediate regime (e.g. $g(s) = \log(1+s)^\beta$ for $1/4 < \beta \leq 1/2$) in which it is unclear whether one can force blowup\footnote{Since the initial release of this manuscript, it has been shown in \cite{bmr-conj} (see also \cite{bmr-dyadic}) that blowup in fact does not occur in this intermediate regime.  Roughly speaking, the basic point is that as the energy moves from low frequency modes to high frequency modes, it must transition through all intermediate frequency scales, and the cumulative energy dissipation from such transitions is enough to prevent the solution from escaping to frequency infinity in this intermediate regime.} with any of these ODE models.  The analysis in \cite{tapay} or \cite{wu} suggests that this may be possible, but one would have to work with models in which many different modes are activated at once (in contrast to the situation in Proposition \ref{bkp}, in which only two modes have interesting dynamics at any given time).
\end{remark}

\subsection{The amplifier gate}

The \emph{amplifier gate} is a reversed version of the pump gate:
\begin{equation}\label{amp}
\begin{split}
\partial_t x &= - \alpha y^2 \\
\partial_t y &= \alpha xy.
\end{split}
\end{equation}
Here again $\alpha>0$ is a coupling constant, indicating the strength of the amplifier.  We will use this gate in the regime in which $x$ is positive and large, and $y$ is positive but small.  In this case, we can explicitly solve the second equation to obtain
\begin{equation}\label{y-gronwall}
 y(t) = \exp\left( \alpha \int_{t_0}^t x(t')\ dt' \right) y(t_0)
\end{equation}
for any $t \geq t_0$, which suggests that $y$ grows exponentially at rate comparable to $\alpha x(0)$, until such time that the $y$ mode begins to drain a significant fraction of energy from the $x$ mode.  Thus, the $x$ mode can be viewed as causing exponential amplification in the $y$ mode.  Of course, in the presence of forcing terms, we no longer have the exact formula \eqref{y-gronwall}, but we may take advantage of Gronwall's inequality to obtain analogous control on $y$.

As with the pump gate, the amplifier gate preserves the total energy $x^2+y^2$.  An explicit solution to \eqref{amp} is given by
$$ x(t) = A \operatorname{tanh}(\alpha A (T-t)); \quad y(t) = A \operatorname{sech}( \alpha A (T-t) )$$
for any $A>0$ and $T>0$.  For $0 < t < T$, the quantity $y$ increases exponentially at rate about $\alpha A$, while $x$ stays roughly steady at $A$.

By using the amplifier with a large coupling constant $\alpha$, $x$ large and positive, and $y$ small and positive, we can cause $y$ to grow at a rapid exponential rate, and in particular to transition abruptly from being small (e.g. $y \leq \eps$ for some threshold $\eps$) to being large (e.g. $y>2\eps$), if the threshold $\eps$ is set low enough that $y$ does not yet begin to drain significant amounts of energy from $x$.  This ability to generate abrupt transitions is of course needed in our quest to engineer an abrupt delayed transition of energy from one mode to another.  This behaviour can be disrupted if $x$ becomes negative at some point, but we will avoid this in practice by making $x$ the output of a pump (which, as discussed previously, can serve to ``rectify'' an alternating input into a steadily increasing output).  We will represent the amplifier schematically by a triangle-headed arrow (Figure \ref{fig:amp}).

\begin{figure} [t]
\centering
\includegraphics{./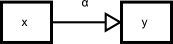}
\caption[Amplifier gate]{The amplifier gate from the $x$ mode to the $y$ mode with coupling constant $\alpha$.}
\label{fig:amp}
\end{figure}

\subsection{The rotor gate}

The \emph{rotor gate} is a ternary gate
\begin{equation}\label{rotor}
\begin{split}
\partial_t x &= -\alpha yz \\
\partial_t y &= \alpha xz \\
\partial_t z &= 0
\end{split}
\end{equation}
where again $\alpha >0$ is a parameter.
This of course preserves the total energy $x^2+y^2+z^2$ and has the explicit solution
\begin{align*}
x(t) &= x(t_0) \cos\left( \alpha z(t_0) (t-t_0) \right) - y(t_0) \sin\left( \alpha z(t_0) (t-t_0) \right) \\
y(t) &= y(t_0) \cos\left( \alpha z(t_0) (t-t_0) \right) + x(t_0) \sin\left( \alpha z(t_0) (t-t_0) \right) \\
z(t) &= z(t_0) 
\end{align*}
in which $(x(t),y(t))$ rotates around the origin at a contant angular rate $\alpha z(t_0)$, while $z$ remains fixed. Thus the $z$ mode can be viewed as driving the oscillating interchange of energy between the $x$ and $y$ modes.

Because we will be coupling the rotor to various forcing terms in $x$, $y$, and $z$, we cannot rely directly on the above explicit solution, although this solution is of course very useful for supplying intuition as to how the rotor behaves.  Instead, we will use energy-based analyses of the rotor, which are much more robust with respect to forcing terms.  Firstly we observe that for the rotor with no forcing, the combined energy of the $x$ and $y$ modes is conserved:
$$ \partial_t (x^2+y^2) = 0.$$
In a related spirit, we have the \emph{equipartition of energy identity}
$$ \alpha z (x^2-y^2) = \partial_t (xy)$$
or in integral form
$$ \alpha \int_{t_0}^T z(t) (x^2(t) - y^2(t))\ dt = x(T)y(T) - x(t_0)y(t_0).$$
Using the conserved energy $x^2+y^2=E$ and the constant nature of $z$, this becomes
$$ \frac{1}{T-t_0} \int_{t_0}^T x^2(t)\ dt = \frac{1}{2} E + O\left( \frac{E}{\alpha |z(t_0)| (T-t_0)}\right)$$
and similarly with $x(t)$ replaced by $y(t)$.  Thus we see that over any time interval significantly longer than the period $\frac{2\pi}{\alpha |z(t_0)|}$, the $x$ mode absorbs about half the energy $E$ of the combined pair $x,y$, and similarly for $y$.

In our application, we will use the rotor with the driving mode $z$ being the output of an amplifier.  As noted previously, amplifier outputs can transition rapidly from being small to being large, so the pair $(x,y)$ will initially be almost stationary, and then suddenly transition to a highly oscillatory state.  This creates a ``jolt'' of ``alternating current'', which we will then quickly transform to ``direct current'' via a pump gate.

We describe the rotor gate schematically by a loop connecting the $x$ and $y$ modes that is driven by the $z$ mode: see Figure \ref{fig:rotor}.

\begin{figure} [t]
\centering
\includegraphics{./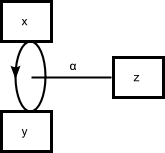}
\caption[Rotor gate]{The rotor gate that uses the $z$ mode to exchange energy between the $x$ and $y$ modes using the coupling constant $\alpha$.}
\label{fig:rotor}
\end{figure}

\subsection{A delayed and abrupt energy transition}

We can now build a ``quadratic circuit'' that achieves the goal of abruptly transitioning almost all of its energy from one mode to another, after a certain delay (and thus exhibiting ``digital'' transition behaviour rather than ``analog'' transition behaviour).  To describe this circuit, we first need a large parameter $K \gtrsim 1$, together with a very small parameter $0 < \eps \lesssim 1$ which will be sufficiently small depending on $K$ (e.g. one could choose $\eps := 1/\exp(\exp(CK))$ for some large absolute constant $C$).  We will then consider a five-mode circuit $X = (a,b,c,d,\tilde a)$ obeying the equations
\begin{align}
\partial_t a &= - \eps^{-2} c d - \eps ab - \eps^2 \exp(-K^{10}) ac \label{a-eq} \\
\partial_t b &= \eps a^2 - \eps^{-1} K^{10} c^2 \label{b-eq} \\
\partial_t c &= \eps^2 \exp(-K^{10}) a^2 + \eps^{-1} K^{10} bc \label{c-eq} \\
\partial_t d &= \eps^{-2} ca - K d\tilde a \label{d-eq}\\
\partial_t \tilde a &= K d^2\label{ta-eq}
\end{align}
and with initial data
\begin{equation}\label{a-init}
 a(0) = 1; \quad b(0)=c(0)=d(0)=\tilde a(0)=0.
 \end{equation}
This system looks complicated and artificial, with a rather arbitrary looking set of coupling constants of wildly differing magnitudes, but it should be viewed as a superposition of five quadratic gates:
\begin{itemize}
\item A pump of coupling constant $\eps$ that transfers a small amount of energy from $a$ to $b$;
\item A pump of coupling constant $\eps^2 \exp(-K^{10})$ that transfers a minute amount of energy from $a$ to $c$;
\item An amplifier of coupling constant $\eps^{-1} K^{10}$ that uses $b$ to rapidly amplify $c$;
\item A rotor of coupling constant $\eps^{-2}$ that uses $c$ to (eventually) rotate energy very rapidly between $a$ and $d$; and
\item A pump of coupling constant $K$ that drains energy from $d$ to $\tilde a$ at a moderately fast pace.
\end{itemize}
See Figure \ref{fig:delay}.

\begin{figure} [t]
\centering
\includegraphics{./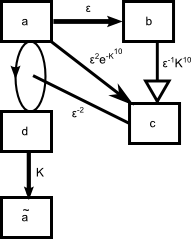}
\caption[Delay circuit]{A circuit that creates a delayed, but abrupt, transition of energy from $a$ to $\tilde a$.}
\label{fig:delay}
\end{figure}

One should view $a$ as the input mode for this circuit, and $\tilde a$ as the output mode; in later sections we will chain an infinite sequence of these circuits (rescaled by an exponentially growing parameter) together by identifying the output mode for each circuit with the input mode for the next.  One can check by hand that this system is of the form \eqref{ode} with bilinear form $G$ obeying the cancellation condition \eqref{g-cancel} (basically because the system is composed of gates, each of which individually satisfy this condition).

As a caricature, the evolution of this system can be described as follows, involving a critical time $t_c \approx \sqrt{2}$:
\begin{enumerate}
\item[(i)] At early times $0 \leq t \leq t_c - 1/\sqrt{K}$ (say), nothing much appears to happen: $a$ remains very close to $1$, $b$ grows linearly like $\eps t$, $c$ grows exponentially like $\eps^2 \exp( (\frac{1}{2}t^2-1) K^{10} )$, and $d$ and $\tilde a$ are close to $0$.
\item[(ii)] At a critical time $t_c \approx \sqrt{2}$, there is an abrupt transition when the exponentially growing $c$ suddenly (within a time of $O(K^{-10})$ or so) transitions from being much smaller than $\eps^2$ to being much larger than $\eps^2$.  This ignites the rotor gate, which then begins to rapidly transfer energy between $a$ and $d$.  By equipartition of energy, $d^2$ will approximately be equal to $1/2$ on the average.
\item[(iii)] After time $t_c+1/K$ or so, the pump between $d$ and $\tilde a$ begins to activate, and steadily drains the energy from $d$ (which, as mentioned before, contains about half the energy of the system) to $\tilde a$.  Meanwhile, $b$ and $c$ remain very small (because of the $\eps$ factors), but with $c$ large enough to continue the rapid mixing of energy between $a$ and $d$ throughout this process.  
\item[(iv)] By time $t_c + 1/\sqrt{K}$ (say), all but an exponentially small remnant of energy has been drained into $\tilde a$.
\end{enumerate}

We depict these dynamics schematically in Figure \ref{fig:transfer}.

\begin{figure} [t]
\centering
\includegraphics{./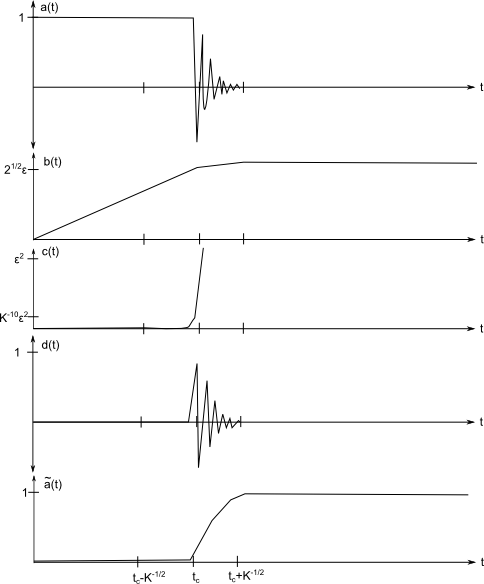}
\caption{A schematic description (not entirely drawn to scale) of the dynamics of $a,b,c,d,\tilde a$ before, near, and after the critical time $t_c$.  Note that $b$ and $c$ are of significantly smaller magnitude (by powers of $\eps$) than the other modes $a,d,\tilde a$, even after taking into account the exponential growth of $c$.  Note also how the pump gates convert alternating inputs to monotone outputs, and how the amplifier gate converts a slowly growing input to an exponentially growing output.  Finally, observe the transient nature of $d$, which only plays a role near the critical time $t_c$; the bulk of the energy is concentrated at the $a$ mode before time $t_c$ and at the $\tilde a$ mode after time $t_c$.}
\label{fig:transfer}
\end{figure}

To summarise the above description of the dynamics, this circuit has achieved the stated goal of creating an abrupt transition (of duration $O(1/\sqrt{K})$) of energy from one mode $a$ to another $\tilde a$, after a long delay (of duration $t_c \approx \sqrt{2}$).  (It is by no means the only circuit that can accomplish this task, but the author was not able to locate a circuit of lower complexity that did so.)

More formally, we claim

\begin{theorem}[Delayed abrupt energy transition]\label{daet} If $K$ is sufficiently large, and $\eps$ sufficiently small depending on $K$, then there exists a time 
\begin{equation}\label{tcable}
t_c = \sqrt{2} + O(1/\sqrt{K})
\end{equation}
 such that
\begin{equation}\label{able2}
 a(t) = 1 + O(K^{-10}); \quad b(t), c(t), d(t), \tilde a(t) = O(K^{-10})
\end{equation}
for $0 \leq t \leq t_c - 1/\sqrt{K}$, and
\begin{equation}\label{beable}
\tilde a(t) = 1 + O(K^{-10}); \quad a(t), b(t), c(t), d(t) = O(K^{-10})
\end{equation}
for $t \geq t_c + 1/\sqrt{K}$.
\end{theorem}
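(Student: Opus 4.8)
The plan is to track the evolution through the four phases (i)--(iv) described above, using energy conservation (the total energy $a^2+b^2+c^2+d^2+\tilde a^2$ is conserved, and initially equals $1$) together with Gronwall-type bounds, treating the small parameter $\eps$ as negligible relative to any fixed power of $K$. The bootstrap quantity is the total energy $E := a^2+b^2+c^2+d^2+\tilde a^2 \equiv 1$, which immediately confines every mode to the unit ball; the work is in localising where the energy sits as a function of time.

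\emph{Phase (i): the quiescent regime.} On the interval $0 \le t \le t_c - 1/\sqrt K$ I would show that $a = 1 + O(K^{-10})$, $b = \eps t + O(\eps^2 \cdot \mathrm{poly})$, that $c$ grows like $\eps^2 \exp((\tfrac12 t^2 - 1)K^{10})$ up to multiplicative errors, and $d, \tilde a = O(\eps \cdot \mathrm{poly})$. The mechanism: as long as $c \lesssim \eps^2$, the equations \eqref{a-eq}--\eqref{ta-eq} show $\partial_t a, \partial_t d = O(\eps)$, so $a$ stays near $1$ and $d$ stays near $0$; then \eqref{b-eq} gives $\partial_t b = \eps a^2 - O(\eps^{-1} K^{10} c^2) = \eps(1+o(1))$ since $c^2 \ll \eps^4$ makes the second term negligible; and \eqref{c-eq} reads $\partial_t c = \eps^{-1} K^{10}\, b\, c + \eps^2 \exp(-K^{10}) a^2$, which with $b \approx \eps t$ integrates to the stated exponential growth $c(t) \approx \eps^2 \exp(\tfrac12 K^{10}(t^2 - t_0^2) - K^{10})$ after the forcing term seeds $c$ up from zero to size $\sim \eps^2 \exp(-K^{10})$ almost immediately. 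Running this bootstrap, the self-consistency assumption $c \lesssim \eps^2$ holds until $\tfrac12 t^2 - 1$ crosses zero, i.e. until $t$ is within $O(K^{-10})$ of $\sqrt 2$; this is where $t_c$ is defined (as, say, the first time $c(t) = \eps^2$), and \eqref{tcable} follows since the crossing is extremely sharp. This establishes \eqref{able2}.

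\emph{Phases (ii)--(iv): ignition, rotation, and drain.} Once $c$ reaches size $\eps^2$, the rotor term $\eps^{-2} c$ in \eqref{a-eq}, \eqref{d-eq} becomes order $1$, and within a further time $O(K^{-10})$ (by continuing the exponential growth of $c$, which is driven by $b \approx \eps\sqrt2 \gg 0$ and is not yet disrupted since $b$ moves only at rate $O(\eps)$ and $c^2$ in \eqref{b-eq} is still tiny) $c$ overshoots $\eps^2$ by a large factor, so the rotor coupling $\alpha_{\mathrm{rot}} := \eps^{-2} c$ becomes huge. I would then run the rotor energy analysis from Section 4.4: writing $E_{ad} := a^2 + d^2$, on a short window after $t_c$ the only non-negligible effect is the rotor, which conserves $E_{ad}$ (to within $O(\eps + K\cdot\text{(drain)})$ errors from the $\eps ab$, $\eps^{-2}$ is exact, and $Kd\tilde a$ terms), while the equipartition-of-energy identity forces the time-averaged value of $d^2$ over any window long compared to the rotor period $2\pi/(\eps^{-2}c) = O(\eps^2)$ to be $\tfrac12 E_{ad} + O(\eps^2 \cdot \eps^{-2} c^{-1}) = \tfrac12 + O(K^{-10})$. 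Now the pump \eqref{d-eq}--\eqref{ta-eq} with coupling $K$, fed by a $d$ whose square averages $\approx \tfrac12$, drains the combined $E_{ad}$ energy into $\tilde a$: by the rectifier property of the pump (from Section 4.1, since the rapidly oscillating rotor makes $d$ alternate in sign but $d^2 \approx \tfrac12$ on average), $\tilde a$ increases at average rate $\approx \tfrac12 K \cdot (\text{energy still in } a,d)$, so $a^2 + d^2$ decays like $\exp(-\Theta(K) t)$ and by time $t_c + 1/\sqrt K$ — which is $\Theta(\sqrt K)$ rotor-and-pump timescales later — all but $\exp(-\Theta(\sqrt K)) = O(K^{-10})$ of the energy has moved to $\tilde a$; meanwhile $b, c$ remain $O(\eps \cdot \mathrm{poly}) = O(K^{-10})$ throughout because every term feeding them carries an $\eps$ or $\eps^2$. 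This gives \eqref{beable}.

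\emph{Main obstacle.} The delicate point is phase (ii): I must show that during the brief ignition window the back-reaction does not prematurely shut off the amplifier driving $c$ (the $-\eps^{-1}K^{10} c^2$ term in \eqref{b-eq} could, once $c$ grows, drive $b$ negative and reverse the amplifier into a pump), and simultaneously that the rotor, once ignited, genuinely equidistributes energy between $a$ and $d$ rather than leaving most of it stuck in $a$ — i.e. I need the rotor to complete at least one full period before the $d$-to-$\tilde a$ pump or the back-reaction on $c$ changes the picture. This is a competition of exponentials and requires choosing $\eps$ small enough (e.g. $\eps = \exp(-\exp(CK))$) that the rotor period $O(\eps^2)$ and the ignition time $O(K^{-10})$ are both negligible against the pump timescale $O(1/K)$, while $c$ stays large enough (bounded below by, say, $\eps^{1/2}$, still $\gg \eps^2$) throughout phase (iii) to keep the rotor fast; this is exactly the kind of careful hierarchy-of-scales bookkeeping that Gronwall inequalities handle, but it is where essentially all the real work lies.
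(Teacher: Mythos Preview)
Your proposal is correct and follows essentially the same route as the paper: energy conservation plus the local energy identity for $b^2+c^2$ to keep $b,c=O(\eps)$, Gronwall on \eqref{c-eq} to track the exponential growth of $c$ and locate $t_c \approx \sqrt 2$, and then equipartition of energy for the rotor-driven drain into $\tilde a$. The paper's implementation of the drain step packages the time-averaging into a modified energy $E_* := \tfrac12(a^2+b^2+c^2+d^2) + \tfrac12 K\, a d\, \tfrac{\eps^2}{c}\, \tilde a$ (whose derivative is $-\tfrac12 K(a^2+d^2)\tilde a + O(K^{-80})$) and separately bootstraps $\tilde a(t_c+1/K) \geq 0.1$ via the equipartition identity to initiate the exponential decay, but these are exactly the rigorous versions of the heuristics you sketch.
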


\begin{proof}  We shall use the usual bootstrap procedure of starting with crude estimates and steadily refining them to stronger estimates on this interval, using continuity arguments if necessary in case the crude estimates are initially only available at $t=0$ rather than for all $t \in [0,2]$.

From conservation of energy and \eqref{a-init} we have
\begin{equation}\label{energy-con}
a(t)^2 + b(t)^2 + c(t)^2 + d(t)^2 + \tilde a(t)^2 = 1
\end{equation}
throughout this interval.  In particular, we have
\begin{equation}\label{ob}
a(t),b(t),c(t),d(t),\tilde a(t) = O(1).
\end{equation}
We can improve this bound on $b$ and $c$ as follows.  From \eqref{b-eq}, \eqref{c-eq} we have the local energy identity
$$ \partial_t (b^2+c^2) = 2 \eps a^2 b + 2\eps^2 \exp(-K^{10}) a^2 c$$
and thus by \eqref{ob}
$$ \partial_t (b^2+c^2) = O\left( \eps (b^2+c^2)^{1/2} \right)$$
or equivalently
$$ \partial_t \left((b^2+c^2)^{1/2}\right) = O( \eps )$$
(in a weak derivative\footnote{To justify this step (a very simple example of the \emph{diamagnetic inequality} (see e.g. \cite[\S 7.19-7.22]{ll}) in action), one can first work instead with $(b^2+c^2+\delta)^{1/2}$ for some small $\delta>0$, in order to avoid any singularity, and then take distributional limits as $\delta \to 0$.} sense), and so from \eqref{a-init} and the fundamental theorem of calculus we see in particular that
\begin{equation}\label{ob-2}
b(t), c(t) = O(\eps)
\end{equation}
for $t \in [0,2]$.  Inserting this into \eqref{c-eq}, we see that
\begin{equation}\label{cgrow}
\partial_t c = O( \eps^2 \exp(-K^{10}) ) + O( K^{10} c )
\end{equation}
for $t \in [0,2]$ (note from the initial condition $c(0)=0$ and a comparison argument that $c(t) \geq 0$ for all $t \geq 0$).  By Gronwall's inequality we thus have
\begin{equation}\label{code}
0 \leq c(t) \lesssim \eps^2 \exp\left((Ct-1)K^{10} \right)
\end{equation}
for all $t \in [0,2]$ and some absolute constant $C>0$.  Finally, from \eqref{d-eq}, \eqref{ta-eq} we have the local energy identity
$$
\partial_t (d^2+\tilde a^2) = 2 \eps^{-2} c ad
$$
and hence by \eqref{ob}
$$
\partial_t \left((d^2+\tilde a^2)^{1/2}\right) = O( \eps^{-2} c ) $$
and thus by \eqref{a-init} and the fundamental theorem of calculus
\begin{equation}\label{dora}
d(t), \tilde a(t) = O\left( \eps^{-2} \int_0^t c(t')\ dt' \right).
\end{equation}
In particular, from \eqref{code} we have
\begin{equation}\label{explorer-2}
|d(t)|, |\tilde a(t)| \lesssim K^{-10} \exp\left( (Ct-1) K^{10} \right),
\end{equation}
which is a good bound for short times $t \leq 1/C$.

Having obtained crude bounds on all five quantities $a(t), b(t), c(t), d(t),\tilde a(t)$, we now return to obtain sharper bounds on these quantities.  Let $t_c$ be the supremum of all the times $t \in [0,2]$ for which $c(t') \leq K^{-10} \eps^2$ for all $0 \leq t' \leq t$, thus $0 < t_c \leq 2$ and
\begin{equation}\label{boots}
c(t) \leq K^{-10} \eps^2
\end{equation}
for $t \in [0,t_c]$.  Comparing this with \eqref{code} we conclude that $t_c \gtrsim 1$.  From \eqref{dora} we have
$$ |d(t)|, |\tilde a(t)| \lesssim K^{-10}$$
for $t \in [0,t_c]$.  Inserting these bounds and \eqref{ob-2} back into \eqref{a-eq}, we have
$$ 
\partial_t a = O( K^{-20} ) + O( \eps^2 )
$$
on $[0,t_c]$, so from \eqref{a-init} (and assuming $\eps$ sufficiently small depending on $K$) we have
$$ 
a(t) = 1 + O( K^{-20} )$$
for $t \in [0,t_c]$.  This already gives all the bounds \eqref{able2}.  Inserting the $a$ bound into \eqref{b-eq} and using \eqref{boots}, we have
$$ \partial_t b = \eps + O( K^{-20} \eps ) + O( K^{-10} \eps^3 ) $$
and so from \eqref{a-init} (again assuming $\eps$ sufficiently small depending on $K$) we have
\begin{equation}\label{bogo-2}
 b(t) = \eps t + O( K^{-20} \eps )
\end{equation}
for $t \in [0,t_c]$.  Inserting these bounds into \eqref{c-eq}, we have
$$ \partial_t c = (1 + O(K^{-20})) \eps^2 \exp(-K^{10}) + (K^{10} t + O(K^{-10})) c$$
and hence by \eqref{a-init} and Gronwall's inequality
\begin{align*}
c(t) &= \int_0^t \exp\left(\int_{t'}^t \left(K^{10} t'' + O(K^{-10})\right)\ dt''\right) \left(1 + O(K^{-20})\right) \eps^2 \exp(-K^{10})\ dt' \\
&= \left(1 + O(K^{-10})\right) \eps^2 \exp\left(\left(\frac{1}{2} t^2- 1\right)K^{10}\right) \int_0^t \exp\left( - \frac{1}{2} (t')^2 K^{10}\right)\ dt'
\end{align*}
for $t \in [0,t_c]$.  In particular (since $t_c \gtrsim 1$), standard asymptotics on the error function give
$$ c(t_c) = \left(1 + O(K^{-10})\right) \eps^2 \exp\left(\left(\frac{1}{2} t_c^2-1\right)K^{10}\right) \sqrt{\frac{\pi}{2 K^{10}}}
$$
which, when compared against the definition of $t_c$, shows \eqref{tcable}.  In particular, $t_c < 2$ (for $K$ large enough), and so 
\begin{equation}\label{c-bound}
c(t_c) = K^{-10} \eps^2.
\end{equation}

Having described the evolution up to time $t_c$, we now move to the future of $t_c$.  From \eqref{bogo-2} we have
$$ b(t_c) \gtrsim \eps.$$
Meanwhile, from \eqref{code}, \eqref{b-eq} (discarding the non-negative $\eps a^2$ term) we have
$$ \partial_t b \geq - \eps^3 \exp(O( K^{10} ) )$$
for $t \in [t_c,2]$, so (for $\eps$ small enough) we also have
$$ b(t) \gtrsim \eps$$
for $t \in [t_c,2]$.  Inserting this bound into \eqref{c-eq}, and discarding the non-negative $\eps^2 \exp(-K^{10}) a^2$ term, we arrive at the exponential growth 
$$ \partial_t c(t) \gtrsim K^{10} c(t)$$
for $t \in [t_c,2]$.  From this, \eqref{c-bound}, and Gronwall's inequality, we see in particular that
\begin{equation}\label{c-large}
c(t) \geq K^{100} \eps^2
\end{equation}
for $t$ in the time interval $I := [t_c + K^{-9}, 2]$.  In other words, the rotor gate will be continuously and strongly activated from time $t_c+K^{-9}$ onwards.  
On the other hand, from \eqref{cgrow}, \eqref{c-large} we also have
\begin{equation}\label{solace}
\partial_t c(t) = O( K^{10} c(t) )
\end{equation}
for $t \in I$, so the exponential growth rate of $c$ remains under control in this region.

Now we use equipartition of energy to establish some reasonably rapid energy drain from $a,b,c,d$ to $\tilde a$.  
From \eqref{a-eq}-\eqref{d-eq} one has
$$ \partial_t (a^2+b^2+c^2+d^2) = - 2Kd^2 \tilde a.$$
Similarly, from \eqref{a-eq}, \eqref{d-eq}, and \eqref{ob} one has
$$ \partial_t(ad) = \eps^{-2} c (a^2-d^2) - O(K).$$
Finally, from \eqref{ta-eq}, \eqref{ob} one has
$$ \partial_t \tilde a  = O(K).$$
We conclude using \eqref{c-large}, \eqref{solace}, and the product rule that
\begin{equation}\label{douse}
\begin{split}
\partial_t( ad \frac{\eps^2}{c} \tilde a) &= - (a^2-d^2) \tilde a - ad \frac{\eps^2}{c} \frac{\partial_t c}{c} \tilde a + ad \frac{\eps^2}{c} \partial_t \tilde a + O(K^{-99}) \\
&= - (a^2-d^2) \tilde a + O(K^{-90})
\end{split}
\end{equation}
for $t \in I$,
so if we define the modified energy
$$ E_* := \frac{1}{2}(a^2+b^2+c^2+d^2) + \frac{1}{2} K ad \frac{\eps^2}{c} \tilde a$$
then
$$ \partial_t E_* = - \frac{1}{2} K (a^2+d^2) \tilde a + O( K^{-80} )$$
for $t \in I$.
From \eqref{ob-2} we have
\begin{equation}\label{est}
E_* = \frac{1}{2}(a^2+b^2+c^2+d^2) + O(K^{-99}) = \frac{1}{2} (a^2+d^2) + O(K^{-99}) 
\end{equation}
and thus
$$ \partial_t E_* = - K \tilde a E_* + O( K^{-80} ).$$
Starting with the crude bound $E_*(t_c) = O(1)$ from \eqref{est}, we thus see from Gronwall's inequality that
$$ E_*(t) \lesssim \exp\left( - K \int_{t_c}^t \tilde a(t')\ dt'\right) + O(K^{-80})$$
for all $t \in I$.

Observe from \eqref{ta-eq} that $\tilde a$ is non-decreasing, and thus
\begin{equation}\label{toke}
 E_*(t) \lesssim \exp( - K (t-t')  \tilde a(t')) + O(K^{-80})
\end{equation}
whenever $t_c \leq t' \leq t \leq 2$.  We will use this bound with $t' := t_c + 1/K$.  We claim that
\begin{equation}\label{atc}
\tilde a(t_c+1/K) \geq 0.1.
\end{equation}
Suppose this is not the case; then by \eqref{ta-eq} we have
$$ \int_{t_c}^{t_c+1/K} d(t'')^2\ dt'' \leq \frac{1}{10K}.$$
However, by repeating the derivation of \eqref{douse} we have
$$ \partial_t \left(ad \frac{\eps^2}{c}\right) = - (a^2-d^2) + O(K^{-90})$$
and hence by the fundamental theorem of calculus and \eqref{c-large} we have
$$ \int_{t_c}^{t_c+1/K} (a(t'')^2 - d(t'')^2)\ dt'' = O( K^{-90} );$$
combining this with the previous estimate, we conclude that
$$ \int_{t_c}^{t_c+1/K} \frac{1}{2} (a^2+d^2)(t'')\ dt'' \leq \frac{1}{10K} + O(K^{-90}).$$
On the other hand, for $t'' \in [t_c,t_c+1/K]$ one has $\tilde a(t) \leq 0.1$ by monotonicity of $\tilde a$, and hence by \eqref{est}, \eqref{energy-con} we have
$$ a^2+d^2 \geq 0.99 + O(K^{-99})$$
in this interval, giving the required contradiction.  

Inserting the bound \eqref{atc} into \eqref{toke}, we conclude in particular that
$$ E_*(t) \lesssim K^{-80}$$
for $t_c+\frac{1}{\sqrt{K}} \leq t \leq 2$, and \eqref{beable} follows from \eqref{est} and \eqref{energy-con}.
\end{proof}

\section{Blowup for the cascade ODE}\label{blowup-sec}

We can now prove Theorem \ref{ood} (and hence Theorem \ref{blowup} and Theorem \ref{main}).  The idea is to chain together an infinite sequence of circuits of the form \eqref{a-eq}-\eqref{ta-eq}, so that (a more complicated version of) the analysis from Theorem \ref{daet} may be applied.

\subsection{First step: constructing the ODE}

Let $\epsilon_0 > 0$ be fixed; we allow all implied constants in the $O()$ notation to depend on $\epsilon_0$. As in the previous section, we need a large constant $K \geq 1$, which we assume to be sufficiently large depending on $\epsilon_0$, and then a small constant $0 < \eps < 1$, which we assume to be sufficiently small depending on both $K$ and $\epsilon_0$.  Finally, we take $n_0$ sufficiently large depending on $\epsilon_0, K, \eps$.

The reader may wish to keep in mind the hierarchy of parameters
$$ 1 \ll \frac{1}{\epsilon_0} \ll K \ll \frac{1}{\eps} \ll n_0$$
as a heuristic for comparing the magnitude of various quantities appearing in the sequel.  Thus, for instance, a quantity of the form $O\left( \exp\left( O\left(K^{10}\right) \right) \left(1+\epsilon_0\right)^{-n_0/2} \right)$ will be smaller than $\exp\left(-K^{10}\right) \eps^2$; a quantity of the form $O\left( \exp\left( O\left( K^{10}\right)\right) \eps \right)$ will be smaller than $K^{-100}$; and so forth.

\begin{table}
\centering
 \caption{The non-zero values of $\alpha_{i_1,i_2,i_3,\mu_1,\mu_2,\mu_3}$.}
 \label{alpha-table}
  \begin{tabular}{lllllll}
   \toprule
     $i_1$ & $i_2$ & $i_3$ & $\mu_1$ & $\mu_2$ & $\mu_3$ & $\alpha_{i_1,i_2,i_3,\mu_1,\mu_2,\mu_3}$ \\
   \midrule
	   $3$  & $4$ & $1$ & $0$ & $0$ & $0$ & $-\eps^{-2} / 2$ \\
	   $4$  & $3$ & $1$ & $0$ & $0$ & $0$ & $-\eps^{-2} / 2$ \\
	   $1$  & $3$ & $4$ & $0$ & $0$ & $0$ & $\eps^{-2} / 2$ \\
	   $3$  & $1$ & $4$ & $0$ & $0$ & $0$ & $\eps^{-2} / 2$ \\
	   $1$  & $2$ & $1$ & $0$ & $0$ & $0$ & $-\eps / 2$ \\
	   $2$  & $1$ & $1$ & $0$ & $0$ & $0$ & $-\eps / 2$ \\
	   $1$  & $1$ & $2$ & $0$ & $0$ & $0$ & $\eps$ \\
	   $1$  & $3$ & $1$ & $0$ & $0$ & $0$ & $-\eps^{2} \exp(-K^{10})/ 2$ \\
	   $3$  & $1$ & $1$ & $0$ & $0$ & $0$ & $-\eps^{2} \exp(-K^{-10}) / 2$ \\
	   $1$  & $1$ & $3$ & $0$ & $0$ & $0$ & $\eps^{2} \exp(K^{-10})$ \\
		 $3$  & $3$ & $2$ & $0$ & $0$ & $0$ & $-\eps^{-1} K^{10}$ \\
		 $2$  & $3$ & $3$ & $0$ & $0$ & $0$ & $\eps^{-1} K^{10}/2$ \\
		 $3$  & $2$ & $3$ & $0$ & $0$ & $0$ & $\eps^{-1} K^{10}/2$ \\		
	   $4$  & $4$ & $1$ & $0$ & $0$ & $1$ & $(1+\epsilon_0)^{5/2} K$ \\
	   $1$  & $4$ & $4$ & $1$ & $0$ & $0$ & $-(1+\epsilon_0)^{5/2} K/2$ \\
	   $4$  & $1$ & $4$ & $0$ & $1$ & $0$ & $-(1+\epsilon_0)^{5/2} K/2$\\
	   \bottomrule
  \end{tabular}
\end{table}

The dimension parameter $m$ for the system we will use to prove Theorem \ref{ood} will be taken to be $m=4$.  
We set the coefficients $\alpha_{i_1,i_2,i_3,\mu_1,\mu_2,\mu_3}$ by using Table \ref{alpha-table}, with $\alpha_{i_1,i_2,i_3,\mu_1,\mu_2,\mu_3}$ set equal to zero if it does not appear in the above table.  It is clear that the required symmetry property \eqref{symmetry} and the cancellation property \eqref{cyclic} hold.    Also, the hypotheses of Lemma \ref{eqmot}(v) are satisfied.

Suppose for contradiction that Theorem \ref{ood} fails for this choice of parameters, so that we have global continuously differentiable functions $X_{i,n}: [0,+\infty) \to \R$ and $E_{i,n}: [0,+\infty) \to [0,+\infty)$ obeying the conclusions of Lemma \ref{eqmot}.  More precisely, by Lemma \ref{eqmot}(i), we have the \emph{a priori} regularity
\begin{equation}\label{many}
\sup_{0 \leq t\leq T} \sup_{n \in\Z} \sup_{i=1,\dots,4} \left(1 + (1+\epsilon_0)^{10n}\right) |X_{i,n}(t)| < \infty
\end{equation}
and
$$
\sup_{0 \leq t\leq T} \sup_{n \in\Z} \sup_{i=1,\dots,4} \left(1 + (1+\epsilon_0)^{10n}\right) E_{i,n}(t)^{1/2} < \infty,
$$
for all $0 < T< \infty$.  

It will be convenient to work with the combined energy
$$E_n := E_{1,n} + E_{2,n} + E_{3,n} + E_{4,n},$$
so that
\begin{equation}\label{many-2}
\sup_{0 \leq t\leq T} \sup_{n \in\Z} \left(1 + (1+\epsilon_0)^{10n}\right) E_{n}(t)^{1/2} < \infty
\end{equation}
for all $0 < T < \infty$.

By Lemma \ref{eqmot}(iii), we have the equations of motion
\begin{align}
\partial_t X_{1,n} &= (1+\epsilon_0)^{5n/2} (- \eps^{-2} X_{3,n} X_{4,n} - \eps X_{1,n} X_{2,n} - \eps^2 \exp(-K^{10}) X_{1,n} X_{3,n} + K X_{4,n-1}^2)\nonumber\\
&\quad\quad + O\left( (1+\epsilon_0)^{2n} E_n^{1/2} \right) \label{axin-eq} \\
\partial_t X_{2,n} &= (1+\epsilon_0)^{5n/2} (\eps X_{1,n}^2 - \eps^{-1} K^{10} X_{3,n}^2) + O\left( (1+\epsilon_0)^{2n} E_n^{1/2} \right) \label{bxin-eq} \\
\partial_t X_{3,n} &= (1+\epsilon_0)^{5n/2} (\eps^2 \exp(-K^{10}) X_{1,n}^2 + \eps^{-1} K^{10} X_{2,n} X_{3,n} ) + O\left( (1+\epsilon_0)^{2n} E_n^{1/2} \right) \label{cxin-eq} \\
\partial_t X_{4,n} &= (1+\epsilon_0)^{5n/2} (\eps^{-2} X_{3,n} X_{1,n} - (1+\epsilon_0)^{5/2} K X_{4,n} X_{1,n+1}) + O\left( (1+\epsilon_0)^{2n} E_n^{1/2} \right) \label{dxin-eq}
\end{align}
(compare with \eqref{a-eq}-\eqref{ta-eq}) and the local energy inequality
\begin{equation}\label{lei-0}
 \partial_t E_n \leq (1+\epsilon_0)^{5n/2} K X_{4,n-1}^2 X_{1,n} - (1+\epsilon_0)^{5(n+1)/2} K X_{4,n}^2 X_{1,n+1} 
\end{equation}
for any $n \in \Z$ and $t \geq 0$.

\begin{remark} As mentioned in the previous section, if one ignores the dissipation terms, the system \eqref{axin-eq}-\eqref{dxin-eq} describes an infinite number of (rescaled) copies of the quadratic circuit analysed in Theorem \ref{daet}, with the output of each such circuit chained to the input of a slightly faster-running version of the same circuit; see Figure \ref{fig:circ1}.
\end{remark}

\begin{figure} [t]
\centering
\includegraphics{./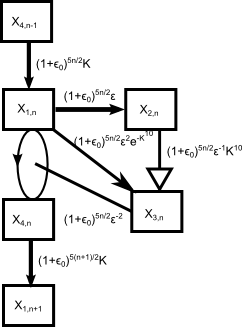}
\caption[Cascade of circuits]{A portion of the system \eqref{axin-eq}-\eqref{dxin-eq}, focusing on the modes at or near scale $n$, and ignoring the dissipation terms.  Compare with Figure \ref{fig:delay}.}
\label{fig:circ1}
\end{figure}

By Lemma \ref{eqmot}(iii), we have the initial conditions
\begin{equation}\label{initial-cond}
E_{n}(0) = \frac{1}{2} 1_{n=n_0}; \quad X_{i,n}(0) = 1_{(i,n)=(1,n_0)}
\end{equation}
for all $n \in \Z$ and $i=1,\dots,4$.

By Lemma \ref{eqmot}(iv), we have
\begin{equation}\label{ein-sum}
\frac{1}{2}  \sum_{i=1}^4 X_{i,n}^2(t)  \leq E_n(t) \leq \frac{1}{2}\sum_{i=1}^4  X_{i,n}^2(t) + O\left( (1+\epsilon_0)^{2n} \int_0^t E_n(t')\ dt' \right)
\end{equation}
for any $n \in \Z$ and $t \geq 0$.  

Finally, from Lemma \ref{eqmot}(v) we have
\begin{equation}\label{nomode}
E_n(t) = X_{1,n}(t) = X_{2,n}(t)=X_{3,n}(t) = X_{4,n}(t)=0
\end{equation}
for all $n < n_0$ and $t \geq 0$.

To prove Theorem \ref{ood}, it thus suffices to show

\begin{theorem}[No global solution for ODE system]\label{ood2}  Let $0 < \epsilon_0 < 1$, let $K>0$ be sufficiently large depending on $\epsilon_0$, let $\eps>0$ be sufficiently small depending on $\epsilon_0,K$, and let $n_0$ be sufficiently large depending on $\epsilon_0, K, \eps$, and the implied constants in \eqref{axin-eq}, \eqref{dxin-eq}, \eqref{ein-sum}.  Then there does not exist continuously differentiable functions $X_{i,n}: [0,+\infty) \to \R$ and $E_n: [0,+\infty) \to [0,+\infty)$ obeying the estimates and equations \eqref{many}-\eqref{nomode} for the indicated range of parameters.
\end{theorem}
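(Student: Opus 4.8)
The plan is to realise the infinite system \eqref{axin-eq}--\eqref{dxin-eq} as a cascade of rescaled, slightly perturbed copies of the five-mode circuit \eqref{a-eq}--\eqref{ta-eq}, and to propagate the conclusion of Theorem~\ref{daet} from one frequency scale to the next by induction on $k\geq 0$. For $n=n_0+k$ the quadruple $(X_{1,n},X_{2,n},X_{3,n},X_{4,n})$ plays the role of $(a,b,c,d)$, the output mode $\tilde a$ of circuit $n$ is identified with the input mode $X_{1,n+1}$ of circuit $n+1$, and every coupling at scale $n$ carries the factor $(1+\epsilon_0)^{5n/2}$ (the $d\to\tilde a$ pump carrying an extra fixed factor $(1+\epsilon_0)^{5/2}$ that we absorb into $K$). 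Thus, writing $e_n:=E_n(t_n)$, the substitution $t\mapsto (1+\epsilon_0)^{5n/2}(2e_n)^{1/2}(t-t_n)$ together with the amplitude rescaling $X_{i,n}\mapsto (2e_n)^{-1/2}X_{i,n}$ turns the scale-$n$ block of \eqref{axin-eq}--\eqref{dxin-eq}, to leading order, into exactly the system \eqref{a-eq}--\eqref{ta-eq} with initial data \eqref{a-init}. The inductive hypothesis at time $t_n$ asserts, roughly, that essentially all of the surviving energy $e_n\in[c_0,\frac{1}{2}]$ resides in $X_{1,n}$, that the other scale-$n$ modes and all scales $>n$ have energy $O(K^{-80})e_n$, and that scales $<n$ carry only an inert, slowly dissipating leftover; the base case $n=n_0$, $t_{n_0}=0$, is \eqref{initial-cond} exactly.

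For the inductive step I would run the bootstrap of Theorem~\ref{daet} on circuit $n$, treating three extra families of terms as perturbations. First, the dissipative errors $O((1+\epsilon_0)^{2n}E_n^{1/2})$ in \eqref{axin-eq}--\eqref{dxin-eq} and the energy defect in \eqref{ein-sum}: after the time rescaling these pick up a gain $(1+\epsilon_0)^{2n}/(1+\epsilon_0)^{5n/2}=(1+\epsilon_0)^{-n/2}$ and so are negligible for $n\geq n_0$ with $n_0$ large depending on $\epsilon_0,K,\eps$. Second, the feed-forward pump $X_{4,n}\to X_{1,n+1}$: this is already the $d\to\tilde a$ gate of the circuit, but one must check that $X_{1,n+1}$ never takes a non-negligible negative value while circuit $n$ is running, so that this pump does not reverse into an amplifier; this holds because $X_{1,n+1}$ is fed only positively, by circuit $n$ itself. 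Third, the back-reaction of circuit $n+1$ on circuit $n$: the modes $X_{2,n+1},X_{3,n+1},X_{4,n+1}$ remain at the inert leftover level until $X_{1,n+1}$ is charged, and the delay engineered into \eqref{a-eq}--\eqref{ta-eq} means circuit $n+1$ only begins exporting energy after a further rescaled time close to $\sqrt{2}$; in real time this is of order $(1+\epsilon_0)^{-5(n+1)/2}$, which exceeds the $O(K^{-1/2}(1+\epsilon_0)^{-5n/2})$ remaining lifetime of circuit $n$ by a factor that is large once $K$ is large depending on $\epsilon_0$. Hence the two circuits do not interfere, and the bootstrap produces a time $t_{n+1}=t_n+(\sqrt{2}+O(K^{-1/2}))(1+\epsilon_0)^{-5n/2}(2e_n)^{-1/2}$ at which the energy has migrated to $X_{1,n+1}$, the scale-$n$ modes retain energy $O(K^{-80})e_n$, and $e_{n+1}\geq\bigl(1-O(K^{-80})-O((1+\epsilon_0)^{-n/2})\bigr)e_n$. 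This closes the induction.

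To conclude, one sums the estimates. The increments $t_{n+1}-t_n=O((1+\epsilon_0)^{-5n/2})$ are summable, so the $t_n$ increase to a finite limit $T_*$. For the energy, $e_n\geq e_{n_0}\prod_{j=n_0}^{n-1}\bigl(1-O(K^{-80})-O((1+\epsilon_0)^{-j/2})\bigr)$; the terms $O((1+\epsilon_0)^{-j/2})$ contribute a convergent product, while the terms $O(K^{-80})$ contribute at worst $\exp(-O(K^{-80})(n-n_0))$. This is the one place where the hypothesis ``$K$ large depending on $\epsilon_0$'' is used in an essential way: choosing $K$ so that $O(K^{-80})\leq\frac{1}{200}\log(1+\epsilon_0)$ gives $e_n\geq c_0(1+\epsilon_0)^{-(n-n_0)/200}$ with $c_0$ an absolute constant, whence by \eqref{ein-sum} and the inductive concentration of energy also $|X_{1,n}(t_n)|^2\gtrsim(1+\epsilon_0)^{-(n-n_0)/200}$. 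But then, applying the a priori bound \eqref{many} with $T=2T_*$, the quantity $(1+\epsilon_0)^{10n}|X_{1,n}(t_n)|$ must stay bounded as $n\to\infty$, whereas it is $\gtrsim (1+\epsilon_0)^{10n-(n-n_0)/400}$, which tends to infinity. This contradiction proves Theorem~\ref{ood2}, and hence Theorem~\ref{ood}, Theorem~\ref{blowup} and Theorem~\ref{main}.

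The main obstacle is the inductive step carried out uniformly in $n$: one must quantify, simultaneously for all scales, how much the tail contributions --- the dissipation and energy-defect errors of \eqref{axin-eq}--\eqref{nomode}, the inert leftover at lower scales, and the nascent next circuit --- perturb the clean dynamics of Theorem~\ref{daet}, and organise these error budgets so that they remain \emph{summable} (needed both for $T_*<\infty$ and for the gate timings to align) while the per-step energy loss stays \emph{sub-geometric} in the sense above (needed so that the surviving energy dominates $(1+\epsilon_0)^{-10n}$). The most delicate ingredient is the timing: verifying that the delay phase of circuit $n+1$ occupies the entire remaining active life of circuit $n$, which is exactly the property for which \eqref{a-eq}--\eqref{ta-eq} was engineered and which the scalar model \eqref{xn} lacks.
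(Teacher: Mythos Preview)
Your overall strategy---induction on the scale $n$, rescaling each circuit to the normalised system of Theorem~\ref{daet}, treating dissipation and cross-scale couplings as perturbations, and concluding by contradicting \eqref{many}---is exactly the paper's approach, and your accounting for the forward coupling (circuit $n+1$ dormant while circuit $n$ runs) and for the final contradiction is sound.

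There is, however, a genuine gap in the inductive step which the paper has to work hard to close and which your outline does not anticipate: the \emph{backward} coupling from scale $n$ to scale $n-1$. Once the rotor at scale $n$ ignites, $a_0=X_{1,n}$ begins to oscillate and can take negative values; during those intervals the pump $X_{4,n-1}\to X_{1,n}$ reverses direction and feeds energy back into $\tilde E_{n-1}$, potentially violating your ``inert leftover'' hypothesis before the transition to scale $n+1$ is complete. The paper handles this (Proposition~\ref{noexit}) by carrying through the induction additional lower and upper bounds on $X_{2,n-1}$ and $X_{3,n-1}$ at time $t_n$ (the bounds \eqref{spin-1}--\eqref{spin-2a}), which guarantee that the rotor at scale $n-1$ is already spinning fast enough that an equipartition argument with a modified energy drains $\tilde E_{n-1}$ faster than the oscillation of $a_0$ can refill it. Without these extra inductive hypotheses and the argument that closes them, your bootstrap could exit prematurely via backward energy flow rather than via the forward transition you want. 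A smaller point: the per-step energy loss in the full system is $O(K^{-1})$, dominated by the $K d_{n-1}^2$ forcing on $X_{1,n}$, not $O(K^{-80})$ as in the clean five-mode model; this does not affect your conclusion but does change which error term is dominant.
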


It remains to prove Theorem \ref{ood2}.

\subsection{Second step: describing the blowup dynamics}

We will establish the following description of the dynamics of $X_{i,n}$ and $E_{i,n}$:

\begin{proposition}[Blowup dynamics]\label{blowdyn}  Let the hypotheses and notation be as in Theorem \ref{ood2}, and suppose for contradiction that we may find continuously differentiable functions $X_{i,n}:[0,+\infty) \to \R$ and $E_n: [0,+\infty) \to [0,+\infty)$ with the stated properties \eqref{many}-\eqref{nomode}.

Let $N \geq n_0$ be an integer.  Then there exist times
$$ 0 \leq t_{n_0} < t_{n_0+1} < \dots < t_N < \infty$$
and amplitudes
$$ e_{n_0}, \dots, e_N > 0$$
obeying the following properties:
\begin{itemize}
\item[(vi)] (Initialisation) We have
\begin{equation}\label{t-init}
t_{n_0} = 0
\end{equation}
and
\begin{equation}\label{e-init}
e_{n_0} = 1.
\end{equation}
\item[(vii)] (Scale evolution) For all $n_0 < n \leq N$, one has the amplitude stability
\begin{equation}\label{en-stable}
(1+\epsilon_0)^{-1/100} e_{n-1} \leq e_n \leq (1+\epsilon_0)^{1/100} e_{n-1}
\end{equation}
and the lifespan bound
\begin{equation}\label{lifespan}
\frac{1}{100} (1+\epsilon_0)^{-5(n-1)/2} e_{n-1}^{-1} \leq t_n - t_{n-1} \leq 100 (1+\epsilon_0)^{-5(n-1)/2} e_{n-1}^{-1}.
\end{equation}
\item[(viii)] (Transition state)  For all $n_0 \leq n \leq N$, we have the bounds
\begin{align}
X_{1,n}(t_n) &= e_n \label{an-init} \\
|X_{2,n}(t_n)| &\leq 10^{-5} \eps e_n \label{bn-init}\\
|X_{3,n}(t_n)| &\leq 10^{-5} \exp( -K^{10} ) \eps^2 e_n \label{cn-init}\\
X_{3,n}(t_n) &\geq - (1+\epsilon_0)^{-n_0/4} e_n\label{cn-init-2}\\
|X_{4,n}(t_n)| &\leq K^{-10} e_{n} \label{dn-en} \\
E_{n-1}(t_n) &\leq K^{-20} e_n^2. \label{en1-init}
\end{align}
If $n_0 < n \leq N$, we have the additional bounds
\begin{align}
X_{2,n-1}(t_n) &\geq 10^{-5} \eps e_n \label{spin-1}\\
X_{2,n-1}(t_n) &\leq 10^{5} \eps e_n \label{spin-1a}\\
X_{3,n-1}(t_n) &\geq \exp( K^{9} ) \eps^2 e_n.\label{spin-2}\\
X_{3,n-1}(t_n) &\leq \exp( K^{10} ) \eps^2 e_n.\label{spin-2a}
\end{align}
\item[(ix)] (Energy estimates) For all $n_0 < n \leq N$ and $t_{n-1} \leq t \leq t_n$, we have the bounds
\begin{align}
E_{n-m}(t) &\leq K^{-10} (1+\epsilon_0)^{m/10} e_{n-1}^2 \hbox{ for all } m \geq 2 \label{before-en} \\
E_{n-1}(t)+E_n(t) &\leq e_{n-1}^2 \label{during-en} \\
E_{n+m}(t) &\leq K^{-30} (1+\epsilon_0)^{-10m} e_{n-1}^2 \hbox{ for all } m \geq 1 \label{after-en}
\end{align}
\end{itemize}
\end{proposition}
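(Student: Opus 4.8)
The plan is to construct the $t_n$ and $e_n$ by induction on $N$ --- equivalently, to build the infinite sequence one scale at a time --- each inductive step being governed by a rescaled, perturbed copy of the delay circuit of Theorem~\ref{daet}. For the base case $N=n_0$ one sets $t_{n_0}=0$, $e_{n_0}=1$; then (vi) and the transition-state bounds (viii) at $n=n_0$ are immediate from the initial conditions \eqref{initial-cond} and the vanishing \eqref{nomode}, while (vii) and (ix) are vacuous.

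For the inductive step, assume $t_{n_0}<\dots<t_{N-1}$ and $e_{n_0},\dots,e_{N-1}$ have been produced with all the asserted properties, and study the evolution on the interval starting at $t_{N-1}$. On this interval the cluster $(X_{1,N-1},X_{2,N-1},X_{3,N-1},X_{4,N-1},X_{1,N})$ should run ``circuit $N-1$'': rescaling amplitudes by $e_{N-1}$ and time by $\lambda:=(1+\epsilon_0)^{5(N-1)/2}e_{N-1}$ turns \eqref{axin-eq}--\eqref{dxin-eq} for this cluster into the circuit system \eqref{a-eq}--\eqref{ta-eq} (with $a=X_{1,N-1}/e_{N-1},\dots,\tilde a=X_{1,N}/e_{N-1}$, the last pump's coupling altered only by a harmless $1+O(\epsilon_0)$), plus three families of perturbations: the dissipation terms $O((1+\epsilon_0)^{2n}E_n^{1/2})$; the couplings of $X_{4,N-1}$ and $X_{1,N}$ to the neighbouring scales $N-2$, $N+1$ and beyond; and the fact that the input $X_{1,N}$ is fed by the previous circuit and carries a small residue at $t_{N-1}$ instead of being held fixed. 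One then defines $t_N$ to be the first time after $t_{N-1}$ at which a suitable threshold is crossed --- for instance $X_{3,N-1}$ reaching a fixed multiple of $\exp(K^{9.5})\eps^2 e_{N-1}$, which in the circuit picture is just when circuit $N-1$ has abruptly dumped essentially all its energy into $X_{1,N}$ --- and sets $e_N:=X_{1,N}(t_N)>0$.

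The core of the step is a bootstrap on $[t_{N-1},t_N]$, of the same type as in Theorem~\ref{daet} but now carrying (ix) along as extra bootstrap hypotheses. The dissipation terms are negligible because $n_0$ is large: after rescaling each has relative size $O((1+\epsilon_0)^{-n/2}e_{n-1}^{-1})$, and the iterated \eqref{en-stable} gives $e_{n-1}\in[(1+\epsilon_0)^{-(n-n_0)/100},(1+\epsilon_0)^{(n-n_0)/100}]$, so this is $O((1+\epsilon_0)^{-cn_0})$ for an absolute $c>0$, uniformly in $n\le N$. The neighbour couplings are suppressed via the local energy inequality \eqref{lei-0} and (ix) at step $N-1$: one propagates the levels (before-en), (after-en) for $\sum_{m\ge2}E_{N-m}$ and $\sum_{m\ge1}E_{N+m}$, using that scales $\le N-2$ have already completed their transitions (their inflow is quadratically small, so their energies stay frozen near their small initial values, apart from an $O(\eps^2)$ residue in the $X_2,X_3$ modes), while scales $\ge N+1$ can gain energy only through the pump $X_{4,N}\to X_{1,N+1}$ whose driver $X_{4,N}$ stays $O(K^{-10})$-small because circuit $N$'s rotor mode $X_{3,N}$ remains far below ignition on this interval. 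The chaining correction is precisely where the delayed-and-abrupt design is used: circuit $N-1$'s ignition is abrupt (duration $O(K^{-10})$ in rescaled time) and follows a delay $t_c=\sqrt2+O(1/\sqrt K)$, so when $X_{1,N}$ is rapidly charged, circuit $N$ has had only $O(1/\sqrt K)$ of its own clock run, which keeps $X_{2,N},X_{3,N},X_{4,N}$ at the levels \eqref{bn-init}--\eqref{dn-en}; conversely the early-phase smallness of $X_{2,N},X_{3,N}$ ensures circuit $N$ does not disturb circuit $N-1$'s transition. Propagating the bootstrap to $t_N$ and invoking the sharp endgame of Theorem~\ref{daet} (all but an $O(K^{-10})$ remnant of energy transferred, cf.\ \eqref{beable}) yields $X_{1,N}(t_N)=e_N$ with $e_N=(1+O(K^{-10})+O((1+\epsilon_0)^{-cn_0}))e_{N-1}$, i.e.\ \eqref{an-init} and the amplitude stability of (vii); the remaining \eqref{bn-init}--\eqref{en1-init} and the additional bounds \eqref{spin-1}--\eqref{spin-2a} (tracking $X_{2,N-1},X_{3,N-1}$ through the ignition as a linearly-pumped and an exponentially-amplified output of circuit $N-1$'s delay phase) come out of the same analysis, and the lifespan \eqref{lifespan} reads off from $t_c=\sqrt2+O(1/\sqrt K)$ and the rescaling by $\lambda$.

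The main obstacle I expect is the simultaneous closure of this bootstrap. The amplifier modes $X_{3,N-1},X_{3,N}$ depend exponentially on $\int X_{2,\cdot}$, so the residues inherited from the previous scale's energy leakage, together with the dissipation and neighbour-coupling errors, must be shown neither to shift the ignition time $t_c$ nor to spoil the near-total energy transfer --- this is, in effect, the error-function asymptotics at the end of the proof of Theorem~\ref{daet}, but made uniform across infinitely many coupled circuits and with the exact circuit energy identities replaced by the one-sided inequality \eqref{lei-0}. Keeping the hierarchy $1\ll\epsilon_0^{-1}\ll K\ll\eps^{-1}\ll n_0$ consistently respected at every estimate is what makes the whole induction close.
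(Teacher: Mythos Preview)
Your plan is essentially the paper's: induct on $N$, rescale to a perturbed copy of the delay circuit, and run a bootstrap modelled on Theorem~\ref{daet} with the distant-scale energy levels carried as additional bootstrap hypotheses and propagated via \eqref{lei-0}. Your reading of the three perturbation families, of the parameter hierarchy, and of how the delayed-abrupt design decouples successive circuits is correct; your choice of stopping rule for $t_N$ differs from the paper's ($\tau_1=t_c+K^{-1/2}$ in rescaled time) only cosmetically.

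There is one mechanism you gloss over. You say scales $\le N-2$ stay frozen because ``their inflow is quadratically small''. The inflow from below is indeed small, but the nominal \emph{outflow} term for $E_{N-2}$ in \eqref{lei-0} is proportional to $-X_{4,N-2}^2\,X_{1,N-1}$, and once circuit $N-1$'s rotor ignites, $X_{1,N-1}$ oscillates through zero, turning this into an inflow of rate $\sim K E_{N-2}$ in rescaled time. A crude Gronwall bound then permits $E_{N-2}$ to grow by a factor $\exp(O(K^{1/2}))$ over the rotor phase $[t_c,t_c+K^{-1/2}]$, which destroys \eqref{before-en} at $m=2$. The paper handles this with a separate equipartition argument (Proposition~\ref{noexit}), and here the bounds \eqref{spin-1}--\eqref{spin-2a} at $n=N-1$ are essential \emph{inputs}, not merely outputs to be recorded for the next step: they guarantee the rotor at scale $N-2$ is already spinning, with $X_{3,N-2}$ growing past $\exp(cK^{10})\eps^2 e_{N-1}$ well before circuit $N-1$ ignites, whereas $X_{3,N-1}\lesssim\exp(O(K^{9.5}))\eps^2 e_{N-1}$ throughout by the analogue of \eqref{co-upper}. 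This speed mismatch between the two rotors is exactly what lets an equipartition correction to $E_{N-2}$ absorb the oscillatory backflow; without it the bootstrap does not close.
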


These bounds may appear somewhat complicated, but roughly speaking they assert that at each time $t_n$, the solution concentrates an important part of its energy at scale $n$ (and significantly less energy at adjacent scales); see Table \ref{energy-table} and Figure \ref{energy-figure}.  The precise bounds here do have to be chosen carefully, because of a rather intricate induction argument in which the estimates for a given value of $N$ are used to prove the estimates for $N+1$.  For this reason, no use of the asymptotic notation $O()$ appears in the above proposition.  Of the four modes $X_{1,n}, X_{2,n}, X_{3,n}, X_{4,n}$, it is the first mode $X_{1,n}$ that carries most of the energy at the checkpoint time $t_n$; the secondary modes $X_{2,n}, X_{3,n}$ play an important role in driving the dynamics (and so many of the more technical bounds in (viii) are devoted to controlling these modes) but carry\footnote{As a crude first approximation (ignoring factors depending on $K$), one should think of $X_{2,n}$ as being about $\eps$ the size of $X_{1,n}$ or $X_{4,n}$, and $X_{3,n}$ being about $\eps^2$ the size of $X_{1,n}$ or $X_{4,n}$.} very little energy, while the $X_{4,n}$ mode is only used as a conduit to transfer energy from the $X_{1,n}$ mode to the $X_{1,n+1}$ mode.  The bounds \eqref{spin-1}-\eqref{spin-2a} are technical; they are needed to ensure that the rotor at scale $n-1$ is rotating so quickly that the modes at scale $n-1$ do not cause any ``constructive interference'' with the modes at scale $n$ at time $t_n$ (or at slightly later times).

\begin{table}
\centering
 \caption{Upper bounds for energies at scales close to $N$, at times close to $t_N$.  Note that at time $t_N$, the energy is locally concentrated at scale $N$, but transitions as $t_N \leq t \leq t_{N+1}$ to a state at time $t=t_{N+1}$ at which the energy is locally concentrated at scale $N+1$.}
 \label{energy-table}
  \begin{tabular}{lllll}
   \toprule
     Energy     & $t_{N-1} < t < t_N$                         & $t=t_N$                                   & $t_N < t < t_{N+1}$                     & $t=t_{N+1}$ \\
   \midrule  
   $E_{N-2}$    &  $K^{-10} (1+\epsilon_0)^{2/10} e_{N-1}^2$  & $K^{-10} (1+\epsilon_0)^{2/10} e_{N-1}^2$ & $K^{-10} (1+\epsilon_0)^{3/10} e_{N}^2$ & $K^{-10} (1+\epsilon_0)^{3/10} e_{N}^2$ \\ 
   $E_{N-1}$    &  $e_{N-1}^2$                                & $K^{-20} e_N$                             & $K^{-10} (1+\epsilon_0)^{2/10} e_{N}^2$ & $K^{-10} (1+\epsilon_0)^{2/10} e_{N}^2$ \\
   $E_N$        &  $e_{N-1}^2$                                & $(\frac{1}{2} + O(K^{-20}))e_N^2$         & $e_N^2$                                 & $K^{-20} e_{N+1}^2$ \\ 
   $E_{N+1}$    &  $K^{-30} (1+\epsilon_0)^{-10} e_{N-1}^2$   & $K^{-30} (1+\epsilon_0)^{-10} e_{N-1}^2$  & $e_N^2$                                 & $(\frac{1}{2} + O(K^{-20})) e_{N+1}^2$\\
   $E_{N+2}$    &  $K^{-30} (1+\epsilon_0)^{-20} e_{N-1}^2$   & $K^{-30} (1+\epsilon_0)^{-20} e_{N-1}^2$  & $K^{-30} (1+\epsilon_0)^{-10} e_{N}^2$  & $K^{-30} (1+\epsilon_0)^{-10} e_{N}^2$\\
   \bottomrule
  \end{tabular}
\end{table}

\begin{figure} [t]
\centering
\includegraphics{./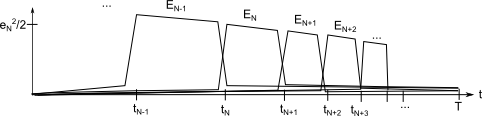}
\caption{A schematic description (not entirely drawn to scale) of the dynamics of the energies $E_n$ for $n$ close to $N$ and times close to $t_N$.  Observe that the energy is concentrated at a single scale $N$ for a lengthy time interval (most of $[t_N,t_{N+1}]$), but then abruptly transfers almost all of its energy (minus some losses due to dissipation and interaction with other scales) to the next finer scale $N+1$ by the time $t_{N+1}$.  The length of the time intervals $[t_N,t_{N+1}]$ decreases geometrically, leading to blowup at some finite time $T$.}
\label{energy-figure}
\end{figure}

Let us now see how the above proposition implies Theorem \ref{ood2} (and hence Theorem \ref{ood}, Theorem \ref{blowup} and Theorem \ref{main}).  Let $N \geq n_0$ be arbitrary.  From \eqref{e-init}, \eqref{en-stable}, \eqref{lifespan} we have
$$ t_n - t_{n-1} \lesssim (1+\epsilon_0)^{-(\frac{5}{2}-\frac{1}{100}) n} $$
and hence by \eqref{t-init} and summing the geometric series we have
$$ t_N \leq T$$
for some finite $T = T_{\epsilon_0}$ independent of $N$.  On the other hand, from \eqref{an-init}, \eqref{en-stable}, \eqref{e-init} we have
$$ |X_{1,N}(t_N)| \geq (1+\epsilon_0)^{-N/100}$$
and hence
$$ \sup_{0 \leq t\leq T} \sup_{n \in\Z} \sup_{i=1,\dots,4} (1 + (1+\epsilon_0)^{10n}) X_{i,n}(t) \geq (1+\epsilon_0)^{9N}$$
for any $N$.  Sending $N$ to infinity, we contradict \eqref{many}.

\subsection{Third step: setting up the induction}

It remains to prove Proposition \ref{blowdyn}.  We do so by an induction on $N$.  The base case $N=n_0$ is easy: one sets $t_{n_0}=0$ and $e_{n_0}=1$, and all the required claims are either vacuously true or follow immediately from the initial conditions \eqref{initial-cond}.  It remains to establish the inductive case of this proposition.  For the convenience of the reader, we state this inductive case as an explicit proposition.

\begin{proposition}[Blowup dynamics, inductive case]\label{blowdyn-induct}  Let the hypotheses and notation be as in Theorem \ref{ood2}, and suppose for contradiction that we may find continuously differentiable functions $X_{i,n}:[0,+\infty) \to \R$ and $E_n: [0,+\infty) \to [0,+\infty)$ with the stated properties \eqref{many}-\eqref{nomode}.

Assume that Proposition \ref{blowdyn} has already been established for some $N \geq n_0$, giving times
$$ 0 \leq t_{n_0} < t_{n_0+1} < \dots < t_N < \infty$$
and energies
$$ e_{n_0}, \dots, e_N > 0$$
with the properties \eqref{t-init}-\eqref{after-en} stated in that proposition. Then there exists a time
$$ t_N < t_{N+1} < \infty$$
and an amplitude $e_{N+1} > 0$ obeying the following properties:
\begin{itemize}
\item[(vii')] (Scale evolution) One has the amplitude stability
\begin{equation}\label{en-stable-induct}
(1+\epsilon_0)^{-1/100} e_N \leq e_{N+1} \leq (1+\epsilon_0)^{1/100} e_N
\end{equation}
and the lifespan bound
\begin{equation}\label{lifespan-induct}
\frac{1}{100} (1+\epsilon_0)^{-5N/2} e_N^{-1} \leq t_{N+1} - t_N \leq 100 (1+\epsilon_0)^{-5N/2} e_N^{-1}.
\end{equation}
\item[(viii')] (Transition state)  We have the bounds
\begin{align}
X_{1,N+1}(t_{N+1}) &= e_{N+1} \label{an-init-induct} \\
|X_{2,N+1}(t_{N+1})| &\leq 10^{-5} \eps e_{N+1} \label{bn-init-induct}\\
|X_{3,N+1}(t_{N+1})| &\leq 10^{-5} \exp( -K^{10} ) \eps^2 e_{N+1} \label{cn-init-induct}\\
X_{3,N+1}(t_{N+1}) &\geq - (1+\epsilon_0)^{-n_0/4} e_{N+1} \label{cn-init-induct-2}\\
|X_{4,N+1}(t_{N+1})| &\leq K^{-10} e_{N+1} \label{dn-en-induct} \\
E_N(t_{N+1}) &\leq K^{-20} e_{N+1}^2 \label{en1-init-induct}\\
X_{2,N}(t_{N+1}) &\geq 10^{-5} \eps e_{N+1}\label{spin-1-induct}\\
X_{2,N}(t_{N+1}) &\leq 10^{5} \eps e_{N+1}\label{spin-1a-induct}\\
X_{3,N}(t_{N+1}) &\geq \exp( K^{9} ) \eps^2 e_{N+1}^2.\label{spin-2-induct}\\
X_{3,N}(t_{N+1}) &\leq \exp( K^{10} ) \eps^2 e_{N+1}^2.\label{spin-2a-induct}
\end{align}
\item[(ix')] (Energy estimates) For all $t_N \leq t \leq t_{N+1}$, we have the bounds
\begin{align}
E_{N+1-m}(t) &\leq K^{-10} (1+\epsilon_0)^{m/10} e_N^2 \hbox{ for all } m \geq 2 \label{before-en-induct} \\
E_{N}(t)+E_{N+1}(t) &\leq e_{N}^2 \label{during-en-induct} \\
E_{N+1+m}(t) &\leq K^{-30} (1+\epsilon_0)^{-10m} e_N^2 \hbox{ for all } m \geq 1 \label{after-en-induct}
\end{align}
\end{itemize}
\end{proposition}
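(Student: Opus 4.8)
The plan is to transfer the analysis of Theorem~\ref{daet} to the scale-$N$ block of the cascade, treating every other scale as a controlled perturbation. I would set $\tau_N := (1+\epsilon_0)^{-5N/2} e_N^{-1}$ and rescale $a(s) := e_N^{-1} X_{1,N}(t_N+\tau_N s)$, $b(s) := e_N^{-1}X_{2,N}(t_N+\tau_N s)$, $c(s) := e_N^{-1}X_{3,N}(t_N+\tau_N s)$, $d(s) := e_N^{-1}X_{4,N}(t_N+\tau_N s)$, $\tilde a(s) := e_N^{-1}X_{1,N+1}(t_N+\tau_N s)$. Substituting into \eqref{axin-eq}--\eqref{dxin-eq} (and the $n=N+1$ instance of \eqref{axin-eq}), one finds that $(a,b,c,d,\tilde a)$ solves the model system \eqref{a-eq}--\eqref{ta-eq}, with $K$ in the final pump replaced by the harmless constant $(1+\epsilon_0)^{5/2}K$, modulo three families of error: (i) the dissipation terms, which after rescaling are $O((1+\epsilon_0)^{-n_0/2})$ uniformly on the interval by \eqref{during-en-induct} and hence as small as desired once $n_0$ is large; (ii) the influx $(1+\epsilon_0)^{5N/2}KX_{4,N-1}^2$ feeding $X_{1,N}$; and (iii) the feedback of the scale-$(N+1)$ circuit into $X_{1,N+1}$, governed by $X_{2,N+1},X_{3,N+1},X_{4,N+1}$, which is $O(K^{-30})$ after rescaling because those modes are dominated on $[t_N,t_{N+1}]$ by the inductive bound \eqref{after-en-induct}. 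I would also use \eqref{ein-sum} to pass freely between $E_N$ and $\tfrac12\sum_i X_{i,N}^2$, the discrepancy again being absorbed once $n_0$ is large.

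With this reduction in hand, I would re-run the bootstrap argument of Theorem~\ref{daet} step by step, replacing exact energy conservation by the local energy inequalities \eqref{lei-0} and the exact ODE by the perturbed system, and carrying the three error families through each estimate. This yields a critical rescaled time $s_c = \sqrt2 + O(1/\sqrt K)$, and I would then \emph{define} $t_{N+1} := t_N + (s_c + 1/\sqrt K)\tau_N$ and $e_{N+1} := X_{1,N+1}(t_{N+1})$. Since $s_c + 1/\sqrt K \in [\tfrac1{100},100]$, this is the lifespan bound \eqref{lifespan-induct}; since the bootstrap gives $\tilde a(s_c+1/\sqrt K) = 1 + O(K^{-10})$ with $K^{-10}$ far below $(1+\epsilon_0)^{1/100}-1$, it gives \eqref{an-init-induct} by definition and the amplitude stability \eqref{en-stable-induct}. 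The ``after'' conclusions $a,b,c,d = O(K^{-10})$, combined with the modified-energy ($E_*$) estimate in the proof of Theorem~\ref{daet} showing that only an exponentially small remnant of energy stays at scale $N$, give \eqref{en1-init-induct}; one further checks that $X_{1,N+1}$ remains $O(K^{-10}e_N)$ until a window of length $O(\tau_N/\sqrt K)$ about $t_{N+1}$, so that the minute coupling constants $\eps$ and $\eps^2\exp(-K^{10})$ keep $X_{2,N+1},X_{3,N+1},X_{4,N+1}$ below the thresholds \eqref{bn-init-induct}--\eqref{dn-en-induct} (these being exactly the ``transition state'' data needed to later run the scale-$(N+1)$ circuit, and so self-consistent with the base case), while the sign bound \eqref{cn-init-induct-2} comes from the nonnegativity comparison for $c$, perturbed by only $O((1+\epsilon_0)^{-n_0/2})$.

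For the spin bounds \eqref{spin-1-induct}--\eqref{spin-2a-induct} I would use the sharp asymptotics in the proof of Theorem~\ref{daet}: $b(s_c) \sim \eps$, $c(s_c) = K^{-10}\eps^2$, and then $\partial_s c \sim K^{10} c$ with $b$ still comparable to $\eps$ on $[s_c, s_c + 1/\sqrt K]$; evaluating at $s = s_c + 1/\sqrt K$ and unrescaling places $X_{2,N}(t_{N+1})$ in $[10^{-5}\eps e_{N+1},\,10^5\eps e_{N+1}]$ and $X_{3,N}(t_{N+1})$ in $[\exp(K^9)\eps^2 e_{N+1},\,\exp(K^{10})\eps^2 e_{N+1}]$, the latter because a window of length $1/\sqrt K$ at exponential rate $\sim K^{10}$ spans exactly this multiplicative range. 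For the energy estimates (ix') I would argue scale by scale: at scales $N-1$ and below the influx from the next coarser scale is doubly negligible while the dissipation term in \eqref{lei-0} is non-positive, so the energies are essentially non-increasing on $[t_N,t_{N+1}]$ and the $(1+\epsilon_0)^{m/10}$ slack absorbs the re-indexing from \eqref{before-en} (yielding \eqref{before-en-induct}); at scale $N+1$, \eqref{during-en-induct} follows from conservation of energy up to the negligible cross-scale transfers and dissipation; and at scales $N+2$ and above the only forward leakage is through the pump $\eps^2\exp(-K^{10})X_{1,n}^2$ and the exponentially small $X_{3,n}$ modes, so a Gronwall estimate against \eqref{after-en} keeps $E_{N+1+m}$ below $K^{-30}(1+\epsilon_0)^{-10m}e_N^2$.

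The hard part will be error family (ii): the influx $(1+\epsilon_0)^{5N/2}KX_{4,N-1}^2$ is driven by the scale-$(N-1)$ rotor, which is still spinning throughout $[t_N,t_{N+1}]$, so $X_{4,N-1}$ oscillates and no pointwise bound on this term is strong enough. The remedy is twofold. First, integrating the local energy inequality \eqref{lei-0} at scale $N-1$ and using \eqref{en1-init} together with the negligibility of any influx from scale $N-2$, the total energy passed up into scale $N$ over $[t_N,t_{N+1}]$ is $O(K^{-20}e_N^2)$, so this term perturbs the trajectory $(a,b,c,d)$ by only $O(K^{-10})$. Second, to see that this energy does not arrive in a ``constructively interfering'' burst that could spoil the delicate transition, one invokes the inductive spin bounds \eqref{spin-1}--\eqref{spin-2a} at scale $N-1$, which force the scale-$(N-1)$ rotor to turn at frequency $\gg \tau_N^{-1}$, so that $X_{4,N-1}^2$ is effectively averaged over many periods and contributes smoothly. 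Carrying all of this through — and, more generally, running the entire bootstrap with the explicit non-asymptotic constants the induction demands rather than with $O()$ notation — is the technical core of the argument, and is precisely why the inequalities \eqref{spin-1}--\eqref{spin-2a} are tracked along the way.
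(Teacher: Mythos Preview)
Your strategy matches the paper's: rescale so the scale-$N$ block resembles the Theorem~\ref{daet} system, re-run that bootstrap with perturbations, extract $t_{N+1},e_{N+1}$. The paper formalises this as Proposition~\ref{induct-rescale} plus a layered bootstrap---energy bounds at all scales first (so (ix') precedes the main dynamics rather than following it, which resolves the circularity in your sketch), then Proposition~\ref{todos} to control $b_1,c_1,d_1$ via smallness of $\int a_1^2$ (this is how error family (iii) is actually handled; note \eqref{after-en-induct} is a conclusion, not a hypothesis, and in any case does not cover scale $N{+}1$---you correct this later but should lead with it), then the scale-$0$ analysis.

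The real gap is your error family (ii). Your ``total energy passed up is $O(K^{-20}e_N^2)$'' bounds only $\int K d_{-1}^2 a_0$, not the pointwise term $Kd_{-1}^2$ appearing in $\partial_t a_0$; worse, once $a_0$ oscillates in sign after the critical time, energy can flow \emph{back} into scale $-1$ and the energy-inequality argument fails outright, potentially triggering the ``backwards flow'' exit of Corollary~\ref{exit}. Your averaging heuristic points in the right direction but is not the mechanism. What the paper actually does (Proposition~\ref{noexit}) is a modified-energy estimate at scale $-1$: with $E^*=\tilde E_{-1}-\tfrac12(1+\epsilon_0)^{5/2}K\, a_{-1}d_{-1}\frac{\eps^2}{c_{-1}}a_0$ one obtains $\partial_t E^*\le -\tfrac12 K(1+\epsilon_0)^{-5/2}a_0 E^*+O(K^{-14})$, and Gronwall with $\int_0^t a_0\gtrsim 1$ (coming from $a_0\approx 1$ on the long early interval $[0,t_c]$) forces $E^*\lesssim K^{-14}$. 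For the correction term $\frac{\eps^2}{c_{-1}}\partial_t a_0$ to be negligible one needs $c_{-1}\gg \eps^2|\partial_t a_0|\sim c_0$: the scale-$(N{-}1)$ rotor must out-spin the scale-$N$ rotor, not merely out-spin $\tau_N^{-1}$. The paper secures this by pairing the lower bound $c_{-1}\ge\exp(10^{-8}K^{10})\eps^2$ (from the spin hypotheses \eqref{spin-1}--\eqref{spin-2a} and subsequent growth) with the \emph{upper} bound $c_0\lesssim\exp(O(K^{9.5}))\eps^2$ enforced by the choice $\tau_1=t_c+K^{-1/2}$; the text explicitly flags this ``mismatch in speeds'' as crucial, and it is the idea your sketch is missing.
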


Clearly, Proposition \ref{blowdyn-induct} implies Proposition \ref{blowdyn} (and hence Theorems \ref{ood2}, \ref{ood}, \ref{blowup} and \ref{main}).

Roughly speaking, the situation is as follows.  At time $t_N$, the solution $(X_{i,n})_{i=1,\dots,4; n \in \Z}$ has a large amount of energy at a single mode $X_{1,N}$ at scale $N$ (thanks to \eqref{an-init}) and small amounts of energy at nearby modes and scales (thanks to \eqref{bn-init}, \eqref{cn-init}, \eqref{dn-en}, \eqref{en1-init}, \eqref{before-en}, \eqref{after-en}).  We wish to run the evolution forward to a later time $t_{N+1}$ (which can be approximately determined using \eqref{lifespan-induct}) for which the energy near scale $N$ has now largely transitioned to the $X_{1,N+1}$ mode (see \eqref{an-init-induct}, \eqref{en-stable-induct}), but with little energy at nearby modes and scales (see \eqref{bn-init-induct}, \eqref{cn-init-induct}, \eqref{en1-init-induct}, \eqref{before-en-induct}, \eqref{dn-en-induct}, \eqref{after-en-induct}).  In particular, the transition of energy to the $X_{1,N+1}$ mode needs to be so abrupt that no significant amount of energy leaks into the $N+2$ modes yet (see \eqref{after-en-induct}).  To establish this, we shall first show (under a bootstrap hypothesis) that all scales other\footnote{This is an oversimplification; one has to take some care to also exclude the possibility of ``constructive interference'' between the $N-1$ and $N$ modes, but this is a technical issue of secondary importance.} than the $N$ and $N+1$ scales are under control, thus largely reducing matters to the study of the dynamics between the $N$ and $N+1$ modes, at which point one can run the analysis used to establish Theorem \ref{daet}.

\subsection{Fourth step: renormalising the dynamics}

It is convenient to perform a rescaling to essentially eliminate the role of the time $t_N$, the energy $e_N$, and the scale $(1+\epsilon_0)^{-N}$, in order to make the dynamics closely resemble those in Theorem \ref{daet}. 
More precisely, Proposition \ref{blowdyn-induct} rescales as follows.

\begin{proposition}[Rescaled inductive step]\label{induct-rescale}  Let $0 < \epsilon_0 < 1$, let $K>0$ be sufficiently large depending on $\epsilon_0$, let $\eps>0$ be sufficiently small depending on $\epsilon_0,K$, and let $n_0$ be sufficiently large depending on $\epsilon_0, K, \eps$, and the implied constants in \eqref{axin-eq-resc}-\eqref{dxin-eq-resc}, \eqref{cain}, \eqref{tau-stable} below.  Let $N \geq n_0$, and suppose we have rescaled times
$$\tau_{n_0-N} < \dots < \tau_0 = 0$$
and continuously differentiable functions $a_k,b_k,c_k,d_k: [\tau_{n_0-N},+\infty) \to \R$ and $\tilde E_k: [\tau_{n_0-N},+\infty) \to [0,+\infty)$ obeying the following properties:
\begin{itemize}
\item[(i)] (A priori regularity) We have
\begin{equation}\label{many-norm}
\sup_{\tau_{n_0-N} \leq t\leq T} \sup_{k \in\Z} \left(1 + (1+\epsilon_0)^{10k}\right) (|a_k(t)|+|b_k(t)|+|c_k(t)|+|d_k(t)|) < \infty
\end{equation}
and
\begin{equation}\label{many-norm-2}
\sup_{\tau_{n_0-N} \leq t\leq T} \sup_{k \in\Z} \left(1 + (1+\epsilon_0)^{10k}\right) \tilde E_k(t)^{1/2} < \infty,
\end{equation}
for any $T > \tau_{n_0-N}$.
\item[(ii)] (Equations of motion)  One has
\begin{align}
\partial_t a_k &= (1+\epsilon_0)^{5k/2} \left(- \eps^{-2} c_k d_k - \eps a_k b_k - \eps^2 \exp(-K^{10}) a_k c_k + K d_{k-1}^2 \right) \nonumber \\
&\quad\quad+ O\left( (1+\epsilon_0)^{2k - n_0/2} \tilde E_k^{1/2} \right) \label{axin-eq-resc} \\
\partial_t b_k &= (1+\epsilon_0)^{5k/2} \left(\eps a_k^2 - \eps^{-1} K^{10} c_k^2 \right) + O\left( (1+\epsilon_0)^{2k - n_0/2} \tilde E_k^{1/2} \right) \label{bxin-eq-resc} \\
\partial_t c_k &= (1+\epsilon_0)^{5k/2} \left(\eps^2 \exp(-K^{10}) a_k^2 + \eps^{-1} K^{10} b_k c_k \right) + O\left( (1+\epsilon_0)^{2k - n_0/2}  \tilde E_k^{1/2} \right) \label{cxin-eq-resc} \\
\partial_t d_k &= (1+\epsilon_0)^{5k/2} \left(\eps^{-2} c_k a_k - (1+\epsilon_0)^{5/2} K d_k a_{k+1} \right) + O\left( (1+\epsilon_0)^{2k - n_0/2} \tilde E_k^{1/2} \right) \label{dxin-eq-resc}
\end{align}
and
\begin{equation}\label{lei}
\partial_t \tilde E_k \leq K (1+\epsilon_0)^{5k/2} ( d_{k-1}^2 a_k - (1+\epsilon_0)^{5/2} d_k^2 a_{k+1} )
\end{equation}
for all $k \in \Z$ and $t \geq \tau_{n_0-N}$.
\item[(iii)] (Initial conditions)  One has
\begin{equation}\label{nomad}
a_k(\tau_{n_0-N})=b_k(\tau_{n_0-N})=c_k(\tau_{n_0-N})=d_k(\tau_{n_0-N})=\tilde E_k(\tau_{n_0-N})=0
\end{equation}
whenever $k > n_0-N$.  
\item[(iv)] (Energy defect) One has
\begin{equation}\label{cain}
\begin{split}
\frac{1}{2} \left(a_k^2(t)+b_k^2(t)+c_k^2(t)+d_k^2(t)\right)  &\leq \tilde E_k(t)\\
& \leq \frac{1}{2} \left(a_k^2(t)+b_k^2(t)+c_k^2(t)+d_k^2(t)\right)\\
&\quad\quad  + O\left( (1+\epsilon_0)^{2k-n_0/2} \int_{\tau_{n_0-N}}^t \tilde E_k(t')\ dt' \right)
\end{split}
\end{equation}
whenever $k \in\Z$ and $t \geq \tau_{n_0-N}$.
\item[(v)] (No very low frequencies) One has
\begin{equation}\label{nomodo}
a_k(t)=b_k(t)=c_k(t)=d_k(t)=\tilde E_k(t)=0
\end{equation}
whenever $k < n_0-N$ and $t \geq \tau_{n_0-N}$.
\item[(vii)] (Scale evolution) One has
\begin{equation}\label{tau-stable}
-O\left((1+\epsilon_0)^{(\frac{5}{2}+\frac{1}{100})|k|}\right) \leq \tau_k \leq 0
\end{equation}
for any $n_0-N \leq k \leq 0$.
\item[(viii)] (Transition state)  We have
\begin{align}
a_0(0) &= 1 \label{an-init-rescaled}\\
|b_0(0)| &\leq 10^{-5} \eps \label{bn-init-rescaled}\\
|c_0(0)| &\leq 10^{-5} \exp(-K^{10}) \eps^2 \label{cn-init-rescaled}\\
c_0(0) &\geq - (1+\epsilon_0)^{-n_0/4} \label{cn-init-rescaleda}\\
|d_0(0)| &\leq K^{-10} \label{dn-init-rescaled}\\
\tilde E_{-1}(0) &\leq K^{-20} \label{en1-init-rescaled}
\end{align}
and if $N > n_0$ we have the additional bounds
\begin{align}
b_{-1}(0) &\geq 10^{-5} \eps \label{spin-1-scaled}\\
b_{-1}(0) &\leq 10^{5} \eps \label{spin-1a-scaled}\\
c_{-1}(0) &\geq \exp( K^{9} ) \eps^2.\label{spin-2-scaled}\\
c_{-1}(0) &\leq \exp( K^{10} ) \eps^2.\label{spin-2a-scaled}
\end{align}
\item[(ix)] (Energy estimates) We have
\begin{align}
\tilde E_{k-m}(t) &\leq K^{-10} (1+\epsilon_0)^{m/10 + |k-1|/50}\hbox{ for all } m \geq 2 \label{before-en-rescaled} \\
\tilde E_{k-1}(t)+\tilde E_k(t) &\leq (1+\epsilon_0)^{|k-1|/50} \label{during-en-rescaled} \\
\tilde E_{k+m}(t) &\leq K^{-30} (1+\epsilon_0)^{-10m + |k-1|/50} \hbox{ for all } m \geq 1 \label{after-en-rescaled}
\end{align}
whenever $n_0-N < k\leq 0$ and $\tau_{k-1} \leq t \leq \tau_k$.
\end{itemize}
Then there exists a time 
\begin{equation}\label{lifespan-rescaled}
\frac{1}{100} \leq \tau_1 \leq 100 
\end{equation}
and an amplitude
\begin{equation}\label{en-stable-rescaled}
(1+\epsilon_0)^{-1/100} \leq \mu_1 \leq (1+\epsilon_0)^{1/100} 
\end{equation}
such that we have the bounds
\begin{align}
a_1(\tau_1) &= \mu_1 \label{an-init-next} \\
|b_1(\tau_1)| &\leq 10^{-5} \eps \mu_1 \label{bn-init-next}\\
|c_1(\tau_1)| &\leq 10^{-5} \exp( -K^{10} ) \eps^2 \mu_1 \label{cn-init-next}\\
c_1(\tau_1) &\geq - (1+\epsilon_0)^{-n_0/4} \mu_1 \label{cn-init-rescaleda-next}\\
|d_1(\tau_1)| &\leq K^{-10} \mu_1 \label{dn-en-next} \\
\tilde E_0(\tau_1) &\leq K^{-20} \mu_1^2. \label{en1-init-next}\\
b_0(\tau_1) &\geq 10^{-5} \eps \mu_1 \label{spin-1-next}\\
b_0(\tau_1) &\leq 10^{5} \eps \mu_1 \label{spin-1a-next}\\
c_0(\tau_1) &\geq \exp( K^{9} ) \eps^2 \mu_1\label{spin-2-next}\\
c_0(\tau_1) &\leq \exp( K^{10} ) \eps^2 \mu_1\label{spin-2a-next}
\end{align}
and
\begin{align}
\tilde E_{1-m}(t) &\leq K^{-10} (1+\epsilon_0)^{m/10} \hbox{ for all } m \geq 2 \label{before-en-next} \\
\tilde E_{0}(t)+\tilde E_{1}(t) &\leq 1 \label{during-en-next} \\
\tilde E_{1+m}(t) &\leq K^{-30} (1+\epsilon_0)^{-10m} \hbox{ for all } m \geq 1 \label{after-en-next}
\end{align}
for all $0 \leq t \leq \tau_1$.
\end{proposition}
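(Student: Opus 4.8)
The plan is to prove Proposition \ref{induct-rescale} by a bootstrap/continuity argument on a maximal subinterval of $[0,+\infty)$, in the spirit of (but considerably more elaborate than) the proof of Theorem \ref{daet}: one starts from crude a priori bounds valid on all of $[\tau_{n_0-N},+\infty)$ and successively refines them into the sharp estimates \eqref{lifespan-rescaled}--\eqref{after-en-next}. The universal input is energy bookkeeping: summing the local energy inequalities \eqref{lei} over $k$ produces a monotone non-increasing total energy, which together with the energy-defect bounds \eqref{cain} and the initial data \eqref{an-init-rescaled} pins $\sum_k \tilde E_k \le 1 + o(1)$ and hence $a_k^2+b_k^2+c_k^2+d_k^2 \le 2\tilde E_k = O(1)$ throughout. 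Crucially, since $n_0$ is chosen \emph{after} $\epsilon_0$, $K$, $\eps$ and is taken enormous, every ``dissipation-type'' error $O((1+\epsilon_0)^{2k-n_0/2}\tilde E_k^{1/2})$ in \eqref{axin-eq-resc}--\eqref{dxin-eq-resc}, and likewise the energy-defect discrepancy in \eqref{cain}, is far smaller than any structural quantity ($\eps$, $\exp(-K^{10})\eps^2$, $(1+\epsilon_0)^{-n_0/4}$) appearing in the target bounds, so these terms are simply absorbed into the error budgets at every step.

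First I would \emph{decouple the far scales}. Using \eqref{lei}, the bounds \eqref{before-en-rescaled}--\eqref{after-en-rescaled} at $t=\tau_0=0$, and the observation that the only inflow of energy into scale $k$ is the pump term $K(1+\epsilon_0)^{5k/2}d_{k-1}^2 a_k$ --- whose size is $\lesssim \sqrt{\tilde E_k}\,\tilde E_{k-1}$ once the neighbouring energies are controlled --- one runs a Gronwall estimate scale by scale: for $k\le -2$ the outflow to scale $k+1$ and the smallness of $\tilde E_{k-1}$ force $\tilde E_k$ to stay under its $K^{-10}(1+\epsilon_0)^{m/10+\cdots}$ envelope, while for $k\ge 2$ the energies begin exponentially small and, the inflow being proportional to $a_k$ itself, Gronwall keeps them exponentially small. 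This isolates the genuinely active dynamics to the eight modes at scales $k=0$ and $k=1$, modulo exactly two residual couplings that must be retained: the inflow $K d_{-1}^2 a_0$ into $a_0$ from the still-running circuit at scale $-1$, and the drain $-(1+\epsilon_0)^{5/2}K d_1^2 a_2$ into scale $2$ (negligible, as $\tilde E_2$ is tiny).

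The heart of the argument is then to replay the analysis of Theorem \ref{daet} on this almost-closed system. As there: $a_0$ stays near $1$, $b_0$ grows essentially linearly like $\eps t$, and $c_0$ grows like $\eps^2\exp((\tfrac12 t^2-1)K^{10})$ until a critical time $t_c=\sqrt2+O(1/\sqrt K)$, at which the $b_0$--$c_0$ amplifier gate abruptly completes: $c_0$'s exponential growth drains the $\sim\eps^2$ of energy that $b_0$ accumulated, so $c_0$ jumps to size $\sim\eps$ and ignites the scale-$0$ rotor (coupling $\eps^{-2}$, hence now spinning at rate $\sim\eps^{-1}$). Equipartition then forces $d_0^2$ to average to about half the local energy, and the pump $d_0\to a_1$ of coupling $\sim K$ drains the bulk of that energy into $a_1$ over a time $\sim 1/K$, exactly as in the proof of Theorem \ref{daet}. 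One takes $\tau_1:=t_c+\Theta(1/\sqrt K)$, chosen precisely so that $\mu_1:=a_1(\tau_1)$ lands in $[(1+\epsilon_0)^{-1/100},(1+\epsilon_0)^{1/100}]$, which yields \eqref{lifespan-rescaled} and \eqref{en-stable-rescaled}. The new ingredient, absent from Theorem \ref{daet}, is controlling the feedback of the scale-$(-1)$ circuit: the spin hypotheses \eqref{spin-1-scaled}--\eqref{spin-2a-scaled} (together with \eqref{en1-init-rescaled}) ensure $c_{-1}$ is so large that the scale-$(-1)$ rotor turns on a timescale far shorter than any other in the problem, so $d_{-1}^2$ may be replaced by its rapidly-oscillating average $\tfrac12(a_{-1}^2+d_{-1}^2)=\tilde E_{-1}+(\text{oscillation})$; the inflow $K d_{-1}^2 a_0$ is then an effectively smooth perturbation of $a_0$ carrying total energy $\lesssim \tilde E_{-1}\ll K^{-10}$, harmless to the transition.

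Finally I would verify the remaining output bounds. The scale-$1$ secondary modes stay confined because the scale-$1$ amplifier pump $b_1\to c_1$ is far too slow to fire on $[0,\tau_1]$ --- its rate $\eps^{-1}K^{10}c_1$ is minuscule while $c_1$ is still of seed size $\lesssim\exp(-K^{10})\eps^2$ --- so $c_1$ merely accumulates its source over the short window during which $a_1$ is large, giving \eqref{cn-init-next}--\eqref{cn-init-rescaleda-next}, while $b_1$ grows only to $O(\eps/\sqrt K)$, giving \eqref{bn-init-next}; the scale-$0$ modes $a_0,b_0,c_0,d_0$ settle into their expected post-ignition state, from which \eqref{dn-en-next}, \eqref{en1-init-next} and the next-stage spin bounds \eqref{spin-1-next}--\eqref{spin-2a-next} follow (the latter being exactly the reason $\eps$ must be small relative to $K$). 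The local energy bounds \eqref{before-en-next}--\eqref{after-en-next} then come from splicing the far-scale Gronwall estimates to the transition analysis; here the extra factors $(1+\epsilon_0)^{|k-1|/50}$ built into \eqref{before-en-rescaled}--\eqref{after-en-rescaled} supply exactly the room needed to absorb the mild degradation of constants incurred in one step, so the inductive hypotheses reproduce themselves. I expect the main obstacle to be this last point together with the scale-$(-1)$ feedback: because Proposition \ref{blowdyn} is stated with no $O()$ slack, the entire bootstrap must be carried out keeping explicit track of all numerical constants, and in particular the heuristic ``the fast rotor averages out'' must be converted into a rigorous quantitative bound certifying that no constructive interference between scales $-1$ and $0$ can push $a_0$, $d_0$ or $\tilde E_{-1}$ outside their allotted ranges before time $\tau_1$.
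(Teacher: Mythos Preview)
Your proposal follows the paper's approach closely and is essentially correct: a bootstrap on $[0,T_1]\subset[0,100]$ using the local energy inequalities to freeze all scales except $k\in\{-1,0,1\}$, a replay of the Theorem~\ref{daet} analysis at scale $0$ to locate a critical time $t_c$ and set $\tau_1=t_c+K^{-1/2}$, and fast-rotor averaging at scale $-1$ to handle the backward coupling.

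Two corrections on the details. First, $c_0$ does not jump to size $\sim\eps$ after ignition: $b_0$ stays $\sim\eps$ throughout (the amplifier does not noticeably deplete it on this timescale), and $c_0$ only reaches $\exp(O(K^{9.5}))\eps^2$ on $[t_c,\tau_1]$; this is immaterial since the rotor only requires $c_0\gg\eps^2$. Second, and more importantly, you have the scale-$(-1)$ argument pointed in the wrong direction. The forcing $Kd_{-1}^2$ on $a_0$ is already $O(K^{-9})$ by the crude bootstrap bound on $\tilde E_{-1}$ and needs no averaging. The genuine danger is that once the scale-$0$ rotor fires and $a_0$ begins to oscillate through zero, the pump from $d_{-1}$ to $a_0$ can reverse, pushing $\tilde E_{-1}$ back \emph{up} to the exit threshold $K^{-10}(1+\epsilon_0)^{2/10}$ and terminating the bootstrap prematurely. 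The paper's cure (Proposition~\ref{noexit}) uses the scale-$(-1)$ equipartition to convert the drain term $-Kd_{-1}^2 a_0$ in $\partial_t\tilde E_{-1}$ into an effective $-K\tilde E_{-1}a_0$; since $\int_0^{t_c}a_0\approx t_c\gtrsim 1/2$ while $\int_{t_c}^{\tau_1}|a_0|=O(K^{-1/2})$, this forces $\tilde E_{-1}$ down to $O(K^{-14})$ before the scale-$0$ rotor ignites, with no time to recover afterward. That sharpened $K^{-14}$ is then fed back into the $a_0$ equation and is what ultimately drives $\tilde E_0(\tau_1)$ below the required $K^{-20}$ threshold.
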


\begin{remark} The dynamics \eqref{axin-eq-resc}-\eqref{dxin-eq-resc} are depicted in Figure \ref{fig:circ2} (with the dissipative terms ignored).  Note how the rescaling has placed the tiny factor of $(1+\epsilon_0)^{-n_0/2}$ in front of all the viscosity terms in \eqref{axin-eq-resc}-\eqref{dxin-eq-resc}, thus highlighting the lower order nature of these terms for our analysis.  This small factor is ultimately reflecting the supercritical nature of the dissipation; in practice, this factor will allow us to treat all dissipative terms as negligible. 
\end{remark}

\begin{figure} [t]
\centering
\includegraphics{./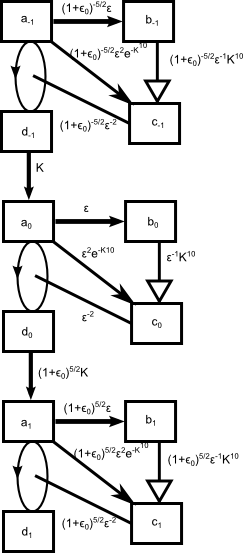}
\caption[Rescaled circuits]{A portion of the rescaled dynamics \eqref{axin-eq-resc}-\eqref{dxin-eq-resc}, again ignoring the dissipation terms.  The factors of $(1+\epsilon_0)$ here play no significant role and may be ignored at a first reading.}
\label{fig:circ2}
\end{figure}

Let us now explain why Proposition \ref{induct-rescale} implies Proposition \ref{blowdyn-induct} (and hence Proposition \ref{blowdyn} and Theorems \ref{ood2}, \ref{ood}, \ref{blowup} and \ref{main}).  Let the notation and hypotheses be as in Proposition \ref{blowdyn-induct}.  We then define the rescaled times
\begin{equation}\label{taudef}
\tau_k := (1+\epsilon_0)^{5N/2} e_N (t_{N+k} - t_N)
\end{equation}
and rescaled amplitudes
\begin{equation}\label{mudef}
\mu_k := e_N^{-1} e_{N+k}  
\end{equation}
for $n_0-N \leq k \leq 0$, as well as the rescaled solutions
\begin{align*}
a_k(t) &:= e_N^{-1} X_{1,N+k}\left( t_N + (1+\epsilon_0)^{-5N/2} e_N^{-1} t \right)  \\
b_k(t) &:= e_N^{-1} X_{2,N+k}\left( t_N + (1+\epsilon_0)^{-5N/2} e_N^{-1} t \right)  \\
c_k(t) &:= e_N^{-1} X_{3,N+k}\left( t_N + (1+\epsilon_0)^{-5N/2} e_N^{-1} t \right) \\
d_k(t) &:= e_N^{-1} X_{4,N+k}\left( t_N + (1+\epsilon_0)^{-5N/2} e_N^{-1} t \right) 
\end{align*}
and rescaled energies
\begin{align*}
\tilde E_k(t) &:= e_N^{-2} E_{N+k}\left( t_N + (1+\epsilon_0)^{-5N/2} e_N^{-1} t \right) 
\end{align*}
for $k \in \Z$ and $t \geq \tau_{n_0-N}$.  Under this rescaling, the \emph{a priori} regularity \eqref{many-norm}, \eqref{many-norm-2} follows from \eqref{many}, \eqref{many-2}.  The bound 
\begin{equation}\label{mu-stable}
(1+\epsilon_0)^{-|k|/100} \leq \mu_{k} \leq (1+\epsilon_0)^{|k|/100}
\end{equation}
for $n_0-N \leq k \leq 0$ follows from \eqref{en-stable} and \eqref{mudef}; from this, \eqref{lifespan}, \eqref{taudef} and summing the geometric series we then obtain \eqref{tau-stable} (recall that we allow implied constants in the $\lesssim$ or $O()$ notation to depend on $\epsilon_0$).

If we directly rescale \eqref{axin-eq}-\eqref{dxin-eq}, we obtain \eqref{axin-eq-resc}-\eqref{dxin-eq-resc}, except with the factors $(1+\epsilon_0)^{2k-n_0/2}$ replaced by 
$(1+\epsilon_0)^{2k - N/2} e_N^{-1}$.  However, from \eqref{en-stable}, \eqref{e-init} we have
$$ e_N^{-1} \leq (1+\epsilon_0)^{(N-n_0)/100}$$
and hence
$$ (1+\epsilon_0)^{2k - N/2} e_N^{-1} \leq (1+\epsilon_0)^{2k-n_0/2}.$$
This gives the equations of motion \eqref{axin-eq-resc}-\eqref{dxin-eq-resc}.  The energy inequality \eqref{lei} is similarly obtained from rescaling \eqref{lei-0}.

The initial conditions \eqref{nomad} follow from rescaling \eqref{initial-cond} (and also using \eqref{t-init}).  Similarly, \eqref{cain} follows from rescaling \eqref{ein-sum}, and \eqref{nomodo} follows from rescaling \eqref{nomode}.  Similarly, the conditions \eqref{an-init-rescaled}-\eqref{spin-2a-scaled} follow from rescaling \eqref{an-init}-\eqref{spin-2a}.  Finally, \eqref{before-en-rescaled}-\eqref{after-en-rescaled} follow from rescaling \eqref{before-en}-\eqref{after-en} and using \eqref{mu-stable}.  We then apply Proposition \ref{induct-rescale} to obtain $\tau_1, \mu_1$ with the stated properties \eqref{lifespan-rescaled}-\eqref{after-en-next}.  It is then routine to verify that the conclusions of Proposition \ref{blowdyn-induct} are satisfied with
$$ t_{N+1} := t_N + (1+\epsilon_0)^{-5N/2} e_N^{-1} \tau_1$$
and
$$ e_{N+1} := \mu_1 e_N.$$

For future reference, we record one consequence of the energy estimates \eqref{before-en-rescaled}-\eqref{after-en-rescaled}:

\begin{lemma}[Cumulative energy bound]\label{cumeng}  For any integer $m = O(1)$, one has
$$ \int_{\tau_{n_0-N}}^0 \tilde E_m(t)\ dt \lesssim 1.$$\
(The implied constant here may depend on $m$.)
\end{lemma}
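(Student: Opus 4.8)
The plan is to prove Lemma \ref{cumeng} by summing the rescaled energy estimates \eqref{before-en-rescaled}--\eqref{after-en-rescaled} over the relevant time intervals, exploiting the geometric decay both in the scale separation $|m-k|$ and in the lifespans $\tau_k - \tau_{k-1}$. Fix an integer $m = O(1)$. The interval of integration $[\tau_{n_0-N}, 0]$ is partitioned (up to endpoints) into the subintervals $[\tau_{k-1}, \tau_k]$ for $n_0 - N < k \leq 0$, so it suffices to bound $\int_{\tau_{k-1}}^{\tau_k} \tilde E_m(t)\ dt$ for each such $k$ and sum. On the interval $[\tau_{k-1}, \tau_k]$, we use whichever of \eqref{before-en-rescaled}, \eqref{during-en-rescaled}, \eqref{after-en-rescaled} applies according to whether $m \leq k - 2$, $m \in \{k-1, k\}$, or $m \geq k+1$; in each case the pointwise bound on $\tilde E_m(t)$ is at most $(1+\epsilon_0)^{|k-1|/50}$ times a factor that decays like $(1+\epsilon_0)^{-10|m-k|}$ (or is $O(1)$) when $|m-k|$ is large.

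First I would control the length of each subinterval: from \eqref{tau-stable} we have $|\tau_k| \lesssim (1+\epsilon_0)^{(\frac52 + \frac1{100})|k|}$, so $\tau_k - \tau_{k-1} \lesssim (1+\epsilon_0)^{(\frac52 + \frac1{100})|k|}$ as well (both $\tau_k, \tau_{k-1}$ are negative for $k \leq 0$, and $\tau_k - \tau_{k-1} > 0$, so this difference is dominated by $|\tau_{k-1}|$). Actually, to get a clean geometric sum we want the sharper lifespan comparison: for $n_0 - N < k \leq 0$ the difference $\tau_k - \tau_{k-1}$ is $\lesssim (1+\epsilon_0)^{(\frac52 + \frac1{100})|k|}$, and since $m = O(1)$ is fixed, for $k$ sufficiently negative we have $|m - k| \sim |k|$. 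Then the contribution of the subinterval $[\tau_{k-1}, \tau_k]$ to the integral is at most
$$ (\tau_k - \tau_{k-1}) \cdot \sup_{[\tau_{k-1},\tau_k]} \tilde E_m \lesssim (1+\epsilon_0)^{(\frac52 + \frac1{100})|k|} \cdot K^{-10}(1+\epsilon_0)^{-10|m-k| + |k-1|/50}, $$
using \eqref{before-en-rescaled} when $m \le k-2$ (the relevant case for all but finitely many $k$). Here the exponent is $(\frac52 + \frac1{100})|k| - 10|m-k| + \frac{|k-1|}{50}$, which for $|k|$ large behaves like $(\frac52 + \frac1{100} + \frac1{50} - 10)|k| \to -\infty$ geometrically, so these terms form a convergent geometric series with sum $O(1)$. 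The remaining finitely many values of $k$ (those with $k - 2 < m$, i.e. $k \in \{m-1, m, m+1, \dots, 0\}$, a bounded set since $m = O(1)$) each contribute a bounded amount: on those intervals $\tilde E_m$ is bounded by $(1+\epsilon_0)^{|k-1|/50} = O(1)$ via \eqref{during-en-rescaled} or \eqref{after-en-rescaled}, and the interval length is $O(1)$ by \eqref{tau-stable} since $k = O(1)$.

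Summing the geometric tail and the $O(1)$ finite head yields $\int_{\tau_{n_0-N}}^0 \tilde E_m(t)\ dt \lesssim 1$, with the implied constant depending on $m$ (and on $\epsilon_0$, as is our standing convention) but not on $N$. The main point to be careful about — and the only place where anything could go wrong — is the bookkeeping of the exponent in the geometric series: one must check that the growth rate $\frac52 + \frac1{100}$ coming from the lifespan bound \eqref{tau-stable}, plus the mild growth $\frac1{50}$ from the $(1+\epsilon_0)^{|k-1|/50}$ slack in \eqref{before-en-rescaled}, is strictly dominated by the decay rate $10$ from the scale-separation factor $(1+\epsilon_0)^{-10|m-k|}$, i.e. that $\frac52 + \frac1{100} + \frac1{50} < 10$, which is clear. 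No genuine obstacle arises here; this lemma is a routine consequence of the already-established energy hierarchy, recorded now for convenient reference in the subsequent analysis.
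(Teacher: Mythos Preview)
Your approach is essentially the same as the paper's---partition $[\tau_{n_0-N},0]$ into the subintervals $[\tau_{k-1},\tau_k]$, bound $\tilde E_m$ pointwise using the energy hierarchy, multiply by the lifespan from \eqref{tau-stable}, and sum a geometric series---and the final exponent bookkeeping is correct. However, you have swapped the roles of \eqref{before-en-rescaled} and \eqref{after-en-rescaled}. Since $k \le 0$ and $m = O(1)$ is fixed, for all but finitely many $k$ one has $k \le m-1$, i.e.\ $m \ge k+1$, which is the regime of \eqref{after-en-rescaled}, not \eqref{before-en-rescaled}. (The case $m \le k-2$, i.e.\ $k \ge m+2$, occurs only for the finitely many $k$ in $\{m+2,\dots,0\}$, and is empty when $m \ge -1$.) Correspondingly, the decay factor $(1+\epsilon_0)^{-10|m-k|}$ that you wrote down is exactly the one coming from \eqref{after-en-rescaled}; the factor from \eqref{before-en-rescaled} is $(1+\epsilon_0)^{(k-m)/10}$, which \emph{grows} in $|m-k|$. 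Once you relabel the tail case as \eqref{after-en-rescaled} and the finite head as the cases $k \in \{m,m+1,\dots,0\}$ handled by \eqref{during-en-rescaled} or \eqref{before-en-rescaled}, the computation you wrote is correct and coincides with the paper's proof (the paper simply notes $\tilde E_m(t) \lesssim (1+\epsilon_0)^{10k+|k|/50}$ uniformly from \eqref{during-en-rescaled}, \eqref{after-en-rescaled} and sums).
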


\begin{proof}
From \eqref{during-en-rescaled}, \eqref{after-en-rescaled} we have
$$ \tilde E_m(t) \lesssim (1+\epsilon_0)^{10k + |k|/50} $$
whenever $\tau_{k-1} \leq t \leq \tau_k$ and $n_0-N < k \leq 0$, and so
$$ \int_{\tau_{n_0-N}}^0 \tilde E_m(t)\ dt \lesssim \sum_{n_0-N < k \leq 0} (1+\epsilon_0)^{10k + |k|/50} |\tau_{k-1}|.$$
Applying \eqref{tau-stable} and summing the geometric series, we obtain the claim.
\end{proof}

\subsection{Fifth step: crude energy estimates for distant modes}

We now begin the proof of Proposition \ref{induct-rescale}.  For the rest of this section, we assume the notations and hypotheses are as in that proposition.

The first stage is to establish the energy bounds \eqref{before-en-next}, \eqref{during-en-next}, \eqref{after-en-next} (and also the bound \eqref{dn-en-next}) on a certain time interval $0 \leq t \leq T_1$; the quantity $\tau_1$ will later be chosen between $0$ and $T_1$, thus establishing the required bounds \eqref{before-en-next}, \eqref{during-en-next}, \eqref{after-en-next}, \eqref{dn-en-next} for $0 \leq t \leq \tau_1$.

We first establish bounds at time $t=0$ that are slightly better than the required bounds \eqref{before-en-next}, \eqref{during-en-next}, \eqref{after-en-next}, \eqref{dn-en-next}.

\begin{lemma}[Initial bounds]\label{prelim}  Let the notation and assumptions be as in Proposition \ref{induct-rescale}.  Then we have
\begin{align}
\tilde E_{1-m}(0) &\leq (1+\epsilon_0)^{-0.08} K^{-10} (1+\epsilon_0)^{m/10} \hbox{ for all } m \geq 2 \label{before-en-0} \\
\tilde E_{0}(0)+\tilde E_{1}(0) &\leq 0.6 \label{during-en-0} \\
\tilde E_{1+m}(0) &\leq (1+\epsilon_0)^{-9.98} K^{-30} (1+\epsilon_0)^{-10m} \hbox{ for all } m \geq 1 \label{after-en-0}\\
|d_1(0)| &\leq \sqrt{2} K^{-15}. \label{dn-en-0} 
\end{align}
\end{lemma}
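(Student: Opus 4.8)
The plan is to read off each of the four claimed bounds at $t=0$ directly from the hypotheses of Proposition~\ref{induct-rescale}, together with the energy-defect inequality \eqref{cain} and Lemma~\ref{cumeng}, and then carry out some elementary arithmetic on the exponents. The key preliminary observation is that $t=0=\tau_0$ lies in the range $\tau_{-1}\le t\le\tau_0$, since $\tau_0=0$ and $\tau_{-1}\le0$ by \eqref{tau-stable}; hence we may invoke the energy estimates \eqref{before-en-rescaled}--\eqref{after-en-rescaled} with the choice $k=0$ (so $|k-1|=1$), obtaining at $t=0$ the bounds $\tilde E_{-m}(0)\le K^{-10}(1+\epsilon_0)^{m/10+1/50}$ for $m\ge2$ and $\tilde E_m(0)\le K^{-30}(1+\epsilon_0)^{-10m+1/50}$ for $m\ge1$, supplemented by $\tilde E_{-1}(0)\le K^{-20}$ from \eqref{en1-init-rescaled}.

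For \eqref{before-en-0}: when $m\ge3$ we have $\tilde E_{1-m}(0)=\tilde E_{-(m-1)}(0)\le K^{-10}(1+\epsilon_0)^{(m-1)/10+1/50}$, and since $\tfrac1{10}-\tfrac1{50}=0.08$ the exponent equals $m/10-0.08$, which is exactly the claimed bound; when $m=2$ we simply use $\tilde E_{-1}(0)\le K^{-20}\le K^{-10}\le(1+\epsilon_0)^{0.12}K^{-10}$ (valid since $K\ge1$ and $\epsilon_0>0$), and $(1+\epsilon_0)^{0.12}=(1+\epsilon_0)^{-0.08}(1+\epsilon_0)^{2/10}$. For \eqref{after-en-0}: $\tilde E_{1+m}(0)=\tilde E_{m+1}(0)\le K^{-30}(1+\epsilon_0)^{-10(m+1)+1/50}$, and since $10-\tfrac1{50}=9.98$ this equals $(1+\epsilon_0)^{-9.98}K^{-30}(1+\epsilon_0)^{-10m}$. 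Thus these two bounds are essentially tight, and the stated constants $-0.08$, $0.12$, $-9.98$ are precisely those that make the inequalities go through.

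For \eqref{during-en-0} and \eqref{dn-en-0} I would instead use the energy-defect inequality \eqref{cain}. Applied with $k=0$ at $t=0$ it gives $\tilde E_0(0)\le\tfrac12\bigl(a_0^2+b_0^2+c_0^2+d_0^2\bigr)(0)+O\bigl((1+\epsilon_0)^{-n_0/2}\int_{\tau_{n_0-N}}^0\tilde E_0\bigr)$; by Lemma~\ref{cumeng} the integral is $O(1)$, so this error term is $O((1+\epsilon_0)^{-n_0/2})$, negligible because $n_0$ is taken large depending on $\epsilon_0,K,\eps$. Inserting the transition-state values $a_0(0)=1$, $|b_0(0)|\le10^{-5}\eps$, $|c_0(0)|\le10^{-5}\exp(-K^{10})\eps^2$, $|d_0(0)|\le K^{-10}$ from \eqref{an-init-rescaled}--\eqref{dn-init-rescaled} yields $\tilde E_0(0)\le\tfrac12+O(K^{-20})$; combining with $\tilde E_1(0)\le K^{-30}(1+\epsilon_0)^{-10+1/50}\le K^{-30}$ from the previous paragraph gives $\tilde E_0(0)+\tilde E_1(0)\le\tfrac12+O(K^{-20})\le0.6$ once $K$ is large. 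Likewise \eqref{cain} with $k=1$ gives $\tfrac12 d_1(0)^2\le\tilde E_1(0)\le K^{-30}$, so $|d_1(0)|\le\sqrt2\,K^{-15}$.

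The proof is essentially pure bookkeeping, so there is no real analytic obstacle; the only points requiring any care are (a) checking that the error term in \eqref{cain} is genuinely negligible, which is where the parameter hierarchy $1\ll1/\epsilon_0\ll K\ll1/\eps\ll n_0$ and Lemma~\ref{cumeng} are used, and (b) verifying the exponent arithmetic, since several of the inequalities (the $m\ge3$ case of \eqref{before-en-0} and all of \eqref{after-en-0}) hold with equality and thus leave no slack.
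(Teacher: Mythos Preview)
Your proof is correct and follows essentially the same approach as the paper: both arguments invoke the energy estimates \eqref{before-en-rescaled}, \eqref{after-en-rescaled} at $k=0$, $t=0$ (shifting $m$ by one and computing the exponents), use \eqref{en1-init-rescaled} for the $m=2$ case of \eqref{before-en-0}, and handle \eqref{during-en-0} and \eqref{dn-en-0} via \eqref{cain} together with Lemma~\ref{cumeng} and the transition-state bounds. Your write-up is in fact more explicit about the exponent arithmetic than the paper's.
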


\begin{proof}
From \eqref{before-en-rescaled}, \eqref{after-en-rescaled} for $k=0$ and $t=0$, we have
\begin{equation}\label{before-1}
\tilde E_{-m}(0) \leq K^{-10} (1+\epsilon_0)^{m/10 + 1/50} \hbox{ for all } m \geq 2
\end{equation}
and
\begin{equation}\label{after-1}
\tilde E_{m}(0) \leq K^{-30} (1+\epsilon_0)^{-10m + 1/50} \hbox{ for all } m \geq 1 
\end{equation}
which implies the claims \eqref{before-en-0} for all $m \geq 3$ and \eqref{after-en-0} for $m \geq 1$, after shifting $m$ by one. The claim \eqref{before-en-0} for $m=2$ follows from \eqref{en1-init-rescaled} (since $K$ is large depending on $\epsilon_0$). Also, from \eqref{after-1} we have
\begin{equation}\label{half}
 \tilde E_1(0) \leq K^{-30};
 \end{equation}
the claim \eqref{dn-en-0} then follows from \eqref{cain}.

It remains to establish \eqref{during-en-0}.  From \eqref{cain} one has
$$ \tilde E_0(0) \leq   \frac{1}{2} \left(a_0^2(0)+b_0^2(0)+c_0^2(0)+d_0^2(0)\right) + O\left( (1+\epsilon_0)^{-n_0/2} \int_{\tau_{n_0-N}}^0 \tilde E_0(t)\ dt \right).$$
From \eqref{an-init-rescaled}-\eqref{dn-init-rescaled} we have
$$ \frac{1}{2} (a_0^2(0)+b_0^2(0)+c_0^2(0)+d_0^2(0))  = 0.5 + O( K^{-10} ).$$
Applying Lemma \ref{cumeng}, and recalling that $K$ and $n_0$ are assumed sufficiently large, the claim \eqref{during-en-0} follows.
\end{proof}

We now define $T_1$ to be the largest time in $[0,100]$ for which one has the bounds
\begin{align}
\tilde E_{1-m}(t) &\leq K^{-10} (1+\epsilon_0)^{m/10} \hbox{ for all } m \geq 2 \label{before-en-next'} \\
\tilde E_{0}(t)+\tilde E_{1}(t) &\leq 1 \label{during-en-next'} \\
\tilde E_{1+m}(t) &\leq K^{-30} (1+\epsilon_0)^{-10m} \hbox{ for all } m \geq 1 \label{after-en-next'}\\
|d_1(t)| &\leq \frac{1}{2} K^{-10}. \label{dn-en-next'} 
\end{align}
for all $0 \leq t \leq T_1$.  Lemma \ref{prelim} ensures that $T_1$ is well-defined (note that all the conditions here are closed conditions in $t$).

We record a variant of the arguments in Lemma \ref{prelim} that will be needed later:

\begin{lemma}[Almost all energy in primary modes]\label{primary}  For any $k=-1,0,1$ and $0 \leq t \leq T_1$, one has
$$ \tilde E_k(t) = \frac{1}{2} \left(a_k(t)^2 + b_k(t)^2 + c_k(t)^2 + d_k(t)^2 \right) + O\left( (1+\epsilon_0)^{-n_0/2} \right).$$
\end{lemma}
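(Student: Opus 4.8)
The plan is to derive Lemma~\ref{primary} directly from the energy defect inequality \eqref{cain}, exactly as was done for $\tilde E_0(0)$ in the proof of Lemma~\ref{prelim}, but now uniformly in $t \in [0,T_1]$ and for the three modes $k=-1,0,1$. The lower bound
$$\tilde E_k(t) \geq \frac{1}{2}\left(a_k(t)^2 + b_k(t)^2 + c_k(t)^2 + d_k(t)^2\right)$$
is immediate from the first inequality in \eqref{cain}, so the content is the matching upper bound with an error of size $O((1+\epsilon_0)^{-n_0/2})$. By the second inequality in \eqref{cain}, it suffices to show that the defect term
$$(1+\epsilon_0)^{2k - n_0/2} \int_{\tau_{n_0-N}}^{t} \tilde E_k(t')\, dt'$$
is $O((1+\epsilon_0)^{-n_0/2})$ for $k = -1,0,1$ and $0 \leq t \leq T_1$.

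To do this I would split the integral at the time $0$: on $[\tau_{n_0-N},0]$ the bound $\int_{\tau_{n_0-N}}^0 \tilde E_k(t')\, dt' \lesssim 1$ is exactly Lemma~\ref{cumeng} (applicable since $k = O(1)$), and on $[0,t] \subset [0,100]$ we use the bootstrap bounds defining $T_1$, namely \eqref{during-en-next'} for $k=0,1$ (which gives $\tilde E_k(t') \leq 1$, hence $\int_0^t \tilde E_k \leq 100$) and, for $k=-1$, the bound \eqref{before-en-next'} with $m=2$ (which gives $\tilde E_{-1}(t') \leq K^{-10}(1+\epsilon_0)^{1/5} \lesssim 1$, hence again $\int_0^t \tilde E_{-1} \lesssim 1$). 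Combining, $\int_{\tau_{n_0-N}}^t \tilde E_k(t')\, dt' \lesssim 1$ for each $k \in \{-1,0,1\}$, and since $2k = O(1)$ the prefactor $(1+\epsilon_0)^{2k}$ is $O(1)$, so the defect term is indeed $O((1+\epsilon_0)^{-n_0/2})$. Feeding this back into \eqref{cain} gives the claimed identity.

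There is essentially no obstacle here: the lemma is a bookkeeping consequence of \eqref{cain}, Lemma~\ref{cumeng}, and the definition of $T_1$. The only mild subtlety is making sure one has an \emph{a priori} bound on $\tilde E_k$ over the whole evolution window so that the time integral in \eqref{cain} converges and is controlled — but this is precisely what Lemma~\ref{cumeng} supplies on $[\tau_{n_0-N},0]$ and what the bootstrap hypotheses \eqref{before-en-next'}--\eqref{after-en-next'} supply on $[0,T_1]$. I give the argument.

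\begin{proof}
The lower bound $\tilde E_k(t) \geq \frac12(a_k(t)^2+b_k(t)^2+c_k(t)^2+d_k(t)^2)$ is immediate from the first inequality in \eqref{cain}. For the upper bound, the second inequality in \eqref{cain} gives
$$ \tilde E_k(t) \leq \frac12\left(a_k(t)^2+b_k(t)^2+c_k(t)^2+d_k(t)^2\right) + O\left( (1+\epsilon_0)^{2k-n_0/2} \int_{\tau_{n_0-N}}^{t} \tilde E_k(t')\ dt' \right),$$
so it suffices to show that $\int_{\tau_{n_0-N}}^{t} \tilde E_k(t')\ dt' \lesssim 1$ for $k=-1,0,1$ and $0 \leq t \leq T_1$, since then $(1+\epsilon_0)^{2k} = O(1)$ and the error term is $O((1+\epsilon_0)^{-n_0/2})$ as claimed.

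We split the integral at $t'=0$. On $[\tau_{n_0-N},0]$, Lemma \ref{cumeng} (applicable since $k=O(1)$) gives $\int_{\tau_{n_0-N}}^{0} \tilde E_k(t')\ dt' \lesssim 1$. On $[0,t] \subseteq [0,100]$ we use the defining bounds of $T_1$: for $k=0,1$, the bound \eqref{during-en-next'} gives $\tilde E_k(t') \leq 1$; for $k=-1$, the bound \eqref{before-en-next'} with $m=2$ gives $\tilde E_{-1}(t') \leq K^{-10}(1+\epsilon_0)^{1/5} \leq 1$. In either case $\int_0^t \tilde E_k(t')\ dt' \lesssim 1$. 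Adding the two contributions gives $\int_{\tau_{n_0-N}}^{t} \tilde E_k(t')\ dt' \lesssim 1$, completing the proof.
\end{proof}
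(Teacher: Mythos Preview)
Your proof is correct and follows essentially the same approach as the paper: reduce via \eqref{cain} to bounding $\int_{\tau_{n_0-N}}^{t}\tilde E_k(t')\,dt'$, split at $t'=0$, invoke Lemma~\ref{cumeng} on $[\tau_{n_0-N},0]$, and use the bootstrap bounds \eqref{before-en-next'}--\eqref{during-en-next'} together with $T_1\le 100$ on $[0,t]$. Your write-up is in fact slightly more explicit than the paper's in spelling out which bootstrap bound handles which $k$ and in noting that the lower bound is immediate from the first inequality in \eqref{cain}.
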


\begin{proof} By \eqref{cain} it suffices to show that
$$ \int_{\tau_{n_0-N}}^{T_1} \tilde E_k(t)\ dt \lesssim 1.$$
The portion of the integral with $0 \leq t \leq T_1$ is controlled by \eqref{before-en-next'}, \eqref{during-en-next'}, and the trivial bound $T_1 \leq 100$.  The claim now follows from Lemma \ref{cumeng}.
\end{proof}

The bounds \eqref{before-en-next'}-\eqref{dn-en-next'} look like an infinite number of conditions, but note from the qualitative decay property \eqref{many-norm-2} that
$$
\sup_{0\leq t \leq 100} \sup_{k \in\Z} (1 + (1+\epsilon_0)^{20k}) \tilde E_k(0) < \infty,
$$
which implies that the bounds \eqref{before-en-next'}, \eqref{after-en-next'} are automatically satisfied for all $m \geq M$ and some finite $M$ (independent of $T_1$).  So there are really only a finite number of conditions in the definition of $T_1$.  As $\tilde E_m$ and $d_1$ vary continuously in time, we conclude that either $T_1=100$, or that at least one of the inequalities \eqref{before-en-next'}-\eqref{after-en-next'} is obeyed with equality (for some $m$) at time $t=T_1$.  In particular, from Lemma \ref{prelim} we see that $T_1 \neq 0$, thus $0 < T_1 \leq 100$.

Now we use local energy estimates to rule out several of the ways in which one can ``exit'' the bounds \eqref{before-en-next'}-\eqref{dn-en-next'}.

\begin{lemma}[Closing some exits]  We have
\begin{equation}\label{far-exit}
\tilde E_{1-m}(T_1) < K^{-10} (1+\epsilon_0)^{m/10} \hbox{ for all } m \geq 3
\end{equation}
and
\begin{equation}\label{far2-exit}
\tilde E_{1+m}(T_1) < K^{-30} (1+\epsilon_0)^{-10m} \hbox{ for all } m \geq 1 
\end{equation}
and
\begin{equation}\label{near-exit}
\tilde E_{0}(T_1)+\tilde E_{1}(T_1) < 1.
\end{equation}
\end{lemma}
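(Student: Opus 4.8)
The plan is to run \eqref{lei} as a Gronwall-type estimate over the interval $[0,T_1]$, using the fact that Lemma \ref{prelim} gives bounds at $t=0$ that are strictly better than the $T_1$-defining bounds \eqref{before-en-next'}--\eqref{dn-en-next'}, and that $T_1\le 100$, so that the growth of the relevant energies over $[0,T_1]$ is far too small to consume the available slack. Throughout, the only input on the fields is \eqref{cain}, which gives $|a_k|,|d_k|\le\sqrt{2\tilde E_k}$, together with the bootstrap bounds \eqref{before-en-next'}--\eqref{dn-en-next'}, which are assumed to hold on all of $[0,T_1]$; the constant $K$ is large depending on $\epsilon_0$, which is what ultimately wins.

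For the distant low modes \eqref{far-exit} (scales $k=1-m$, $m\ge3$) I would apply \eqref{lei} at the single scale $k=1-m$ and estimate both the influx term $K(1+\epsilon_0)^{5k/2}d_{k-1}^2|a_k|$ and the outflux term $K(1+\epsilon_0)^{5k/2+5/2}d_k^2|a_{k+1}|$ by plugging in $d_j^2\le 2\tilde E_j$ and $|a_j|\le\sqrt{2\tilde E_j}$ and then the bootstrap bound \eqref{before-en-next'} at the appropriate indices. A short exponent count shows the prefactor $(1+\epsilon_0)^{5k/2}$ dominates all remaining powers of $(1+\epsilon_0)$, so the right-hand side is $O_{\epsilon_0}\!\big(K^{-14}(1+\epsilon_0)^{-c m}\big)$ for some $c>0$; multiplying by $T_1\le100$ and comparing against the slack $\gtrsim_{\epsilon_0}K^{-10}(1+\epsilon_0)^{m/10}$ coming from the factor $(1+\epsilon_0)^{-0.08}$ in \eqref{before-en-0} yields the strict inequality once $K$ is large. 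The distant high modes \eqref{far2-exit} (scales $k=1+m$, $m\ge1$) are handled identically: drop the nonnegative outflux, bound the influx $K(1+\epsilon_0)^{5(1+m)/2}d_m^2|a_{1+m}|$ using $|a_{1+m}|\le\sqrt{2\tilde E_{1+m}}$ from \eqref{after-en-next'} and $d_m^2\le 2\tilde E_m$ from \eqref{after-en-next'} (or $|d_1|\le\tfrac12K^{-10}$ from \eqref{dn-en-next'} when $m=1$); again the prefactor is beaten, the total growth is $O_{\epsilon_0}(K^{-34})$, and this is $\ll_{\epsilon_0}K^{-30}(1+\epsilon_0)^{-10m}$, the slack furnished by the $(1+\epsilon_0)^{-9.98}$ in \eqref{after-en-0}.

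For the near-mode bound \eqref{near-exit} the point is the telescoping in \eqref{lei}: adding the inequalities at $k=0$ and $k=1$ makes the intermediate flux $K(1+\epsilon_0)^{5/2}d_0^2 a_1$ cancel, leaving
\[
\partial_t(\tilde E_0+\tilde E_1)\ \le\ K\,d_{-1}^2 a_0\ -\ K(1+\epsilon_0)^{5}\,d_1^2 a_2\ \le\ K\,d_{-1}^2|a_0|\ +\ K(1+\epsilon_0)^{5}\,d_1^2|a_2|.
\]
Using $d_{-1}^2\le 2\tilde E_{-1}\le 2K^{-10}(1+\epsilon_0)^{1/5}$ (the $m=2$ case of \eqref{before-en-next'}), $|a_0|\le\sqrt{2\tilde E_0}\le\sqrt2$ (from \eqref{during-en-next'}), $|d_1|\le\tfrac12K^{-10}$ (from \eqref{dn-en-next'}), and $|a_2|\le\sqrt{2\tilde E_2}\lesssim K^{-15}$ (from \eqref{after-en-next'}), the right-hand side is $O_{\epsilon_0}(K^{-9})$; integrating from $\tilde E_0(0)+\tilde E_1(0)\le0.6$ of \eqref{during-en-0} over a time interval of length at most $100$ gives $\tilde E_0(T_1)+\tilde E_1(T_1)\le 0.6+O_{\epsilon_0}(K^{-9})<1$ for $K$ large.

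The only real obstacle is the bookkeeping: one must check in each case that the growth accrued over an interval of length up to $100$ is genuinely dominated by the slack, and this is precisely why Lemma \ref{prelim} was stated with the spare factors $(1+\epsilon_0)^{-0.08}$, $0.6$, $(1+\epsilon_0)^{-9.98}$, $\sqrt2 K^{-15}$ and why $K$ is taken large in terms of $\epsilon_0$. The structural input that makes the near-mode estimate work with so little slack is the cancellation above, which reflects that \eqref{lei} is a genuine local energy-flux inequality: the combined energy $\tilde E_0+\tilde E_1$ can change only through flux in from scale $-1$ and flux out to scale $2$, both of which are tiny throughout the regime defining $T_1$.
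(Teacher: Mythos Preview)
Your approach is essentially identical to the paper's: integrate \eqref{lei} over $[0,T_1]$ at each relevant scale, use the bootstrap bounds \eqref{before-en-next'}--\eqref{dn-en-next'} to control the flux terms, and compare the accumulated growth against the slack built into Lemma~\ref{prelim}; for \eqref{near-exit} the telescoping of the $k=0$ and $k=1$ inequalities is exactly what the paper does. One small slip: in the treatment of \eqref{far2-exit} you write ``drop the nonnegative outflux'', but the outflux term $-K(1+\epsilon_0)^{5(k+1)/2}d_k^2 a_{k+1}$ has no sign control since $a_{k+1}$ is not known to be nonnegative---the paper instead bounds \emph{both} flux terms by $|d_m|^2|a_{m+1}|+|d_{m+1}|^2|a_{m+2}|\lesssim K^{-35}(1+\epsilon_0)^{-15m}$, which is no harder and closes the estimate the same way.
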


\begin{proof}  Integrating \eqref{lei} on $[0,T_1]$, we conclude that
\begin{equation}\label{energy-ineq}
\tilde E_k(T_1) \leq \tilde E_k(0) + K (1+\epsilon_0)^{5k/2} \int_0^{T_1} d_{k-1}^2 a_k(t) - (1+\epsilon_0)^{5/2} d_k^2 a_{k+1}(t)\ dt
\end{equation}
for any $k \in \Z$; summing this for $k=0$ and $k=1$ and using \eqref{during-en-0}, we conclude the variant
\begin{equation}\label{energy-ineq2}
 \tilde E_0(T_1) +\tilde E_1(T_1) \leq 0.6 + K \int_0^{T_1} d_{-1}^2 a_0(t) - (1+\epsilon_0)^{5} d_1^2 a_{2}(t)\ dt.
\end{equation}
From \eqref{before-en-next'}, \eqref{during-en-next'}, \eqref{after-en-next'} we have
$$
d_{-1}^2 a_0(t) - (1+\epsilon_0)^{5} d_1^2 a_{2}(t) = O( K^{-5} )$$
and hence from \eqref{energy-ineq2}
$$  \tilde E_0(T_1) +\tilde E_1(T_1) \leq 0.6 + O(K^{-4})$$
giving \eqref{near-exit} for $K$ large enough.

Now suppose that $m \geq 3$.  From \eqref{energy-ineq} with $k=1-m$ and \eqref{before-en-0}, we have
\begin{align*}
\tilde E_{1-m}(T_1) &\leq (1+\epsilon_0)^{-0.08} K^{-10} (1+\epsilon_0)^{m/10} \\
&\quad + O\left( K (1+\epsilon_0)^{-5m/2} \int_0^{T_1} |d_{-m}|^2 |a_{1-m}|(t) + |d_{1-m}|^2 |a_{2-m}(t)|\ dt \right).
\end{align*}
From \eqref{before-en-next'} (and now using the hypothesis $m \geq 3$) we have
$$ |d_{-m}|^2 |a_{1-m}|(t) + |d_{1-m}|^2 |a_{2-m}(t)| \lesssim K^{-15} (1+\epsilon_0)^{3m/20}$$
for $0 \leq t \leq T_1$, and so (since $T_1 \leq 100$)
$$ 
\tilde E_{1-m}(T_1) \leq K^{-10} (1+\epsilon_0)^{m/10} \left((1+\epsilon_0)^{-0.09} + O\left( K^{-4} (1+\epsilon_0)^{-(\frac{5}{2}-\frac{1}{20})m}\right)\right)$$
and \eqref{far-exit} follows (assuming $K$ large enough).  

Similarly, if $m \geq 1$, we may apply \eqref{energy-ineq} with $k=1+m$ and use \eqref{after-en-0} to obtain
$$
\tilde E_{1+m}(T_1) \leq (1+\epsilon_0)^{-9.98} K^{-30} (1+\epsilon_0)^{-10m} + O\left( K (1+\epsilon_0)^{5m/2} \int_0^{T_1} |d_{m}|^2 |a_{m+1}|(t) + |d_{m+1}|^2 |a_{m+2}(t)|\ dt\right).$$
From \eqref{after-en-next'} (and \eqref{dn-en-next'} when $m=1$) we have
$$ |d_{k-1}|^2 |a_k|(t) + |d_k|^2 |a_{k+1}|(t) \lesssim K^{-35} (1+\epsilon_0)^{-15m} $$
for $0 \leq t \leq T_1$, and thus
$$
\tilde E_{1+m}(T_1) \leq K^{-30} (1+\epsilon_0)^{-10m} \left((1+\epsilon_0)^{-9.98} + O\left( K^{-4} (1+\epsilon_0)^{-5m/2} \right) \right)$$
and \eqref{far2-exit} follows (assuming $K$ large enough).  
\end{proof}

From this lemma and the previous discussion, we have some partial control on how we exit the regime:

\begin{corollary}[Exit trichotomy]\label{exit} At least one of the following assertions hold:
\begin{itemize}
\item (Backwards flow of energy) We have
\begin{equation}\label{exit-1}
\tilde E_{-1}(T_1) = K^{-10} (1+\epsilon_0)^{2/10}.
\end{equation}
\item (Forwards flow of energy) We have
\begin{equation}\label{exit-2}
|d_1(T_1)| = \frac{1}{2} K^{-10}.
\end{equation}
\item (Running out the clock) We have
\begin{equation}\label{exit-3}
T_1 = 100.
\end{equation}
\end{itemize}
\end{corollary}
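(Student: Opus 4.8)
The plan is to read the trichotomy off directly from the definition of $T_1$ as a maximal time of validity, using the preceding lemma (``Closing some exits'') to eliminate all but two of the ways the defining inequalities \eqref{before-en-next'}--\eqref{dn-en-next'} can degenerate at the endpoint. First I would recall what has already been put in place: by Lemma \ref{prelim} the four bounds \eqref{before-en-next'}--\eqref{dn-en-next'} hold with strict room to spare at $t=0$, so $T_1>0$, and by construction $T_1\leq 100$. If $T_1=100$ we are in the third alternative \eqref{exit-3} and there is nothing further to do, so I would assume from now on that $0<T_1<100$.

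Next comes the only real step. Although \eqref{before-en-next'} and \eqref{after-en-next'} are nominally infinite families, the qualitative decay \eqref{many-norm} and \eqref{many-norm-2} forces $\tilde E_{1-m}$ and $\tilde E_{1+m}$ to lie far below the relevant thresholds once $|m|$ exceeds some finite $M$ (independent of $T_1$), uniformly on $[0,100]$; hence the definition of $T_1$ effectively involves only finitely many closed conditions in $t$. Since each $\tilde E_k$ and $d_1$ is continuous, maximality of $T_1$ together with $T_1<100$ forces at least one of these finitely many inequalities to be an equality at $t=T_1$ (otherwise continuity would let us extend past $T_1$). I would then invoke the preceding lemma: \eqref{far-exit} gives strict inequality in \eqref{before-en-next'} for every $m\geq 3$, \eqref{far2-exit} gives strict inequality in \eqref{after-en-next'} for every $m\geq 1$, and \eqref{near-exit} gives strict inequality in \eqref{during-en-next'}. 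The only inequalities among \eqref{before-en-next'}--\eqref{dn-en-next'} that therefore remain available for equality at $t=T_1$ are \eqref{before-en-next'} with $m=2$, which is precisely the assertion \eqref{exit-1}, and \eqref{dn-en-next'}, which is precisely the assertion \eqref{exit-2}. Combining with the case $T_1=100$ already disposed of, one of \eqref{exit-1}, \eqref{exit-2}, \eqref{exit-3} must hold.

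The hard part here is essentially nonexistent: the corollary is pure bookkeeping, obtained by marrying the definition of $T_1$ to the three strict bounds of the preceding lemma, and the reduction to finitely many conditions together with the continuity argument has already been carried out in the paragraph immediately before the corollary. If one wanted to be fully self-contained, the only thing to spell out is the uniform choice of $M$ coming from \eqref{many-norm}--\eqref{many-norm-2} (e.g.\ a bound of the shape $\tilde E_k(t)\lesssim (1+\epsilon_0)^{-20k}$ holding uniformly for $t\in[0,100]$, which makes \eqref{before-en-next'} and \eqref{after-en-next'} automatic for large $|m|$), which is routine. All the genuine analysis — that energy cannot leak to the distant modes nor overflow the combined $N$, $N{+}1$ modes over $[0,T_1]$ — was already done in the preceding lemma via the local energy inequality \eqref{lei}, so here it only remains to package those facts into the stated ``backwards flow / forwards flow / run out the clock'' dichotomy.
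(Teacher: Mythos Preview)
Your proposal is correct and follows exactly the argument the paper has in mind: the corollary is stated immediately after the ``Closing some exits'' lemma with the one-line justification ``From this lemma and the previous discussion,'' where the previous discussion is precisely the finiteness reduction via \eqref{many-norm-2} and the continuity argument you describe. You have simply written out what the paper leaves implicit, and there is nothing to add.
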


Although we will not need this fact here, it turns out (using a refinement of the analysis below) that it is option \eqref{exit-2} which actually occurs in this trichotomy.

Thanks to \eqref{before-en-next'}-\eqref{dn-en-next'}, the task of proving Proposition \ref{induct-rescale} has now reduced to the following claim:

\begin{proposition}[Reduced induction claim]\label{reduced-claim}  Let the notation and hypotheses be as in Proposition \ref{induct-rescale}, and let $T_1$ be defined as above.  There exists a time
\begin{equation}\label{lifespan-rescaled-2}
\frac{1}{100} \leq \tau_1 \leq T_1
\end{equation}
(in particular, $T_1 \geq 1/100$) and an amplitude
\begin{equation}\label{en-stable-rescaled-2}
(1+\epsilon_0)^{-1/100} \leq \mu_1 \leq (1+\epsilon_0)^{1/100} 
\end{equation}
such that we have the bounds
\begin{align}
a_1(\tau_1) &= \mu_1 \label{an-init-next-2} \\
|b_1(\tau_1)| &\leq 10^{-5} \eps \mu_1 \label{bn-init-next-2}\\
|c_1(\tau_1)| &\leq 10^{-5} \exp( -K^{10} ) \eps^2 \mu_1 \label{cn-init-next-2}\\
c_1(\tau_1) &\geq - (1+\epsilon_0)^{-n_0/4} \mu_1 \label{cn-init-next-2a}\\
\tilde E_0(\tau_1) &\leq K^{-20} \mu_1^2 \label{en1-init-next-2}\\
b_0(\tau_1) &\geq 10^{-5} \eps \mu_1 \label{spin-1-next-2}\\
b_0(\tau_1) &\leq 10^{5} \eps \mu_1 \label{spin-1a-next-2}\\
c_0(\tau_1) &\geq \exp( K^{9} ) \eps^2 \mu_1\label{spin-2-next-2}\\
c_0(\tau_1) &\leq \exp( K^{10} ) \eps^2 \mu_1.\label{spin-2a-next-2}
\end{align}
\end{proposition}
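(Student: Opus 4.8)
# Proof Proposal for Proposition \ref{reduced-claim}

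\textbf{Overall strategy.} The plan is to run a bootstrap/continuity argument on the time interval $[0,T_1]$ that mimics almost verbatim the proof of Theorem \ref{daet}, but now in the presence of (a) the extra modes at scales $k \leq -1$ and $k \geq 2$, (b) the viscosity error terms of size $O((1+\epsilon_0)^{2k-n_0/2}\tilde E_k^{1/2})$, and (c) the nonzero (but small) initial data for $b_0, c_0, d_0$ and the ``spin'' data $b_{-1}(0), c_{-1}(0)$. The key point, already built into the definition of $T_1$ via \eqref{before-en-next'}--\eqref{dn-en-next'} and recorded in Lemma \ref{primary}, is that on $[0,T_1]$ essentially all the energy lives in the five modes $a_0, b_0, c_0, d_0, a_1$ (plus the transient $d_{-1}$ needed only as the ``pump from $d_{-1}$ into $a_0$'', which by \eqref{during-en-next'} carries $O(1)$ energy and by the spin bounds rotates fast). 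Thus on $[0,T_1]$ the system \eqref{axin-eq-resc}--\eqref{dxin-eq-resc} at $k=0$ is, up to errors of size $O(\exp(O(K^{10}))(1+\epsilon_0)^{-n_0/2})$ and $O(K^{-\text{large}})$ contributions from neighbouring scales, exactly the five-mode circuit \eqref{a-eq}--\eqref{ta-eq} with $(a,b,c,d,\tilde a) = (a_0,b_0,c_0,d_0,a_1)$, and the output $\tilde a = a_1$ is being drained further by the pump at scale $1$ (coupling $K$). The parameter hierarchy $1 \ll 1/\epsilon_0 \ll K \ll 1/\eps \ll n_0$ ensures all these perturbative terms are negligible against the thresholds appearing in Theorem \ref{daet}.

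\textbf{Key steps, in order.} First I would establish that $T_1 \geq 1/100$ (in fact $T_1 \gtrsim 1$), by re-running the first half of the proof of Theorem \ref{daet}: conservation of energy is replaced by the approximate conservation coming from \eqref{during-en-next'} and \eqref{cain} plus Lemma \ref{cumeng}; the crude bounds $b_0, c_0 = O(\eps)$ and the Gronwall bound $0 \le c_0(t) \lesssim \eps^2 \exp((Ct-1)K^{10})$ follow exactly as before, now absorbing the viscosity errors (which are $\lesssim \exp(O(K^{10}))(1+\epsilon_0)^{-n_0/2}\ll \eps^2$ after also using \eqref{before-en-next'}). This forces the rotor at scale $0$ to stay switched off, hence $d_0, a_1$ stay $O(K^{-10})$, hence none of the exit conditions \eqref{far-exit}, \eqref{far2-exit}, \eqref{near-exit}, \eqref{exit-2} can trigger before a time $\gtrsim 1$; combined with Corollary \ref{exit} this gives $T_1 \gtrsim 1$. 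Second, I would locate the critical time $t_c = \sqrt 2 + O(1/\sqrt K)$ exactly as in Theorem \ref{daet} (the only new input is that the ``initial'' $b_0(0), c_0(0)$ are within the allowed tolerances \eqref{bn-init-rescaled}, \eqref{cn-init-rescaled}, so the error-function asymptotics for $c_0(t_c)$ are unchanged up to $O(K^{-10})$ relative error). One checks $t_c < T_1$ by again invoking the trichotomy: at time $t_c$ the energy is still essentially all in $a_0$, so the clock cannot have run out and no energy has flowed backward or forward yet. Third, for $t \in [t_c, t_c + 1/\sqrt K] \subset [t_c, T_1]$ (one must separately verify $t_c + 1/\sqrt K \le T_1$, which follows once the energy-drain estimate below is in hand, since the exit conditions only get triggered \emph{after} the transfer completes), I would repeat the equipartition-of-energy and modified-energy argument: define $E_* := \tfrac12(a_0^2+b_0^2+c_0^2+d_0^2) + \tfrac12 K a_0 d_0 \tfrac{\eps^2}{c_0} a_1$, show $\partial_t E_* = -K a_1 E_* + O(K^{-80})$ (now the $O(K^{-80})$ also swallows the viscosity terms and the scale-$(\pm1)$ interactions, using \eqref{before-en-next'}, \eqref{after-en-next'}), derive $\tilde a := a_1$ crosses $0.1$ by time $t_c + 1/K$, and conclude $E_* \lesssim K^{-80}$ on $[t_c+1/\sqrt K, T_1]$.

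\textbf{Choosing $\tau_1$ and verifying the conclusions.} I would set $\tau_1 := t_c + 1/\sqrt K$ (noting $1/100 \le \tau_1 \le 100$ gives \eqref{lifespan-rescaled-2} after checking $\tau_1 \le T_1$; if $T_1 < \tau_1$ one argues that the exit must have been option \eqref{exit-2} or \eqref{exit-3} at a time already past completion of the transfer and resets $\tau_1 := T_1$, which still works since the needed bounds hold there). Then $\mu_1 := a_1(\tau_1)$, and \eqref{en-stable-rescaled-2} follows because $E_*(\tau_1) \lesssim K^{-80}$ together with the near-conservation of total energy in scales $\{0,1\}$ (from \eqref{during-en-next'} and the energy inequality \eqref{lei}, which gives only a tiny loss $K \int d_{-1}^2 a_0 - \ldots = O(K^{-4})$ per the computation in the ``Closing some exits'' lemma, plus a possible gain from scale $-1$ which is controlled the same way) forces $a_1(\tau_1)^2 = 1 - O(K^{-4})$, i.e. $\mu_1 = 1 + O(K^{-4})$, which sits inside $[(1+\epsilon_0)^{-1/100},(1+\epsilon_0)^{1/100}]$ since $K \gg 1/\epsilon_0$. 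The bounds \eqref{bn-init-next-2}, \eqref{cn-init-next-2}, \eqref{en1-init-next-2} on $b_1, c_1, \tilde E_0$ follow from $E_* \lesssim K^{-80}$ and the crude Gronwall bounds for $b_1, c_1$ (which, at scale $1$, are driven only by $a_1 = O(1)$ and start from the tiny values guaranteed at time $\tau_1$ via the scale-$1$ analogues of \eqref{before-en-0}); \eqref{cn-init-next-2a} is immediate from $c_1 \ge 0$-type sign analysis at scale $1$ (comparison, using $c_1(0)=0$ and the nonnegative forcing $\eps^2\exp(-K^{10})a_1^2$, minus a viscosity term of size $(1+\epsilon_0)^{-n_0/2}$ which accounts for the harmless $-(1+\epsilon_0)^{-n_0/4}$ slack). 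Finally the ``spin'' conclusions \eqref{spin-1-next-2}--\eqref{spin-2a-next-2} on $b_0(\tau_1), c_0(\tau_1)$ come from tracking $b_0$ and $c_0$ through the transition: $b_0(t_c) \gtrsim \eps$ from \eqref{bogo-2}-type integration and $b_0$ cannot decay much afterward (same argument as the $b(t)\gtrsim\eps$ bound in Theorem \ref{daet}), giving the lower bound, while the upper bounds $b_0 \le 10^5 \eps$, $c_0 \le \exp(K^{10})\eps^2$ come from the crude Gronwall estimates valid throughout $[0,T_1]$.

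\textbf{Main obstacle.} The genuinely delicate point is the interaction with the scale $k=-1$ modes — specifically ensuring that the fast-rotating rotor at scale $-1$ (guaranteed fast by the spin bounds \eqref{spin-2-scaled}, \eqref{spin-2a-scaled}, which say $c_{-1}(0) \ge \exp(K^9)\eps^2$, so the rotor frequency $\eps^{-2}c_{-1} \gtrsim \exp(K^9)$ is enormous) does not resonate with or ``constructively interfere'' with the scale-$0$ and scale-$1$ dynamics near $t = \tau_1$. The pump from $d_{-1}$ into $a_0$ contributes a term $K d_{-1}^2$ to $\partial_t a_0$ that is only $O(K)$ in size, not small; controlling its \emph{effect} over the $O(1)$-length interval requires using that $d_{-1}^2$, being fed by the scale-$(-1)$ rotor output through a pump of coupling $K$, has already been largely drained into $a_0$ by time $\tau_{-1} \le 0$ — i.e. this is exactly the inductive hypothesis \eqref{en1-init-rescaled}, $\tilde E_{-1}(0) \le K^{-20}$ — and then that $\tilde E_{-1}$ can only grow slowly on $[0,T_1]$, which is the content of the estimate \eqref{far-exit} (or rather its failure to be tight). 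Making this circular-looking dependency rigorous — the scale-$0$ analysis needs $\tilde E_{-1}$ small, which needs the scale-$(-1)$ rotor to have finished, which is the previous step of the outer induction — is where the careful bookkeeping of the proposition's non-asymptotic constants earns its keep, and is the step I expect to consume most of the actual write-up.
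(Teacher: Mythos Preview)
Your overall strategy is correct and matches the paper's approach closely: reduce to a perturbed version of the five-mode analysis in Theorem \ref{daet}, locate $t_c$, set $\tau_1 = t_c + K^{-1/2}$, and run the modified-energy/equipartition argument. Two points require repair.

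First, your mechanism for controlling $b_1, c_1, d_1$ is underspecified. The paper introduces an intermediate time $T_2 \le T_1$ defined by $\int_0^{T_2} a_1^2 \le K^{-1/4}$, and proves (Proposition \ref{todos}) that under this hypothesis $b_1, c_1, d_1$ satisfy the needed smallness via a Gronwall argument run all the way back to $\tau_{n_0-N}$ (not just to $t=0$). The chain is: $|c_1|$ is controlled by $\exp\bigl((1+\epsilon_0)^{5/2}\eps^{-1} K^{10} \int |b_1|\bigr)$, and $|b_1|$ is in turn controlled by $\eps \int a_1^2$; the threshold $K^{-1/4}$ on $\int a_1^2$ forces $\int |b_1| \le \eps/10$, keeping the $c_1$ Gronwall factor bounded by $\exp(K^{10}/2)$. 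Your ``crude Gronwall bounds'' gesture at this but do not close the loop; in particular you need this control on $d_1$ to rule out the exit option \eqref{exit-2} already when establishing $T_1 \gtrsim 1$. The paper then reduces to Proposition \ref{reduced-claim-2}, which no longer carries the $b_1,c_1,d_1$ conclusions, deriving those afterward from Proposition \ref{todos}.

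Second, and more seriously, your treatment of the scale $-1$ obstacle contains a concrete error. You invoke \eqref{far-exit} to argue $\tilde E_{-1}$ grows slowly, but \eqref{far-exit} is the statement for $m \ge 3$, i.e.\ for $\tilde E_{-2}$ and coarser; the case $m=2$ (which is $\tilde E_{-1}$) is precisely the exit option \eqref{exit-1} left open by Corollary \ref{exit}, and it is \emph{not} closed by the simple energy-inequality argument you sketch. Once $a_0$ begins to oscillate (after $t_c$), the backward pump can feed energy from scale $0$ into $d_{-1}$, and crude Gronwall on $\partial_t \tilde E_{-1} \le 2K|a_0|\tilde E_{-1} + O(K^{-14})$ over $[t_c,\tau_1]$ would allow amplification by $\exp(O(K^{1/2}))$, destroying the bootstrap. (Incidentally, within the bootstrap $d_{-1}^2 \le 2\tilde E_{-1} = O(K^{-10})$, so $K d_{-1}^2 = O(K^{-9})$, not $O(K)$; but your concern about \emph{maintaining} the bootstrap is the right one.) The paper's fix (Proposition \ref{noexit}) is a second modified-energy argument, this time at scale $-1$: set $E^* := \tilde E_{-1} - \tfrac12 (1+\epsilon_0)^{5/2} K\, a_{-1} d_{-1} \tfrac{\eps^2}{c_{-1}}\, a_0$ and use the fast rotor $c_{-1} \gtrsim \exp(10^{-8} K^{10}) \eps^2$ (propagated forward from the spin bounds \eqref{spin-1-scaled}--\eqref{spin-2a-scaled}) to make the correction negligible and to equipartition $a_{-1}^2, d_{-1}^2$, yielding $\partial_t E^* \le -\tfrac12 K(1+\epsilon_0)^{-5/2} a_0\, E^* + O(K^{-14})$. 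The decisive input is that $\int_0^t a_0\,dt' \gtrsim 1$ for all $t \in [t_c,\tau_1]$: the long pre-$t_c$ phase with $a_0 \approx 1$ dominates the short post-$t_c$ phase of length $K^{-1/2}$, so Gronwall over $[0,t]$ gives genuine decay of $E^*$ down to $O(K^{-14})$. You correctly flag the fast $c_{-1}$ rotor as relevant, but the mechanism you describe (initial smallness plus slow growth) is not the one that works.
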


Indeed, the remaining claims \eqref{dn-en-next}, \eqref{before-en-next}, \eqref{during-en-next}, \eqref{after-en-next} of Proposition \ref{induct-rescale} follow for $\tau_1$ obeying \eqref{lifespan-rescaled-2} from \eqref{before-en-next'}-\eqref{dn-en-next'} (using \eqref{en-stable-rescaled-2} to handle the $\mu_1$ factor in \eqref{dn-en-next}).

\subsection{Sixth step: eliminating the role of $b_1,c_1,d_1$}

We now begin the proof of Proposition \ref{reduced-claim}.  Henceforth the notation and assumptions are as in that proposition.

It turns out that we can reduce to the setting in which the dynamics of $b_1,c_1,d_1$ are essentially trivial.  The key proposition is

\begin{proposition}[Small $a_1$ implies small $b_1,c_1,d_1$]\label{todos}  Suppose that $0 \leq \tau \leq T_1$ is a time such that
\begin{equation}\label{sold}
 \int_0^\tau a_1(t)^2\ dt \leq K^{-1/4}
\end{equation}
Then we have the bounds
\begin{align}
|b_1(t)| &\lesssim K^{-1/4} \eps \label{b-small} \\
|c_1(t)| &\lesssim K^{-1/4} \exp( - K^{10}/2 ) \eps^2 \label{c-small}\\
c_1(t)   &\geq - O( (1+\epsilon_0)^{-n_0/3} ) \label{c-small-2}\\
|d_1(t)| &\lesssim K^{-20} \label{d-small}
\end{align}
for all $0 \leq t \leq \tau$.
\end{proposition}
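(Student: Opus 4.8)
The plan is a continuity (bootstrap) argument for the three bounds \eqref{b-small}--\eqref{d-small}, run on the whole interval $[\tau_{n_0-N},\tau]$ rather than just on $[0,\tau]$: the quantities $b_1,c_1,d_1$ vanish at $t=\tau_{n_0-N}$ by \eqref{nomad}, whereas at $t=0$ the only a priori information is the crude $\tilde E_1(0)\lesssim K^{-30}$, which gives $|b_1(0)|,|c_1(0)|,|d_1(0)|\lesssim K^{-15}$ --- far too weak for the targets. On the ``prehistory'' $[\tau_{n_0-N},0]$ one has the scale-by-scale energy estimates \eqref{before-en-rescaled}--\eqref{after-en-rescaled}, and together with \eqref{tau-stable} these let one sum all the relevant contributions as convergent geometric series; in particular $\int_{\tau_{n_0-N}}^0 a_1(t)^2\,dt\lesssim K^{-30}$ and, more generally, the time-weighted integral $\int_{\tau_{n_0-N}}^0 a_1(t)^2(C-t)\,dt\lesssim K^{-30}$ for any $C=O(1)$, while on $[0,\tau]$ one invokes the hypothesis \eqref{sold} and $\tau\le T_1\le100$.

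For the amplifier subsystem $(b_1,c_1)$ the essential observation --- exactly as in the proof of Theorem \ref{daet} --- is that in $\partial_t(b_1^2+c_1^2)$ the two a priori dangerous terms $\mp 2(1+\epsilon_0)^{5/2}\eps^{-1}K^{10}b_1c_1^2$ coming from \eqref{bxin-eq-resc} and \eqref{cxin-eq-resc} cancel identically, leaving $\partial_t(b_1^2+c_1^2)=2(1+\epsilon_0)^{5/2}a_1^2(\eps b_1+\eps^2\exp(-K^{10})c_1)+O((1+\epsilon_0)^{2-n_0/2}\tilde E_1^{1/2}(|b_1|+|c_1|))$. By the diamagnetic inequality $\partial_t(b_1^2+c_1^2)^{1/2}\lesssim(1+\epsilon_0)^{5/2}\eps a_1^2+(1+\epsilon_0)^{2-n_0/2}\tilde E_1^{1/2}$, and integrating from $\tau_{n_0-N}$, using the $a_1$-integral bounds above and the negligibility of the viscous integral (this is where $n_0\gg K$ is cashed in), one gets $(b_1^2+c_1^2)^{1/2}(t)\lesssim\eps K^{-1/4}$. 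This is \eqref{b-small} together with a preliminary $|c_1|\lesssim\eps K^{-1/4}$; the same estimate, combined with exchanging the order of integration and the time-weighted bound, yields $\int_{\tau_{n_0-N}}^\tau\max(b_1,0)\,dt\lesssim\eps K^{-1/4}$.

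One then sharpens $c_1$ by Gronwall. A comparison argument (the only wrinkle being the viscous term, handled via the scale-by-scale bounds as above) gives $c_1\ge -O((1+\epsilon_0)^{-n_0/3})$, which is \eqref{c-small-2}; granting $c_1\ge0$ up to such an error, \eqref{cxin-eq-resc} reads $\partial_t c_1\le(1+\epsilon_0)^{5/2}\eps^2\exp(-K^{10})a_1^2+(1+\epsilon_0)^{5/2}\eps^{-1}K^{10}\max(b_1,0)\,c_1+O((1+\epsilon_0)^{2-n_0/2}\tilde E_1^{1/2})$. Since $c_1$ vanishes at $\tau_{n_0-N}$, the inhomogeneous forcing has size $\lesssim\eps^2\exp(-K^{10})K^{-1/4}$ plus a negligible viscous piece, and the Gronwall amplification factor is $\exp(O(\eps^{-1}K^{10}\cdot\eps K^{-1/4}))=\exp(O(K^{39/4}))$; hence $|c_1(t)|\lesssim\eps^2\exp(-K^{10}+O(K^{39/4}))K^{-1/4}\le\eps^2\exp(-K^{10}/2)K^{-1/4}$ for $K$ large, which is \eqref{c-small}. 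Finally $d_1$: by \eqref{dxin-eq-resc} its only non-viscous contributions are the rotor source $(1+\epsilon_0)^{5/2}\eps^{-2}c_1a_1$ and the damping $-(1+\epsilon_0)^5Kd_1a_2$; bounding $\eps^{-2}|c_1a_1|\lesssim\exp(-K^{10}/2)K^{-1/4}(1+a_1^2)$ via \eqref{c-small}, $|a_2|\lesssim K^{-15}$ via the energy estimates, and $|d_1|\le\frac{1}{2}K^{-10}$ (valid since $\tau\le T_1$), the three contributions integrate to $\lesssim\exp(-K^{10}/2)+K^{-24}+(1+\epsilon_0)^{2-n_0/2}$, which together with the sharpened value of $d_1$ at $t=0$ (the same argument applied on the prehistory) yields $|d_1(t)|\lesssim K^{-20}$, i.e.\ \eqref{d-small}. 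Each of these is a strict improvement of the bootstrap hypothesis, closing the continuity argument.

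The step I expect to be the main obstacle is closing the Gronwall estimate for $c_1$: one must keep the amplification factor $\exp(O(\eps^{-1}K^{10}\cdot\sup|b_1|\cdot(\text{elapsed time})))$ strictly below $\exp(K^{10}/2)$, and this is precisely why the hypothesis is quantified as $\int_0^\tau a_1^2\le K^{-1/4}$ (rather than merely ``small'') and why the cancellation in $\partial_t(b_1^2+c_1^2)$ is indispensable --- without it the $-\eps^{-1}K^{10}c_1^2$ term in \eqref{bxin-eq-resc} could a priori drive $|b_1|$ up to size $\sim\eps^{-1}$ and the amplifier would then blow $c_1$ up rather than keep it super-exponentially small. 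A secondary nuisance, already noted, is that the sharp bounds fail at $t=0$ on the strength of the energy estimate alone, so the prehistory $[\tau_{n_0-N},0]$ must be carried through with the same local-energy and Gronwall machinery; the geometric decay built into \eqref{before-en-rescaled}--\eqref{after-en-rescaled} and \eqref{tau-stable} is exactly what keeps the resulting sums finite.
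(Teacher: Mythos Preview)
Your proposal is correct and reaches the same conclusions, but the argument is organised differently from the paper's. The paper does not use the $(b_1^2+c_1^2)$ energy identity here; instead it runs a bootstrap on the single quantity $\int_{\tau_{n_0-N}}^{\tau'}|b_1|\,dt\le\tfrac{1}{10}\eps$: under this hypothesis Gronwall on the $c_1$-equation gives $|c_1|\lesssim\eps^2\exp(-3K^{10}/4)/(1+|t|^2)$, and then integrating $|\partial_t b_1|\le(1+\epsilon_0)^{5/2}(\eps a_1^2+\eps^{-1}K^{10}c_1^2)+O((1+\epsilon_0)^{-n_0/2}\tilde E_1^{1/2})$ recovers a pointwise $|b_1|$ bound (with $1/(1+|t|^2)$ decay on the prehistory) that closes the bootstrap. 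You instead break the $b_1$--$c_1$ circularity up front via the amplifier cancellation $\partial_t(b_1^2+c_1^2)$, obtaining $(b_1^2+c_1^2)^{1/2}\lesssim\eps K^{-1/4}$ directly, and then extract $\int|b_1|\lesssim\eps K^{-1/4}$ by Fubini (the time-weighted integral of $a_1^2$) before applying Gronwall to $c_1$. Your route avoids the formal continuity argument at the cost of the double time integral; the paper's route avoids Fubini but needs the bootstrap scaffolding. Two small remarks: you should write $\int|b_1|$ rather than $\int\max(b_1,0)$ throughout---the lower bound \eqref{c-small-2} genuinely needs the two-sided version, and your Fubini argument delivers it anyway; and your treatment of $d_1$ via the prehistory is in fact what is needed to reach the stated $K^{-20}$, since working only from $|d_1(0)|\le\sqrt 2K^{-15}$ (as the paper does) yields only $|d_1|\lesssim K^{-15}$, though that weaker bound also suffices downstream.
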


\begin{proof}  We first observe from \eqref{after-en-rescaled} that
$$ \tilde E_1(t) \lesssim K^{-30} (1+\epsilon_0)^{10k}$$
whenever $n_0-N < k\leq 0$ and $\tau_{k-1} \leq t \leq \tau_k$.  From this and \eqref{tau-stable} we conclude the crude bound
\begin{equation}\label{ai}
\tilde E_1(t) \lesssim \frac{K^{-30}}{1+|t|^3}
\end{equation}
whenever $\tau_{n_0-N} \leq t \leq 0$.

Let $\tau'$ be the largest time in $[\tau_{n_0-N},\tau]$ for which
\begin{equation}\label{champ}
\int_{\tau_{n_0-N}}^{\tau'} |b_1(t)|\ dt \leq \frac{1}{10} \eps.
\end{equation}
From continuity we see that either $\tau'=\tau$, or else
\begin{equation}\label{opes}
 \int_{\tau_{n_0-N}}^{\tau'} |b_1(t)|\ dt = \frac{1}{10} \eps.
 \end{equation}
We rule out the latter possibility as follows.  From \eqref{cxin-eq-resc} one has
$$ |\partial_t c_1| \leq O\left(\eps^2 \exp(-K^{10}) \tilde E_1\right) + (1+\epsilon_0)^{5/2} \eps^{-1} K^{10} |b_1| |c_1| + O\left((1+\epsilon_0)^{-n_0/2} \tilde E_1^{1/2}\right)$$
and
$$ \partial_t c_1 \geq - O( (1+\epsilon_0)^{-n_0/2} \tilde E_1 ) - (1+\epsilon_0)^{5/2} \eps^{-1} K^{10} |b_1| |c_1|$$
for all $t \geq \tau_{n_0-N}$, while from \eqref{nomad} one has $c_1(\tau_{n_0-N})=0$.  From Gronwall's inequality and \eqref{champ}, we conclude that
\begin{align*}
 |c_1(t)| &\lesssim \exp( (1+\epsilon_0)^{5/2} \frac{1}{10} K^{10}) \times \\
&\quad \left( \eps^2 \exp(-K^{10}) \int_{\tau_{n_0-N}}^t \tilde E_1(t')\ dt' + O\left( (1+\epsilon_0)^{-n_0/2} \int_{\tau_{n_0-N}}^t \tilde E_1(t')^{1/2}\ dt' \right) \right)
\end{align*}
and
$$c_1(t) \geq - O( \exp( (1+\epsilon_0)^{5/2} \frac{1}{10} K^{10}) (1+\epsilon_0)^{-n_0/2} \int_{\tau_{n_0-N}}^t \tilde E_1(t')^{1/2}\ dt' )$$
for any $\tau_{n_0-N} \leq t \leq \tau'$.  In particular, from \eqref{ai} and \eqref{during-en-next'} we have
\begin{equation}\label{dream}
 |c_1(t)| \lesssim \frac{\eps^2\exp(-3K^{-10}/4)}{1+|t|^2}
 \end{equation}
and
\begin{equation}\label{dream-2}
 c_1(t) \geq - O\left( \exp\left( O(K^{10}) (1+\epsilon_0)^{-n_0/2} \right)\right)
\end{equation}
for all $\tau_{n_0-N} \leq t \leq \tau'$ (here we use the trivial bound $\tau' \leq \tau \leq T_1 \leq 100$). 

In a similar spirit, from \eqref{bxin-eq-resc} one has
$$|\partial_t b_1| \leq \eps a_1^2 + \eps^{-1} K^{10} c_1^2 + O\left( (1+\epsilon_0)^{-n_0} \tilde E_1^{1/2} \right) $$
for all $t \geq \tau_{n_0-N}$, and hence by \eqref{nomad}
$$ |b_1(t)| \leq \int_{\tau_{n_0-N}}^t \eps a_1^2(t') + \eps^{-1} K^{10} c_1^2(t') + O\left((1+\epsilon_0)^{-n_0} \tilde E_1^{1/2}\right) \ dt'$$
In particular, from \eqref{dream}, \eqref{ai}, \eqref{sold} we have
$$ |b_1(t)| \lesssim \frac{K^{-30}}{1+|t|^2} \eps$$
for $\tau_{n_0-N} \leq t \leq 0$, and
\begin{equation}\label{wake}
|b_1(t)| \lesssim K^{-1/4} \eps 
\end{equation}
for $0 \leq t \leq \tau'$.  However, this is inconsistent with \eqref{opes} if $K$ is small enough (recalling that $\tau' \leq 100$).  Thus $\tau'=\tau$.  The bounds \eqref{b-small}, \eqref{c-small}, \eqref{c-small-2} now follow from \eqref{dream}, \eqref{dream-2}, \eqref{wake}.

Finally, from \eqref{dxin-eq-resc} and \eqref{c-small}, \eqref{during-en-next'}, \eqref{after-en-next'} we have
$$
\partial_t d_1 = O\left( \exp\left( -K^{10}/2 \right) \right) + O( K^{-14} |d_1| ) 
$$
for $0 \leq t \leq \tau'$ (taking $n_0$ large enough), and from this, \eqref{dn-en-0}, and Gronwall's inequality one obtains \eqref{d-small} (for $K$ large enough).
\end{proof}

Let $T_2$ be the largest time in $[0,T_1]$ such that
$$
 \int_0^{T_2} a_1(t)^2\ dt \leq K^{-1/4}.
$$

Combining Proposition \ref{todos} with Corollary \ref{exit} and using continuity, we conclude

\begin{corollary}[Exit trichotomy, again]\label{exit-again} At least one of the following assertions hold:
\begin{itemize}
\item (Backwards flow of energy) One has
\begin{equation}\label{exit-1a}
\tilde E_{-1}(T_2) = K^{-10} (1+\epsilon_0)^{2/10}.
\end{equation}
\item (Forwards flow of energy) One has
\begin{equation}\label{exit-2a}
 \int_0^{T_2} a_1(t)^2\ dt = K^{-1/4}.
\end{equation}
\item (Running out the clock) One has
\begin{equation}\label{exit-3a}
T_2 = 100.
\end{equation}
\end{itemize}
\end{corollary}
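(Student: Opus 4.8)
The plan is a short case analysis on the relative position of $T_2$ and $T_1$, using the continuity of $t \mapsto \int_0^t a_1(s)^2\,ds$ together with Proposition \ref{todos} and Corollary \ref{exit}.

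First I would note that, by the a priori regularity \eqref{many-norm} (which bounds $a_1$, so that $a_1^2$ is locally integrable), the map $G(t) := \int_0^t a_1(s)^2\,ds$ is well-defined, continuous and nondecreasing on $[0,T_1]$, with $G(0) = 0 \le K^{-1/4}$; hence $T_2$ is a well-defined element of $[0,T_1]$ with $G(T_2) \le K^{-1/4}$. If $G(T_2) = K^{-1/4}$, then \eqref{exit-2a} holds and we are done, so assume from now on that $G(T_2) < K^{-1/4}$.

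In this case, maximality of $T_2$ inside $[0,T_1]$ together with the continuity of $G$ forces $T_2 = T_1$: if $T_2$ were strictly less than $T_1$, then $G$ would remain below $K^{-1/4}$ on a slightly larger subinterval of $[0,T_1]$, contradicting the definition of $T_2$. Now I would apply Proposition \ref{todos} with $\tau := T_2 = T_1$; the hypothesis \eqref{sold} is exactly $G(T_1) \le K^{-1/4}$, which holds by assumption, so in particular \eqref{d-small} gives $|d_1(T_1)| \lesssim K^{-20}$. For $K$ sufficiently large (depending only on $\epsilon_0$) this is strictly less than $\frac{1}{2} K^{-10}$, so option \eqref{exit-2} of Corollary \ref{exit} is ruled out. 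Consequently one of the two remaining options of Corollary \ref{exit} holds: either $\tilde E_{-1}(T_1) = K^{-10}(1+\epsilon_0)^{2/10}$ or $T_1 = 100$. Since $T_2 = T_1$, these are precisely \eqref{exit-1a} and \eqref{exit-3a}, respectively, completing the proof.

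I do not anticipate any genuine obstacle here: the only points requiring care are the standard continuity/maximality bookkeeping in the definition of $T_2$, and the verification that the implied constant in the bound \eqref{d-small} is beaten by $\frac{1}{2}K^{-10}$ once $K$ is large, both of which are routine given the results already established.
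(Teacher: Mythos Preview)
Your proposal is correct and matches the paper's approach exactly: the paper's proof is the single line ``Combining Proposition~\ref{todos} with Corollary~\ref{exit} and using continuity, we conclude,'' and you have simply unpacked that sentence in the natural way (case split on whether $G(T_2)=K^{-1/4}$, then use maximality to force $T_2=T_1$, then invoke \eqref{d-small} to rule out option \eqref{exit-2}).
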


Again, it turns out that it is option \eqref{exit-2a} that actually occurs, although we will not quite prove (or use) this assertion here.

The most important modes for the remainder of the analysis are $a_0,b_0,c_0,d_0$, and $a_1$.
From \eqref{axin-eq-resc}-\eqref{dxin-eq-resc}, the energy bounds \eqref{before-en-next'}-\eqref{after-en-next'}, and Proposition \ref{todos}, we observe the equations of motion
\begin{align}
\partial_t a_0 &= - \eps^{-2} c_0 d_0 + O\left( K^{-9} \right)  \label{axin-eq-0a} \\
\partial_t b_0 &= \eps a_0^2 - \eps^{-1} K^{10} c_0^2 + O\left( (1+\epsilon_0)^{- n_0/2}\right) \label{bxin-eq-0a} \\
\partial_t c_0 &= \eps^2 \exp(-K^{10}) a_0^2 + \eps^{-1} K^{10} b_0 c_0 + O\left( (1+\epsilon_0)^{- n_0/2} \right) \label{cxin-eq-0a} \\
\partial_t d_0 &= \eps^{-2} c_0 a_0 - (1+\epsilon_0)^{5/2} K d_0 a_1 + O\left( (1+\epsilon_0)^{- n_0/2} \right) \label{dxin-eq-0a}\\
\partial_t a_1 &= (1+\epsilon_0)^{5/2} K d_0^2 + O( K^{-1} |a_1| ) + O\left( K^{-20} \right) \label{axin-eq-1a} 
\end{align}
for these modes in the time interval $0 \leq t \leq T_2$.  When $N > n_0$, we also need to keep some track of the modes $a_{-1}, b_{-1}, c_{-1}, d_{-1}$; again from \eqref{axin-eq-resc}-\eqref{dxin-eq-resc} and \eqref{before-en-next'}-\eqref{after-en-next'}, these equations may be given as
\begin{align}
\partial_t a_{-1} &= - (1+\epsilon_0)^{-5/2} \eps^{-2} c_{-1} d_{-1} + O\left( K^{-9} \right)  \label{axin-eq-minus} \\
\partial_t b_{-1} &= (1+\epsilon_0)^{-5/2} \eps a_{-1}^2 - (1+\epsilon_0)^{-5/2} \eps^{-1} K^{10} c_{-1}^2 + O\left( (1+\epsilon_0)^{- n_0/2}\right) \label{bxin-eq-minus} \\
\partial_t c_{-1} &= (1+\epsilon_0)^{-5/2} \eps^2 \exp(-K^{10}) a_{-1}^2 + (1+\epsilon_0)^{-5/2} \eps^{-1} K^{10} b_{-1} c_{-1} + O\left( (1+\epsilon_0)^{- n_0/2} \right) \label{cxin-eq-minus} \\
\partial_t d_{-1} &= (1+\epsilon_0)^{-5/2} \eps^{-2} c_{-1} a_{-1} - K d_{-1} a_{0} + O\left( (1+\epsilon_0)^{- n_0/2} \right). \label{dxin-eq-minus}
\end{align}
The dynamics of these variables $a_{-1},b_{-1},c_{-1},d_{-1}$, do not directly impact the dynamics in \eqref{axin-eq-0a}-\eqref{axin-eq-1a}; however we will still need to track these variables in order to prevent a premature exit of the form \eqref{exit-1a} that could potentially be caused by energy flowing back from $a_0$ to $d_{-1}$.

The task of proving Proposition \ref{reduced-claim} has now reduced further, to that of establishing the following claim.

\begin{proposition}[Reduced induction claim, II]\label{reduced-claim-2}  Let the notation and hypotheses be as in Proposition \ref{induct-rescale}, and let $T_1$ and $T_2$ be defined as above.  There exists a time
\begin{equation}\label{lifespan-rescaled-3}
\frac{1}{100} \leq \tau_1 \leq T_2
\end{equation}
(in particular, $T_2 \geq 1/100$)
such that we have the bounds
\begin{align}
(1+\epsilon_0)^{-1/100} \leq a_1(\tau_1) &\leq (1+\epsilon_0)^{1/100} \label{an-init-next-3}\\
b_0(\tau_1) &\geq 2 \times 10^{-5} \eps \label{spin-1-next-3}\\
b_0(\tau_1) &\leq \frac{1}{2} 10^{5} \eps \label{spin-1a-next-3}\\
c_0(\tau_1) &\geq 2 \times \exp( K^{9} ) \eps^2 \label{spin-2-next-3}\\
c_0(\tau_1) &\leq \frac{1}{2} \exp( K^{10} ) \eps^2 \label{spin-2a-next-3}\\
\tilde E_0(\tau_1) &\leq \frac{1}{2} K^{-20}, \label{en1-init-next-3}
\end{align}
\end{proposition}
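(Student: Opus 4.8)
The plan is to observe that, on the interval $[0,T_2]$, the renormalized equations \eqref{axin-eq-0a}--\eqref{axin-eq-1a} for the five modes $a_0,b_0,c_0,d_0,a_1$ are precisely the quadratic delay circuit \eqref{a-eq}--\eqref{ta-eq} of Theorem \ref{daet}, with $(a,b,c,d,\tilde a)$ in that theorem played by $(a_0,b_0,c_0,d_0,a_1)$, the coupling constant $K$ replaced by the comparable constant $(1+\epsilon_0)^{5/2}K$, and with an extra feedback source $(1+\epsilon_0)^{5/2}Kd_{-1}^2$ in the $a_0$-equation together with viscous errors $O((1+\epsilon_0)^{2k-n_0/2}\tilde E_k^{1/2})$. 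Since $T_2\le 100$, after integration the viscous errors contribute only $O((1+\epsilon_0)^{-n_0/2})$, and --- granting the bound $\tilde E_{-1}\lesssim K^{-20}$ on $[0,T_2]$ proved in the next paragraph --- the feedback source is $\lesssim K^{-19}$; both are negligible throughout the bootstrap of Theorem \ref{daet}. Re-running that bootstrap on $[0,T_2]$ (using Lemma \ref{primary} to pass between $\tilde E_k$ and $\frac12(a_k^2+b_k^2+c_k^2+d_k^2)$ for $k=-1,0,1$) yields an ignition time $\tau_c=\sqrt2+O(1/\sqrt K)$ for the scale-$0$ rotor --- the first time $c_0$ reaches $K^{-10}\eps^2$ --- such that on $[0,\tau_c]$ the modes are quiescent ($a_0=1+O(K^{-20})$, $b_0=\eps t+O(K^{-20}\eps)$, $c_0$ growing Gaussian-exponentially, $d_0,a_1=O(K^{-10})$) and over $[\tau_c,\tau_c+1/\sqrt K]$ the equipartition-of-energy mechanism dumps essentially all of the energy into $a_1$. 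I would then set $\tau_1:=\tau_c+1/\sqrt K$; since $\int_0^{\tau_1}a_1^2\lesssim K^{-1/2}<K^{-1/4}$ and $\tau_1<100$, alternatives \eqref{exit-2a} and \eqref{exit-3a} of Corollary \ref{exit-again} cannot fire at or before $\tau_1$, so once \eqref{exit-1a} is also excluded (next paragraph) this forces $T_2\ge\tau_1$ and hence \eqref{lifespan-rescaled-3}. At $\tau_1$ the conclusions \eqref{an-init-next-3}--\eqref{en1-init-next-3} all follow from the circuit description: $a_1(\tau_1)=1+O(K^{-80})$ lies in $[(1+\epsilon_0)^{-1/100},(1+\epsilon_0)^{1/100}]$ since $K^{-80}\ll\epsilon_0$, giving \eqref{an-init-next-3}; $b_0(\tau_1)$ is of size $\eps$ (at least $\frac{\sqrt2}{4}\eps$, at most $O(\eps)$), giving \eqref{spin-1-next-3}--\eqref{spin-1a-next-3}; the bound $\frac13 K^{10}\le\partial_t\log c_0\le CK^{10}$ on $[\tau_c,\tau_1]$ together with $c_0(\tau_c)=K^{-10}\eps^2$ puts $c_0(\tau_1)$ in $[2\exp(K^9)\eps^2,\frac12\exp(K^{10})\eps^2]$, i.e.\ \eqref{spin-2-next-3}--\eqref{spin-2a-next-3}; and for \eqref{en1-init-next-3} one uses the modified energy $E_*$ from the proof of Theorem \ref{daet}: its feedback source $\lesssim|a_0|Kd_{-1}^2$ is sizable only while $a_0$ is, i.e.\ before the transition (where $E_*\approx\frac12$ is not yet required to be small) and is negligible afterward, so the drain $\partial_t E_*\le -Ka_1 E_*$ plus negligible terms, with $a_1\ge 0.1$ on $[\tau_c+1/K,\tau_1]$, forces $E_*(\tau_1)\le\exp(-c\sqrt K)\ll\frac12 K^{-20}$, whence $\tilde E_0(\tau_1)=E_*(\tau_1)+O((1+\epsilon_0)^{-n_0/2})\le\frac12 K^{-20}$.

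The one genuinely new ingredient is the bound $\tilde E_{-1}\lesssim K^{-20}$ on $[0,T_2]$, needed to exclude the ``backwards flow'' exit \eqref{exit-1a} and to render the feedback source negligible (vacuous when $N=n_0$, by \eqref{nomodo}). Here I would argue as follows. The spin bounds \eqref{spin-1-scaled}--\eqref{spin-2a-scaled} give $b_{-1}(0)\in[10^{-5}\eps,10^5\eps]$ and $c_{-1}(0)\ge\exp(K^9)\eps^2$; up to negligible terms $\{b_{-1},c_{-1}\}$ evolves as an amplifier gate, conserving $b_{-1}^2+c_{-1}^2$, whose ``firing time'' $\sim K^{-10}\log(\eps^{-1})$ is (by the astronomical smallness of $\eps$ relative to $K$) far longer than $100\ge T_2$; hence $c_{-1}$ stays $\le\eps^{3/2}$ and so $b_{-1}\ge\frac12 10^{-5}\eps>0$ and $c_{-1}\ge\frac12\exp(K^9)\eps^2$ throughout $[0,T_2]$. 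Consequently the scale-$(-1)$ rotor term $\eps^{-2}c_{-1}$ in \eqref{axin-eq-minus}, \eqref{dxin-eq-minus} has rate $\ge\frac12\exp(K^9)$, so by the forcing-robust, energy-based equipartition identity for the rotor gate, over any subinterval longer than $\exp(-K^9)$ one has $\int d_{-1}^2\approx\int\frac12(a_{-1}^2+d_{-1}^2)\approx\int\tilde E_{-1}$ (the $b_{-1},c_{-1}$ part of $\tilde E_{-1}$ being $O(\eps^2)$). Feeding this into \eqref{lei} with $k=-1$ --- using $a_0\ge\frac12$ on $[0,\tau_c]$ (and $|a_0|\le1$ on the short tail $[\tau_c,\tau_1]$, contributing $O(\sqrt K\tilde E_{-1})$), and bounding the input $K(1+\epsilon_0)^{-5/2}d_{-2}^2|a_{-1}|$ by $O(K^{-9})\sqrt{\tilde E_{-1}}$ via \eqref{before-en-next'} and $|a_{-1}|\le\sqrt{2\tilde E_{-1}}$ --- yields, in an averaged sense, $\partial_t\tilde E_{-1}\lesssim K^{-9}\sqrt{\tilde E_{-1}}-\frac12 K\tilde E_{-1}$; with $\tilde E_{-1}(0)\le K^{-20}$ this Gronwall-type inequality (treated via the usual regularization of $\partial_t(\tilde E_{-1})^{1/2}$) closes at $\tilde E_{-1}\lesssim K^{-20}$ on $[0,T_2]$, comfortably below $K^{-10}(1+\epsilon_0)^{2/10}$, so \eqref{exit-1a} cannot occur.

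I expect the main difficulty to be precisely this scale-$(-1)$ analysis: making the rotor equipartition estimate robust against every forcing term, confirming that the fast-rotor hypothesis (rotor rate $\gtrsim\exp(K^9)$) really persists on all of $[0,T_2]$, and checking that the resulting bound is obtained using only the crude data built into the definition of $T_2$ (positivity of $a_0$ on $[0,\tau_c]$, the a priori energy bounds \eqref{before-en-next'}--\eqref{dn-en-next'}, and the spin bounds) together with the self-improving factor $|a_{-1}|\le\sqrt{2\tilde E_{-1}}$ --- so that there is no circular dependence on the fine scale-$0/1$ dynamics, which then use only the resulting error bounds and run exactly as in Theorem \ref{daet}. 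A secondary, purely computational nuisance is pinning down the explicit numerical constants in \eqref{en1-init-next-3} and the exponential windows \eqref{spin-2-next-3}--\eqref{spin-2a-next-3}; the available powers of $K$ and $\eps$ leave ample room there.
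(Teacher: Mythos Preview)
Your strategy is correct and matches the paper's: rerun the delay-circuit analysis of Theorem~\ref{daet} on the modes $a_0,b_0,c_0,d_0,a_1$, with the only new obstacle being control of the feedback $Kd_{-1}^2$ from scale $-1$, which one handles by an equipartition argument on the scale-$(-1)$ rotor. Two points of comparison are worth noting. First, your ``in an averaged sense'' step is exactly where the paper introduces a second modified energy $E^{*} := \tilde E_{-1} - \tfrac12(1+\epsilon_0)^{5/2} K\, a_{-1}d_{-1}\,\tfrac{\eps^2}{c_{-1}}\,a_0$ (the scale-$(-1)$ analogue of the $E_*$ you already invoke at scale $0$); this is the rigorous device that converts the oscillatory sink $-Kd_{-1}^2a_0$ into the averaged one $-\tfrac12 K(a_{-1}^2+d_{-1}^2)a_0$, and it is what lets one survive the interval $[\tau_c,\tau_1]$ where $a_0$ is no longer positive (one uses $\int_0^t a_0\gtrsim 1$ rather than pointwise positivity). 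Second, the paper does not attempt your self-improving bound $\tilde E_{-1}\lesssim K^{-20}$ on all of $[0,T_2]$; it only proves $\tilde E_{-1}\lesssim K^{-14}$ on the short post-ignition window $[t_c,\tau_1]$, using the crude bootstrap bound on $a_{-1}$ rather than $|a_{-1}|\le\sqrt{2\tilde E_{-1}}$. This weaker estimate already yields $\tilde E_0(\tau_1)\lesssim K^{-28}$ (your claim $E_*(\tau_1)\le\exp(-c\sqrt K)$ overlooks the forcing-induced floor in the $E_*$ inequality), which is more than enough for \eqref{en1-init-next-3}. Your sharper route would also work, but the paper's version avoids having to control $\tilde E_{-1}$ during the quiescent phase.
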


Indeed, Proposition \ref{reduced-claim} follows from Proposition \ref{reduced-claim-2} and Proposition \ref{todos} once we set $\mu_1 := a_1(\tau_1)$ (and take $K$ sufficiently large, $\eps$ sufficiently small, and $n_0$ sufficiently large).

\subsection{Seventh step: dynamics at the zero scale}

We now prove Proposition \ref{reduced-claim-2} (and hence Propositions \ref{reduced-claim},  \ref{blowdyn-induct}, \ref{blowdyn} and Theorems \ref{ood2}, \ref{ood}, \ref{blowup} and \ref{main}).

The task at hand is now very close to the situation in Theorem \ref{daet}, and we will now repeat the proof of that theorem with minor modifications, except for a technical distraction having to do with eliminating a premature exercise of the option \eqref{exit-1a}, which requires some analysis of the $-1$-scale dynamics.

From \eqref{during-en-next'} we have
\begin{equation}\label{ab}
a_0(t), b_0(t), c_0(t), d_0(t), a_1(t) = O(1)
\end{equation}
for all $0 \leq t \leq T_2$.  Actually, we can do a bit better than this.  From \eqref{axin-eq-0a}-\eqref{axin-eq-1a} and \eqref{ab} we have
$$ \partial_t (a_0^2+b_0^2+c_0^2+d_0^2+a_1^2) = O( K^{-1} ) $$
for $0 \leq t \leq T_2$
(if $n_0$ is large enough), whereas from \eqref{an-init-rescaled}-\eqref{dn-init-rescaled} and \eqref{after-en-rescaled} we have
$$ a_0(0)^2+b_0(0)^2+c_0(0)^2+d_0(0)^2+a_1(0)^2 = 1 + O(K^{-20}).$$
By the fundamental theorem of calculus, we conclude that
\begin{equation}\label{ab-refine}
a_0(t)^2+b_0(t)^2+c_0(t)^2+d_0(t)^2+a_1(t)^2 = 1 + O(K^{-1})
\end{equation}
for all $0 \leq t \leq T_2$.

Now (as in the proof of Theorem \ref{daet}) we obtain improved bounds on $b,c$.  From \eqref{bxin-eq-0a}, \eqref{cxin-eq-0a}, \eqref{ab} one has
$$ 
\partial_t (b_0^2+c_0^2) = 
2 \eps a_0^2 b_0 + 2 \eps^2 \exp(-K^{10}) a_0^2 c_0 + O\left( (1+\epsilon_0)^{-n_0/2} (b_0^2+c_0^2)^{1/2} \right)$$
for all $0 \leq t \leq T_2$, and thus by \eqref{ab-refine}
$$ 
\partial_t (b_0^2+c_0^2)^{1/2} \leq (1 + O(K^{-1})) \eps$$
for all $0 \leq t \leq T_2$ (interpreting the derivative in a weak sense).
On the other hand, from \eqref{bn-init-rescaled}, \eqref{cn-init-rescaled} we have
$$ (b_0(0)^2+c_0(0)^2)^{1/2} \leq (10^{-5} + O(K^{-1})) \eps.$$
From the fundamental theorem of calculus, we conclude that
\begin{equation}\label{ab-2}
|b_0(t)|, |c_0(t)| \leq (10^{-5} + t + O(K^{-1})) \eps
\end{equation}
for all $0 \leq t \leq T_2$.  Inserting this (and \eqref{ab-refine}) into \eqref{cxin-eq-0a}, we obtain
$$ |\partial_t c_0| \leq O( \eps^2 \exp(-K^{10}) ) + \left(10^{-5}+t+O(K^{-1})\right) K^{10} |c_0|$$
for all $0 \leq t \leq T_2$.
In particular, by \eqref{cn-init-rescaled} and Gronwall's inequality, we have the bound
\begin{equation}\label{code2}
 |c_0(t)| \lesssim \eps^2 \exp\left( K^{10} \left( - \frac{1}{4} + 10^{-5} t + \frac{1}{2} t^2 + O(K^{-1}) \right) \right)
\end{equation}
for all $0 \leq t \leq T_2$.
Finally, from \eqref{dxin-eq-0a}, \eqref{axin-eq-1a} we have
$$ \partial_t \left(d_0^2 + a_1^2\right) = 2 \eps^{-2} c_0 a_0 d_0 + O\left( K^{-1} \left(d_0^2+a_1^2\right) \right) + O\left( K^{-20} \left(d_0^2+a_1^2\right)^{1/2} \right) $$
for all $0 \leq t \leq T_2$,
and hence by \eqref{ab}
\begin{equation}\label{explorer}
 \partial_t \left(d_0^2 + a_1^2\right)^{1/2} = O( \eps^{-2} |c_0|)  + O\left( K^{-1} \left(d_0^2+a_1^2\right)^{1/2} \right) + O( K^{-20} )
\end{equation}
for all $0 \leq t \leq T_2$ (interpreted in a weak sense).
From \eqref{dn-init-rescaled}, \eqref{after-en-rescaled} we have
\begin{equation}\label{slam}
 \left(d_0(0)^2+a_1(0)^2\right)^{1/2} = O( K^{-10} )
\end{equation}
and hence by \eqref{code2} and Gronwall's inequality
\begin{equation}\label{domo}
 |d_0(t)|, |a_1(t)| \lesssim  \exp\left( K^{10} \left( - 1/4 + 10^{-5} t + \frac{1}{2} t^2 + O(K^{-1}) \right) \right) + K^{-10}
 \end{equation}
 for all $0 \leq t \leq T_2$.
Inserting this bound into \eqref{axin-eq-0a}, we see that
$$
|\partial_t a_0| \lesssim  \exp\left( 2K^{10} \left( - 1/4 + 10^{-5} t + \frac{1}{2} t^2 + O\left(K^{-1}\right) \right) \right) + K^{-9}$$
for all $0 \leq t \leq T_2$,
which among other things implies (from \eqref{an-init-rescaled}) that $a_0(t) \geq 0$ whenever $0 \leq t \leq \min(T_2,1/2)$.  From the $k=-1$ case of \eqref{lei}, we thus have
$$ \partial_t \tilde E_{-1} \leq K (1+\epsilon_0)^{-5/2} d_{-2}^2 a_{-1}$$
for $0 \leq t \leq \min(T_2,1/2)$; by \eqref{before-en-next'} we conclude that
$$ \partial_t \tilde E_{-1} \leq O( K^{-14} )$$
on this interval, and hence by \eqref{before-en-0}
$$ \tilde E_{-1}\left(\min(T_2,1/2)\right) \leq  K^{-10} \left(1+\epsilon_0\right)^{2/10} \left(\left(1+\epsilon_0\right)^{-0.08} + O\left( K^{-4} \right)\right),$$
which rules out the first option of Corollary \ref{exit-again} if $T_2 \leq 1/2$.  The second option of this corollary is also ruled out when $T_2 \leq 1/2$, thanks to \eqref{domo}.  We conclude that
\begin{equation}\label{tc-below}
T_2 \geq 1/2.
\end{equation}

Now we sharpen the bounds on $a_0(t), b_0(t), c_0(t), d_0(t), a_1(t)$.  Let $t_c$ be the supremum of all the times $t \in [0,T_2]$ for which $|c(t')| \leq K^{-10} \eps^2$ for all $0 \leq t \leq t'$, thus 
$$0 \leq t_c \leq T_2 \leq T_1 \leq 100$$
 and
\begin{equation}\label{boots2}
|c_0(t)| \leq K^{-10} \eps^2
\end{equation}
for all $0 \leq t \leq t_c$.  Comparing this with \eqref{code2}, \eqref{tc-below}, we conclude that
\begin{equation}\label{cando}
t_c \geq 1/2.
\end{equation}
From \eqref{explorer}, \eqref{slam}, \eqref{boots2}, and Gronwall's inequality one has
\begin{equation}\label{ak10}
 |d_0(t)|, |a_1(t)| \lesssim K^{-10}
\end{equation}
for all $0 \leq t \leq t_c$.  Inserting these bounds and \eqref{ab-2} back into \eqref{axin-eq-0a}, we see that
$$ \partial_t a_0 = O( K^{-9} ) $$
for $0 \leq t \leq t_c$, and thus by \eqref{an-init-rescaled} we have
\begin{equation}\label{aorta}
 a_0(t) = 1 + O(K^{-9})
\end{equation}
for $0 \leq t \leq t_c$.  Inserting this into \eqref{bxin-eq-0a} and using \eqref{boots2}, we conclude that
$$ \partial_t b_0(t) = \eps \left(1 + O\left(K^{-9}\right)\right) $$
for $0 \leq t \leq t_c$, and hence by \eqref{bn-init-rescaled}
\begin{equation}\label{stan-lee}
 \eps\left(t - 10^{-5} - O\left(K^{-9}\right)\right) \leq b_0(t) \leq \eps\left(t + 10^{-5} + O\left(K^{-9}\right)\right)
\end{equation}
for all $0 \leq t \leq t_c$.  Meanwhile, inserting \eqref{aorta} into \eqref{cxin-eq-0a}, we obtain
$$
\partial_t c_0(t) \geq \left(1 + O(K^{-9})\right) \eps^2 \exp(-K^{10}) + \eps^{-1} K^{10} b_0(t) c_0(t) $$
for all $0 \leq t \leq t_c$, and hence by 
\eqref{cn-init-rescaleda}, \eqref{stan-lee} and Gronwall's inequality we see that
$$
c_0(t) \gtrsim \exp\left( \left(\frac{1}{2} t^2 - 10^{-5} t - 1 + O(K^{-9})\right) K^{10} \right) \eps^2$$
whenever $1/2 \leq t \leq t_c$.  Comparing this with \eqref{boots2} we see that
\begin{equation}\label{tc-bound}
 t_c \leq 2
\end{equation}
(say), which by definition of \eqref{boots2} implies that
\begin{equation}\label{c-bound-happy}
 c_0(t_c) = K^{-10} \eps^2.
\end{equation}

Having described the evolution up to time $t_c$, we now move to the future of $t_c$, and specifically in the interval $[t_c,\tau_1]$ where
\begin{equation}\label{t3-def}
\tau_1 := \min( t_c + K^{-1/2}, T_2 ).
\end{equation}
From \eqref{ak10} we have
$$ \int_0^{t_c} a_1(t)^2\ dt \lesssim K^{-10}$$
and hence by \eqref{during-en-next'} and \eqref{t3-def} we have
$$ \int_0^{t} a_1(t)^2\ dt < K^{-1/4}$$
whenever $t \leq \tau_1$.  From this and Corollary \ref{exit-again} (and \eqref{tc-bound}) we conclude

\begin{proposition}[Exit dichotomy]\label{exit-options} At least one of the following assertions hold:
\begin{itemize}
\item (Backwards flow of energy) One has
\begin{equation}\label{exit-1b}
\tilde E_{-1}(\tau_1) = K^{-10} (1+\epsilon_0)^{2/10}.
\end{equation}
\item (Running out the clock) One has
\begin{equation}\label{exit-3b}
\tau_1 = t_c + K^{-1/2}.
\end{equation}
\end{itemize}
\end{proposition}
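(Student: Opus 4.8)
The plan is to read off the dichotomy from the trichotomy in Corollary~\ref{exit-again} together with the definition \eqref{t3-def} of $\tau_1 = \min(t_c + K^{-1/2}, T_2)$. First I would treat the trivial alternative: if $t_c + K^{-1/2} \leq T_2$, then $\tau_1 = t_c + K^{-1/2}$ and the second conclusion \eqref{exit-3b} holds outright. So it remains to handle the case $T_2 < t_c + K^{-1/2}$, in which $\tau_1 = T_2$.

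In this case I would invoke the strict bound $\int_0^t a_1(s)^2\,ds < K^{-1/4}$, valid for all $t \leq \tau_1$, which was just derived (from $\int_0^{t_c} a_1^2 = O(K^{-10})$, the $O(1)$ bound \eqref{ab} on $a_1$ on $[0,T_2]$, and the fact that $[t_c,\tau_1]$ has length at most $K^{-1/2}$, with $K$ large). Evaluating at $t = \tau_1 = T_2$ shows $\int_0^{T_2} a_1^2 < K^{-1/4}$ strictly. Now I apply Corollary~\ref{exit-again}: its \emph{forwards flow} alternative \eqref{exit-2a} is the equality $\int_0^{T_2} a_1^2 = K^{-1/4}$ and is therefore ruled out; its \emph{running out the clock} alternative \eqref{exit-3a} would force $T_2 = 100$, but $T_2 = \tau_1 < t_c + K^{-1/2} \leq 2 + K^{-1/2} < 100$ by \eqref{tc-bound} and $K \geq 1$, a contradiction. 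Hence the remaining \emph{backwards flow} alternative \eqref{exit-1a} must hold, i.e.\ $\tilde E_{-1}(T_2) = K^{-10}(1+\epsilon_0)^{2/10}$; since $\tau_1 = T_2$, this is precisely \eqref{exit-1b}, and the dichotomy follows.

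There is no real obstacle here: the argument is pure bookkeeping that converts assertions about $T_2$ into assertions about $\tau_1$. The one point meriting attention is that the energy bound on $\int_0^t a_1^2$ be \emph{strict} up to and including $t = \tau_1$, so that the equality in the \emph{forwards flow} alternative is genuinely excluded rather than merely saturated; this is exactly why the quantitative gain $O(K^{-10})$ on $[0,t_c]$ (rather than just $\leq K^{-1/4}$) together with the short length $K^{-1/2}$ of the interval $[t_c,\tau_1]$ are both used.
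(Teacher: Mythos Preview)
Your proof is correct and follows essentially the same approach as the paper: the paper establishes the strict inequality $\int_0^t a_1^2 < K^{-1/4}$ for all $t \leq \tau_1$ (from \eqref{ak10}, \eqref{during-en-next'}, and \eqref{t3-def}) and then invokes Corollary~\ref{exit-again} together with the bound $t_c \leq 2$ from \eqref{tc-bound}, which is exactly your case analysis spelled out. Your exposition is slightly more explicit about the dichotomy on $\min(t_c + K^{-1/2}, T_2)$, but the underlying argument is identical.
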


We will shortly eliminate the option \eqref{exit-1b}, but first we need more control on the dynamics.

From \eqref{stan-lee} and \eqref{cando} we have
$$ b_0(t_c) \geq 10^{-1} \eps.$$
Meanwhile, from \eqref{code2}, \eqref{bxin-eq-0a} (discarding the non-negative $\eps a_0^2$ term) we have
$$
\partial_t b_0 \geq - O\left( \eps^3 \exp\left( O\left( K^{10} \right) \right) \right)$$
for all $t_c \leq t \leq \tau_1$ (with $n_0$ large enough); we conclude (for $\eps$ small enough) that
\begin{equation}\label{botany}
 b_0(t) \geq 10^{-4} \eps
\end{equation}
(say) for all $t_c \leq t \leq \tau_1$.  Inserting this bound into \eqref{cxin-eq-0a}, and discarding the non-negative $\eps^2 \exp(-K^{10}) a_0^2$ term, we see from \eqref{c-bound-happy} and a continuity argument that
\begin{equation}\label{gargle}
 c_0(t) \geq K^{-10} \eps^2
\end{equation}
for $t_c \leq t \leq \tau_1$ (in particular, $c_0$ is positive on this interval), and furthermore that we have the exponential growth
\begin{equation}\label{chap}
 \partial_t c_0(t) \gtrsim K^{10} c_0(t)
\end{equation}
for $t_c \leq t \leq \tau_1$.  We conclude that
\begin{equation}\label{c-large-again}
c_0(t) \geq K^{100} \eps^2
\end{equation}
for $t$ in the interval $I := [t_c+K^{-9}, \tau_1]$.  (We have not yet ruled out the possibility that this time interval is empty, although we will shortly show that this is not the case.)  In the opposite direction, we see from \eqref{ab}, \eqref{ab-2}, \eqref{gargle}, \eqref{cxin-eq-0a} that
\begin{equation}\label{solace-again}
\partial_t c_0 \lesssim K^{10} c_0 
\end{equation}
for $t_c \leq t \leq \tau_1$.  From \eqref{c-bound-happy}, \eqref{t3-def}, and Gronwall's inequality, we thus have the upper bound
\begin{equation}\label{co-upper}
c_0(t) \lesssim \exp\left( O( K^{10 - 1/2} ) \right) \eps^2
\end{equation}
for $t_c \leq t \leq \tau_1$.  Crucially, this upper bound will be significantly smaller than a lower bound for $c_{-1}$ in the same interval, leading to an important mismatch in speeds between the $0$-scale and $-1$-scale dynamics that prevents a premature exit via \eqref{exit-1b}.  More precisely, we have

\begin{proposition}[No exit to coarse scales]\label{noexit}  We have
\begin{equation}\label{toast}
\tilde E_{-1}(t) \lesssim K^{-14}
\end{equation}
for all $t_c \leq t \leq \tau_1$.
In particular, by Proposition \ref{exit-options} we have
\begin{equation}\label{tau-tc}
\tau_1 = t_c + K^{-1/2}
\end{equation}
and hence the interval $I = [t_c+K^{-9}, \tau_1]$ is non-empty.
\end{proposition}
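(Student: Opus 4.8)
\emph{Reduction.} The ``in particular'' clauses are immediate once \eqref{toast} is known: $\tilde E_{-1}(\tau_1)\lesssim K^{-14}$ is incompatible with the equation $\tilde E_{-1}(\tau_1)=K^{-10}(1+\epsilon_0)^{2/10}$ of \eqref{exit-1b} for $K$ large, so Proposition \ref{exit-options} forces \eqref{exit-3b}, i.e. $\tau_1=t_c+K^{-1/2}$, whence $I=[t_c+K^{-9},t_c+K^{-1/2}]\neq\emptyset$. So the whole task is the estimate \eqref{toast}, that $\tilde E_{-1}(t)\lesssim K^{-14}$ on $[t_c,\tau_1]$. If $N=n_0$ there is nothing to do, since then $\tilde E_{-1}\equiv 0$ by \eqref{nomodo}; so I would assume $N>n_0$ and use the additional hypotheses \eqref{spin-1-scaled}--\eqref{spin-2a-scaled}.

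\emph{Controlling the internal modes of the scale-$(-1)$ circuit.} First I would pin down $b_{-1},c_{-1}$ on all of $[0,\tau_1]$ by a continuity/bootstrap argument of exactly the type used for $b_0,c_0$ in the seventh step and in Proposition \ref{todos}. Under the bootstrap hypothesis $c_{-1}\leq\exp(C_1K^{10})\eps^2$, equation \eqref{bxin-eq-minus} gives $|\partial_t b_{-1}|\lesssim \eps K^{-10}+K^{10}\exp(2C_1K^{10})\eps^3+(1+\epsilon_0)^{-n_0/2}$, so (as $\eps$ is small depending on $K$ and $n_0$ is huge) $b_{-1}$ never moves more than $o(\eps)$ from its initial value, hence $b_{-1}(t)\in[\tfrac12 10^{-5}\eps,\,2\cdot 10^5\eps]$ by \eqref{spin-1-scaled}--\eqref{spin-1a-scaled}; feeding this into \eqref{cxin-eq-minus} and running Gronwall (growth rate $\lesssim K^{10}$, forcing $\lesssim\eps^2$) re-derives $c_{-1}\leq\tfrac12\exp(C_1K^{10})\eps^2$ for a suitable absolute $C_1=C_1(\epsilon_0)$, closing the bootstrap. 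The same Gronwall estimate with $b_{-1}\gtrsim\eps$ gives exponential \emph{growth} of $c_{-1}$: from \eqref{spin-2-scaled} and $t_c\geq1/2$ (by \eqref{cando}) one gets $c_{-1}(t)\geq\exp(c'K^{10})\eps^2$ on $[t_c,\tau_1]$ for some $c'=c'(\epsilon_0)>0$. Comparing with $c_0\lesssim\exp(O(K^{9.5}))\eps^2$ from \eqref{co-upper} yields the crucial \emph{speed hierarchy} on $[t_c,\tau_1]$: the scale-$(-1)$ rotor frequency $\eps^{-2}c_{-1}$ beats the scale-$0$ rotor frequency $\eps^{-2}c_0$ by a factor $\gtrsim\exp(c'K^{10}-O(K^{9.5}))\to\infty$, and (by \eqref{c-large-again}) on $I$ both beat $1$.

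\emph{The energy estimate.} On $[0,t_c]$ it is easy: \eqref{aorta} gives $a_0>0$, so the negative term in \eqref{lei} with $k=-1$ only helps and the positive term is $\lesssim K^{-14}$ by \eqref{before-en-next'} (there $d_{-2}^2\lesssim K^{-10}$, $|a_{-1}|\lesssim K^{-5}$); integrating from the bound $\tilde E_{-1}(0)\leq K^{-20}$ of \eqref{en1-init-rescaled} gives $\tilde E_{-1}\lesssim K^{-14}$ on $[0,t_c]$. On $[t_c,\tau_1]$ the difficulty is the backward-pump term $-Kd_{-1}^2a_0$ in \eqref{lei}: once $t>t_c$, $a_0$ oscillates through zero with the scale-$0$ rotor, so this term is a genuine source for $\tilde E_{-1}$ when $a_0<0$, and the crude bounds $d_{-1}^2\leq 2\tilde E_{-1}$, $|a_0|\lesssim1$ only give, via Gronwall over time $\leq K^{-1/2}$, $\tilde E_{-1}(t)\lesssim K^{-14}\exp(O(K^{1/2}))$ --- useless, as this exceeds the exit threshold $K^{-10}(1+\epsilon_0)^{2/10}$. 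The resolution uses the speed hierarchy twice. First, differentiating $a_{-1}d_{-1}$ via \eqref{axin-eq-minus}, \eqref{dxin-eq-minus} gives the equipartition identity $a_{-1}^2-d_{-1}^2=\tfrac{(1+\epsilon_0)^{5/2}\eps^2}{c_{-1}}\big(\partial_t(a_{-1}d_{-1})+Kd_{-1}a_{-1}a_0+O((1+\epsilon_0)^{-n_0/2})\big)$, so $d_{-1}^2=\tilde E_{-1}+O(\eps^2)+\tfrac{\eps^2}{c_{-1}}(\text{bounded})$; since $\eps^2/c_{-1}\lesssim\exp(-c'K^{10})$ on $[t_c,\tau_1]$, integrating by parts the $\partial_t(a_{-1}d_{-1})$ contribution --- the only subtle one, in which $\partial_t a_0=-\eps^{-2}c_0d_0+O(K^{-9})$ appears and is tamed by $\tfrac{\eps^2}{c_{-1}}\eps^{-2}c_0=c_0/c_{-1}\to0$ (using also \eqref{axin-eq-0a}) --- makes every correction negligible against $K^{-14}$, leaving $\partial_t\tilde E_{-1}\leq CK^{-14}-K\tilde E_{-1}a_0+(\text{negligible})$. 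Second, to handle $-K\int_{t_c}^t\tilde E_{-1}a_0$ I would use \eqref{dxin-eq-0a} to write $a_0=\tfrac{\eps^2}{c_0}\big(\partial_t d_0+(1+\epsilon_0)^{5/2}Kd_0a_1+O((1+\epsilon_0)^{-n_0/2})\big)$ and integrate by parts the $\partial_t d_0$ term: on $I=[t_c+K^{-9},\tau_1]$, \eqref{c-large-again} gives $\eps^2/c_0\leq K^{-100}$, and using \eqref{gargle}, \eqref{solace-again} to bound $\partial_t(\eps^2/c_0)$ together with the crude $\partial_t\tilde E_{-1}\lesssim K^{-14}+K\tilde E_{-1}$, every resulting term carries a factor $\lesssim K^{-80}$ times $\tilde E_{-1}+K^{-14}$; on the short junction $[t_c,t_c+K^{-9}]$ the crude Gronwall already suffices since the exponential factor is $\exp(O(K^{-8}))=1+o(1)$. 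Assembling these and starting from $\tilde E_{-1}(t_c)\lesssim K^{-14}$, Gronwall over $[t_c,\tau_1]$ yields $\tilde E_{-1}(t)\lesssim K^{-14}$, i.e. \eqref{toast}.

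\emph{Main obstacle.} The crux is precisely the backward-pump term $-Kd_{-1}^2a_0$: it admits no useful pointwise bound, and the argument stands or falls on extracting its oscillatory cancellation from the two-scale frequency separation $\eps^{-2}c_{-1}\gg\eps^{-2}c_0\gg1$ on $I$ --- the ``mismatch in speeds'' flagged just before the statement. Everything else (the $b_{-1},c_{-1}$ bootstrap, the various $O((1+\epsilon_0)^{-n_0/2})$ and $\eps$-power error terms, the reduction of the final clauses to Proposition \ref{exit-options}) is routine once that cancellation is secured.
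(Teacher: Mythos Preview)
Your proof is correct and uses the same core ingredients as the paper's (the $b_{-1},c_{-1}$ bootstrap, the speed hierarchy $c_{-1}\gg c_0$, and equipartition at scale $-1$ to convert $-Kd_{-1}^2 a_0$ into roughly $-K\tilde E_{-1}a_0$ via the modified energy $E^*=\tilde E_{-1}-\tfrac12(1+\epsilon_0)^{5/2}Ka_{-1}d_{-1}\tfrac{\eps^2}{c_{-1}}a_0$), but the organization is genuinely different. The paper does not split $[0,\tau_1]$; it runs a single Gronwall for $E^*$ from $t=0$, starting from the coarser bound $E^*(0)\lesssim K^{-10}$ of \eqref{before-en-0}, and relies on the exponential \emph{decay} driven by $a_0\approx 1$ on the long interval $[0,t_c]$ (so that $\int_0^t a_0\gtrsim 1$ for all $t\geq t_c$) to bring $E^*$ down to $O(K^{-14})$; your second equipartition at scale $0$ never appears. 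Your route instead exploits the sharper initial bound $\tilde E_{-1}(0)\leq K^{-20}$ from \eqref{en1-init-rescaled} for a one-line argument on $[0,t_c]$, and then on $[t_c,\tau_1]$ invokes the scale-$0$ oscillation of $a_0$ (via $a_0=\tfrac{\eps^2}{c_0}\partial_t d_0+\cdots$ with $\tfrac{\eps^2}{c_0}\leq K^{-100}$ on $I$) to show the Gronwall factor there is $1+O(K^{-8})$ rather than the naive $e^{O(K^{1/2})}$. This is extra work but makes the $[t_c,\tau_1]$ step self-contained---and in fact it tightens a point the paper leaves implicit: with only the crude estimate $|\int_{t_c}^t a_0|\lesssim K^{-1/2}$ that the paper records, the forcing term $O(K^{-14})$ in the Gronwall picks up a factor $e^{O(K^{1/2})}$ which is not obviously absorbable, so some form of your scale-$0$ cancellation is arguably needed even in the paper's arrangement.
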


\begin{proof}  If $N=n_0$ then this is immediate from \eqref{nomodo}, so we may assume that $N > n_0$.  In particular, the bounds \eqref{spin-1-scaled}-\eqref{spin-2a-scaled} are available.

We will need some additional bounds on $b_{-1}, c_{-1}$.
From \eqref{bxin-eq-minus}, \eqref{before-en-next'} (discarding the second term in \eqref{bxin-eq-minus} as being non-positive) we have
$$ \partial_t b_{-1}(t) \leq O( \eps )$$
for all $0 \leq t \leq \tau_1$.  From this and \eqref{spin-1a-scaled}, we have
\begin{equation}\label{bmn}
 b_{-1}(t) \leq O( \eps )
\end{equation}
for $0 \leq t \leq \tau_1$.  Meanwhile, from \eqref{cxin-eq-minus} (using \eqref{before-en-next'} to bound $a_{-1}$) we have
$$ \partial_t |c_{-1}|^2 = O( \eps^2 |c_{-1}| ) + 2 (1+\epsilon_0)^{-5/2} \eps^{-1} K^{10} b_{-1} |c_{-1}|^2 $$
and thus by \eqref{bmn}
$$ \partial_t |c_{-1}|^2(t) \leq O( \eps^2 |c_{-1}| ) + O\left( K^{10} |c_{-1}|^2 \right)$$
and thus
$$ \partial_t |c_{-1}|(t) \leq O( \eps^2 ) + O\left( K^{10} |c_{-1}| \right).$$
By Gronwall's inequality and \eqref{spin-2a-scaled}, we thus have
\begin{equation}\label{cibosh}
 |c_{-1}(t)| \lesssim \exp\left( O\left( K^{10} \right) \right) \eps^2
\end{equation}
for $0 \leq t \leq \tau_1$.
Inserting this back into \eqref{bxin-eq-minus}, we see that
$$ \partial_t b_{-1} \geq - O\left( \exp\left( O\left( K^{10} \right) \right) \right) \eps^3$$
and hence by \eqref{spin-1-scaled}
$$ b_{-1}(t) \geq 10^{-6} \eps$$
for $0 \leq t \leq \tau_1$ (if $\eps$ is small enough).  Inserting this into \eqref{cxin-eq-minus}, we see that
$$ \partial_t c_{-1} \geq 10^{-6} K^{10} c_{-1} - O\left( (1+\epsilon_0)^{- n_0/2} \right)$$
for $0 \leq t \leq \tau_1$, and hence by \eqref{spin-2-scaled}
\begin{equation}\label{cmn}
 c_{-1}(t) \geq \exp( 10^{-8} K^{10} ) \eps^2
 \end{equation}
for $1/10 \leq t \leq \tau_1$ (if $n_0$ is large enough).  Returning to \eqref{bxin-eq-minus}, we now have (thanks to \eqref{cibosh}) that
$$ \partial_t b_{-1}(t) = O( \eps )$$
for $0 \leq t \leq \tau_1$, and thus by \eqref{spin-1a-scaled}
\begin{equation}\label{bibosh}
 b_{-1}(t) = O( \eps )
\end{equation}
for $0 \leq t \leq \tau_1$; from \eqref{cxin-eq-minus}, \eqref{cmn}, \eqref{before-en-next'} we now have
\begin{equation}\label{c-bang}
\partial_t c_{-1}(t) = O( K^{10} c_{-1}(t) )
\end{equation}
for $0 \leq t \leq \tau_1$.

From \eqref{before-en-0} we have
\begin{equation}\label{eop}
\tilde E_{-1}(0) \lesssim K^{-10}.
\end{equation}
This bound is too big for \eqref{toast}, so we will first need to establish some decay in $\tilde E_{-1}$ as one moves from time $t=0$ to time $t=t_c$.
From \eqref{lei} we have
$$ \partial_t \tilde E_{-1} \leq K (1+\epsilon_0)^{-5/2} d_{-2}^2 a_{-1} - K d_{-1}^2 a_0 $$
From \eqref{before-en-next'} we have $d_{-2}^2 a_{-1}(t) = O(K^{-15})$ for $0 \leq t \leq \tau_1$, so 
\begin{equation}\label{chimp}
 \partial_t \tilde E_{-1} \leq - K d_{-1}^2 a_0 + O( K^{-14} )
\end{equation}
for $0 \leq t \leq \tau_1$.
Equipartition of energy suggests that $d_{-1}^2$ oscillates around $\tilde E_{-1}$ on the average.  To formalise this, observe from \eqref{axin-eq-minus}, \eqref{dxin-eq-minus}, \eqref{before-en-next'} that
$$
\partial_t (a_{-1} d_{-1}) = (1+\epsilon_0)^{-5/2} \eps^{-2} c_{-1} \left(a_{-1}^2 - d_{-1}^2\right)  - K a_{-1} d_{-1} a_0 + O(K^{-14}) $$
and hence by the product rule
\begin{align*}
\partial_t \left(a_{-1} d_{-1} \frac{\eps^2}{c_{-1}} a_0\right)& = (1+\epsilon_0)^{-5/2} \left(a_{-1}^2 - d_{-1}^2\right) a_0 - \frac{\eps^2}{c_{-1}} \frac{\partial_t c_{-1}}{c_{-1}} a_{-1} d_{-1} - \frac{\eps^2}{c_{-1}} K a_{-1} d_{-1} a_0^2 \\
&\quad + a_{-1} d_{-1} \frac{\eps_2}{c_{-1}} \partial_t a_0 + O\left(K^{-14} \frac{\eps^2}{c_{-1}}\right)
\end{align*}
for $0 \leq t \leq \tau_1$.
from \eqref{axin-eq-0a}, \eqref{during-en-next'}, \eqref{co-upper} we have
$$ \partial_t a_0 = O\left( \exp\left( O\left( K^{10 - 1/2} \right) \right) \right).$$
Using \eqref{cmn}, \eqref{c-bang}, \eqref{before-en-next}, we conclude that
$$ \partial_t \left(a_{-1} d_{-1} \frac{\eps^2}{c_{-1}} a_0\right) = (1+\epsilon_0)^{-5/2} \left(a_{-1}^2 - d_{-1}^2\right) a_0 + O\left(K^{-100}\right)$$
for $0 \leq t \leq T_*$.
If we define the modified energy
$$ E^* := \tilde E_{-1} - \frac{1}{2} (1+\epsilon_0)^{5/2} K a_{-1} d_{-1} \frac{\eps^2}{c_{-1}} a_0$$
then from \eqref{cmn}, \eqref{before-en-next'} we have
\begin{equation}\label{vamonos}
E^* = \tilde E_{-1} + O(K^{-100})
\end{equation}
while from \eqref{chimp} we have
$$ \partial_t E^* \leq -\frac{1}{2} K (1+\epsilon_0)^{-5/2} \left(a_{-1}^2 + d_{-1}^2\right) a_0 + O\left(K^{-14} \right)$$
for $0 \leq t \leq \tau_1$.
By Lemma \ref{primary}, \eqref{bibosh}, \eqref{cibosh} we have
$$ \tilde E_{-1} = \frac{1}{2} \left(a_{-1}^2 + d_{-1}^2\right) + O(\eps^2)$$
and thus by \eqref{vamonos}
$$ \partial_t E^* \leq -\frac{1}{2} K (1+\epsilon_0)^{-5/2} E^* a_0 + O\left(K^{-14} \right)$$
From \eqref{eop} we have $E^*(0) \lesssim K^{-10}$, and by \eqref{vamonos} it will suffice to show that $E^*(t) \lesssim K^{-14}$ for $t_c \leq t \leq \tau_1$.  By Gronwall's inequality, it thus suffices to show that
$$ \int_0^t a_0(t')\ dt' \gtrsim 1$$
for all $t_c \leq t \leq \tau_1$.  But from \eqref{during-en-next'} and the bound $\tau_1 \leq t_c + K^{-1/2}$ we have
$$ \int_0^t a_0(t')\ dt'  = \int_0^{t_c} a_0(t')\ dt' + O(K^{-1/2})$$
and the claim now follows from \eqref{aorta} and \eqref{cando}.
\end{proof}

We now resume the analysis of the $0$-scale modes. 
From Proposition \ref{noexit} and \eqref{axin-eq-resc} (and \eqref{during-en-next'}), we see that we can improve the error term in \eqref{axin-eq-0a} to
\begin{equation}\label{achoo}
\partial_t a_0 = - \eps^{-2} c_0 d_0 + O\left( K^{-14} \right) 
\end{equation}
in the interval $t_c \leq t \leq \tau_1$.  This improvement will be needed in order to close the bootstrap argument.

 From \eqref{bxin-eq-0a}, \eqref{during-en-next'}, \eqref{co-upper} we have
$$ |\partial_t b_0| \leq 10 \eps$$
for $t_c \leq t \leq \tau_1$, and hence by \eqref{stan-lee} and \eqref{botany} we have
\begin{equation}\label{bilboa}
| b_0| \leq 10^4 \eps
\end{equation}
for $t_c \leq t \leq \tau_1$.  Inserting this into \eqref{cxin-eq-0a} and using \eqref{c-large-again}, \eqref{during-en-next'} we have
$$
|\partial_t c_0(t)| \lesssim K^{10} c_0(t) 
$$
for $t_c \leq t \leq \tau_1$; combining this with \eqref{solace-again} we have
\begin{equation}\label{solace-final}
K^{10} c_0(t) \lesssim \partial_t c_0(t) \lesssim K^{10} c_0(t). 
\end{equation}

Now we use equipartition of energy to establish some energy drain from $a_0,d_0$ to $a_1$.  From \eqref{achoo}, \eqref{dxin-eq-0a}, \eqref{during-en-next'}, \eqref{after-en-next'} one has
\begin{equation}\label{sanitary}
 \partial_t \frac{1}{2}(a_0^2+d_0^2) = - (1+\epsilon_0)^{5/2} K d_0^2 a_1 + O\left( K^{-14} \left(a_0^2+d_0^2\right)^{1/2} \right) + O( K^{-100} )
\end{equation}
and
$$ \partial_t \left(a_0 d_0\right) = \eps^{-2} c_0 \left(a_0^2 - d_0^2\right) + O( K )$$
for $t \in I$.
Meanwhile, from \eqref{axin-eq-1a}, \eqref{during-en-next'} we have
$$ \partial_t a_1 = O(K).$$
From this and \eqref{c-large-again}, \eqref{solace-final} that
\begin{equation}\label{douse-again}
\begin{split}
\partial_t\left( a_0d_0 \frac{\eps^2}{c_0} a_1\right) &= - (a_0^2-d_0^2) a_1 - a_0d_0 \frac{\eps^2}{c_0} \frac{\partial_t c_0}{c_0} a_1 + a_0d_0 \frac{\eps^2}{c_0} \partial_t a_0 + O(K^{-100}) \\
&= - (a_0^2-d_0^2) a_1 + O(K^{-100})
\end{split}
\end{equation}
for $t \in I$,
so if we define the modified energy
$$
E_* := \frac{1}{2}\left(a_0^2+d^2_0\right) + \frac{1}{2} K a_0d_0 \frac{\eps^2}{c_0} a_1
$$
then from \eqref{c-large}, \eqref{during-en-next'}, \eqref{after-en-next'} we have
\begin{equation}\label{est-again}
E_* = \tilde E_0 + O(K^{-100}) = \frac{1}{2} \left(a_0^2+d_0^2\right) + O(K^{-100}) 
\end{equation}
and
$$ \partial_t E_* = - \frac{1}{2} K \left(a_0^2+d_0^2\right) a_1 + O\left( K^{-14} \left(a_0^2+d_0^2\right)^{1/2} \right) + O( K^{-99} )$$
for $t \in I$, and hence
$$ \partial_t E_* = - K a_1 E_* + O\left( K^{-14} E_*^{1/2} \right) + O( K^{-99} )$$
and hence
$$ \partial_t \left(E_* + K^{-28}\right)^{1/2} = \frac{1}{2} K a_1 \left(E_* + K^{-28}\right)^{1/2} + O(K^{-14})$$
for $t \in I$.
Starting with the crude bound $E_*(t_c+K^{-9}) \lesssim 1$ from \eqref{est-again}, \eqref{during-en-next'}, we conclude from Gronwall's inequality that
$$
\left(E_*(t)+K^{-28}\right)^{1/2} \lesssim \exp\left( - \frac{1}{2} K \int_{t_c+K^{-9}}^t a(t')\ dt'\right) + O( K^{-14} )$$
for any $t \in I$.  In particular, from \eqref{est-again} we have
\begin{equation}\label{eeyore}
\tilde E_0(t) \lesssim \exp\left( - K \int_{t_c+K^{-9}}^t a(t')\ dt'\right) + O( K^{-28} )
\end{equation}
for $t \in I$.

From \eqref{axin-eq-1a} we have
\begin{equation}\label{fallow}
\partial_t a_1 \geq O( K^{-1} |a_1| ) + O( K^{-20} )
\end{equation}
for $t \in I$.  In particular, if we can show
\begin{equation}\label{atc-again}
a_1(t_c+1/K) \geq 0.1
\end{equation}
then by Gronwall's inequality we will have
\begin{equation}\label{gb}
 a_1(t) \geq 0.05
\end{equation}
for all $t_c+1/K \leq t \leq \tau_1 = t_c+1/K^{1/2}$, and in particular from \eqref{eeyore} we have
\begin{equation}\label{samuel}
\tilde E_0(\tau_1) \lesssim K^{-28}
\end{equation}
giving \eqref{en1-init-next-3}.

We now show \eqref{atc-again}.   Suppose this is not the case.  From \eqref{axin-eq-1a}, \eqref{ak10}, \eqref{ab-refine} we have
$$ a_1(t_c + K^{-9}) \lesssim K^{-1} $$
so from \eqref{axin-eq-1a}, \eqref{after-en-next'}, and the failure of \eqref{atc-again} we have
$$ \int_{t_c+K^{-9}}^{t_c+1/K} K d_0(t)^2\ dt \leq 0.1 + O( K^{-1} )$$
and thus
$$ \int_{t_c+K^{-9}}^{t_c+1/K} d_0(t)^2\ dt \leq \frac{1}{10K} + O( K^{-2} ).$$
However, by repeating the derivation of \eqref{douse-again} we have
$$ \partial_t (a_0d_0 \frac{\eps^2}{c_0}) = - (a_0^2-d_0^2) + O(K^{-100})$$
on $I$,
and hence by the fundamental theorem of calculus and \eqref{c-large} we have
$$ \int_{t_c+K^{-9}}^{t_c+1/K} a_0(t)^2 - d_0(t)^2\ dt = O( K^{-100} )$$
and thus
\begin{equation}\label{jam}
\int_{t_c+K^{-9}}^{t_c+1/K} \frac{1}{2} (a_0^2+d_0^2)(t)\ dt \leq \frac{1}{10K} + O(K^{-2}).
\end{equation}
On the other hand, for $t \in [t_c+K^{-9},t_c+1/K]$ one has $a_1(t) \leq 0.1 + O(K^{-20})$ by \eqref{fallow}, the failure of \eqref{atc-again}, and Gronwall's inequality.  From \eqref{ab-refine}, \eqref{est-again} we conclude that
$$ a_0^2+d_0^2(t) \geq 0.99 + O(K^{-1}),$$
which contradicts \eqref{jam}.  This concludes the proof of \eqref{atc-again} and hence \eqref{en1-init-next-3}.

To finish up, we need to establish the bounds \eqref{an-init-next-3}-\eqref{spin-2a-next-3} (the bounds \eqref{lifespan-rescaled-3} coming from \eqref{cando} and construction of $\tau_1$).  From \eqref{ab-refine}, \eqref{samuel} we have
$$ a_1(t)^2 = 1 + O(K^{-1})$$
and \eqref{an-init-next-3} follows from this and \eqref{gb}.  The bounds \eqref{spin-1-next-3}, \eqref{spin-1a-next-3} follow from \eqref{bilboa} and \eqref{botany}, while the bounds \eqref{spin-2-next-3}, \eqref{spin-2a-next-3} follows from \eqref{c-bound-happy}, \eqref{solace-final}, \eqref{tau-tc}, and Gronwall's inequality.  This (finally!) completes the proof of Proposition \ref{reduced-claim-2}, and hence of Theorem \ref{main}.

\end{document}